\newcommand{\N}{{\mathbb N}}
\newcommand{\Z}{{\mathbb Z}}
\newcommand{\R}{{\mathbb R}}
\newcommand{\C}{{\mathbb C}}
\newcommand{\E}{{\mathbb E}}
\newcommand{\eps}{{\varepsilon}}
\newcommand{\tauetabar}{{(\underline{\tau},\underline{\eta})}}
\newcommand{\utau}{\underline{\tau}}
\newcommand{\ueta}{\underline{\eta}}
\newcommand{\LL}{{\mathbb L}}
\newcommand{\bA}{{\mathbb A}}
\newcommand\cA{{\cal  A}}
\newcommand\cB{{\cal  B}}
\newcommand\cH{{\cal  H}}
\newcommand\cV{{\cal  V}}
\newcommand\cW{{\cal  W}}
\newcommand\cI{{\cal  I}}
\newcommand\cR{{\cal  R}}
\newcommand\cG{{\cal  G}}
\newcommand\cE{{\cal  E}}
\newcommand\cF{{\cal  F}}
\newcommand\cO{{\cal O}}
\newcommand\cU{{\mathcal U}}
\newcommand\cL{{\mathcal L}}
\newcommand{\opeg}{{\text{\rm Op}^{\varepsilon,\gamma}}}
\newcommand{\bfS}{{\bf S}}
\newcommand{\tA}{\tilde{A}}
\newcommand{\X}{{\xi+\frac{k\, \beta}{\eps}}}
\newcommand\adots{\mathinner{\mkern2mu\raise1pt\hbox{.}
\mkern3mu\raise4pt\hbox{.}\mkern1mu\raise7pt\hbox{.}}}
\newtheorem{theo}{Theorem}[section]
\newtheorem{theorem}{Theorem}[section]
\newtheorem{prop}[theo]{Proposition}
\newtheorem{proposition}[theo]{Proposition}
\newtheorem{cor}[theo]{Corollary}
\newtheorem{lem}[theo]{Lemma}
\newtheorem{defn}[theo]{Definition}
\newtheorem{ass}[theo]{Assumption}
\newtheorem{rem}[theo]{Remark}
\newtheorem{remark}[theo]{Remark}
\newtheorem{nota}[theo]{Notations}
\newtheorem{assumption}[theo]{Assumption}
\newtheorem{definition}[theo]{Definition}
\numberwithin{equation}{section}
 \title{Nonlinear geometric optics for reflecting uniformly stable pulses}
\author{\sc \small
Jean-Francois Coulombel\thanks{CNRS and Universit\'e de Nantes, Laboratoire de math\'ematiques Jean
Leray (UMR CNRS 6629), 2 rue de la Houssini\`ere, BP 92208, 44322 Nantes Cedex 3, France. Email:
{\tt jean-francois.coulombel@univ-nantes.fr}. Research of J.-F. C. was supported by the French Agence
Nationale de la Recherche, contract ANR-08-JCJC-0132-01.},
Mark Williams\thanks{University of North Carolina, Mathematics Department, CB 3250, Phillips Hall,
Chapel Hill, NC 27599. USA. Email: {\tt williams@email.unc.edu}. Research of M.W. was partially supported
by NSF grants number DMS-0701201 and DMS-1001616.}}
\begin{document}

\maketitle

\begin{abstract}
We provide a justification with rigorous error estimates showing that the leading term in weakly nonlinear geometric
optics expansions of highly oscillatory reflecting pulses is close to the uniquely determined exact solution for small
wavelengths $\eps$. Pulses reflecting off fixed noncharacteristic boundaries are considered under the assumption
that the underlying  boundary problem is uniformly spectrally stable in the sense of Kreiss. There are two respects
in which these results make rigorous the formal treatment of pulses in Majda and Artola \cite{majdaartola}, and Hunter,
Majda and Rosales \cite{hmr}. First, we give a rigorous construction of leading pulse profiles in problems where pulses
traveling with many distinct group velocities are, unavoidably, present; and second, we provide a rigorous error analysis
which yields a rate of convergence of approximate to exact solutions as $\eps\to 0$. Unlike wavetrains, interacting
pulses do not produce resonances that affect leading order profiles. However, our error analysis  shows the importance
of estimating pulse interactions in the construction and estimation of correctors. Our results apply to a general class of
systems that includes quasilinear problems like the compressible Euler equations; moreover, the same methods yield
a stability result for uniformly stable Euler shocks perturbed by highly oscillatory pulses.
\end{abstract}

\tableofcontents

\section{Introduction}
\label{intro}

\emph{\quad}We study highly oscillatory pulse solutions for a general class of hyperbolic equations that includes
quasilinear systems like the compressible Euler equations. Our main objective is to construct leading order weakly
nonlinear geometric optics expansions of the solutions (which are valuable because, for example, they exhibit
important qualitative properties), and to rigorously justify such expansions, that is, to show that they are close in
a precise sense to true exact solutions.

A single pulse colliding with a fixed noncharacteristic boundary in an $N\times N$ hyperbolic system will generally
give rise to a family of reflected pulses traveling with several distinct group velocities. We study this situation when
the underlying  boundary problem is assumed to be uniformly spectrally stable in the sense of Kreiss. A formal
treatment of this problem was given in Majda-Artola \cite{majdaartola}, building on an earlier treatment of nonlinear
geometric optics for pulses in free space in Hunter-Majda-Rosales \cite{hmr}. In the papers \cite{majdaartola,hmr},
systems of nonlinear equations for leading order profiles were derived, but their solvability was not discussed.
Morever, the questions of the existence of exact solutions on a fixed time interval independent of the wavelength
of oscillations (or pulse width) $\eps$, and of the relation between exact and approximate solutions, were not
studied there. In this paper, we give a rigorous construction of leading pulse profiles in problems where pulses
traveling with many distinct group velocities are, unavoidably, present. In addition, we construct exact solutions
on a fixed time interval independent of $\eps$, and  provide a rigorous error analysis which yields a rate of
convergence of approximate to exact solutions as $\eps\to 0$.

Rigorous treatments of the short-time propagation of a single pulse in free space were given in Alterman-Rauch
\cite{ar2} and Gu\`es-Rauch \cite{gr}\footnote{The paper \cite{gr} considered ``fronts" as well as pulses.}.
The methods (e.g., conormal estimates in \cite{ar2,gr}, high-order approximate solutions in \cite{gr})
used in the constructions of exact solutions and in the error analyses of these papers do not readily extend to
problems involving many pulses with distinct group velocities. The method we use here to construct exact solutions
and justify leading term expansions involves replacing the original system \eqref{a1} with an associated singular
system \eqref{a3} involving coefficients of order $\frac{1}{\eps}$ and a new unknown $U_\eps(x,\theta_0)$.\footnote{The singular system approach was used in \cite{altermanrauch} in their study of a single pulse on diffractive time scales.}
Exact solutions $U_\eps$ to the singular system yield exact solutions to the original system by a substitution
\begin{align*}
u_\eps(x)=U_\eps\left(x,\frac{\phi_0(x')}{\eps}\right),
\end{align*}
where $\phi_0(x')=x'\cdot\beta$ is the ``boundary phase" as in \eqref{a1}. Both the singular system and the system
of profile equations satisfied by the leading profile $\cU^0(x,\theta_0,\xi_d)$ are solved by Picard iteration.

The error analysis is based on ``simultaneous Picard iteration", a method first used in the study of geometric optics
for wavetrains in free space in \cite{jmr}. The idea  is to show that for every $n$, the $n$-th profile iterate $\cU^{0,n}
(x,\theta_0,\frac{x_d}{\eps})$ converges as $\eps\to 0$ in an appropriate sense to the $n$-th exact iterate $U^n_\eps
(x,\theta_0)$, and to conclude therefrom that $\cU^0(x,\theta_0,\frac{x_d}{\eps})$ is close to $U_\eps(x,\theta_0)$ for
$\eps$ small. Unlike wavetrains, interacting pulses do not produce resonances that affect leading order profiles.
However, our error analysis shows the importance of estimating pulse interactions in the construction and estimation
of correctors. Another key tool in the error analysis, discussed further in section \ref{errorintro}, is the machinery of
moment-zero approximations developed in section \ref{mz}.   Our use of these approximations was inspired by the ``low-frequency cutoff" argument of
\cite{altermanrauch}.

The main novelties of this paper are:

1)\;We give a rigorous treatment of pulses reflecting off boundaries;  earlier rigorous work concerned pulses in free space.

2)\;We provide methods for handling many pulses traveling with distinct group velocities;  in particular, we show that although pulse interactions do not produce new pulses at leading order, pulse interactions must be estimated in the construction of correctors and in the error analysis.   We distinguish in the estimates between ``transversal" and ``nontransversal" pulse interactions.

3)\;In contrast to the treatment of uniformly stable reflecting wavetrains in \cite{CGW1}, we are able here to give a rate of convergence of approximate to exact solutions as wavelength $\eps\to 0$.\footnote{In the case of wavetrains there was an ``arithmetic obstacle" to obtaining a rate of convergence in the error analysis; namely, the generation of many noncharacteristic, but ``almost characteristic", phases by nonlinear interactions.  Because pulses interact weakly, that obstacle is absent in the problem studied here.}

\subsection{Exact solutions and singular systems}\label{exact1}

\emph{\quad}In order to study geometric optics for nonlinear problems with highly oscillatory solutions it is important
first to settle the question of whether exact solutions exist on a fixed time interval independent of the wavelength
($\eps$ in the notation below). A powerful method for studying this problem, introduced in \cite{jmr} for initial value
problems and extended to boundary problems in \cite{williams3}, is to replace the original system with an associated
singular system.

On $\overline{\R}^{d+1}_+ = \{x=(x',x_d)=(t,y,x_d)=(t,x''):x_d\geq 0\}$, consider the $N\times N$ quasilinear
hyperbolic boundary problem:
\begin{align} \label{a1}
\begin{split}
& \sum^d_{j=0}A_j(v_\eps) \, \partial_{x_j}v_\eps=f(v_\eps)\\
&b(v_\eps)|_{x_d=0}=g_0+\eps \, G\left(x',\frac{x'\cdot\beta}{\eps}\right)\\
&v_\eps = u_0 \text{ in } t < 0,
\end{split}
\end{align}
where $x_0=t$ is  time, $G(x',\theta_0)\in C^\infty (\R^d \times \R^1,\R^p)$ decays to zero as $|\theta_0|\to \infty$,
with supp $G\subset \{x_0\geq 0\}$, and the boundary frequency $\beta \in \R^d \setminus \{ 0\}$\footnote{Wavetrains
instead of pulses are obtained by taking $G(x',\theta_0)$ to be periodic in $\theta_0$.}. Here the coefficients $A_j \in
C^\infty(\R^N,\R^{N^2})$, $f\in C^\infty(\R^N,\R^N)$, and $b\in C^\infty(\R^N,\R^p)$.

Looking for $v_\eps$ as a perturbation $v_\eps=u_0+\eps u_\eps$ of a constant state $u_0$ such that $f(u_0)=0$,
$b(u_0)=g_0$, we obtain for $u_\eps$ the system (with slightly different $A_j$'s)
\begin{align}\label{a2}
\begin{split}
&(a)\;P(\eps u_\eps,\partial_x) u_\eps:=\sum^d_{j=0}A_j(\eps u_\eps) \, \partial_{x_j}u_\eps
=\mathcal{F}(\eps u_\eps)u_\eps\text{ on }x_d\geq 0\\
&(b)\;B(\eps u_\eps)u_\eps|_{x_d=0}=G\left(x',\frac{x'\cdot \beta}{\eps}\right)\\
&(c)\;u_\eps = 0 \text{ in } t<0,
\end{split}
\end{align}
where $B(v)$ is a $C^\infty \;p\times N$ real matrix defined by
\begin{align*}
b(u_0+\eps u_\eps)=b(u_0)+B(\eps u_\eps)\eps u_\eps
\end{align*}
and $\cF$ is defined similarly. We assume that the boundary $\{x_d=0\}$ is noncharacteristic, that is, $A_d(0)$ is
invertible. The other key assumptions, explained in section \ref{assumptions}, are that $P(v,\partial_x)$ is hyperbolic
with characteristics of constant multiplicity for $v$ in a neighborhood of the origin (Assumption \ref{assumption1})
and that $(P(0,\partial_x),B(0))$ is uniformly stable in the sense of Kreiss (Assumption \ref{a7}).

For any fixed $\eps_0>0$, the standard theory of hyperbolic boundary problems (see e.g., \cite{CP,kreiss}) yields
solutions of \eqref{a2} on a fixed time interval $[0,T_{\eps_0}]$ independent of $\eps\geq \eps_0$. However, since
Sobolev norms of the boundary data blow up as $\eps\to 0$, the standard theory  yields solutions $u_\eps$ of \eqref{a2}
only on time intervals $[0,T_\eps]$ that shrink to zero as $\eps\to 0$. In section \ref{exact}, exact (and necessarily
unique) solutions to \eqref{a2} of the form $u_\eps(x)=U_\eps(x,\frac{x'\cdot\beta}{\eps})$ are constructed on a time
interval independent of $\eps\in (0,\eps_0]$ for $\eps_0$ sufficiently small, where $U_\eps(x,\theta_0)$   satisfies
 the \emph{singular system} derived by substituting $U_\eps(x,\frac{x'\cdot\beta}{\eps})$ into \eqref{a2}:
\begin{align}\label{a3}
\begin{split}
&\;\sum^d_{j=0} A_j(\eps U_\eps) \, \partial_{x_j}U_\eps+\dfrac{1}{\eps} \, \sum^{d-1}_{j=0}
A_j(\eps U_\eps) \, \beta_j \, \partial_{\theta_0}U_\eps=\cF(\eps U_\eps)U_\eps,\\
&\;B(\eps U_\eps)(U_\eps)|_{x_d=0}=G(x',\theta_0),\\
&\;U_\eps=0 \text{ in } t<0.
\end{split}
\end{align}

As explained in \cite{williams3}, the study of singular systems is greatly complicated by the presence of a boundary.
Even if one assumes that the matrices $A_j$ are symmetric (as we do not here), there is no way to obtain an $L^2$
estimate uniform in $\eps$ by a simple integration by parts because of the boundary terms that arise\footnote{The
class of symmetric problems with maximal strictly dissipative boundary conditions provides an exception to this
statement, but that class is too restrictive for some important applications; for example, the boundary problem that
arises in the study of multi-D shocks does not lie in this class.}. The blow-up examples of \cite{williams2} show that,
at least in the wavetrain case, for certain boundary frequencies $\beta$ it is impossible  to estimate solutions of
\eqref{a3} uniformly with respect to $\eps$ in $C(x_d,H^s(x',\theta_0))$ norms, or indeed in \emph{any} norm that
dominates the $L^\infty$ norm\footnote{The problem occurs only for $\beta$ in the glancing set (Definition \ref{def1}),
as the examples of \cite{williams2} together with the results of \cite{williams3} show.}. We do not know if analogous
blow-up examples exist in the pulse case, but it is clear that the proofs of this paper do not apply when $\beta$ lies
in the glancing set (Definition \ref{def1}).


In \cite{CGW2} a class of singular pseudodifferential operators, acting on functions $U(x',\theta_0)$ decaying in
$\theta_0$ and having the form
\begin{align}\label{a4a}
p_s(D_{x',\theta_0})U:=\int_{\R^d \times \R} {\rm e}^{i x'\xi'+i\theta_0 k} \,
p \left( \eps V(x',\theta_0),\xi'+\frac{k\, \beta}{\eps},\gamma \right) \, \widehat{U}(\xi',k) \, {\rm d}\xi' \, {\rm d}k,
\; \; \gamma \geq 1,
\end{align}
was introduced to deal with these difficulties. Observe that after multiplication by $A_d^{-1}(\eps U_\eps)$ and
setting $\tA_j:=A_d^{-1}A_j$, $F:=A_d^{-1}\cF$, \eqref{a3} becomes
\begin{align}\label{a4b}
\begin{split}
&\partial_{x_d}U_\eps +\sum^{d-1}_{j=0} \tA_j(\eps U_\eps)
\left(\partial_{x_j}+\dfrac{\beta_j \partial_{\theta_0}}{\eps}\right) U_\eps \\
&\qquad \equiv \partial_{x_d}U_\eps +\mathbb{A} \left( \eps U_\eps,\partial_{x'}+\frac{\beta \partial_{\theta_0}}{\eps}
\right) U_\eps=F(\eps U_\eps)U_\eps,\\
&B(\eps U_\eps)(U_\eps)|_{x_d=0}=G(x',\theta_0),\\
&U_\eps=0 \text{ in } t<0,
\end{split}
\end{align}
where $\mathbb{A}\left(\eps U_\eps,\partial_{x'}+\frac{\beta \partial_{\theta_0}}{\eps}\right)$ is a (differential) operator
that can be expressed in the form \eqref{a4a}. Kreiss-type symmetrizers $r_s(D_{x',\theta_0})$ in the singular calculus
can be constructed for the system \eqref{a4b} as in \cite{williams3} under the assumptions given below. With these one
can prove $L^2(x_d,H^s(x',\theta_0))$ estimates uniform in $\eps$ for the linearization of \eqref{a4b}. The main difference
with \cite{williams3} is that we use here a singular pseudodifferential calculus that is specially constructed  for pulses in \cite{CGW2}.    In the pulse case $\theta_0$ lies
in an unbounded set and the exact profile $U_\eps(x,\theta_0)$ has continuous Fourier spectrum.   The analysis of \cite{williams3} relied on a singular calculus for wavetrains.  In that case $\theta_0$ lies in
$S^1$ and $U_\eps(x,\theta_0)$ has discrete Fourier spectrum, a fact that was used in several places for proving symbolic calculus rules in \cite{williams3}.  The results of the pulse calculus needed here are  recalled in Appendix \ref{append}.

To progress beyond $L^2(x_d,H^s(x',\theta_0))$ estimates and control $L^\infty$ norms, the boundary frequency $\beta$ must be restricted to lie in the complement
of the glancing set (Definition \ref{def1}). With this extra assumption we are able to use the pulse calculus  to
block-diagonalize the operator $\mathbb{A}\left(\eps U_\eps,\partial_{x'}+\frac{\beta \partial_{\theta_0}}{\eps}\right)$ and
thereby prove estimates uniform with respect to $\eps$ in the spaces
\begin{align}\label{a5}
E^s_T=C(x_d,H^s_T(x',\theta_0))\cap L^2(x_d,H^{s+1}_T(x',\theta_0)).
\end{align}
These spaces are algebras and are contained in $L^\infty$ for $s>\frac{d+1}{2}$. For large enough $s$, as determined
by the requirements of the calculus, existence of solutions to \eqref{a4} in $E^s_T$ on a time interval $[0,T]$ independent
of $\eps\in (0,\eps_0]$ follows by Picard iteration (see Theorem \ref{a12}).

\subsection{Assumptions and main results}\label{assumptions}

\emph{\quad}Before continuing with an overview of the strategies for constructing profiles and for showing that
approximate solutions are close to exact solutions, we now give a precise statement of our assumptions and
main results.

We make the following hyperbolicity assumption on the system \eqref{a2}:

\begin{ass}
\label{assumption1}
The matrix $A_0=I$. For an open neighborhood $\cU$ of  $0\in\R^N$, there exists an integer $q \ge 1$, some
real functions $\lambda_1,\dots,\lambda_q$ that are $C^\infty$ on $\cU \;\times$ $\R^d\setminus\{0\}$ and
homogeneous of degree $1$ and analytic in $\xi$, and there exist some positive integers $\nu_1,\dots,\nu_q$
such that:
\begin{equation*}
\det \Big[ \tau \, I+\sum_{j=1}^d \xi_j \, A_j(u) \Big] =\prod_{k=1}^q \big( \tau+\lambda_k(u,\xi) \big)^{\nu_k}
\end{equation*}
for $u \in \cU$ and $\xi=(\xi_1,\dots,\xi_d)\in\R^d\setminus\{0\}$. Moreover the eigenvalues $\lambda_1(u,\xi),
\dots, \lambda_q(u,\xi)$ are semi-simple (their algebraic multiplicity equals their geometric multiplicity) and
satisfy $\lambda_1(u,\xi)<\dots<\lambda_q(u,\xi)$ for all $u \in \cU$, $\xi \in \R^d \setminus \{ 0\}$.
\end{ass}

We restrict our analysis to noncharacteristic boundaries and therefore make the following:

\begin{ass}
\label{assumption2}
For $u\in \cU$ the matrix $A_d(u)$ is invertible and the matrix $B(u)$ has maximal rank, its rank $p$ being
equal to the number of positive eigenvalues of $A_d(u)$ (counted with their multiplicity).
\end{ass}

In the normal modes analysis for the linearization of \eqref{a2} at $0 \in \cU$, one first performs a Laplace
transform in the time variable $t$ and a Fourier transform in the tangential space variables $y$. We let
$\tau-i\, \gamma \in \C$ and $\eta \in \R^{d-1}$ denote the dual variables of $t$ and $y$. We introduce
the symbol
\begin{equation*}
{\mathcal A}(\zeta):= -i \, A_d^{-1}(0) \left( (\tau-i\gamma) \, I +\sum_{j=1}^{d-1} \eta_j \, A_j(0) \right)
\, ,\quad \zeta:=(\tau-i\gamma,\eta) \in \C \times \R^{d-1} \, .
\end{equation*}
For future use, we also define the following sets of frequencies:
\begin{align*}
& \Xi := \Big\{ (\tau-i\gamma,\eta) \in \C \times \R^{d-1} \setminus (0,0) : \gamma \ge 0 \Big\} \, ,
& \Sigma := \Big\{ \zeta \in \Xi : \tau^2 +\gamma^2 +|\eta|^2 =1 \Big\} \, ,\\
& \Xi_0 := \Big\{ (\tau,\eta) \in \R \times \R^{d-1} \setminus (0,0) \Big\} = \Xi \cap \{ \gamma = 0 \} \, ,
& \Sigma_0 := \Sigma \cap \Xi_0 \, .
\end{align*}
Henceforth we suppress the $u$ in $\lambda_k(u,\xi)$ when it is evaluated at $u=0$ and write
$\lambda_k(0,\xi)=\lambda_k(\xi)$. Two key objects in our analysis are the hyperbolic region and the
glancing set that are defined as follows:

\begin{definition}
\label{def1}
\begin{itemize}
 \item The hyperbolic region ${\mathcal H}$ is the set of all $(\tau,\eta) \in \Xi_0$ such that the matrix
       ${\mathcal A}(\tau,\eta)$ is diagonalizable with purely imaginary eigenvalues.

 \item Let $G$ denote the set of all $(\tau,\xi) \in \R \times \R^d$ such that $\xi \neq 0$ and there exists
       an integer $k \in \{1,\dots,q\}$ satisfying:
\begin{equation*}
\tau + \lambda_k(\xi) = \dfrac{\partial \lambda_k}{\partial \xi_d} (\xi) = 0 \, .
\end{equation*}
If $\pi (G)$ denotes the projection of $G$ on the $d$ first coordinates (in other words $\pi (\tau,\xi) =
(\tau,\xi_1,\dots,\xi_{d-1})$ for all $(\tau,\xi)$), the glancing set ${\mathcal G}$ is ${\mathcal G} :=
\pi (G) \subset \Xi_0$.
\end{itemize}
\end{definition}

\noindent We recall the following result that is due to Kreiss \cite{kreiss} in the strictly hyperbolic case
(when all integers $\nu_j$ in Assumption \ref{assumption1} equal $1$) and to M\'etivier \cite{metivier}
in our more general framework:

\begin{prop}[\cite{kreiss,metivier}]
\label{thm1}
Let Assumptions \ref{assumption1} and \ref{assumption2} be satisfied. Then for all $\zeta \in \Xi \setminus
\Xi_0$, the matrix ${\mathcal A}(\zeta)$ has no purely imaginary eigenvalue and its stable subspace $\E^s (\zeta)$
has dimension $p$.\footnote{The stable subspace is the direct sum of the generalized eigenspaces associated to eigenvalues with negative real part.} Furthermore, $\E^s$ defines an analytic vector bundle over $\Xi \setminus \Xi_0$ that can
be extended as a continuous vector bundle over $\Xi$.
\end{prop}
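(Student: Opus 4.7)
The plan is to study the eigenvalues of $\mathcal{A}(\zeta)$ through the hyperbolicity factorization of Assumption \ref{assumption1}, count stable eigenvalues by a connectedness argument for $\gamma>0$, and then invoke M\'etivier's block-structure theorem to extend the bundle $\E^s$ continuously up to $\Xi_0$; the last step is the main difficulty.

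First I would observe that $\mu\in\C$ is an eigenvalue of $\mathcal{A}(\zeta)$ if and only if, setting $\xi_d:=-i\mu$,
\[
\det\Bigl((\tau-i\gamma)\,I + \sum_{j=1}^{d-1}\eta_j\, A_j(0) + \xi_d\, A_d(0)\Bigr) = 0,
\]
which by Assumption \ref{assumption1}, applied with the analytic continuation of the $\lambda_k$ in $\xi$, amounts to $\tau - i\gamma + \lambda_k(\eta,\xi_d) = 0$ for some $k$. If $\mu=i\alpha$ were purely imaginary with $\alpha\in\R$, then $\xi_d=\alpha$ would be real, $\lambda_k(\eta,\xi_d)\in\R$, and taking imaginary parts would force $\gamma=0$. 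Hence no purely imaginary eigenvalue occurs on $\Xi\setminus\Xi_0$. Since $\Xi\setminus\Xi_0=\{\gamma>0\}$ is connected and no eigenvalue crosses the imaginary axis there, the total algebraic multiplicity of eigenvalues with negative real part is constant on this set. Evaluating at $\zeta=(-i\gamma,0)$ gives $\mathcal{A}(\zeta) = -\gamma\, A_d^{-1}(0)$, which by Assumption \ref{assumption2} has exactly $p$ eigenvalues of negative real part, so $\dim\E^s(\zeta)=p$ everywhere. The Dunford formula $\Pi^s(\zeta) = \frac{1}{2\pi i}\oint_\Gamma (zI-\mathcal{A}(\zeta))^{-1}\,dz$ with $\Gamma\subset\{\re z<0\}$ enclosing the stable spectrum then shows analyticity of $\E^s$ on $\Xi\setminus\Xi_0$.

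The main obstacle is extending $\E^s$ continuously up to $\zeta_0\in\Xi_0$, where the eigenvalues of $\mathcal{A}(\zeta_0)$ can lie on the imaginary axis with nontrivial Jordan blocks (especially in the glancing set), and where stable and unstable eigenvalues may collide as $\gamma\downarrow 0$. Here I would invoke the block-structure theorem of M\'etivier \cite{metivier}: for each $\zeta_0\in\Xi_0$ there exist a neighborhood $\cV$ and a $C^\infty$ invertible matrix $T(\zeta)$ on $\cV$ such that
\[
T(\zeta)^{-1}\,\mathcal{A}(\zeta)\,T(\zeta) = \mathrm{diag}\bigl(\mathcal{A}_1(\zeta),\dots,\mathcal{A}_r(\zeta)\bigr),
\]
with each $\mathcal{A}_j(\zeta_0)$ having a single purely imaginary eigenvalue, and each block being either diagonalizable (hyperbolic block) or in Kreiss's explicit Jordan normal form (glancing block) with a sign condition ensuring that as $\gamma\downarrow 0$ the stable eigenvalues of $\mathcal{A}_j(\zeta)$ remain inside a contour $\Gamma_j(\zeta)$ varying continuously up to $\zeta_0$. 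The corresponding spectral projectors are then continuous on $\cV$, and reassembling via $T(\zeta)$ yields a continuous extension of $\Pi^s$, hence of $\E^s$. The dimension count passes to the limit because the sum over blocks of the number of eigenvalues with negative real part equals $p$ by construction, which, together with the block decomposition and a covering argument, globalizes the local continuous extension to all of $\Xi$.
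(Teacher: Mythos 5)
The paper does not prove this Proposition; it is explicitly recalled from Kreiss \cite{kreiss} (strictly hyperbolic case) and M\'etivier \cite{metivier} (constant multiplicity), so there is no in-paper argument to compare against. Your sketch reproduces the standard line of reasoning from those references and is essentially correct: the characterization of eigenvalues of $\mathcal{A}(\zeta)$ via $\det L(\tau-i\gamma,\eta,\xi_d)=0$ with $\xi_d=-i\mu$, the ``no purely imaginary eigenvalue for $\gamma>0$'' argument by taking imaginary parts in $\tau-i\gamma+\lambda_k(\eta,\xi_d)=0$ with $\xi_d$ real, the connectedness-plus-evaluation-at-$(-i\gamma,0)$ argument to get $\dim\E^s=p$, the Dunford projector for analyticity, and the appeal to the block structure theorem for the continuous extension to $\Xi_0$. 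One imprecision worth flagging: you state that \emph{each} block $\mathcal{A}_j(\zeta_0)$ has a single purely imaginary eigenvalue, but the block decomposition in Kreiss/M\'etivier also contains trivially stable and trivially unstable blocks (eigenvalues uniformly bounded away from $i\R$ near $\zeta_0$); only the remaining blocks have a purely imaginary eigenvalue at $\zeta_0$ and are then classified as hyperbolic or glancing (the latter with the Ralston/Kreiss sign condition you mention). This does not affect the conclusion, since the stable subspace of the trivially stable/unstable blocks obviously extends continuously, and the delicate blocks are handled exactly as you describe, but the phrasing should be corrected if this is to serve as a proof outline.
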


\noindent For all $(\tau,\eta) \in \Xi_0$, we let $\E^s(\tau,\eta)$ denote the continuous extension of $\E^s$ to the
point $(\tau,\eta)$. The analysis in \cite{metivier} shows that away from the glancing set ${\mathcal G} \subset
\Xi_0$, $\E^s(\zeta)$ depends analytically on $\zeta$, and the hyperbolic region ${\mathcal H}$ does not contain
any glancing point.

Next we define the hyperbolic operator
\begin{align*}
L(\partial_x):=\partial_t+\sum_{j=1}^dA_j(0)\partial_{x_j}
\end{align*}
and recall the definition of uniform stability \cite{kreiss,CP}:

\begin{defn}\label{a6}
The problem \eqref{a2} is said to be \emph{uniformly stable} at $u=0$ if the linearized operators
$(L(\partial_x),B(0))$ at $u=0$ are such that
\begin{align*}
B(0):\E^s(\tau-i\gamma,\eta) \to \C^p \text{ is an isomorphism for all } (\tau-i\gamma,\eta) \in \Sigma.
\end{align*}
\end{defn}

\begin{ass}\label{a7}
The problem \eqref{a2} is \emph{uniformly stable} at $u=0$.
\end{ass}

It is clear that uniform stability at $u=0$ implies uniform stability at nearby states (and therefore at all $u \in \cU$
up to restricting $\cU$). Thus, there is a slight redundancy in Assumptions \ref{assumption2} and \ref{a7} as far
as the rank of $B(0)$ is concerned.
\bigskip

\textbf{Boundary and interior phases.}
We consider a planar real phase $\phi_0$ defined on the boundary:
\begin{equation}
\label{phasebord}
\quad \phi_0(t,y) :=\utau \, t +\ueta \cdot y \, ,\quad \tauetabar \in \Xi_0 \, .
\end{equation}
As follows from earlier works (e.g. \cite{majdaartola}), oscillations on the boundary associated with the phase
$\phi_0$ give rise to oscillations in the interior associated with some planar phases $\phi_m$. These phases
are characteristic for the hyperbolic operator $L(\partial_x)$ and their trace on the boundary equals $\phi_0$.
For now we make the following:

\begin{assumption}
\label{a8}
The phase $\phi_0$ defined by \eqref{phasebord} satisfies $\tauetabar \in {\mathcal H}$.
\end{assumption}

\noindent Thanks to Assumption \ref{a8}, we know that the matrix ${\mathcal A} \tauetabar$ is
diagonalizable with purely imaginary eigenvalues. These eigenvalues are denoted $i\, {\omega}_1,
\dots,i\, {\omega}_M$, where the ${\omega}_m$'s are real and pairwise distinct. The
${\omega}_m$'s are the roots (and all the roots are real) of the dispersion relation:
\begin{equation*}
\det \Big[ \underline{\tau} \, I+\sum_{j=1}^{d-1} \underline{\eta}_j \, A_j(0) +\omega \, A_d(0) \Big] = 0 \, .
\end{equation*}
To each root ${\omega}_m$ there corresponds a unique integer $k_m \in \{ 1,\dots,q\}$ such that
$\underline{\tau} + \lambda_{k_m} (\underline{\eta},{\omega}_m)=0$. We can then define the following
real\footnote{If $\tauetabar$ does not belong to the hyperbolic region ${\mathcal H}$, some of the phases
$\phi_m$ may be complex, see e.g. \cite{williams1,williams2,lescarret,marcou,hernandez}. Moreover, glancing
phases introduce a new scale $\sqrt{\eps}$ as well as boundary layers. } phases and their associated group
velocities:
\begin{equation}
\label{phases}
\forall \, m =1,\dots,M \, ,\quad \phi_m (x):= \phi_0(t,y)+\omega_m \, x_d \, ,\quad
{\bf v}_m := \nabla \lambda_{k_m} (\underline{\eta},\omega_m) \, .
\end{equation}
Let us observe that each group velocity ${\bf v}_m$ is either incoming or outgoing with respect to the space
domain $\R^d_+$: the last coordinate of ${\bf v}_m$ is nonzero. This property holds because $\tauetabar$
does not belong to the glancing set ${\mathcal G}$. We can therefore adopt the following classification:

\begin{definition}
\label{def2}
The phase $\phi_m$ is said to be incoming if the group velocity ${\bf v}_m$ is incoming (that is, $\partial_{\xi_d}
\lambda_{k_m} (\underline{\beta},{\omega}_m)>0$), and outgoing if the group velocity ${\bf v}_m$ is outgoing
($\partial_{\xi_d} \lambda_{k_m} (\underline{\beta},{\omega}_m) <0$).
\end{definition}

\noindent In all that follows, we let ${\mathcal I}$ denote the set of indices $m \in \{ 1,\dots,M\}$ such that
$\phi_m$ is an incoming phase, and ${\mathcal{O}}$ denote the set of indices $m \in \{ 1,\dots,M\}$ such
that $\phi_m$ is an outgoing phase.   If $p\geq 1$, then $\cI$ is nonempty, while if $p\leq N-1$, $\cO$ is
nonempty (this follows from Lemma \ref{lem1} below).
\bigskip

\textbf{Main results. }We  will use the notation:
\begin{align*}
\begin{split}
&L(\tau,\xi) := \tau \, I +\sum_{j=1}^d \xi_j \, A_j(0) \, ,\\
&\beta=(\utau,\ueta), \;x'=(t,y),\; \phi_0(x')=\beta\cdot x'.
\end{split}
\end{align*}
For each phase $\phi_m$, ${\rm d}\phi_m$ denotes the differential of the function $\phi_m$ with respect to its
argument $x=(t,y,x_d)$. It follows from Assumption \ref{assumption1} that the eigenspace of ${\mathcal A} (\beta)$
associated with the eigenvalue $i\, \underline{\omega}_m$ coincides with the kernel of $L({\rm d}\phi_m)$ and has
dimension $\nu_{k_m}$. The following well-known lemma, whose proof is recalled in \cite{jfcog}, gives a useful
decomposition of $\E^s$ in the hyperbolic region.

\begin{lem}
\label{lem1}
The stable subspace $\E^s \tauetabar$ admits the decomposition:
\begin{equation}
\label{decomposition1}
\E^s \tauetabar = \oplus_{m \in {\mathcal I}} \, \text{\rm Ker } L({\rm d}\phi_m) \, ,
\end{equation}
and each vector space in the decomposition \eqref{decomposition1} admits a basis of real vectors.
\end{lem}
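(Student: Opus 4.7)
The plan is to identify $\E^s(\beta)$ at the hyperbolic point $\tauetabar$ by perturbing $\gamma$ off zero and tracking which eigenvalues of $\mathcal{A}(\utau-i\gamma,\ueta)$ cross into the open left half-plane. First I would record the ambient decomposition: since $\tauetabar \in \mathcal{H}$, the matrix $\mathcal{A}(\beta)$ is diagonalizable, and the equivalence
\begin{equation*}
\mathcal{A}(\beta)\,r = i\omega_m\,r \;\;\Longleftrightarrow\;\; L({\rm d}\phi_m)\,r = 0
\end{equation*}
identifies the eigenspace for $i\omega_m$ with $\ker L({\rm d}\phi_m)$; by Assumption \ref{assumption1} this kernel has dimension $\nu_{k_m}$. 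Hence $\C^N=\oplus_{m=1}^M \ker L({\rm d}\phi_m)$, and each summand admits a basis of real vectors because $L({\rm d}\phi_m)$ is a real matrix (the $\utau, \ueta_j, \omega_m$ and $A_j(0)$ are all real).

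Next I would use the affine perturbation $\mathcal{A}(\utau-i\gamma,\ueta)=\mathcal{A}(\beta)-\gamma\, A_d^{-1}(0)$. The factorization of the dispersion polynomial in Assumption \ref{assumption1} shows that near $\omega=\omega_m$ the only vanishing factor is $(\tau+\lambda_{k_m}(\ueta,\omega))^{\nu_{k_m}}$, so the eigenvalues of $\mathcal{A}(\utau-i\gamma,\ueta)$ close to $i\omega_m$ are precisely the values $i\omega$ with
\begin{equation*}
\utau - i\gamma + \lambda_{k_m}(\ueta,\omega) = 0.
\end{equation*}
Because $\tauetabar \notin \mathcal{G}$, we have $\partial_{\xi_d}\lambda_{k_m}(\ueta,\omega_m) \ne 0$, so the implicit function theorem yields a smooth branch $\omega_m(\gamma)$ with
\begin{equation*}
\omega_m'(0)=\frac{i}{\partial_{\xi_d}\lambda_{k_m}(\ueta,\omega_m)},
\qquad
\re\, i\omega_m(\gamma)=-\frac{\gamma}{\partial_{\xi_d}\lambda_{k_m}(\ueta,\omega_m)} + O(\gamma^2).
\end{equation*}
The algebraic multiplicity of the perturbed eigenvalue $i\omega_m(\gamma)$ remains equal to $\nu_{k_m}$ (the other factors of the dispersion polynomial stay bounded away from zero near $\omega_m$). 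For $\gamma>0$ small, the perturbed eigenvalue lies in the open left half-plane exactly when $\partial_{\xi_d}\lambda_{k_m}(\ueta,\omega_m)>0$, i.e., when $m\in\mathcal{I}$.

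Finally I would take the limit $\gamma\downarrow 0^+$. For $\gamma>0$ small, Proposition \ref{thm1} combined with the previous step gives
\begin{equation*}
\E^s(\utau-i\gamma,\ueta) = \bigoplus_{m\in\mathcal{I}} \Pi_m(\gamma)\,\C^N,
\end{equation*}
where $\Pi_m(\gamma)$ is the total spectral projection of $\mathcal{A}(\utau-i\gamma,\ueta)$ onto the generalized eigenspace associated with $i\omega_m(\gamma)$. Since the $i\omega_m$ are pairwise distinct and semisimple and the perturbation is analytic in $\gamma$, Kato's total projection theorem makes each $\Pi_m(\gamma)$ depend analytically on $\gamma$ near $0$, with $\Pi_m(0)$ equal to the spectral projection of $\mathcal{A}(\beta)$ onto $\ker L({\rm d}\phi_m)$. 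Passing to the limit and using the continuous extension of $\E^s$ from Proposition \ref{thm1} yields $\E^s(\beta)=\oplus_{m\in\mathcal{I}}\ker L({\rm d}\phi_m)$; the dimensions match since $p=\dim \E^s(\beta)=\sum_{m\in\mathcal{I}}\nu_{k_m}$ in agreement with Assumption \ref{assumption2}. The real-basis assertion follows from the first paragraph. The only delicate step is the continuity of the spectral projections at $\gamma=0$, which is standard given the semisimple, separated nature of the unperturbed eigenvalues.
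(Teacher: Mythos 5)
Your argument is correct, and it is essentially the standard proof that the paper defers to \cite{jfcog}: perturb $\gamma$ off zero, apply the implicit function theorem to each non-glancing factor of the dispersion relation to track which $i\omega_m(\gamma)$ migrate to the open left half-plane, identify the resulting stable space with a sum of analytically varying Kato total projections, and pass to the limit using the continuous extension of $\E^s$ from Proposition \ref{thm1}. The realness of the bases is correctly attributed to $L({\rm d}\phi_m)$ being a real matrix. One minor remark: the closing sentence about ``agreement with Assumption \ref{assumption2}'' is slightly misleading --- the equality $p = \sum_{m\in\mathcal{I}}\nu_{k_m}$ follows from $\dim\E^s(\zeta)=p$ in Proposition \ref{thm1} combined with your eigenvalue count, not directly from the statement that $p$ equals the number of positive eigenvalues of $A_d(0)$ (that link is a separate classical fact); this does not affect the validity of the proof.
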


The next Lemma, also proved in \cite{jfcog}, gives a useful decomposition of $\C^N$ and introduces projectors
needed later for formulating and solving the profile equations.

\begin{lem}
\label{lem2}
The space $\C^N$ admits the decomposition:
\begin{equation}
\label{decomposition2}
\C^N = \oplus_{m=1}^M \, \text{\rm Ker } L({\rm d} \phi_m)
\end{equation}
and each vector space in \eqref{decomposition2} admits a basis of real vectors. If we let $P_1,\dots,P_M$
denote the projectors associated with the decomposition \eqref{decomposition2}, then there holds
$\text{\rm Im } A_d^{-1}(0) \, L({\rm d} \phi_m) = \text{\rm Ker } P_m$ for all $m=1,\dots,M$.
\end{lem}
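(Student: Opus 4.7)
The plan is to reduce everything to the spectral theory of the (diagonalizable) matrix $\mathcal{A}\tauetabar$. First I would observe the algebraic identity
\begin{equation*}
-i\, A_d^{-1}(0) \, L({\rm d}\phi_m) \;=\; \mathcal{A}\tauetabar - i\, \omega_m \, I,
\end{equation*}
which follows directly from the definitions of $L$ and $\mathcal{A}$ together with $\phi_m(x)=\underline{\tau}\, t+\underline{\eta}\cdot y+\omega_m x_d$. Since $A_d(0)$ is invertible, this gives $\text{Ker } L({\rm d}\phi_m) = \text{Ker}\bigl(\mathcal{A}\tauetabar - i\, \omega_m \, I\bigr)$ and, taking images, $\text{Im } A_d^{-1}(0) L({\rm d}\phi_m) = \text{Im}\bigl(\mathcal{A}\tauetabar - i\, \omega_m \, I\bigr)$.

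Next I would invoke Assumption \ref{a8}, which puts $\tauetabar$ in the hyperbolic region $\mathcal{H}$; by Definition \ref{def1} the matrix $\mathcal{A}\tauetabar$ is diagonalizable with purely imaginary eigenvalues, and by the discussion preceding \eqref{phases} these eigenvalues are exactly the pairwise distinct $\{i\omega_1,\dots,i\omega_M\}$. The standard spectral decomposition for a diagonalizable operator then yields
\begin{equation*}
\C^N \;=\; \bigoplus_{m=1}^M \text{Ker}\bigl(\mathcal{A}\tauetabar - i\, \omega_m \, I\bigr) \;=\; \bigoplus_{m=1}^M \text{Ker } L({\rm d}\phi_m),
\end{equation*}
proving \eqref{decomposition2}. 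The reality of a basis in each summand is automatic: $\underline{\tau}$, $\underline{\eta}$, $\omega_m$ and the $A_j(0)$ are all real, so $L({\rm d}\phi_m)$ is a real matrix; its complex kernel is therefore the complexification of its real kernel, and any $\R$-basis of the latter gives a $\C$-basis of the former.

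For the identity $\text{Im } A_d^{-1}(0) L({\rm d}\phi_m) = \text{Ker } P_m$, I would use that for a diagonalizable operator the image of $\mathcal{A}\tauetabar - i\, \omega_m \, I$ is precisely the sum of the other eigenspaces: on each $\text{Ker}(\mathcal{A}-i\omega_{m'} I)$ with $m'\neq m$, the operator acts as the invertible scalar $i(\omega_{m'}-\omega_m)I$, while on $\text{Ker}(\mathcal{A}-i\omega_m I)$ it vanishes. Hence
\begin{equation*}
\text{Im}\bigl(\mathcal{A}\tauetabar - i\, \omega_m \, I\bigr) \;=\; \bigoplus_{m'\neq m}\text{Ker}\bigl(\mathcal{A}\tauetabar - i\, \omega_{m'} \, I\bigr) \;=\; \bigoplus_{m'\neq m}\text{Ker } L({\rm d}\phi_{m'}) \;=\; \text{Ker } P_m,
\end{equation*}
which combined with the first paragraph is the desired conclusion. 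There is no substantive obstacle here — the only point that requires any care is to notice that multiplication by $A_d^{-1}(0)$ relates $L({\rm d}\phi_m)$ to $\mathcal{A}-i\omega_m I$ as above, so that the spectral decomposition of $\mathcal{A}$ (guaranteed by membership in $\mathcal{H}$) can be transferred to the family $\{L({\rm d}\phi_m)\}$.
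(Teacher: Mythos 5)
Your proof is correct, and it uses exactly the identification the paper itself sets up just before the lemma — namely that $\ker L({\rm d}\phi_m)$ coincides with the eigenspace of $\mathcal{A}\tauetabar$ for the eigenvalue $i\omega_m$, so that all three assertions reduce to the spectral decomposition of the diagonalizable real-coefficient matrix $\mathcal{A}\tauetabar$. The paper itself defers the proof to the reference \cite{jfcog} rather than giving it inline, but your argument is the natural one and matches the surrounding discussion.
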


For each $m\in\{1,\dots,M\}$ we let
\begin{align*}
r_{m,k}, \;k=1,\dots,\nu_{k_m}
\end{align*}
denote a basis of $\ker L({\rm d}\phi_m)$ consisting of real vectors. In section \ref{sect3}, we construct an
approximate solution $u^a_\eps$ of \eqref{a2} of the form
\begin{align}\label{a10}
u^a_\eps(x) =\sum_{m\in\cI} \sum^{\nu_{k_m}}_{k=1} \sigma_{m,k} \left(x,\frac{\phi_m}{\eps}\right) \, r_{m,k},
\end{align}
where the $\sigma_{m,k}(x,\theta_m)$ are $C^1$ functions decaying to zero as $|\theta_m|\to \infty$, which
describe the propagation of pulses with group velocity ${\bf v}_m$ (see Proposition \ref{c37}). Observe that
if one plugs the expression \eqref{a10} of $u^a_\eps$ into $P(\eps u_\eps,\partial_x)u_\eps$, the terms of
order $1/\eps$ vanish, leaving an $O(1)$ error, \emph{regardless} of how the $\sigma_{m,k}$ are chosen.
The interior profile equations satisfied by these functions are solvability conditions that permit this $O(1)$ error
to be (at least partially) removed by a corrector that is sublinear (in fact bounded) as $|\theta_m|\to\infty$.
Additional conditions on the profiles come, of course, from the boundary conditions.

For use in the remainder of the introduction and later, we collect some notation here.

\begin{nota}\label{a11}
(a)\; Let $\Omega:=\overline{\R}^{d+1}_+\times\R^1$, $\Omega_T:=\Omega\cap\{-\infty<t<T\}$,
$b\Omega:=\R^d\times\R^1$, $b\Omega_T:=b\Omega\cap \{-\infty<t<T\}$,  and set $\omega_T
:=\overline{\R}^{d+1}_+\cap\{-\infty<t<T\}$.

(b)\; For $s\geq 0$ let $H^s\equiv H^s(b\Omega)$, the standard Sobolev space with norm $\langle
V(x',\theta_0)\rangle_s$.

(c)\; $L^2H^s\equiv L^2(x_d,H^s(b\Omega))$ with $|U(x,\theta_0)|_{L^2H^s}\equiv|U|_{0,s}$.

(d)\; $CH^s\equiv C(x_d,H^s(b\Omega))$ with $|U(x,\theta_0)|_{CH^s}\equiv\sup_{x_d\geq 0}|U(.,x_d,.)|_{H^s}
\equiv |U|_{\infty,s}$ (note that $CH^s\subset L^\infty H^s$).

(e)\; $C^{0,M}(\mathbb{R}^{d+1}_+ \times \mathbb{R}) \equiv \{ V(x',x_d,\theta_0) \in C \left(
\mathbb{R}_+,C^M_b(\mathbb{R}^d \times \mathbb{R},\mathbb{R}^N)\right) \}$ where $C^M_b$
denotes the space of $M$ times differentiable functions with derivatives up to the order $M$ bounded.

(f)\; Similarly, $H^s_T\equiv H^s(b\Omega_T)$ with norm $\langle V\rangle_{s,T}$ and $L^2H^s_T \equiv
L^2(x_d,H^s_T)$, $CH^s_T\equiv C(x_d,H^s_T)$ have norms $|U|_{0,s,T}$, $|U|_{\infty,s,T}$ respectively.

(g)When the domains of $x_d$ and $(x',\theta_0)$ are clear, we sometimes use the self-explanatory notation
$C(x_d,H^s(x',\theta_0))$ or $L^2(x_d,H^s(x',\theta_0))$.

(h)\; For $r\geq 0$, $[r]$ is the smallest integer $\geq r$.

(i)\; $M_0:=3\, d+5$,
%
\end{nota}

The main result of section \ref{exact} is the following theorem, which gives the existence of exact solutions to
the singular system \eqref{a3}, or equivalently \eqref{a4b}, and the original system \eqref{a2} on a time interval
independent of the wavelength $\eps$:

\begin{theo}\label{a12}
Under Assumptions \ref{assumption1}, \ref{assumption2}, \ref{a7}, \ref{a8}, consider the quasilinear boundary
problem \eqref{a2}, where $G(x',\theta_0)\in H^{s+1}(b\Omega)$, $s\geq [M_0+\frac{d+1}{2}]$, satisfies
\begin{equation*}
\text{\rm Supp } G \subset \{ t \geq 0\} \, .
\end{equation*}
There exist $\eps_0>0$, $T_0>0$ independent of $\eps \in (0,\eps_0]$, and a unique $U_\eps(x,\theta_0) \in
CH^s_{T_0}\cap L^2H^{s+1}_{T_0}$ satisfying the singular problem \eqref{a4b}, so that
\begin{equation*}
u_\eps(x) := U_\eps \left( x,\frac{x'\cdot\beta}{\eps} \right) \, ,
\end{equation*}
is the unique $C^1$ solution of \eqref{a2} on $\omega_{T_0}$.
\end{theo}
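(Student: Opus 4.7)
The plan is to prove Theorem \ref{a12} by Picard iteration on the singular problem \eqref{a4b}, with uniform-in-$\eps$ a priori estimates for its linearization as the central ingredient. The natural space is $E^s_T = CH^s_T \cap L^2 H^{s+1}_T$, which for $s \geq [M_0 + (d+1)/2]$ is a Banach algebra; this algebra property is what will allow the quasilinear coefficients $A_j(\eps U_\eps)$, $B(\eps U_\eps)$, and $\cF(\eps U_\eps)$ to be controlled, and the embedding $E^s_T \hookrightarrow C^1$ (after the substitution $\theta_0 = x'\cdot \beta/\eps$) is what will turn a solution $U_\eps$ of \eqref{a4b} into a classical solution $u_\eps$ of \eqref{a2}.

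The heart of the proof is a uniform linear estimate for the problem obtained by freezing the coefficients of \eqref{a4b} at an element $V_\eps$ in a fixed ball $B_R \subset E^s_{T_0}$:
\begin{align*}
\partial_{x_d} U + \mathbb{A}\!\left(\eps V_\eps,\, \partial_{x'} + \tfrac{\beta\, \partial_{\theta_0}}{\eps}\right)\! U = F, \qquad B(\eps V_\eps) U|_{x_d=0} = G, \qquad U = 0 \text{ in } t<0.
\end{align*}
Following \cite{williams3}, I would construct Kreiss-type symmetrizers $r_s(D_{x',\theta_0})$ in the singular pulse calculus of \cite{CGW2}, using the pointwise stability of $\E^s\tauetabar$ guaranteed by Assumption \ref{a7} and Proposition \ref{thm1}. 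A weighted energy argument based on these symmetrizers yields, with $C$ independent of $\eps \in (0,\eps_0]$ and $\gamma \geq \gamma_0$, an estimate of the form
\begin{align*}
\gamma\, |U|_{0,s+1,T}^2 + \langle U|_{x_d=0} \rangle_{s+1,T}^2 \; \leq \; C\!\left(\tfrac{1}{\gamma}\, |F|_{0,s+1,T}^2 + \langle G \rangle_{s+1,T}^2\right).
\end{align*}
This controls only the $L^2 H^{s+1}_T$ piece of the $E^s_T$ norm. To recover the $CH^s_T$ piece, I would use the hypothesis $\tauetabar \notin \cG$ to block-diagonalize $\mathbb{A}(\eps V_\eps,\, \partial_{x'} + \beta\, \partial_{\theta_0}/\eps)$ in the singular pulse calculus, modulo smoothing and $O(\eps)$ remainders absorbable for small $\eps$; on each diagonal block the $x_d$-transport is noncharacteristic, so integrating along $x_d$ and using the trace bound just obtained produces the missing $CH^s_T$ estimate.

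With uniform linear estimates in hand I would run the Picard iteration $U^0_\eps = 0$, with $U^{n+1}_\eps$ solving the linearization about $V = U^n_\eps$. Two properties must be verified on a time interval $[0,T_0]$ chosen independently of both $n$ and $\eps$, exploiting that the prefactor $\eps$ keeps $\eps U^n_\eps$ inside the coefficient neighborhood $\cU$. For boundedness, the algebra property of $E^s_T$ and the smoothness of $A_j, B, \cF$ give tame bounds on the frozen coefficients in terms of $|U^n_\eps|_{E^s_T}$; combined with the linear estimate and a small-time factor, these force $|U^n_\eps|_{E^s_{T_0}} \leq R$ for all $n$, uniformly in $\eps$, provided $T_0$ is small enough relative to $\langle G\rangle_{s+1}$. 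For contraction, I would apply the linear estimate at a lower Sobolev level (say in $E^{s-1}_T$) to the equation satisfied by $U^{n+1}_\eps - U^n_\eps$; a further reduction of $T_0$, still uniform in $\eps$, produces geometric contraction. Passing to the limit yields $U_\eps \in E^s_{T_0}$ solving \eqref{a4b}, with uniqueness from the same linear estimate applied to the difference of two solutions. The function $u_\eps(x) := U_\eps(x, x'\cdot\beta/\eps)$ is then $C^1$ on $\omega_{T_0}$ by the embedding $E^s_T \hookrightarrow C^1$ and satisfies \eqref{a2} by the very construction of \eqref{a3}; its $C^1$-uniqueness is the standard consequence of Kreiss well-posedness at fixed $\eps$.

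The main obstacle is the $CH^s_T$ half of the $E^s_T$ estimate: the Kreiss-symmetrizer argument in the singular calculus gives only $L^2 H^{s+1}_T$ control, and without the block-diagonalization enabled by $\tauetabar \notin \cG$ one cannot bound $L^\infty$ in $(x',\theta_0)$ uniformly in $\eps$, which is precisely what the algebra-based nonlinear closure requires. Ensuring that the remainders in that block-diagonalization are $O(\eps)$ rather than $O(1)$, so that they may be absorbed on the left-hand side, is exactly where the pulse-adapted singular calculus of \cite{CGW2}---as opposed to the wavetrain calculus of \cite{williams3}, whose symbolic rules rest on discreteness of the Fourier spectrum in $\theta_0$---does its essential work.
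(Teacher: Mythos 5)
Your proposal follows essentially the same route as the paper's Section~2: Kreiss-type singular symmetrizers yield a uniform $L^2 H^{s+1}$ estimate, the non-glancing hypothesis is used (via a cutoff $\chi^e$ in the extended calculus) to block-diagonalize $\mathbb{A}$ so that integrating each noncharacteristic transport block in $x_d$ recovers the $CH^s$ bound, and Picard iteration with high-norm ($E^s$) boundedness and low-norm contraction closes the nonlinear problem. The one inaccuracy worth flagging is your description of the block-diagonalization remainders as ``$O(\eps)$ absorbable for small $\eps$'': in the singular pulse calculus the composition/adjoint errors are $O(1/\gamma)$ in $L^2$-operator norm (uniformly in $\eps$), so the absorption is achieved by taking $\gamma$ large, while $\eps_0$ is only used to keep $\eps V_\eps$ inside the coefficient neighborhood.
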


\begin{rem}\label{a15}
\textup{The regularity requirement $s\geq [M_0+\frac{d+1}{2}]$ in the above theorem is needed
in order to apply the singular pseudodifferential calculus introduced in \cite{CGW2}.}
\end{rem}

We can now state the  main result of this paper. This theorem is actually a corollary of the result for singular
systems given in Theorem \ref{e1}.

\begin{theo}\label{a16}
Under the same assumptions as in Theorem \ref{a12}, there exists $T_0>0$ and functions 
$\sigma_{m,k}(x,\theta_m)\in C^1(\Omega_{T_0})$  satisfying the leading order profile equations \eqref{c20}
and defining an approximate solution $u^a_\eps$ as in \eqref{a10} such that
\begin{align*}
\lim_{\eps\to 0} \;u_\eps-u^a_\eps =0\text{ in } L^\infty(\omega_{T_0}),
\end{align*}
where $u_\eps\in C^1(\omega_{T_0})$ is the unique exact solution of \eqref{a2}. In fact we obtain the rate of convergence
\begin{align*}
|u_\eps-u^a_\eps|_{L^\infty(\omega_{T_0})}\leq C\, \eps^{\frac{1}{2M_1+5}},\text{ where }M_1:=\left[\frac{d}{2}+3\right].
\end{align*}
\end{theo}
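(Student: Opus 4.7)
The plan is to deduce Theorem \ref{a16} from the announced singular-system analogue (Theorem \ref{e1}) by the ``simultaneous Picard iteration'' strategy described in the introduction. The key observation is that if one sets $\theta_0 := \phi_0(x')/\eps$ and $\xi_d := x_d/\eps$, then $\phi_m/\eps = \theta_0 + \omega_m \xi_d$, so the ansatz \eqref{a10} is the restriction at $\xi_d = x_d/\eps$ of the singular profile
\[
\mathcal{U}^0(x,\theta_0,\xi_d) := \sum_{m\in\cI} \sum_{k=1}^{\nu_{k_m}} \sigma_{m,k}(x,\theta_0+\omega_m \xi_d)\, r_{m,k} .
\]
Theorem \ref{a12} supplies the exact solution $U_\eps(x,\theta_0) \in E^s_{T_0}$ of \eqref{a4b}, with $u_\eps(x) = U_\eps(x,\phi_0(x')/\eps)$. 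I would then construct $\sigma_{m,k}$ by Picard iteration on the leading order profile system \eqref{c20}, which, using the projectors $P_m$ of Lemma \ref{lem2} and the decomposition \eqref{decomposition1} together with uniform stability, decouples into semilinear transport equations along the group velocities ${\bf v}_m$ coupled only through the boundary reflection. Because pulses do not resonate at leading order, this system closes on its own, and Picard iteration converges on some $[0,T_0]$ independent of $\eps$.

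Writing $U_\eps^n$ and $\mathcal{U}^{0,n}$ for the $n$-th Picard iterates of the singular system and the profile system respectively, the core of the argument is an induction on $n$ showing that
\[
\bigl| U_\eps^n(x,\theta_0) - \mathcal{U}^{0,n}(x,\theta_0,x_d/\eps) \bigr|_{E^s_{T_0}} \xrightarrow[\eps\to 0]{} 0
\]
with a quantitative rate, so that letting $n\to\infty$ via a triangle inequality yields $L^\infty$ closeness of $U_\eps$ to $\mathcal{U}^0(\cdot,\cdot,x_d/\eps)$. The induction step requires inserting $\mathcal{U}^{0,n}(x,\theta_0,x_d/\eps)$ into the singular operator $\partial_{x_d} + \mathbb{A}(\eps U,\partial_{x'} + \beta\partial_{\theta_0}/\eps)$ and identifying the output, up to an $\eps$-small error, with the profile system operator acting on $\mathcal{U}^{0,n}$; the error is produced precisely by pulse cross-products $\sigma_{m,k}(x,\theta_0+\omega_m\xi_d)\sigma_{m',k'}(x,\theta_0+\omega_{m'}\xi_d)$ for $m\neq m'$, which are the ``transversal'' and ``nontransversal'' pulse interactions emphasized in the introduction. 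Uniform control of both iterates in $E^s_{T_0}$ comes from the Kreiss symmetrizer estimates and block diagonalization in the pulse singular calculus of \cite{CGW2}, available because Assumption \ref{a8} places $(\utau,\ueta)$ in the hyperbolic region and therefore off the glancing set.

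The main obstacle will be estimating those interaction terms finely enough to extract the explicit rate $\eps^{1/(2M_1+5)}$. The difficulty is structural: even a pulse decaying at infinity in $\theta_0$ may carry significant spectral mass near frequency zero, and the singular factor $\eps^{-1}\beta\partial_{\theta_0}$ amplifies exactly that low-frequency content, so a naive substitution loses powers of $\eps$. This is where the moment-zero approximations developed in section \ref{mz}, inspired by the low-frequency cutoff of \cite{altermanrauch}, are essential: each interacting profile is replaced by a version whose Fourier transform is cut off below a scale $\delta$. The approximation error vanishes with $\delta$, while the singular factor produces growth like $\delta^{-1}\eps^{-1}$ only on the non-cutoff part. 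Optimizing $\delta = \delta(\eps)$ against the Sobolev embedding threshold $M_1 = [d/2+3]$ needed to control $L^\infty$ on $\R^{d+1}$, and against the number of iteration rounds for which the induction can be closed, produces the exponent $1/(2M_1+5)$. With the singular $L^\infty$ bound in hand, evaluating at $\theta_0 = \phi_0(x')/\eps$ and $\xi_d = x_d/\eps$ yields the desired estimate on $|u_\eps - u^a_\eps|_{L^\infty(\omega_{T_0})}$.
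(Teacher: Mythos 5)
Your high-level route---reduce to a singular-system statement (Theorem \ref{e1}), prove it by simultaneous Picard iteration, use moment-zero approximations with an $\eps$-dependent cutoff scale---is the same as the paper's. But the description of the induction step contains a gap that, as written, would stop the argument from closing. You say that inserting $\mathcal{U}^{0,n}(x,\theta_0,x_d/\eps)$ into the singular operator yields, ``up to an $\eps$-small error,'' the profile system operator acting on $\mathcal{U}^{0,n}$. This is false: substituting $\cU^{0,n+1}_\eps$ into \eqref{e5}(a) produces an error $R^{n+1}_\eps$ that is $O(1)$ in $E^{s-3}_{T_0}$, not $O(\eps)$. The profile equations \eqref{e6}(b) annihilate only the averaged part ${\bf E}\cF$; the remaining $(I-{\bf E})\cF$ part---exactly the pulse cross-products you identify---survives at size $O(1)$. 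The indispensable ingredient, which your proposal omits, is the first corrector $\eps\,\cU^{1,n+1}_{p,\eps}$, built via $-{\bf R}_\infty$ applied to (a modified version of) $(I-{\bf E})\cG_p$, so that $\cU^{0,n+1}_{p,\eps}+\eps\,\cU^{1,n+1}_{p,\eps}$ is an $O(\sqrt{p}+\eps/p^{M_1+2})$-accurate approximate solution to \eqref{e5}(a); one then applies Proposition \ref{e9} to the difference. Without that corrector, no comparison can be made.

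Two secondary inaccuracies stem from the same misunderstanding. First, the reason moment-zero approximations are needed is not that $\eps^{-1}\beta\partial_{\theta_0}$ ``amplifies low frequencies'': it is that ${\bf R}_\infty$ takes primitives in $\theta$, and the primitive of a decaying profile fails to decay unless the integrand has moment zero (the small divisor $1/(im)$ in \eqref{h11}); the cutoff $\chi_p$ removes this at the cost of factors $p^{-1}$, and the exponent $\frac{1}{2M_1+5}$ comes from balancing the $O(\sqrt{p})$ approximation error against the $O(\eps/p^{M_1+2})$ size of $\eps\,\cU^1_{p,\eps}$. Second, the paper's ``nontransversal'' interactions are \emph{same-phase} products $\sigma^n_{k,p}\,\partial_{\theta_0}\sigma^{n+1}_{k,p}$ projected onto $r_i$ with $i\ne k$ (these require the extra $(\cdot)_p$ smoothing in \eqref{k2} because they need not have moment zero), whereas the transversal ones are \emph{different-phase} products handled by Proposition \ref{h12} without further modification; lumping both under ``$m\ne m'$ cross-products'' conflates the two cases and misses why the modification $[(I-{\bf E})\cG_p]_{\mathrm{mod}}$ is needed.
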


Theorem \ref{a16} can be recast in a form where the pulses originate in initial data at $t=0$ and reflect off the
boundary $\{x_d=0\}$. This requires a discussion similar to that given in section 3.2 of \cite{CGW1} to justify the
reduction of the initial boundary value problem with data prescribed at $t=0$ to a forward boundary problem (with
data identically zero in $t<0$), so we omit that discussion here.

\subsection{Profile equations}

In $d$ space variables $(x'',x_d)$, consider the quasilinear problem equivalent to \eqref{a2}
\begin{align*}
\begin{split}
&\partial_d u_\eps+\sum_{j=0}^{d-1}\tA_j(\eps u_\eps)\partial_ju_\eps=F(\eps u_\eps)u_\eps\text{ in }x_d\geq 0\\
&B(\eps u_\eps)u_\eps= G(x',\theta_0)|_{\theta_0=\frac{\phi_0}{\eps}}\text{ on }x_d=0\\
&u_\eps=0\text{ in }t<0,
\end{split}
\end{align*}
where $G(x',\theta_0)$ decays to $0$ like $\langle\theta_0\rangle^{-k}$ (for some $k\geq 2$ to be specified later)
as $|\theta_0|\to\infty$. For ease of exposition we will begin by considering the $3\times 3$ case, which contains
all the main difficulties. In section \ref{extension}, we describe the changes needed to treat the general case.
We define the boundary phase $\phi_0 :=\beta\cdot x'$, the real eigenvalues $\omega_m$ of $\cA(\beta)$, and
the phases $\phi_m:=\phi_0+\omega_m \, x_d$ as in \eqref{phases}, where $\beta\in \cH$. We assume that the
eigenvalues $\omega_m$ are pairwise distinct. For the sake of clarity, we also assume that $\omega_1$ and
$\omega_3$ are incoming (or causal) and $\omega_2$ is outgoing. (The same kind of arguments would apply
if two of the phases were outgoing and only one was incoming.) The corresponding right and left eigenvectors
of the real matrix $-i \, \cA (\beta)$ are denoted $r_j$ and $l_j$, $j=1,2,3$.

Below we frequently suppress $\eps$-dependence in the notation. For functions $\cU(x,\theta_0,\xi_d)$ and
$\cV(x,\theta_0,\xi_d)$, define
\begin{align*}
\begin{split}
&\tilde\cL(\partial_{\theta_0},\partial_{\xi_d}):=\partial_{\xi_d} +\sum^{d-1}_{j=0} \beta_j \, \tA_j(0) \, \partial_{\theta_0}
=\partial_{\xi_d} +\tA(\beta) \, \partial_{\theta_0} \text{ and } \tilde L(\partial):=\partial_d
+\sum^{d-1}_{j=0} \tA_j(0) \, \partial_j \, ,\\
&M(\cU,\partial_{\theta_0} \cV) :=\sum^{d-1}_{j=0}\beta_j \, ({\rm d} \tA_j(0) \cdot \cU) \, \partial_{\theta_0}\cV \, .
\end{split}
\end{align*}
Formally looking for a corrected approximate solution of the form
\begin{equation*}
u^c_\eps(x)=\big[ \cU^0(x,\theta_0,\xi_d)+\eps \, \cU^1(x,\theta_0,\xi_d) \big]
|_{\theta_0=\frac{\phi_0}{\eps},\, \xi_d=\frac{x_d}{\eps}} \, ,
\end{equation*}
we obtain interior profile equations
\begin{align}\label{13}
\begin{split}
&\eps^{-1}:\quad \tilde\cL(\partial_{\theta_0},\partial_{\xi_d}) \, \cU^0=0 \, ,\\
&\eps^0:\;\;\quad \tilde \cL(\partial_{\theta_0},\partial_{\xi_d}) \, \cU^1+\tilde L(\partial) \, \cU^0
+M(\cU^0,\partial_{\theta_0}\cU^0)=F(0) \, \cU^0 \, ,
\end{split}
\end{align}
and the boundary equation
\begin{equation}\label{14}
\eps^0:B(0) \, \cU^0|_{x_d=0,\xi_d=0}=G(x',\theta_0) \, .
\end{equation}

Consider the first equation in \eqref{13}. A function $\cU^0(x,\theta_0,\xi_d)$, taking values in $\R^3$ and assumed
to be $C^1$ for the moment, can always be written
\begin{align*}
\cU^0=\tilde\sigma_1(x,\theta_0,\xi_d) \, r_1+\tilde\sigma_2(x,\theta_0,\xi_d) \, r_2
+\tilde\sigma_3(x,\theta_0,\xi_d) \, r_3 \, .
\end{align*}
Using the matrix $[r_1\;r_2\;r_3]$ to diagonalize $\tA (\beta)$, we find that the scalar $\tilde\sigma_i$ must satisfy
\begin{align*}
(\partial_{\xi_d}-\omega_i \, \partial_{\theta_0}) \, \tilde\sigma_i =0,\;i=1,2,3, \text{ in }
\{ (x,\theta_0,\xi_d):\theta_0\in\R, \xi_d\geq 0 \}.
\end{align*}
This implies that the $\tilde\sigma_i$'s have the form
\begin{align*}
\tilde\sigma_i(x,\theta_0,\xi_d)=\sigma_i(x,\theta_0+\omega_i\xi_d)\text{ for some }\sigma_i(x,\theta_i).
\end{align*}
Using \eqref{14}, we find
\begin{align}\label{17}
B(0) \, \left( \sum_{i=1,3}\sigma_i(x',0,\theta_0) \, r_i \right)=G(x',\theta_0)-B(0)(\sigma_2(x',0,\theta_0) \, r_2),
\end{align}

\begin{rem}\label{18}
\textup{1. We expect the $\sigma_i(x,\theta_i)$ to decay polynomially to $0$ as $|\theta_i|\to\infty$. To prove this
we must formulate and solve profile equations for the $\sigma_i$'s. For this we use an approach inspired by the
formal constructions in \cite{hmr} and \cite{majdarosales}.}

\textup{2. Instead of $\sigma_i(x,\theta_i)$ we shall sometimes write $\sigma_i(x,\theta)$ with the understanding
that $\theta$ is a placeholder for $\theta_0+\omega_i \, \xi_d$ when it appears as an argument of $\sigma_i$.}
\end{rem}

To get transport equations for the $\sigma_i$'s, we consider \eqref{13} ($\eps^0$):
\begin{align}\label{19}
\tilde \cL(\partial_{\theta_0},\partial_{\xi_d}) \, \cU^1=-\left(\tilde L(\partial) \, \cU^0+M(\cU^0,\partial_{\theta_0}\cU^0)
\right) +F(0)\cU^0:=\cF(x,\theta_0,\xi_d).
\end{align}
The corrector $\cU^1$ can be written as
\begin{align*}
\cU^1=t_1(x,\theta_0,\xi_d)r_1+t_2(x,\theta_0,\xi_d)r_2+t_3(x,\theta_0,\xi_d)r_3.
\end{align*}
Diagonalizing again we find that the $t_i$'s must satisfy
\begin{align}\label{21}
(\partial_{\xi_d}-\omega_i\partial_{\theta_0})t_i(x,\theta_0,\xi_d)=l_i\cdot \cF:=\cF_i(x,\theta_0,\xi_d),\;i=1,2,3.
\end{align}
The general solution to \eqref{21} is
\begin{align}\label{22}
t_i(x,\theta_0,\xi_d)=\tau^*_i(x,\theta_0+\omega_i\xi_d)+\int^{\xi_d}_0 \cF_i(x,\theta_0+\omega_i(\xi_d-s),s) \, {\rm d}s,\;
\end{align}
where $\tau^*_i$ is arbitrary. This can be rewritten
\begin{align}\label{23}
\begin{split}
&t_i(x,\theta_0,\xi_d) \\
&\qquad =\tau^*_i(x,\theta_0+\omega_i\xi_d) +\int^\infty_0 \cF_i(x,\theta_0+\omega_i(\xi_d-s),s) \, {\rm d}s
+\int^{\xi_d}_\infty \cF_i(x,\theta_0+\omega_j(\xi_d-s),s) \, {\rm d}s \\
&\qquad =\tau_i(x,\theta_0+\omega_i\xi_d) +\int^{\xi_d}_\infty \cF_i(x,\theta_0+\omega_i(\xi_d-s),s) \, {\rm d}s \, ,
\end{split}
\end{align}
provided the integrals in \eqref{23} exist.

We will need the following modification of a classical lemma due to Lax \cite{lax}. We refer to \cite[Lemma 2.11]{CGW1}
for the proof.

\begin{prop}\label{23a}
Let $W(x,\theta_0,\xi_d)=\sum^3_{i=1}w_i(x,\theta_0,\xi_d)r_i$ be any $C^1$ function.  Then
\begin{equation*}
\tilde L(\partial)W=\sum^3_{i=1}(X_{\phi_i}w_i) \, r_i+ \sum^3_{i=1} \big( \sum_{k\neq i} V^i_k w_k \big) \, r_i \, ,
\end{equation*}
where $X_{\phi_i}$ is the characteristic vector field\footnote{This vector field is a scalar multiple of $\partial_t +\nabla
\lambda_{k_i}(\ueta,\omega_i) \cdot \nabla_{x''}$ that describes propagation at the group velocity ${\bf v}_i$; see
\eqref{phases}.}
\begin{equation*}
X_{\phi_i} :=\partial_{x_d}+\sum^{d-1}_{j=0}-\partial_{\xi_j}\omega_i(\beta)\partial_{x_j} \, ,
\end{equation*}
and $V^i_k$ for $k\neq i$ is the tangential vector field
\begin{equation*}
V^i_k :=\sum^{d-1}_{l=0}(l_i \, \tilde A_l(0) \, r_k) \, \partial_{x_l}.
\end{equation*}
\end{prop}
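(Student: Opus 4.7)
The plan is to expand $\tilde L(\partial) W$ in the basis $\{r_1, r_2, r_3\}$ by taking left-eigenvector components, then identify the diagonal part as the characteristic vector field via the classical Lax perturbation identity. Since $r_i$ is constant in $x$ and the left eigenvectors can be normalized so that $l_i r_j = \delta_{ij}$ (using that the eigenvalues $-\omega_i$ of $\tilde A(\beta)$ are simple in the $3\times 3$ case, and semisimple in general), a direct computation gives
\begin{equation*}
\tilde L(\partial) W \;=\; \sum_i (\partial_d w_i)\, r_i \;+\; \sum_{j=0}^{d-1} \sum_k (\partial_j w_k)\, \tilde A_j(0)\, r_k,
\end{equation*}
so projecting onto $r_i$ by $l_i$ yields
\begin{equation*}
l_i\, \tilde L(\partial) W \;=\; \partial_d w_i \;+\; \sum_{j=0}^{d-1} (l_i\, \tilde A_j(0)\, r_i)\, \partial_j w_i \;+\; \sum_{k\neq i}\sum_{j=0}^{d-1}(l_i\, \tilde A_j(0)\, r_k)\,\partial_j w_k.
\end{equation*}

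The main (and essentially the only non-bookkeeping) step is the Lax identity
\begin{equation*}
l_i\, \tilde A_j(0)\, r_i \;=\; -\,\partial_{\xi_j}\omega_i(\beta), \qquad j=0,\dots,d-1.
\end{equation*}
To prove it, extend $r_i$ and $\omega_i$ smoothly near $\beta$ as eigenvector and eigenvalue of the symbol $\tilde A(\xi) = \sum_{j=0}^{d-1}\xi_j \tilde A_j(0)$; this extension exists because $(\utau,\ueta)\in\mathcal H$ lies outside the glancing set and the relevant eigenvalue of $\mathcal A(\cdot)$ is semisimple (Assumption \ref{assumption1} together with Proposition \ref{thm1}). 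Differentiating $\tilde A(\xi) r(\xi) = -\omega(\xi) r(\xi)$ at $\xi=\beta$ in the $\xi_j$ direction gives
\begin{equation*}
\tilde A_j(0)\, r_i + \tilde A(\beta)\,\partial_{\xi_j} r \;=\; -(\partial_{\xi_j}\omega_i)\, r_i - \omega_i\, \partial_{\xi_j} r,
\end{equation*}
and applying $l_i$ together with $l_i \tilde A(\beta) = -\omega_i l_i$ collapses the $\partial_{\xi_j}r$ terms, leaving exactly $l_i \tilde A_j(0) r_i = -\partial_{\xi_j}\omega_i(\beta)$.

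Plugging the identity back into the expansion, the $k=i$ contributions reassemble into
\begin{equation*}
\partial_{x_d} w_i \;+\; \sum_{j=0}^{d-1}\bigl(-\partial_{\xi_j}\omega_i(\beta)\bigr)\,\partial_{x_j} w_i \;=\; X_{\phi_i} w_i,
\end{equation*}
while the remaining $k\neq i$ terms are, by definition, $\sum_{k\neq i} V^i_k w_k$, and in particular contain only tangential derivatives $\partial_{x_0},\dots,\partial_{x_{d-1}}$ (no $\partial_{x_d}$). Multiplying through by $r_i$ and summing over $i$ gives the stated formula. The only subtlety worth flagging is justifying the smooth perturbation of $(r_i,\omega_i)$ near $\beta$ in the semisimple (but possibly non-simple) case; for a single isolated eigenvalue cluster of constant multiplicity this is standard and is where Assumption \ref{assumption1} together with $\tauetabar\notin\mathcal G$ is used.
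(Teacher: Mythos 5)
Your proof is correct and is exactly the classical Lax argument that the paper invokes: the paper itself does not reproduce the proof but refers to [CGW1, Lemma 2.11], which carries out this same computation (project $\tilde L(\partial)W$ with the dual basis $l_i$, then identify $l_i\tilde A_j(0)r_i = -\partial_{\xi_j}\omega_i(\beta)$ by differentiating the eigenvalue equation for $\tilde A(\xi)$ at $\xi=\beta$). One small clarification worth noting: the identification of the eigenvalues of $\tilde A(\beta)=\sum_{j=0}^{d-1}\beta_j\tilde A_j(0)$ with $-\omega_i$ and of $(r_i,l_i)$ with its right/left eigenvectors follows from $\tilde A(\beta) = -(-i\,\mathcal A(\beta))$ and the paper's convention that $-i\,\mathcal A(\beta)r_i=\omega_i r_i$; you use this implicitly and it is consistent with the paper. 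In the $3\times 3$ strictly hyperbolic case the eigenvalues of $\tilde A(\xi)$ near $\beta$ are simple, so the smooth local extension of $(\omega_i,r_i)$ is immediate and the semisimple caveat you flag at the end is not needed here (it becomes relevant only in the general $N\times N$ case of Section~\ref{extension}, where the paper works with the projectors $P_m(u)$ of Definition~\ref{b25a} and derives the analogue \eqref{c20i}).
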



We see from \eqref{19} that $\cF_i(x,\theta_0,\xi_d)$ has the form
\begin{align}\label{25}
\cF_i(x,\theta_0,\xi_d)=-X_{\phi_i}\tilde{\sigma}_i -\sum_k c^i_k \, \tilde{\sigma}_k \, \partial_{\theta_0} \tilde{\sigma}_k
-\sum_{l\neq m} d^i_{l,m} \, \tilde{\sigma}_l \, \partial_{\theta_0} \tilde{\sigma}_m
+\sum_k e^i_k \, \tilde{\sigma}_k -\sum_{k\neq i} V^i_k \, \tilde{\sigma}_k \, ,
\end{align}
where we recall $\tilde{\sigma}_p (x,\theta_0,\xi_d)=\sigma_p(x,\theta_0+\omega_p\theta_d)$. The coefficients
in \eqref{25} are defined by\footnote{We refer to section \ref{extension} for the general case.}
\begin{equation*}
c_k^i :=l_i \, \sum^{d-1}_{j=0}\beta_j \, ({\rm d} \tA_j(0) \cdot r_k) \, r_k \, ,\quad
d_{l,m}^i := l_i \, \sum^{d-1}_{j=0}\beta_j \, ({\rm d} \tA_j(0) \cdot r_l) \, r_m \, ,\quad
e_k^i := l_i \, F(0) \, r_k \, .
\end{equation*}
Thus, we compute
\begin{align}\label{26}
\begin{split}
&\cF_i(x,\theta_0+\omega_i(\xi_d-s),s) \\
&=-(X_{\phi_i} \sigma_i +c^i_i \, \sigma_i \, \partial_\theta \sigma_i -e^i_i \, \sigma_i) (x,\theta_0+\omega_i\xi_d) \\
&\quad -\sum_{k\neq i} c^i_k \, \sigma_k(x,\theta_0+\omega_i\xi_d+s(\omega_k-\omega_i)) \,
\partial_\theta \sigma_k (x,\theta_0+\omega_i\xi_d+s(\omega_k-\omega_i)) \\
&\quad -\sum_{m\neq i}d^i_{i,m} \, \sigma_i(x,\theta_0+\omega_i\xi_d) \, \partial_\theta \sigma_m
(x,\theta_0+\omega_i\xi_d+s(\omega_m-\omega_i)) \\
&\quad -\sum_{l\neq i} d^i_{l,i} \, \sigma_l(x,\theta_0+\omega_i\xi_d+s(\omega_l-\omega_i)) \,
\partial_\theta \sigma_i (x,\theta_0+\omega_i\xi_d) \\
&\quad -\sum_{l\neq m,l\neq i,m\neq i} d^i_{l,m} \, \sigma_l(x,\theta_0+\omega_i\xi_d+s(\omega_l-\omega_i)) \,
\partial_\theta \sigma_m (x,\theta_0+\omega_i\xi_d+s(\omega_m-\omega_i)) \\
&\quad +\sum_{k\neq i} (e^i_k -V_k^i) \, \sigma_k (x,\theta_0+\omega_i\xi_d+s(\omega_k-\omega_i)) \, .
\end{split}
\end{align}

We look for functions $\sigma_i(x,\theta)$ that decay at least at the rate $\langle \theta\rangle^{-2}$. So we
assume now and verify later that they have this property. Then the integral
\begin{align}\label{27}
\int^{\xi_d}_0 \cF_i(x,\theta_0+\omega_i(\xi_d-s),s) \, {\rm d}s
\end{align}
is sublinear in $(\theta_0,\xi_d)$ (a condition that must be satisfied by $\cU^1$ if $\eps \, \cU^1$ is to make
sense as a corrector) if and only if the sum of the first three terms on the right in \eqref{26} is $0$. In that
case the integral \eqref{27} is actually bounded, since the remaining terms in \eqref{26} have good decay
in $s$. This sublinearity condition gives the profile equations for the $\sigma_i$'s:
\begin{align}\label{28}
\begin{split}
&X_{\phi_i} \sigma_i +c^i_i \, \sigma_i \, \partial_{\theta_i}\sigma_i -e^i_i \, \sigma_i=0,\;i=1,2,3\\
&(\sigma_i(x',0,\theta_0), i =1,3)=\cB \, \left( G(x',\theta_0),\sigma_2(x',0,\theta_0) \right),\\
&\sigma_i=0\text{ in }t<0.
\end{split}
\end{align}
where $\cB$ is a well-determined linear function of its arguments whose existence is given by Lemma \ref{lem1}
and the uniform stability assumption (the matrix $[B(0) \, r_1 \, \, B(0) \, r_3]$ in \eqref{17} is invertible). As expected
from the general rule of thumb, pulses of different families do not interact at the leading order, meaning that the
evolution equations for the amplitudes $\sigma_i$'s are decoupled. In Proposition \ref{c37}, we show that system
\eqref{28} is uniquely solvable on some time interval $[0,T_1]$, that $\sigma_2=0$ and that $\sigma_i$, $i=1,3$,
decay at the rate $\langle\theta\rangle^{-k}$ for some $k\geq 2$ to be determined.

\begin{rem}\label{29}
\textup{The equations \eqref{28} and our assumption that the $\sigma_i$ decay at least at the rate
$\langle \theta \rangle^{-2}$ imply that the integrals in \eqref{23} all do exist. This argument is made
more precise below.}
\end{rem}

Next we introduce an averaging operator ${\bf E}$ and a solution operator ${\bf R}_\infty$ that will be useful in the
error analysis of the next paragraph. Motivated by the form of $\cF_i$ in \eqref{25}, we make the following definition.

\begin{defn}[Type $\cF$ functions]\label{30}
Suppose
\begin{align}\label{30aa}
F(x,\theta_0,\xi_d)=\sum_{i=1}^3 F_i(x,\theta_0,\xi_d) \, r_i,
\end{align}
where each $F_i$ has the form
\begin{align}\label{30a}
\begin{split}
&F_i(x,\theta_0,\xi_d)=\sum_{k=1}^3 f^i_k(x,\theta_0+\omega_k \, \xi_d)
+\sum_{l \le m=1}^3 g^i_{l,m}(x,\theta_0+\omega_l \, \xi_d) \, h^i_{l,m}(x,\theta_0+\omega_m \, \xi_d),\\
\end{split}
\end{align}
where the functions  $f^i_k(x,\theta)$, $g^i_{l,m}(x,\theta)$, $h^i_{l,m}(x,\theta)$ are real-valued, $C^1$, and decay
along with their first order partials at the rate $O(\langle\theta\rangle^{-2})$ uniformly with respect to $x$. We then
say that $F$ is of \emph{type $\cF$}. For such $F$ define
\begin{align*}
{\bf E} F(x,\theta_0,\xi_d):=\sum_{j=1}^3 \left( \lim_{T\to\infty} \, \dfrac{1}{T} \, \int^T_0
l_j\cdot F(x,\theta_0+\omega_j \, (\xi_d-s),s) \, {\rm d}s \right) \, r_j \, .
\end{align*}
\end{defn}

\begin{rem}\label{30ab}
\textup{1.) For $F$ as in \eqref{30aa}-\eqref{30a}, we have
\begin{align}\label{31a}
{\bf E}F =\sum^3_{i=1} \tilde F_i(x,\theta_0 +\omega_i \, \xi_d) \, r_i,\text{ where } \tilde F_i (x,\theta)
:=f^i_i(x,\theta) +g^i_{i,i}(x,\theta) \, h^i_{i,i}(x,\theta) \, .
\end{align}}

\textup{2.) Observe that $\cF$ as defined in \eqref{19} is of type $\cF$ (hence the terminology), provided the
$\sigma_i$'s have sufficiently regularity and decay in $\theta$. In that case, we obtain
\begin{align*}
{\bf E}\cF(x,\theta_0,\xi_d)=-\sum_{i=1}^3 \big( X_{\phi_i} \sigma_i +c^i_i \, \sigma_i \, \partial_{\theta_i}\sigma_i
-e^i_i \, \sigma_i \big) \, r_i \, ,\quad \text{ where }\sigma_i=\sigma_i (x,\theta_0+\omega_i \, \xi_d).
\end{align*}}
\end{rem}

\begin{rem}\label{32}
\textup{The definition of ${\bf E}$ can be extended to more general functions. For example, if
\begin{align*}
F =\sum^3_{i=1} F_i(x,\theta_0+\omega_i \, \xi_d) \, r_i,
\end{align*}
where the $F_i(x,\theta)$ are arbitrary continuous functions, the limits that define ${\bf E}F$ exist and we
have ${\bf E}F=F$. For another example, suppose $F$ is of type $\cF$ and satisfies $EF=0$. Define
\begin{align}\label{R}
{\bf R}_\infty F(x,\theta_0,\xi_d) :=\sum_{i=1}^3 \left( \int^{\xi_d}_\infty F_i(x,\theta_0+\omega_i(\xi_d-s),s) \,
{\rm d}s \right) \, r_i \, .
\end{align}
Then the limits defining ${\bf R}_\infty F$ and ${\bf E} {\bf R}_\infty F$ exist and we have ${\bf E} {\bf R}_\infty F=0$.}
\end{rem}

\begin{prop}\label{35}
Suppose $F$ is of type $\cF$ and satisfies ${\bf E}F=0$. Then ${\bf R}_\infty F$ is bounded and
\begin{align*}
\tilde \cL(\partial_{\theta_0},\partial_{\xi_d}) \, {\bf R}_\infty F ={\bf R}_\infty \, \tilde \cL(\partial_{\theta_0},\partial_{\xi_d}) F
= F =(I-{\bf E}) F.
\end{align*}
\end{prop}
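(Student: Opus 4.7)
The plan is to verify three claims in order --- (a) ${\bf R}_\infty F$ is bounded; (b) $\tilde\cL\,{\bf R}_\infty F = F$; (c) ${\bf R}_\infty\,\tilde\cL F = F$; the remaining identity $F=(I-{\bf E})F$ is then immediate from the hypothesis ${\bf E}F=0$. Two facts will be used throughout: the diagonalization $\tA(\beta)\,r_i = -\omega_i\,r_i$, which follows from the dispersion relation at $\beta$, and the equivalence (via formula \eqref{31a}) of the hypothesis ${\bf E}F=0$ with the pointwise identities $f^i_i(x,\theta)+g^i_{i,i}(x,\theta)\,h^i_{i,i}(x,\theta)=0$ for each $i=1,2,3$.

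For (a), substitute $\theta_0\mapsto\theta_0+\omega_i(\xi_d-s)$ in each integrand defining $({\bf R}_\infty F)_i$. Each single-phase piece $f^i_k$ becomes a function of $\theta_0+\omega_i\xi_d+s(\omega_k-\omega_i)$, and each product $g^i_{l,m}h^i_{l,m}$ becomes a product of factors shifted by $s(\omega_l-\omega_i)$ and $s(\omega_m-\omega_i)$ respectively. For $k\neq i$ (resp.\ $(l,m)\neq(i,i)$), the $O(\langle\cdot\rangle^{-2})$ decay built into the definition of type $\cF$, combined with the distinctness of the $\omega_m$'s, produces integrands bounded by $C\langle s\rangle^{-2}$ uniformly in $(\theta_0,\xi_d)$, hence uniformly integrable on $[\xi_d,\infty)$. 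The only pieces that remain constant in $s$ --- and thus \emph{a priori} non-integrable --- are the resonant terms $f^i_i$ and $g^i_{i,i}h^i_{i,i}$, which cancel \emph{exactly} by the hypothesis. Recognizing that ${\bf E}F=0$ is precisely what makes ${\bf R}_\infty F$ well-defined is the main obstacle.

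For (b), set $G_i(x,\theta_0,\xi_d):=\int_\infty^{\xi_d}F_i(x,\theta_0+\omega_i(\xi_d-s),s)\,{\rm d}s$. Differentiating under the integral and applying the fundamental theorem of calculus,
\begin{equation*}
\partial_{\xi_d}G_i-\omega_i\,\partial_{\theta_0}G_i = F_i(x,\theta_0,\xi_d) + \omega_i\!\int_\infty^{\xi_d}\!(\partial_{\theta_0}F_i)\,{\rm d}s - \omega_i\!\int_\infty^{\xi_d}\!(\partial_{\theta_0}F_i)\,{\rm d}s = F_i(x,\theta_0,\xi_d),
\end{equation*}
where absolute convergence of both integrals follows by applying (a) to $\partial_{\theta_0}F$, which is again of type $\cF$ and satisfies ${\bf E}(\partial_{\theta_0}F)=\partial_{\theta_0}({\bf E}F)=0$. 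Using $\tA(\beta)r_i=-\omega_i r_i$ one gets $\tilde\cL(G_i r_i)=F_i r_i$; summing in $i$ yields (b).

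For (c), compute $(\partial_{\xi_d}-\omega_i\partial_{\theta_0})F_i$ term by term: on $f^i_k(x,\theta_0+\omega_k\xi_d)$ it equals $(\omega_k-\omega_i)(\partial_\theta f^i_k)$, and on $g^i_{l,m}h^i_{l,m}$ it equals $(\omega_l-\omega_i)(\partial_\theta g^i_{l,m})\,h^i_{l,m}+g^i_{l,m}\,(\omega_m-\omega_i)(\partial_\theta h^i_{l,m})$. The crucial observation is that after the substitution inside ${\bf R}_\infty$, each of these expressions is exactly $\frac{{\rm d}}{{\rm d}s}$ of the corresponding shifted factor, resp.\ shifted product, by the chain rule. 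Hence the integrand of $({\bf R}_\infty\tilde\cL F)_i$ is a total $s$-derivative, which telescopes to its value at $s=\xi_d$ --- recovering $f^i_k(x,\theta_0+\omega_k\xi_d)$ or $g^i_{l,m}(x,\theta_0+\omega_l\xi_d)h^i_{l,m}(x,\theta_0+\omega_m\xi_d)$ in the original phases --- minus its limit as $s\to+\infty$. For $k\neq i$ or $(l,m)\neq(i,i)$ this limit vanishes by decay of the appropriate factor; for the resonant $k=i$ or $(l,m)=(i,i)$ the integrand is identically zero. Summing the surviving pieces gives
\begin{equation*}
{\bf R}_\infty\,\tilde\cL F = F - \sum_{i}\bigl[f^i_i + g^i_{i,i}h^i_{i,i}\bigr]\,r_i = F - {\bf E}F = F,
\end{equation*}
using \eqref{31a} and the hypothesis.
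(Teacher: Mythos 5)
Your proposal is correct and follows essentially the same route as the paper: the paper leaves (a), (b) and $F=(I-{\bf E})F$ to the surrounding discussion and proves (c) by observing that the integrand of $({\bf R}_\infty \tilde\cL F)_i$ is the total $s$-derivative of $F_i(x,\theta_0+\omega_i(\xi_d-s),s)$, whose boundary contribution at $s=+\infty$ vanishes precisely because ${\bf E}F=0$; your term-by-term telescoping argument is the same computation written out. One small imprecision in step (a): for $k\neq i$ the shifted integrand is \emph{not} bounded by $C\langle s\rangle^{-2}$ uniformly in $(\theta_0,\xi_d)$ (the zero of the argument can occur at arbitrarily large $s$); the correct bound is $C\langle\theta_0+\omega_i\xi_d+s(\omega_k-\omega_i)\rangle^{-2}$, whose integral over $s\in\R$ is uniformly bounded after the substitution $z=\theta_0+\omega_i\xi_d+s(\omega_k-\omega_i)$, and similarly for the product terms, so the boundedness conclusion stands.
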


\begin{proof}
It just remains to show ${\bf R}_\infty \, \tilde \cL(\partial_{\theta_0},\partial_{\xi_d})F=F$. This follows by direct
computation of the integrals defining ${\bf R}_\infty \, \cL(\partial_{\theta_0},\partial_{\xi_d})F$ and the fact that
when ${\bf E}F=0$, we have $F_i(x,\theta_0+\omega_i(\xi_d-\infty),\infty)=0$.
\end{proof}

\noindent The next Proposition summarizes what we have shown.

\begin{prop}\label{31}
Let $F(x,\theta_0,\xi_d)$ be a function of type $\cF$.

(a)\; Then the equation $\tilde \cL(\partial_{\theta_0},\partial_{\xi_d})\cU=F$ has a solution bounded in $(\theta_0,\xi_d)$
if and only if ${\bf E}F=0$.

(b)\; When ${\bf E}F=0$, every $C^1$ solution bounded in $(\theta_0,\xi_d)$  has the form
\begin{align*}
\cU=\sum_{i=1}^3 \tau_i (x,\theta_0+\omega_i \, \xi_d) \, r_i +{\bf R}_\infty F \text{ with } \tau_i(x,\theta) \in C^1
\text{ and bounded.}
\end{align*}
Here ${\bf E}\cU=\sum_{i=1}^3 \tau_i(x,\theta_0+\omega_i \, \xi_d) \, r_i$ and $(I-{\bf E})\cU={\bf R}_\infty F$.

(c)If $\cU$ is of type $\cF$ then
\begin{align*}
{\bf E} \, \tilde \cL(\partial_{\theta_0},\partial_{\xi_d}) \cU =\tilde \cL(\partial_{\theta_0},\partial_{\xi_d}) \, {\bf E}\cU=0.
\end{align*}
\end{prop}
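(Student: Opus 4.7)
The proof is a careful packaging of the scalar transport analysis carried out between \eqref{19} and Remark \ref{32}, together with Proposition \ref{35}. For part (a), the ``if'' direction is a restatement of Proposition \ref{35}: given ${\bf E}F=0$, the bounded function $\cU:={\bf R}_\infty F$ solves $\tilde\cL\cU=F$. For the ``only if'' direction, I decompose a bounded $C^1$ solution as $\cU=\sum_{i=1}^3 t_i(x,\theta_0,\xi_d)\,r_i$ in the eigenbasis of $\tA(\beta)$; using $\tA(\beta)r_i=-\omega_i r_i$ and equating $r_i$-components reduces $\tilde\cL\cU=F$ to the scalar transport equation \eqref{21}, whose general solution is given by \eqref{22}. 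The key point is to identify the summands of $\int_0^{\xi_d}\cF_i(x,\theta_0+\omega_i(\xi_d-s),s)\,ds$ that produce linear-in-$\xi_d$ growth. Using the type-$\cF$ form \eqref{30a} and the substitution $\theta_0\mapsto\theta_0+\omega_i(\xi_d-s)$, a single term $f^i_k(x,\theta_0+\omega_k\xi_d)$ becomes $f^i_k(x,\theta_0+\omega_i\xi_d+s(\omega_k-\omega_i))$, which is $s$-independent exactly when $k=i$; a product $g^i_{l,m}h^i_{l,m}$ becomes $s$-independent exactly when $l=m=i$. The $s$-independent pieces integrate to $\xi_d\,\tilde F_i(x,\theta_0+\omega_i\xi_d)$ with $\tilde F_i=f^i_i+g^i_{i,i}h^i_{i,i}$ as in \eqref{31a}, while every other term decays in $s$ at rate $\langle s\rangle^{-2}$ (because $\omega_k-\omega_i\neq 0$) and thus contributes only a bounded integral.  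Boundedness of $t_i$ then forces $\tilde F_i\equiv 0$ for each $i$, which is exactly ${\bf E}F=0$.

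For part (b), given ${\bf E}F=0$ and a bounded $C^1$ solution $\cU$, the difference $\cV:=\cU-{\bf R}_\infty F$ is a bounded $C^1$ solution of the homogeneous equation $\tilde\cL\cV=0$. Writing $\cV=\sum v_i r_i$ and diagonalizing yields $(\partial_{\xi_d}-\omega_i\partial_{\theta_0})v_i=0$, so $v_i=\tau_i(x,\theta_0+\omega_i\xi_d)$ for some $C^1$ function $\tau_i$, bounded because $v_i$ is. The identifications ${\bf E}\cU=\sum_i\tau_i(x,\theta_0+\omega_i\xi_d)r_i$ and $(I-{\bf E})\cU={\bf R}_\infty F$ then follow from Remark \ref{32}: ${\bf E}$ acts as the identity on sums of the form $\sum\tau_i(x,\theta_0+\omega_i\xi_d)r_i$, while ${\bf E}\,{\bf R}_\infty F=0$.

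For part (c), I handle the two equalities separately. For $\tilde\cL\,{\bf E}\cU=0$: by \eqref{31a}, ${\bf E}\cU=\sum_i\tilde U_i(x,\theta_0+\omega_i\xi_d)r_i$, and a direct computation using $\tA(\beta)r_i=-\omega_i r_i$ gives $\tilde\cL[\tilde U_i(x,\theta_0+\omega_i\xi_d)r_i]=(\omega_i-\omega_i)\,\partial_\theta\tilde U_i\,r_i=0$. For ${\bf E}\tilde\cL\cU=0$: the same computation shows $\tilde\cL[f^i_k(x,\theta_0+\omega_k\xi_d)r_i]=(\omega_k-\omega_i)\,\partial_\theta f^i_k(x,\theta_0+\omega_k\xi_d)\,r_i$, which vanishes when $k=i$, and $\tilde\cL$ applied to the product $g^i_{l,m}(x,\theta_0+\omega_l\xi_d)h^i_{l,m}(x,\theta_0+\omega_m\xi_d)r_i$ produces two terms carrying the factors $(\omega_l-\omega_i)$ and $(\omega_m-\omega_i)$ respectively, both of which vanish on the diagonal $l=m=i$. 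Hence $\tilde\cL\cU$ is again of type $\cF$ and its diagonal part, which is precisely what ${\bf E}$ extracts, is identically zero.

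The one step requiring genuine care is the ``only if'' direction of (a), where the coefficient $\tilde F_i$ of the linearly-growing term must be shown not to be absorbable into the free profile $\tau^*_i$. This is settled by the observation that both $\tau^*_i$ and $\xi_d\,\tilde F_i$ are functions of the characteristic variable $\theta_0+\omega_i\xi_d$ (with $\xi_d$ an independent factor in the latter), so along any fixed characteristic $\theta_0+\omega_i\xi_d=\tilde\theta$ with $\tilde F_i(x,\tilde\theta)\neq 0$ the linear-in-$\xi_d$ growth cannot be cancelled; boundedness therefore forces $\tilde F_i\equiv 0$, completing the argument.
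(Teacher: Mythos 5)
Your proof is correct and takes essentially the same route as the paper's: the paper's terse argument cites the general-solution formula \eqref{22}, the explicit form \eqref{31a} of ${\bf E}$, Proposition \ref{35}, and Remark \ref{32}, and you simply flesh out those same ingredients (isolating the $s$-independent summands of $\int_0^{\xi_d}\cF_i$, subtracting ${\bf R}_\infty F$ to reduce to the homogeneous equation, and computing $\tilde\cL$ on the diagonalized pieces).
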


\begin{proof}
Part (a) follows from the form of the general solution given in \eqref{22}, and the fact that when a function $F$ of type
$\cF$ satisfies ${\bf E}F=0$, the integrals
\begin{align*}
\int^{\infty}_0 F_i(x,\theta_0+\omega_i \, (\xi_d-s),s) \, {\rm d}s
\end{align*}
are absolutely convergent. Part (b) follows from Remark \ref{30ab} and ${\bf E} {\bf R}_\infty F=0$. Part (c) follows
directly from Remark \ref{30ab}.
\end{proof}

With the leading pulse profile
\begin{equation*}
\cU^0(x,\theta_0,\xi_d) =\sigma_1(x,\theta_0+\omega_1 \, \xi_d) \, r_1 +\sigma_2(x,\theta_0+\omega_2 \, \xi_d) \, r_2
+\sigma_3(x,\theta_0+\omega_3 \, \xi_d) \, r_3 \, ,
\end{equation*}
we can rewrite the profile system \eqref{28} in a form that will be useful for the error analysis as follows:
\begin{align}\label{36a}
\begin{split}
&a)\; {\bf E} \, \cU^0 =\cU^0 \, ,\\
&b)\; {\bf E} \left( \tilde{L}(\partial) \, \cU^0 +M(\cU^0,\partial_{\theta_0} \cU^0) -F(0) \, \cU^0 \right) =0\, ,\\
&c)\; B(0) \, \cU^0|_{x_d=0,\xi_d=0}=G(x',\theta_0) \, ,\\
&d)\; \cU^0 =0 \text{ in }t<0 \, .
\end{split}
\end{align}
These equations can also be obtained by applying the operator ${\bf E}$ to the equations \eqref{13}, and have
the common structure of weakly nonlinear geometric optics equations, see e.g. \cite[chapters 7 and 9]{rauch}.

\subsection{Error analysis}\label{errorintro}

We end this introduction with a sketch of the error analysis used to prove Theorem \ref{e1}, which yields Theorem
\ref{a16} as an immediate consequence. The iteration schemes for the singular system \eqref{a4} and the profile
equations \eqref{36a} are written side by side in \eqref{e5}, \eqref{e6}. For $s$ large\footnote{We take $s>1
+[M_0+\frac{d+1}{2}]$ in Theorem \ref{e1}.} and some $T_0>0$, the proof of Theorem \ref{a12} produces a
sequence of iterates $U^n_\eps(x,\theta_0)$, bounded in the space $E^s_{T_0}$ uniformly with respect to $n$
and $\eps$, and such that
\begin{align*}
\lim_{n\to\infty}U^n_\eps =U_\eps \text{ in } E^{s-1}_{T_0} \text{ uniformly  with respect to  }\eps \in (0,\eps_0] \, ,
\end{align*}
where $U_\eps$ is the solution of the singular system \eqref{a4b}. On the other hand the construction of profiles
in Proposition \ref{c37} yields a sequence of profile iterates $\cU^{0,n}(x,\theta_0,\xi_d)$ bounded in $\cE^s_{T_0}$
(see Definition \ref{d22}) and converging in $\cE^{s-1}_{T_0}$ to a solution $\cU^0$ of the leading profile equations
\eqref{36a}. By Proposition \ref{h1} this implies that the rapidly varying functions $\cU^{0,n}_\eps(x,\theta_0) :=
\cU^{0,n}(x,\theta_0,\frac{x_d}{\eps})$ satisfy
\begin{align*}
\lim_{n\to\infty}\cU^{0,n}_{\eps} =\cU^0_{\eps}\text{ in } E^{s-1}_{T_0} \text{  uniformly  with respect to } \eps \in
(0,\eps_0] \, .
\end{align*}
Thus, in order to conclude $|\cU^0_\eps(x,\theta_0)-U_\eps(x,\theta_0)|_{E^{s-3}_{T_0}} \leq C \, \eps^{\frac{1}{2M_1+5}}$
and thereby complete the proof of Theorem \ref{a16}, it would suffice to show:
\begin{align}\label{dd3}
\text{There exists $C$ such that for every $n$, } \, |\cU^{0,n}_{\eps}-U^n_\eps|_{E^{s-3}_{T_0}} \leq C \,
\eps^{\frac{1}{2M_1+5}} \, .
\end{align}

The statement \eqref{dd3} is proved by induction in section \ref{simult}. It is natural to try to apply the estimate of
Proposition \ref{e9} to the difference $\cU^{0,n+1}_\eps-U^{n+1}_\eps$, but the problem is that for any given $n$,
$\cU^{0,n+1}_\eps$ does not by itself provide a very good approximate solution to the boundary problem \eqref{e5}
that defines $U^{n+1}_\eps$. Indeed, substitution of $\cU^{0,n+1}_\eps$ into \eqref{e5}(a) yields an error, call it
$R^{n+1}_\eps(x,\theta_0)$, that is $O(1)$ in $E^{s-3}_{T_0}$. Since $\cU^{0,n+1}$ satisfies \eqref{e6}, the main
contribution to $R^{n+1}_\eps(x,\theta_0)$ is given by $\cR^{n+1}(x,\theta_0,\frac{x_d}{\eps})$ where
\begin{align}\label{e21z}
\cR^{n+1}:=(I-{\bf E}) \, \left( \tilde L(\partial_x) \, \cU^{0,n+1} +M(\cU^{0,n},\partial_{\theta_0}\cU^{0,n+1})
-F(0) \, \cU^{0,n} \right) \, .
\end{align}
One would like to solve away the main error term in \eqref{e21z} by using Proposition \ref{35} and constructing a
corrector $\cU^{1,n+1} (x,\theta_0,\xi_d)$ such that
\begin{align}\label{dd5}
\tilde{\cL}(\partial_{\theta_0},\partial_{\xi_d}) \, \cU^{1,n+1}=-\cR^{n+1} \, ,
\end{align}
and then use a corrected approximate solution of \eqref{e5}(a) of the form
\begin{align*}
\cU^{0,n+1}_\eps +\eps \, \cU^{1,n+1}_{\eps} \, .
\end{align*}
While such a corrector is given explicitly by $\cU^{1,n+1}={\bf R}_\infty (-\cR^{n+1})$, it is not suitable for the error
analysis because, although bounded, $\cU^{1,n+1}_{\eps}$ does not lie in any of the $E^s_{T_0}$ spaces.

To see the reason for this, note that $\cU^{0,n}$ has the form
\begin{align}\label{ddd5}
\cU^{0,n}(x,\theta_0,\xi_d)=\sum^3_{i=1}\sigma^n_i(x,\theta_0+\omega_i \, \xi_d) \, r_i.
\end{align}
 Observe that if $f$ is a function that decays (say like $|s|^{-2}$) as $|s|\to\infty$,  the primitive $\int_\infty^\theta f(s) \, {\rm d}s$
itself decays to zero as $|\theta|\to\infty$ if and only if $f$ has moment zero ($\int^\infty_{-\infty}f(s) \, {\rm d}s=0$).
Since neither $\cU^{0,n+1}$ nor the term $M(\cU^{0,n},\partial_{\theta_0}\cU^{0,n+1})$ in \eqref{e21z} has moment
zero\footnote{More precisely, we refer here to the moments of profiles like $\sigma^n_i(x,\theta)$ or products of profiles
that appear in these terms.}, the definition of ${\bf R}_\infty$ shows that this choice of $\cU^{1,n+1}_\eps$ generally
cannot lie in any $E^s_{T_0}$ space. We first try to remedy this problem using an idea inspired by an argument in
\cite{altermanrauch}. We replace $\cU^{0,n}$ (and similarly $\cU^{0,n+1}$) by a function $\cU^{0,n}_p$ defined by
functions  $\sigma^n_{i,p}$ with vanishing first moments, where
 \begin{align*}
\hat\sigma^n_{i,p}(x,m) :=\chi_p(m) \, \hat\sigma^n_{i}(x,m),\quad 0<p<1\, ,
\end{align*}
and $\chi_p(m)$ is a low frequency cutoff function vanishing on a neighborhood of $0$ of size $O(|p|)$ and equal to
one outside a slightly larger neighborhood\footnote{The cutoff renders harmless the small divisor that appears when
one writes the Fourier transform of the $\theta$-primitive of $\sigma^n_{i,p}$ in terms of $\hat\sigma^n_{i,p}(x,m)$.}.
We show the estimate
\begin{align*}
|\cU^{0,n}-\cU^{0,n}_p|_{\cE^{s-1}_{T_0}} \leq C \, \sqrt{p}.
\end{align*}
These ``moment zero approximations" (Definition \ref{h5}) are the pulse analogues of the trigonometric polynomial
approximations, which can be viewed as produced by high frequency cutoffs, used in the error analysis in the
wavetrain case in \cite[section 2.5]{CGW1}. With this change the contribution to $\cU^{1,n+1}_\eps$ from
\begin{align*}
-{\bf R}_\infty \, (I-{\bf E}) \, \left( \tilde L(\partial_x) \, \cU^{0,n+1}_p -F(0) \, \cU^{0,n}_p \right)
\end{align*}
lies in a suitable $E^r_{T_0}$ space, but there is a problem due to ``self-interaction terms" of the form
\begin{align*}
\sigma^n_{i,p}(x,\theta) \, \partial_\theta \sigma^{n+1}_{i,p}(x,\theta)
\end{align*}
coming from the $M$ term in \eqref{e21z}, which do not have moment zero. Thus, we replace these terms by
$(\sigma^n_{i,p} \, \partial_\theta \sigma^{n+1}_{i,p})_p$ as in \eqref{k2}. The transversal interaction terms
$\sigma^n_{i,p} \, \partial_\theta\sigma^{n+1}_{j,p}$, $i\neq j$ already yield contributions in an $E^r_{T_0}$ space (Proposition \ref{h12}).

Using moment-zero approximations introduces errors that blow up as $p\to 0$, of course, but taking $p=\eps^b$
for an appropriate $b>0$, one can hope to control these errors using the factor $\eps$ in $\eps \, \cU^{1,n+1}$.
Indeed, this works and by the process outlined above we obtain  a corrector $\cU^{1,n+1}_{p,\eps}$ which, though
it does not solve away $R^{n+1}_\eps$,  solves away ``all but $O(\sqrt{p}+\frac{\eps}{p^{M_1+2}})$" of $R^{n+1}_\eps$
in $E^{s-3}_{T_0}$ (see \eqref{e31} for more details). Setting $p=\eps^b$ and choosing the exponent $b$ so that
$\sqrt{p}=\frac{\eps}{p^{M_1+2}}$ (so $b=\frac{2}{2M_1+5}$), we are able to apply the estimate of Proposition
\eqref{e9} to conclude
\begin{align*}
|\cU^{0,n+1}_{\eps}-U^{n+1}_\eps|_{E^{s-3}_{T_0}} \leq C\, \eps^{\frac{1}{2M_1+5}}\text{ where }
M_1=\left[ \frac{d}{2}+3 \right].
\end{align*}

\begin{rem}[Uniformly stable shocks]
\textup{There is an  analogue of Theorem \ref{a16} for uniformly stable shock waves perturbed by pulses. Uniform
stability for the shock waves problem is an extension of Definition \ref{a6} and dates back to Majda \cite{M1}. The
case of shocks perturbed by highly oscillatory wavetrains was studied in \cite[section 3]{CGW1}. In that case there
is a separate expansion for the oscillating shock front (a free boundary)
\begin{align}\label{a18}
\psi_\eps(x') \thicksim \sigma \, x_0 +\eps \, \left( \chi^0(x') +\eps \, \chi^1 \left( x',\dfrac{\phi^0(x')}{\eps} \right) \right) \, ,
\end{align}
in addition to an expansion for the solution on each side of the front. An important difference in the pulse case is that
the term $\chi^0(x')$ is absent in \eqref{a18}, and of course $\chi^1(x',\theta_0)$ is now decaying instead of periodic
in $\theta_0$. The expansions of the reflected waves on either side of the front are similar to \eqref{a10}.}

\textup{The singular shock problem has the same form as in the wavetrain case (see equations (3.39) of \cite{CGW1}
and \cite{williams4}), and the profile equations again take the form of equations (3.60) in \cite{CGW1}, except that every
occurrence of $\chi^0(x')$ is replaced by $0$. The solution of the large system for the leading profiles is now considerably
simpler than equations (3.68) of \cite{CGW1}, since all the interaction integrals in that equation are now absent. This
reflects the fact that pulses of different families do not interact at the leading order while wavetrains do. However, it is
necessary to estimate interaction integrals in the error analysis. As in the pulse problem with fixed boundaries, one
can in the shock problem obtain a rate of convergence of approximate solutions to exact solutions as $\eps\to 0$.}

\end{rem}

\section{Exact solution of the singular problem}
\label{exact}

The goal of this section is to prove Theorem \ref{a12} and solve the singular system \eqref{a4b}. This is achieved,
as in \cite[section 7]{williams3}, by solving the sequence of linear problems
\begin{align}\label{linexact}
\begin{split}
&a)\; \partial_{x_d} U^{n+1}_\eps +\sum^{d-1}_{j=0} \tilde{A}_j(\eps U^n_\eps) \, \left(
\partial_{x_j} +\dfrac{\beta_j \partial_{\theta_0}}{\eps} \right) \, U^{n+1}_\eps =F(\eps U^n_\eps) \, U^n_\eps \, ,\\
&b)\; B(\eps U^{n}_\eps)\, U^{n+1}_\eps|_{x_d=0} =G(x',\theta_0) \, ,\\
&c)\; U^{n+1}_\eps=0 \text{ in } t<0.
\end{split}
\end{align}
As for the case of hyperbolic boundary value problems, that is without the singular parameter $1/\eps$ in the
differential operator, see e.g. \cite{BS,CP}, the solvability of each linear system \eqref{linexact} relies on some
a priori estimates. Our main focus here is the derivation of a priori estimates that are uniform with respect to
the wavelength $\eps$. For the reasons detailed in the introduction of \cite{williams3}, the appropriate functional
setting in which one can derive uniform estimates is provided by the spaces $E^s_T$ defined in \eqref{a5}. The
main difficulty is to obtain $L^\infty$ estimates uniform in $\eps$.  These cannot be obtained simply from uniform $L^2(x_d,H^2(x',\theta_0))$ estimates since the estimate of $\partial_{x_d}U_\eps$ in terms of tangential derivatives provided by the system \eqref{linexact} blows up as $\eps\to 0$. Much of the analysis in this section is similar to \cite[sections 5 and 7]{williams3},
except that we use here the singular pseudodifferential calculus
of Appendix \ref{append}. This introduces some
modifications for the regularity assumptions in the results below.  In the proofs of this section we shall often refer to \cite{williams3} in order to keep the exposition as short as possible.

\subsection{Main estimate for the linearized singular problem}
\label{mainest}

We consider a linearized problem of the form
\begin{align}\label{mainlin}
\begin{split}
&a)\; \partial_{x_d} U_\eps +\sum^{d-1}_{j=0} \tilde{A}_j(\eps V_\eps) \, \left(
\partial_{x_j} +\dfrac{\beta_j \partial_{\theta_0}}{\eps} \right) \, U_\eps =f_\eps \, ,\\
&b)\; B(\eps V_\eps)\, U_\eps|_{x_d=0} =g_\eps \, ,\\
&c)\; U_\eps=0 \text{ in } t<0,
\end{split}
\end{align}
where $(V_\eps)_{\eps \in (0,1]}$ is a given family of functions, and $(f_\eps,g_\eps)$ represent some source
terms. Our first main result is the analogue of \cite[Theorems 5.1 and 5.2]{williams3} and proves unique solvability
with a uniform $L^2$ energy estimate for \eqref{mainlin}. The main point is to keep track of the regularity
assumptions on the coefficients $V_\eps$.

\begin{theo}\label{est}
Let $s_0 :=[(d+1)/2]+1$. There exists $\delta>0$ such that, for all $K \ge 1$, there exist some constants
$\gamma_0(K) \ge 1$ and $C_0(K)>0$ such that the following property holds: if the coefficients $(V_\eps
)_{\eps \in (0,1]}$ in \eqref{mainlin} satisfy
\begin{equation}\label{e4.17}
|\eps \, V_\eps|_{L^\infty(\Omega)} \le \delta \, ,\quad
|V_\eps|_{C^{0,M_0}(\Omega)} +|V_\eps|_{C(H^{s_0}(\R^d \times \R))}
+|\eps \, \partial_{x_d}V_\eps|_{L^\infty(\Omega)} \leq K\, ,
\end{equation}
then for all $T>0$, for all source terms $f_\eps \in L^2 (\Omega_T)$, $g_\eps \in L^2(b\Omega_T)$ vanishing
for $t<0$, there exists a unique solution $U_\eps \in L^2 (\Omega_T)$ to \eqref{mainlin} vanishing for $t<0$,
and this solution satisfies
\begin{equation}
\label{e4.35}
|{\rm e}^{-\gamma \, t} U_\eps|_{0,0,T} +\frac{1}{\sqrt{\gamma}} \,
\langle {\rm e}^{-\gamma \, t} U_\eps|_{x_d=0} \rangle_{0,T} \leq C_0(K) \left( \frac{1}{\gamma} \,
|{\rm e}^{-\gamma \, t} f_\eps|_{0,0,T} +\frac{1}{\sqrt{\gamma}} \,
\langle {\rm e}^{-\gamma \, t} g_\eps \rangle_{0,T} \right) \, ,
\end{equation}
for all $\gamma \ge \gamma_0(K)$.
\end{theo}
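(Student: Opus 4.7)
The plan is to follow the blueprint of \cite[Theorems 5.1 and 5.2]{williams3}, replacing the wavetrain singular calculus used there by the pulse singular calculus recalled in Appendix \ref{append}. The argument divides into two main parts: a uniform a priori estimate yielding \eqref{e4.35}, and existence by duality applied to the adjoint problem. Uniqueness is immediate from \eqref{e4.35} applied to the difference of two solutions.

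For the a priori estimate, I would first rewrite \eqref{mainlin}(a) as
\begin{equation*}
\partial_{x_d} U_\eps + \mathbb{A}\Big( \eps V_\eps, \partial_{x'} + \frac{\beta\, \partial_{\theta_0}}{\eps} \Big) U_\eps = f_\eps \, ,
\end{equation*}
and view the symbol $\mathbb{A}(\eps V_\eps(x), \xi'+k\beta/\eps, \gamma)$ as a singular symbol in the pulse calculus with a parameter $\eps V_\eps(x)$. The three conditions in \eqref{e4.17} are precisely what the calculus requires: the $C^{0,M_0}$ bound controls the finitely many tangential derivatives that enter the composition and commutator remainder estimates, the $CH^{s_0}$ bound furnishes the Sobolev embedding needed for $L^\infty$ control of products of symbols and their iterates, and the Lipschitz bound $|\eps\, \partial_{x_d} V_\eps|_{L^\infty} \leq K$ ensures that the symbol varies slowly enough in the normal variable so that after symmetrizing, $(\partial_{x_d}$ symbol$)U_\eps$ is an admissible remainder.

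Next I would construct a Kreiss symmetrizer $r_s(D_{x',\theta_0})$ within this pulse calculus. Since $\tauetabar \in \cH$ lies away from the glancing set ${\mathcal G}$, one can block-diagonalize the frozen symbol $\mathbb{A}(0,\xi'+k\beta/\eps,\gamma)$ microlocally near each of its purely imaginary eigenvalues and, using uniform Kreiss stability (Assumption \ref{a7}), construct on each block a Hermitian symbol $r_s(\zeta)$ satisfying
\begin{equation*}
\re \big( r_s(\zeta)\, \mathbb{A}(0,\zeta) \big) \ge c\, \gamma\, I \, , \qquad r_s(\zeta) + C\, B(0)^* B(0) \ge c\, I \text{ on } \E^s(\zeta) \, ,
\end{equation*}
gluing the microlocal pieces by a partition of unity as in \cite{kreiss,CP,williams3}. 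Applying this symmetrizer to \eqref{mainlin}(a), integrating by parts in $x_d$, and invoking Garding's inequality in the pulse calculus yields
\begin{equation*}
\gamma\, |{\rm e}^{-\gamma t} U_\eps|_{0,0,T}^2 + \langle {\rm e}^{-\gamma t} U_\eps|_{x_d=0} \rangle_{0,T}^2 \leq \frac{C}{\gamma}\, |{\rm e}^{-\gamma t} f_\eps|_{0,0,T}^2 + C\, \langle {\rm e}^{-\gamma t} g_\eps \rangle_{0,T}^2 + R(\eps,\gamma,V_\eps) \, ,
\end{equation*}
where $R$ gathers commutator errors together with contributions from the variable part $\tilde A_j(\eps V_\eps) - \tilde A_j(0)$. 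By the calculus remainder estimates and \eqref{e4.17}, $R$ is bounded by $C(K)/\gamma$ times the left-hand side, so it is absorbed for $\gamma \ge \gamma_0(K)$ large enough, yielding \eqref{e4.35}. For existence, the formal $L^2$ adjoint of \eqref{mainlin} with the boundary condition dual to $B(\eps V_\eps)$ is a backward singular boundary problem of the same structure, and uniform Kreiss stability is preserved under this duality; the a priori estimate derived above therefore applies to the adjoint, and existence of $U_\eps \in L^2(\Omega_T)$ for arbitrary $(f_\eps, g_\eps)$ follows by the standard Hahn--Banach argument.

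The main obstacle I expect is the careful quantification of the calculus remainders so that the absorption by large $\gamma$ works uniformly in $\eps \in (0,1]$. The shift $\xi' \mapsto \xi' + k\beta/\eps$ makes many naive bounds blow up as $\eps \to 0$, and it is precisely the pulse-adapted calculus of Appendix \ref{append}, together with the specific combination of regularity in \eqref{e4.17}, that keeps all remainders uniformly of order $1/\gamma$. A secondary technical point is verifying that the Lipschitz-in-$x_d$ hypothesis $|\eps\, \partial_{x_d} V_\eps|_{L^\infty} \le K$ is enough to justify the integration by parts in $x_d$ on $(r_s U_\eps, U_\eps)$ without losing powers of $\eps$; this is handled exactly as in \cite[Proposition 5.6]{williams3}.
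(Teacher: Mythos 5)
Your overall architecture mirrors the paper's: a symmetrizer-based a priori estimate in the pulse singular calculus, existence via the dual (backward) problem, and localization in time. The one place where your sketch deviates in a way that would actually break is the symmetrizer construction. You propose to block-diagonalize the \emph{frozen} symbol $\mathbb{A}(0,\xi'+k\beta/\eps,\gamma)$ and build a Fourier multiplier $r_s(\zeta)$ satisfying $\re\big(r_s(\zeta)\,\mathbb{A}(0,\zeta)\big)\ge c\,\gamma\,I$, then attribute the discrepancy $\tilde A_j(\eps V_\eps)-\tilde A_j(0)$ to a remainder $R$ that you assert is $O(1/\gamma)$ times the left-hand side. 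That assertion fails: $r_s(\zeta)\big(\mathbb{A}(\eps V_\eps,\zeta)-\mathbb{A}(0,\zeta)\big)$ is a \emph{first-order} singular symbol with coefficient of size $O(\delta)$, not a zeroth-order symbol with $O(1/\gamma)$ coefficient. After quantization it produces a term of the schematic form $\delta\,(\gamma+|D_s|)\,|U_\eps|^2$, and the hyperbolic blocks of the Kreiss construction (the $\gamma\,H_l$ blocks in Theorem \ref{theokm}(c)) only supply positivity $c\,\gamma\,|U_\eps|^2$; they do not control $\delta\,|D_s U_\eps|\cdot|U_\eps|$ no matter how large $\gamma$ is or how small $\delta$ is fixed.

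What the paper does instead, following Kreiss and M\'etivier (Theorem \ref{theokm}), is construct a symmetrizer $R(v,\xi',\gamma)$ depending \emph{smoothly on the state $v$} in a $\delta$-ball, so that the exact symbolic inequalities
\begin{equation*}
\re\big(R(v,\zeta)\,\mathcal{A}(v,\zeta)\big)\;\text{has the positive block structure of Theorem \ref{theokm}(c)},\qquad
R(v,\zeta)+C\,B(v)^*B(v)\ge c\,I ,
\end{equation*}
hold \emph{with the same $v$ in both factors}. One then quantizes $\cR_{\eps,\gamma}:=\opeg\big(R(\eps V_\eps,\xi',\gamma)\big)$, a genuinely variable-coefficient singular operator, and invokes the composition and G{\aa}rding rules of Appendix \ref{append} (Propositions \ref{prop18}--\ref{prop20}, Theorem \ref{thm11}) to get \eqref{e4.16a}. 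Because the symbolic inequality is exact at the level of symbols, the only remainders are calculus errors, and these \emph{are} $O(1/\gamma)$ on $L^2$; this is exactly what the regularity hypotheses \eqref{e4.17} (in particular the $C^{0,M_0}$ and $CH^{s_0}$ bounds on $V_\eps$, and the Lipschitz bound on $\eps\,\partial_{x_d}V_\eps$ used for the $[\partial_{x_d},\cR_{\eps,\gamma}]$ commutator) are calibrated to guarantee. To repair your proposal you should replace the frozen symmetrizer by this $v$-dependent symmetrizer and move the dependence on $\eps V_\eps$ inside the symbol rather than into a remainder.
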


In Theorem \ref{est}, the space $C^{0,M_0}(\Omega)$ denotes the space of functions $v(x,\theta)$ such that
for all $x_d \ge 0$, $v(\cdot,x_d,\cdot)$ is bounded on $\R^d \times \R$  with all derivatives up to the order $M_0$
bounded, and with all bounds that are uniform in $x_d$. The norm is defined by
\begin{equation*}
|v|_{C^{0,M_0}(\Omega)} := \sup_{x_d \ge 0} \, \sup_{|\alpha| \le M_0}
\| \partial_{x',\theta}^\alpha v (\cdot,x_d,\cdot) \|_{L^\infty (\R^d \times \R)} \, .
\end{equation*}
For fixed $x_d$, the $(x',\theta)$-regularity of a symbol enables us to use some of the symbolic calculus rules
listed in Appendix \ref{append}.

\begin{proof}
The first main step in the proof of Theorem \ref{est} is to show a global in time a priori estimate.
In other words, we consider a smooth function $U_\eps$ solution to \eqref{mainlin}, and wish to
show the estimate \eqref{e4.35} with $T=+\infty$. We begin with the following result.

\begin{theo}[Kreiss, M\'etivier \cite{kreiss,metivier}]
\label{theokm}
There exists $\delta>0$ such that, if $B_\delta$ denotes the closed ball of radius $\delta$ in $\R^N$,
there exists an $m\times m$ matrix-valued function
\[
R \in C^\infty(B_\delta \times \mathbb{R}^d \times (0,\infty)),
\]
homogeneous of degree zero in $(\xi',\gamma)$ and satisfying:

(a)\;$R(v,\xi',\gamma)=R(v,\xi',\gamma)^*$;

(b)\;there exist $C>0$, $c>0$ such that for all $(v,\xi',\gamma)$:
\begin{align}\label{e4.11}
R(v,\xi',\gamma) +C\; B^*(v) \, B(v)\geq c \, I ;
\end{align}

(c)\;there exist finite sets of $C^\infty$ matrices on $B_\delta \times \mathbb{R}^d \times (0,\infty)$,
denoted $T_l$, $H_l$, and $E_l$ such that
\begin{align*}
(i)\;\text{\rm Re } (R(v,\xi',\gamma) \, \mathcal{A}(v,\xi',\gamma)) =\sum_l T_l(v,\xi',\gamma) \, \begin{pmatrix}
\gamma \, H_l(v,\xi',\gamma) & 0\\
0 & E_l(v,\xi',\gamma) \end{pmatrix} \, T^*_l(v,\xi',\gamma);
\end{align*}

(ii)\; $T_l$, $H_l$ are homogeneous of degree zero in $(\xi',\gamma)$, $E_l$ is homogeneous of degree one;

(iii)\;$H_l(v,\xi',\gamma)=H^*_l(v,\xi',\gamma)$, $E_l(v,\xi',\gamma)=E^*_l(v,\xi',\gamma)$;

(iv)\;there exists $c >0$ such that
\begin{align*}
\sum_l T_l(v,\xi',\gamma) \, T^*_l(v,\xi',\gamma)\geq c\, I, \; H_l(v,\xi',\gamma)\geq c\, I, \;
E_l(v,\xi',\gamma)\geq c\, (|\xi'|+\gamma)\, I.
\end{align*}
The dimensions of $H_l$ and $E_l$ can vary with $l$.
\end{theo}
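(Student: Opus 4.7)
The construction is the classical Kreiss symmetrizer, extended to constant multiplicity by M\'etivier. The plan is to build $R$ locally near each base point $(v_0,\zeta_0)\in B_\delta \times \Sigma$, verify the decomposition (c) block by block, and then glue the local pieces using a smooth partition of unity on the compact set $B_\delta \times \Sigma$. Finally, extend by homogeneity of degree zero in $(\xi',\gamma)$.

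First, I would block-diagonalize $\mathcal{A}(v,\xi',\gamma)$ near $(v_0,\zeta_0)$. By Proposition \ref{thm1} combined with Assumption \ref{assumption1}, the eigenvalues of $\mathcal{A}$ either stay off the imaginary axis or split into semisimple groups associated to the distinct real roots $\omega_m$ of the dispersion relation. A standard smooth conjugation $T_0(v,\xi',\gamma)$ yields $T_0^{-1}\mathcal{A}\, T_0 = \mathrm{diag}(\mathcal{A}_1,\dots,\mathcal{A}_r)$ with each block $\mathcal{A}_\ell$ associated to a single branch; in a neighborhood of a glancing point several branches coalesce into a single larger block.

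Second, I would classify and symmetrize each block $\mathcal{A}_\ell$:
\begin{itemize}
\item Elliptic blocks (eigenvalues off $i\R$ for $\gamma>0$, or staying off $i\R$ after perturbation): take $R_\ell = \pm I$, with signs chosen on stable/unstable directions. Then $\mathrm{Re}(R_\ell \mathcal{A}_\ell) \geq c(|\xi'|+\gamma)I$, giving an $E_\ell$-type contribution.
\item Hyperbolic nonglancing blocks (purely imaginary simple eigenvalues at $\zeta_0$, nonzero $\partial_{\xi_d}\lambda_{k_m}$): diagonalize and take $R_\ell$ real diagonal with signs matching the signs of $\partial_\gamma \mathrm{Im}\,\mu$ of the perturbed eigenvalues. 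Here $\mathrm{Re}(R_\ell\mathcal{A}_\ell) = \gamma H_\ell$ with $H_\ell \geq cI$.
\item Glancing blocks: this is the hard part. At a glancing frequency the eigenvalues coalesce on $i\R$ with vanishing group velocity, producing a nontrivial $\xi_d$-Jordan structure even though the original system is semisimple. Kreiss's construction provides an explicit symmetrizer of the form $R_\ell = \bigl(\begin{smallmatrix} -K\,\mathrm{Id} & 0 \\ 0 & \mathrm{Id}\end{smallmatrix}\bigr) + \text{off-diagonal perturbation}$, with $K$ a large parameter, such that $\mathrm{Re}(R_\ell \mathcal{A}_\ell)$ yields both a $\gamma H_\ell$ piece (from the perturbation) and an $E_\ell$ piece (from the transport-like normal form), while $R_\ell$ is negative precisely on the local stable directions.
\end{itemize}
In every case, expressing $\mathrm{Re}(R_\ell \mathcal{A}_\ell)$ in the block form $T_\ell \mathrm{diag}(\gamma H_\ell, E_\ell) T_\ell^*$ is done by a second smooth change of basis adapted to the block.

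Third, I would verify property (b). The symmetrizer $R_{\mathrm{loc}} = (T_0^{-1})^* \mathrm{diag}(R_\ell)\, T_0^{-1}$ is negative definite on the stable subspace $\mathbb{E}^s(v,\xi',\gamma)$ and positive definite on its complement. Assumption \ref{a7} (uniform Kreiss stability) guarantees that $B(v)$ is an isomorphism $\mathbb{E}^s \to \C^p$ bounded below uniformly near $v=0$, so $B^*(v)B(v)$ is positive on $\mathbb{E}^s$. Hence for $C$ sufficiently large depending on the local constants, $R_{\mathrm{loc}} + C\,B^*B \geq c I$ on all of $\C^N$, establishing (b) locally.

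Fourth, I would cover $B_\delta \times \Sigma$ (compact) by finitely many neighborhoods on which the above construction applies, choose a smooth partition of unity $\sum_j \chi_j^2 = 1$ subordinate to this cover, and set $R = \sum_j \chi_j^2\, R_{\mathrm{loc},j}$. Since each $R_{\mathrm{loc},j}$ is Hermitian, so is $R$; the decomposition (c) is preserved by taking $T_l$ to range over $\chi_j T_{\ell,j}$ with $H_l, E_l$ the corresponding block pieces; and (b) holds with a uniform $C$ since only finitely many local inequalities are combined. Extending $R$ homogeneously of degree zero in $(\xi',\gamma)$ to $\R^d\times(0,\infty)$ and then shrinking $\delta$ if needed to preserve all estimates in $v$ completes the construction.

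The main obstacle is the glancing block construction: the semisimple hypothesis on the characteristics of $P$ does not prevent $\mathcal{A}(\zeta)$ from having a nontrivial Jordan block at a glancing point, and the Kreiss symmetrizer there must simultaneously produce the $\gamma H_\ell$ term (lost at $\gamma=0$) and cooperate with $B^*B$ via the large parameter $K$. This is the step where the constant multiplicity hypothesis (as opposed to strict hyperbolicity) requires M\'etivier's refinement, and where the uniform stability hypothesis is genuinely used.
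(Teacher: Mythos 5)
The paper states Theorem \ref{theokm} without proof, citing \cite{kreiss,metivier}; there is therefore no internal argument to compare your attempt against. Your sketch is a correct outline of the classical Kreiss--M\'etivier symmetrizer construction: local block reduction of $\mathcal{A}$ near each point of the compact set $B_\delta \times \Sigma$, case analysis (elliptic, hyperbolic nonglancing, glancing) with the glancing block correctly singled out as the delicate step, gluing by partition of unity, homogeneous extension in $(\xi',\gamma)$, and use of the uniform Lopatinskii condition together with the large parameter $K$ to obtain inequality (b). You also correctly attribute the constant-multiplicity refinement to M\'etivier. The account is necessarily condensed --- in particular the explicit Kreiss normal form for glancing blocks, the smooth dependence of the conjugation $T_0$ in a neighborhood where branches coalesce, and the quantitative dependence of the constant $C$ in (b) on $K$ are only gestured at --- but the structure matches the cited references and introduces no errors.
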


The parameter $\delta$ is fixed according to Theorem \ref{theokm}. We then define the following
singular symmetrizer for the boundary value problem \eqref{mainlin}:
\begin{equation*}
\cR_{\eps,\gamma} := \opeg (R(\eps V_\eps,\xi',\gamma)) \, ,
\end{equation*}
where singular pseudodifferential operators $\opeg (a)$ associated with a symbol $a$ are defined in
Appendix \ref{append}. We observe, as in \cite[remark 5.2]{williams3} that our symmetrizer is not self-adjoint
on $L^2(\Omega)$. However, the remainder $\cR_{\eps,\gamma}-\cR_{\eps,\gamma}^*$ is $O(1/\gamma)$
as an operator on $L^2$, uniformly in $\eps$.

Under the regularity assumptions \eqref{e4.17} of Theorem \ref{est}, the results given in Appendix
\ref{append} and the arguments in \cite[pages 164-165]{williams3} give the following properties for
the symmetrizer $\cR_{\eps,\gamma}$:
\begin{align}\label{e4.16a}
\begin{split}
&(a)\; |\cR_{\eps,\gamma} \, W|_{0,0} \leq C(K)\, |W|_{0,0} \, ,\\
&(b)\; |[\partial_{x_d},\cR_{\eps,\gamma}] \, W|_0\leq C(K)\, |W|_0 \, ,\\
&(c)\; \text{Re } ((\cR_{\eps,\gamma} \, \mathcal{A}_{\eps,\gamma} \,
+\mathcal{A}_{\eps,\gamma}^* \, \cR_{\eps,\gamma}) \, W,W) \geq c(K)\, \gamma \, |W|_{0,0}^2 \, ,\\
&(d)\; \text{Re } \langle \cR_{\eps,\gamma} \, W,W \rangle +C(K)\, \langle B(\eps V_\eps) \, W\rangle^2_0
\geq c(K)\, \langle W\rangle^2_0 \, ,
\end{split}
\end{align}
where $\mathcal{A}_{\eps,\gamma}$ denotes the operator
\begin{equation*}
-\gamma \, \tilde{A}_0(\eps V_\eps)-\sum^{d-1}_{j=0} \tilde{A}_j(\eps V_\eps) \, \left(
\partial_{x_j} +\dfrac{\beta_j \partial_{\theta_0}}{\eps} \right) \, .
\end{equation*}
Let us focus for instance on property $(d)$ in \eqref{e4.16a}. Since $\cR_{\eps,\gamma}^* =\cR_{\eps,\gamma}
+O(1/\gamma)$, G{\aa}rding's inequality (Theorem \ref{thm11}) shows that it is sufficient to prove that the symbol
$R(\eps V_\eps,\xi',\gamma)+C\, B^*(\eps V_\eps) \, B(\eps V_\eps)$ is positive definite, and this property is
given by \eqref{e4.11}. Other properties in \eqref{e4.16a} are obtained by similar arguments (applying Propositions
\ref{prop18}, \ref{prop19} or \ref{prop20}), see \cite[pages 164-165]{williams3} for more details.

We perform the change of function $U_\eps \rightarrow {\rm e}^{-\gamma \, t} U_\eps$ in \eqref{mainlin},
multiply \eqref{mainlin} $a)$ by $\cR_{\eps,\gamma}$ and take the real part of the $L^2$ scalar product
with ${\rm e}^{-\gamma \, t} U_\eps$. The estimates \eqref{e4.16a} yield\footnote{The detailed computations
can be found in \cite[corollary 5.2]{williams3}, and are the singular analogue of \cite{BS,CP}.}
\begin{equation*}
|{\rm e}^{-\gamma \, t} U_\eps|_{0,0} +\frac{1}{\sqrt{\gamma}} \,
\langle {\rm e}^{-\gamma \, t} U_\eps|_{x_d=0} \rangle_0 \leq C(K) \left( \frac{1}{\gamma} \,
|{\rm e}^{-\gamma \, t} f_\eps|_{0,0} +\frac{1}{\sqrt{\gamma}} \,
\langle {\rm e}^{-\gamma \, t} g_\eps \rangle_0 \right) \, ,
\end{equation*}
for $\gamma$ sufficiently large, that is for all $\gamma \ge \gamma_0(K)$.

A similar uniform a priori estimate is valid for the dual problem (which satisfies the backward uniform Lopatinskii
condition). Then the arguments of \cite{BS,CP}, namely existence of a weak solution and "weak=strong" by
tangential mollification, yields well-posedness of the boundary value problem \eqref{mainlin}. Localization
in time is achieved as usual by showing a causality principle ("future does not affect the past"), which holds
in our context since the constant $C(K)$ in our energy estimate is independent of $\gamma$.
\end{proof}

The uniform $L^2$ estimate \eqref{e4.35} enables us to show an estimate in the space $E^0$ defined in
\eqref{a5}. The result is similar to \cite[Corollary 7.1]{williams3} with a slight improvement with respect to the norm in
which the source term $f_\eps$ is estimated.

\begin{theo}\label{est'}
Let $s_0 :=[(d+1)/2]+1$. There exists $\delta>0$ such that, for all $K \ge 1$, there exist some constants
$\gamma_1(K) \ge 1$ and $C_1(K)>0$ such that the following property holds: if the coefficients $(V_\eps
)_{\eps \in (0,1]}$ in \eqref{mainlin} satisfy \eqref{e4.17}, then for all $T>0$, for all source terms $f_\eps
\in L^2 (H^1(b\Omega_T))$, $g_\eps \in H^1(b\Omega_T)$ vanishing for $t<0$, there exists a unique solution
$U_\eps \in H^1 (\Omega_T)$ to \eqref{mainlin} vanishing for $t<0$, and this solution satisfies
\begin{equation}
\label{estE0}
|{\rm e}^{-\gamma \, t} U_\eps|_{\infty,0,T}
+|{\rm e}^{-\gamma \, t} U_\eps|_{0,1,T} +\frac{1}{\sqrt{\gamma}} \,
\langle {\rm e}^{-\gamma \, t} U_\eps|_{x_d=0} \rangle_{1,T} \leq C_1(K) \left( \frac{1}{\gamma} \,
|{\rm e}^{-\gamma \, t} f_\eps|_{0,1,T} +\frac{1}{\sqrt{\gamma}} \,
\langle {\rm e}^{-\gamma \, t} g_\eps \rangle_{1,T} \right) \, ,
\end{equation}
for all $\gamma \ge \gamma_1(K)$.
\end{theo}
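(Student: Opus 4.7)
The plan is to upgrade the $L^2$ estimate of Theorem \ref{est} by one order of tangential regularity, and to extract the $C(x_d,L^2)$ bound from a refined reading of the symmetrizer argument. First I would apply each tangential derivative $Z\in\{\partial_{x_0},\dots,\partial_{x_{d-1}},\partial_{\theta_0}\}$ to the equation \eqref{mainlin}(a). These derivatives commute with $\partial_{x_d}$ and with the constant-coefficient singular operator $\beta_j\,\partial_{\theta_0}/\eps$, so the only nontrivial commutator comes from differentiating $\tilde A_j(\eps V_\eps)$:
\begin{equation*}
[Z,\tilde A_j(\eps V_\eps)]\left(\partial_{x_j}+\frac{\beta_j\partial_{\theta_0}}{\eps}\right)U_\eps
= \eps\,(d\tilde A_j\!\cdot\! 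ZV_\eps)\,\partial_{x_j}U_\eps +(d\tilde A_j\!\cdot\! ZV_\eps)\,\beta_j\,\partial_{\theta_0}U_\eps.
\end{equation*}
The crucial point is that the $\eps$ coming from differentiating $\tilde A_j(\eps V_\eps)$ cancels the $1/\eps$ inside the singular operator, leaving a remainder uniformly bounded in $\eps$ and controlled by $C(K)(|\partial_{x'}U_\eps|_{0,0,T}+|\partial_{\theta_0}U_\eps|_{0,0,T})$ thanks to the hypothesis \eqref{e4.17} on $V_\eps$.

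Next I would view the tangentially differentiated equation as a new singular system of the form \eqref{mainlin} for $ZU_\eps$, with source $Zf_\eps$ augmented by the above commutator terms and boundary data $Zg_\eps$ augmented by the boundary commutator $[Z,B(\eps V_\eps)]U_\eps|_{x_d=0}=\eps(dB\!\cdot\! ZV_\eps)U_\eps|_{x_d=0}$ (which carries an \emph{extra} factor $\eps$). Applying Theorem \ref{est} to $ZU_\eps$ and summing over $Z$ yields an inequality in which the commutator contributions are of the form $C(K)\,\gamma^{-1}|U_\eps|_{0,1,T}$ and $C(K)\,\gamma^{-1/2}\langle U_\eps|_{x_d=0}\rangle_{1,T}$; these are absorbed into the left-hand side by choosing $\gamma\ge\gamma_1(K)$ sufficiently large, using that the constant $C_0(K)$ in \eqref{e4.35} is independent of $\gamma$. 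This produces the bounds on $|U_\eps|_{0,1,T}$ and $\langle U_\eps|_{x_d=0}\rangle_{1,T}$.

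To obtain the $|\cdot|_{\infty,0,T}$ norm, I would revisit the symmetrizer estimate used to prove Theorem \ref{est}. The identity obtained by pairing $\cR_{\eps,\gamma}U_\eps$ with the equation and integrating by parts in $x_d$ really furnishes, for every $x_d^*\ge 0$,
\begin{equation*}
\langle e^{-\gamma t}U_\eps(\cdot,x_d^*,\cdot)\rangle_{0,T}^2 +\gamma\!\int_{x_d^*}^\infty\!|e^{-\gamma t}U_\eps|_{0,0,T}^2\,dx_d
\leq C(K)\left(\gamma^{-1}|e^{-\gamma t}f_\eps|_{0,1,T}^2 + \langle e^{-\gamma t}g_\eps\rangle_{1,T}^2\right),
\end{equation*}
after applying the tangential-regularity step above so that $f_\eps$ and $g_\eps$ enter at the $H^1$ level. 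Taking the supremum over $x_d^*$ gives the $|\cdot|_{\infty,0,T}$ contribution. Uniqueness and existence of $U_\eps\in H^1(\Omega_T)$ follow from Theorem \ref{est} combined with a standard tangential-difference-quotient argument (or a Friedrichs mollifier in $(x',\theta_0)$), using the a priori estimate just derived to pass to the limit.

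The main obstacle is bookkeeping: one must check that at every stage the commutator remainders remain symbols to which the singular calculus of Appendix \ref{append} applies, which is precisely what forces the regularity hypothesis $V_\eps\in C^{0,M_0}\cap C(H^{s_0})$ with $s_0=[(d+1)/2]+1$ and the auxiliary bound on $\eps\,\partial_{x_d}V_\eps$ (the latter is needed when estimating the $x_d$-commutator that arises from interpreting $\partial_{x_d}U_\eps$ via the equation when controlling products of the form $(d\tilde A_j\!\cdot\! ZV_\eps)\,\partial_{x'}U_\eps$ in $L^2H^0$ rather than $L^\infty$). Once these calculus points are verified, the argument closes in exactly the same manner as \cite[Corollary 7.1]{williams3}, with the only novelty being the use of the pulse calculus in place of the wavetrain calculus.
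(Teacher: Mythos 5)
Your Step 1 (commuting \eqref{mainlin} with tangential derivatives, exploiting the cancellation of $\eps$ against the $1/\eps$ in the singular operator, applying Theorem \ref{est} and absorbing the commutator contributions for $\gamma$ large) agrees with the paper's Step 1. The gap is in your Step 2. You claim that the symmetrizer pairing, integrated over $(x_d^*,\infty)$ rather than $(0,\infty)$, ``really furnishes'' a lower bound of the form $\langle {\rm e}^{-\gamma t}U_\eps(\cdot,x_d^*,\cdot)\rangle_{0,T}^2 + \gamma\int_{x_d^*}^\infty\cdots$. This is false: the Kreiss symmetrizer $R$ is \emph{not} positive definite. The property \eqref{e4.11} is that $R+C\,B^*B\geq c\,I$, and the positivity of the boundary term is recovered only at $x_d=0$ where the boundary condition $B(\eps V_\eps)U_\eps=g_\eps$ is available to absorb the negative part of $R$ through $B^*B$. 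At an interior level $x_d^*>0$ there is no boundary condition, so the boundary term $\langle \cR_{\eps,\gamma}U_\eps(x_d^*),U_\eps(x_d^*)\rangle$ produced by the integration by parts is sign-indefinite and cannot be used to control $\langle U_\eps(x_d^*)\rangle_0^2$. Note also that your argument never uses Assumption \ref{a8}, whereas the paper emphasizes (and the blow-up examples of \cite{williams2} confirm, at least for wavetrains) that without the hyperbolicity of $\beta$ no uniform $L^\infty(x_d;L^2)$ estimate of this kind can hold; an argument producing \eqref{estE0} purely from the symmetrizer would apply at glancing frequencies as well, which is a strong signal that it cannot be correct.

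What the paper actually does is split frequencies with the cutoff $\chi^e_s(D)$. On $\mathrm{supp}(1-\chi^e)$, the singular frequency $|k\beta|/\eps$ is dominated by $(\gamma^2+|\xi'|^2)^{1/2}$, so $\partial_{x_d}U_\eps$ is controlled in $L^2$ uniformly in $\eps$ from the equation and the tangential $L^2H^1$ estimate already established, giving \eqref{estE02}. On $\mathrm{supp}\,\chi^e$, the singular symbol is supported in a small conic neighborhood of $\beta$, where Assumption \ref{a8} (hyperbolicity) allows a microlocal block-diagonalization of $\bA$ into scalar transport operators $\partial_{x_d}-\lambda_j(\eps V_\eps,D_s)$. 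Pointwise-in-$x_d$ control then comes from the transport structure (a scalar multiplier, which \emph{is} sign-definite), integrating outgoing modes from $x_d$ to $\infty$ as in \eqref{estE03}, integrating incoming modes from $0$ to $x_d$, and closing the loop at $x_d=0$ via the uniform Lopatinskii condition expressing the incoming traces in terms of the outgoing ones and $g_\eps$. You would need to replace your Step 2 by this diagonalization argument to close the proof.
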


\begin{proof}
The regularity of the solution $U_\eps$ can be obtained by using the same arguments as in \cite[chapter 7]{CP},
that is by commuting the system \eqref{mainlin} with a mollified version of the Fourier multiplier of symbol
$(\gamma^2+|\xi'|^2+k^2)^{1/2}$. The argument shows that the $\partial_{x'}$ and $\partial_{\theta}$
derivatives of $U_\eps$ are in $L^2$, and \eqref{mainlin} then shows that the $\partial_{x_d}$ derivative
of $U_\eps$ also belongs to $L^2$. We thus only show the estimate \eqref{estE0}.

\textbf{1. $L^2$ estimate of tangential derivatives.} Commuting \eqref{mainlin} with a tangential derivative
$\partial_{tan} \in \{ \partial_{x_0},\dots,\partial_{x_{d-1}},\partial_{\theta_0} \}$, we need to control the
commutators
\begin{equation*}
\sum_{j=0}^{d-1} [\tA_j (\eps \, V_\eps),\partial_{tan}] \, \left(
\partial_{x_j} +\dfrac{\beta_j \partial_{\theta_0}}{\eps} \right) \, U_\eps
=\sum_{j=0}^{d-1} \big( {\rm d}\tA_j (\eps \, V_\eps) \cdot \partial_{tan} V_\eps \big) \, \left(
\eps \, \partial_{x_j} +\beta_j \, \partial_{\theta_0} \right) \, U_\eps \, .
\end{equation*}
When multiplied by ${\rm e}^{-\gamma t}$, this source term is bounded in $L^2(\Omega_T)$ by a
constant times $|{\rm e}^{-\gamma \, t} U_\eps|_{0,1,T}$ and can therefore be absorbed from right to
left by choosing $\gamma$ large. At this stage, we have
\begin{equation}
\label{estE01}
|{\rm e}^{-\gamma \, t} U_\eps|_{0,1,T} +\frac{1}{\sqrt{\gamma}} \,
\langle {\rm e}^{-\gamma \, t} U_\eps|_{x_d=0} \rangle_{1,T} \leq C_1(K) \left( \frac{1}{\gamma} \,
|{\rm e}^{-\gamma \, t} f_\eps|_{0,1,T} +\frac{1}{\sqrt{\gamma}} \,
\langle {\rm e}^{-\gamma \, t} g_\eps \rangle_{1,T} \right) \, ,
\end{equation}
for all $\gamma$ large enough.

\textbf{2. $L^\infty(L^2)$ estimate, part 1.} We extend $f_\eps$ and $g_\eps$ beyond time $T$, which does
not affect the solution $U_\eps$ up to time $T$. Doing so, we just need to prove the $L^\infty(L^2)$ estimate
\eqref{estE0} for $T=+\infty$. We consider a cut-off function $\chi^e$ in the extended singular calculus,
that is a smooth function satisfying the conditions \eqref{n31} given in Appendix \ref{append}. The $L^\infty
(L^2)$ estimate is first proved on $(1-\chi^e_s(D)) \, ({\rm e}^{-\gamma \, t} U_\eps)$, where we let from
now on $\chi^e_s(D)$ denote the Fourier multiplier whose symbol is
\begin{equation*}
\chi^e \left( \xi',\dfrac{k\, \beta}{\eps},\gamma \right) \, .
\end{equation*}
Since $|k \, \beta|/\eps$ is dominated by $(\gamma^2 +|\xi'|^2)^{1/2}$ on the support of $1-\chi^e$, the same
arguments as in \cite[page 173]{williams3} yield
\begin{align}
|(1-\chi^e_s(D)) \, ({\rm e}^{-\gamma \, t} U_\eps)|_{\infty,0} &\le C(K) \, \big( |{\rm e}^{-\gamma \, t} f_\eps|_{0,0}
+|{\rm e}^{-\gamma \, t} U_\eps|_{0,1} \big) \notag \\
&\leq C(K) \left( \frac{1}{\gamma} \, |{\rm e}^{-\gamma \, t} f_\eps|_{0,1} +\frac{1}{\sqrt{\gamma}} \,
\langle {\rm e}^{-\gamma \, t} g_\eps \rangle_1 \right) \, .\label{estE02}
\end{align}

\textbf{3. $L^\infty(L^2)$ estimate, part 2.} It remains to estimate $|\chi^e_s(D) \, ({\rm e}^{-\gamma \, t} U_\eps)
|_{\infty,0}$, which uses the fact that $\beta$ is a hyperbolic frequency. More precisely, we can fix some parameters
$\delta>0$ and $\delta_2>0$ such that for all $v$ in the ball of radius $\delta$ and for all $(z,\eta)$ that are
$\delta_2$-close to $\beta$, there holds
\begin{equation*}
Q(v,z,\eta)^{-1} \, \cA (v,z,\eta) \, Q(v,z,\eta) =\text{\rm diag } (\lambda_1(v,z,\eta),\dots,\lambda_N(v,z,\eta)) \, ,
\end{equation*}
for a suitable invertible matrix $Q$, and the $\lambda_j$'s satisfy
\begin{equation*}
\text{\rm Re } \lambda_j(v,z,\eta) \begin{cases}
\le -c\, \gamma \, ,&\text{if $j=1,\dots,p$,}\\
\ge c\, \gamma \, ,&\text{if $j=p+1,\dots,N$.}\\
\end{cases}
\end{equation*}
Moreover, Assumption \ref{a7} shows that the (square) matrix whose column vectors are
\begin{equation*}
B(v)\, Q_1(v,z,\eta),\dots, B(v)\, Q_p(v,z,\eta) \, ,
\end{equation*}
is invertible (here the $Q_j$'s denote the columns of $Q$).

With the above notation, we can follow the proof of \cite[Proposition 7.3]{williams3}, and write
$\chi^e_s(D) \, ({\rm e}^{-\gamma \, t} U_\eps)$ under the form
\begin{equation*}
\chi^e_s(D) \, ({\rm e}^{-\gamma \, t} U_\eps) =r_0 \, \cW \, ,
\end{equation*}
where, here and from now on, $r_0$ denotes a bounded operator on $L^2(\Omega)$ whose operator norm
is independent of $\eps,\gamma$, and where each component $\cW_j$ of $\cW$ satisfies a transport equation
\begin{equation}
\label{eqWj}
\partial_{x_d} \cW_j -\lambda_j(\eps \, V_\eps,D_s) \, \cW_j =r_0 ({\rm e}^{-\gamma \, t} f_\eps)
+r_0 ({\rm e}^{-\gamma \, t} U_\eps) \, .
\end{equation}
In \eqref{eqWj}, $\lambda_j(\eps \, V_\eps,D_s)$ denotes the singular pseudodifferential operator of symbol
$\lambda_j(\eps \, V_\eps,z,\eta)$ (as described in Appendix \ref{append}).

In the outgoing case ($j=p+1,\dots,N$), we multiply \eqref{eqWj} by $\overline{\cW_j}$, integrate from
$x_d$ to $+\infty$ and apply G{\aa}rding's inequality (Theorem \ref{thm11}), obtaining
\begin{multline*}
\langle \cW_j(x_d) \rangle^2_0+\gamma \, \int_{x_d}^{+\infty} \langle \cW_j(y) \rangle^2_0 \, {\rm d}y \\
\leq C\, \int_{x_d}^{+\infty} \langle \cW_j(y) \rangle_0 \, \langle {\rm e}^{-\gamma \, t} f_\eps(y) \rangle_0
\, {\rm d}y
+C\, \int_{x_d}^{+\infty} \langle \cW_j(y) \rangle_0 \, \langle {\rm e}^{-\gamma \, t} U_\eps(y) \rangle_0
\, {\rm d}y \, .
\end{multline*}
The contribution of $\cW_j$ on the right-hand side can be absorbed on the left by using Young's inequality,
and Theorem \ref{est} enables us to control the $L^2$ norm of $U_\eps$. We thus get
\begin{equation}
\label{estE03}
\sup_{j=p+1,\dots,N} |\cW_j|_{\infty,0}^2 \le C(K) \left( \frac{1}{\gamma} \,
|{\rm e}^{-\gamma \, t} f_\eps|_{0,0}^2 +\frac{1}{\gamma^2} \,
\langle {\rm e}^{-\gamma \, t} g_\eps \rangle_0^2 \right) \, .
\end{equation}

The estimates in the incoming case are similar, except that we integrate from $0$ to $x_d$. We thus get
\begin{equation*}
\sup_{j=1,\dots,p} |\cW_j|_{\infty,0}^2 \le \sup_{j=1,\dots,p} \langle \cW_j|_{x_d=0} \rangle_0^2
+C(K) \, \left( \frac{1}{\gamma} \, |{\rm e}^{-\gamma \, t} f_\eps|_{0,0}^2 +\frac{1}{\gamma^2} \,
\langle {\rm e}^{-\gamma \, t} g_\eps \rangle_0^2 \right) \, .
\end{equation*}
Using the same arguments as in \cite[page 178]{williams3}, we can use the uniform Lopatinskii condition
and write
\begin{equation*}
\begin{pmatrix}
\cW_1|_{x_d=0} \\
\vdots \\
\cW_p|_{x_d=0} \end{pmatrix} =r_0 \, \begin{pmatrix}
\cW_{p+1}|_{x_d=0} \\
\vdots \\
\cW_N|_{x_d=0} \end{pmatrix} +r_0 \, ({\rm e}^{-\gamma \, t} g_\eps)
+\dfrac{1}{\gamma} \, r_0 \, ({\rm e}^{-\gamma \, t} U_\eps|_{x_d=0})\, ,
\end{equation*}
from which we derive the estimate
\begin{equation*}
\sup_{j=1,\dots,p} \langle \cW_j|_{x_d=0} \rangle_0^2 \le C(K) \, \sup_{j=p+1,\dots,N} |\cW_j|_{\infty,0}^2
+C(K) \, \langle {\rm e}^{-\gamma \, t} g_\eps \rangle_0^2
+\dfrac{C(K)}{\gamma^3} \, |{\rm e}^{-\gamma \, t} f_\eps|_{0,0}^2 \, .
\end{equation*}
We combine the latter inequality with \eqref{estE03}, and recall $\chi^e_s(D) \, ({\rm e}^{-\gamma \, t} U_\eps)
=r_0 \, \cW$, so we get
\begin{equation*}
|\chi^e_s(D) \, ({\rm e}^{-\gamma \, t} U_\eps)|_{\infty,0} \le C(K) \, \left( \frac{1}{\sqrt{\gamma}} \,
|{\rm e}^{-\gamma \, t} f_\eps|_{0,0}^2 +\langle {\rm e}^{-\gamma \, t} g_\eps \rangle_0^2 \right) \, .
\end{equation*}
Adding with \eqref{estE02} and \eqref{estE01}, we complete the proof of Theorem \ref{est'}.
\end{proof}

\subsection{Construction of the exact solution}

We use the iteration scheme \eqref{linexact} to solve the nonlinear system \eqref{a4b}. As usual, the convergence
of the iteration scheme follows from the combination of two arguments: a uniform boundedness in a "high norm"
(here in the space $E^s$ given in \eqref{a5}), and a contraction property in a "low norm" (here in $E^0$). The
estimate of a solution in $E^0$ will be provided by Theorem \ref{est'} above, and we indicate below how we
obtain the estimate of a solution in $E^s$, $s \in \N$.

\begin{proposition}
\label{estimEk}
Let $s_0 :=[(d+1)/2]+1$ and let $k \in \N$. There exists $\delta>0$ such that, for all $K \ge 1$, there exist some
constants $\gamma_k(K) \ge 1$ and $C_k(K)>0$ such that the following property holds: if the coefficients
$(V_\eps)_{\eps \in (0,1]}$ in \eqref{mainlin} satisfy \eqref{e4.17} and belong to $L^2 (H^{k+1}(b\Omega_T))
\cap L^\infty(H^k(b\Omega_T))$, then for all $T>0$, for all source terms $f_\eps \in L^2 (H^{k+1}(b\Omega_T))$,
$g_\eps \in H^{k+1}(b\Omega_T)$ vanishing for $t<0$, there exists a unique solution $U_\eps \in L^2 (H^{k+1}
(b\Omega_T)) \cap L^\infty(H^k(b\Omega_T))$ to \eqref{mainlin} vanishing for $t<0$, and this solution satisfies
\begin{multline}
\label{estEk}
|{\rm e}^{-\gamma \, t} U_\eps|_{\infty,k,T}
+|{\rm e}^{-\gamma \, t} U_\eps|_{0,k+1,T} +\frac{1}{\sqrt{\gamma}} \,
\langle {\rm e}^{-\gamma \, t} U_\eps|_{x_d=0} \rangle_{k+1,T} \leq C_k(K) \left( \frac{1}{\gamma} \,
|{\rm e}^{-\gamma \, t} f_\eps|_{0,k+1,T} \right. \\
\left. +\frac{1}{\sqrt{\gamma}} \, \langle {\rm e}^{-\gamma \, t} g_\eps \rangle_{k+1,T}
+|U_\eps|_{L^\infty(W^{1,\infty}(b\Omega_T))} \, \left( \dfrac{|{\rm e}^{-\gamma \, t} V_\eps|_{0,k+1,T}}{\gamma}
+\dfrac{|{\rm e}^{-\gamma \, t} V_\eps|_{x_d=0}|_{k+1,T}}{\sqrt{\gamma}} \right)
\right) \, ,
\end{multline}
for all $\gamma \ge \gamma_k(K)$.
\end{proposition}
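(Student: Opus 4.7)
My plan is to argue by induction on $k \in \N$, with the base case $k = 0$ supplied by Theorem \ref{est'}; note that when $k=0$ the tame remainder in \eqref{estEk} is subsumed into the constant $C_0(K)$ by the uniform bound on $V_\eps$ in \eqref{e4.17}. Assuming the estimate holds at order $k - 1$, I would upgrade it to order $k$ by commuting a tangential multi-derivative $\partial_{tan}^\alpha$, $|\alpha| \le k+1$, with the equations \eqref{mainlin}. Setting $W_\alpha := \partial_{tan}^\alpha U_\eps$, the function $W_\alpha$ solves a system of the same structure as \eqref{mainlin} with two extra source terms: an interior commutator
\begin{equation*}
\cC^{\text{int}}_\alpha := \sum_{j=0}^{d-1} \bigl[\partial_{tan}^\alpha, \tilde A_j(\eps V_\eps)\bigr]
\bigl(\partial_{x_j} + \beta_j \, \partial_{\theta_0}/\eps\bigr) U_\eps,
\end{equation*}
and a boundary commutator $\cC^{\text{bdy}}_\alpha := [\partial_{tan}^\alpha, B(\eps V_\eps)] \, U_\eps|_{x_d=0}$. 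Feeding $W_\alpha$ into Theorem \ref{est'} and summing over $|\alpha| \le k + 1$ would yield the left-hand side of \eqref{estEk}; the task thus reduces to estimating $\cC^{\text{int}}_\alpha$ in $L^2(\Omega_T)$ and $\cC^{\text{bdy}}_\alpha$ in $H^{0}(b\Omega_T)$ by the right-hand side of \eqref{estEk}.

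The key observation is that every $\eps$-factor produced by the Fa\`a di Bruno expansion of $\partial_{tan}^\beta \tilde A_j(\eps V_\eps)$, $|\beta|\ge 1$, cancels one power of the singular factor $1/\eps$ accompanying $\partial_{\theta_0}$. Indeed $\partial_{tan}^\beta \tilde A_j(\eps V_\eps)$ is a linear combination of terms of the form $\eps^m \, (d^m \tilde A_j)(\eps V_\eps) \prod_{i=1}^m \partial_{tan}^{\beta_i} V_\eps$ with $m \ge 1$ and $\sum_i |\beta_i|=|\beta|$, so after combining with $\beta_j\, \partial_{\theta_0}/\eps$ only nonnegative powers of $\eps$ remain and $\cC^{\text{int}}_\alpha$ carries no singular $\eps$-dependence. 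A Moser-type $L^2(x_d, H^{k+1})$ product inequality (applicable because $V_\eps$ is in $C^{0,M_0}$ and $W_\alpha$ in $H^{s_0}$ by the induction hypothesis) then splits the commutator into a low-$V$-high-$U$ piece bounded by $C(K)\, |{\rm e}^{-\gamma t} U_\eps|_{0, k+1, T}$ and a high-$V$-low-$U$ piece bounded by $C(K) \, |U_\eps|_{L^\infty(W^{1,\infty}(b\Omega_T))}\cdot |{\rm e}^{-\gamma t} V_\eps|_{0,k+1,T}$.

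Inserting these bounds into the right-hand side of Theorem \ref{est'} applied to $W_\alpha$, the low-$V$-high-$U$ piece appears divided by $\gamma$ and so is absorbed into the left-hand side by choosing $\gamma \ge \gamma_k(K)$ large; the high-$V$-low-$U$ piece produces precisely the tame interior remainder in \eqref{estEk}. The boundary commutator $\cC^{\text{bdy}}_\alpha$ is handled identically, with trace $H^{k+1}$ product estimates in place of $L^2(x_d, H^{k+1})$ product estimates, and gives the analogous trace remainder $|U_\eps|_{L^\infty(W^{1,\infty})} \langle {\rm e}^{-\gamma t} V_\eps|_{x_d=0}\rangle_{k+1,T} / \sqrt\gamma$. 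The $L^\infty(x_d, H^k)$ part of the norm of $U_\eps$ follows by applying the $|\cdot|_{\infty,0}$ bound in Theorem \ref{est'} to $W_\alpha$ for $|\alpha| \le k$. Existence and uniqueness of $U_\eps$ in $L^2(x_d, H^{k+1}) \cap L^\infty(x_d, H^k)$ then follow, as in \cite[Chapter 7]{CP}, by regularizing with a tangential Friedrichs mollifier, applying the above a priori estimate uniformly in the mollification parameter, and passing to the limit.

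I expect the main obstacle to be the careful Fa\`a di Bruno bookkeeping: at every level of the commutator expansion one must verify that the powers of $\eps$ produced by the chain rule exactly compensate the singular coefficient $\beta_j\, \partial_{\theta_0}/\eps$, and that the resulting products factor into ``one high-order tangential derivative, all others low-order'' as required for the tame Moser split and for the estimate to be linear in $|V_\eps|_{0,k+1,T}$ on the right. The nonlinear factor $|U_\eps|_{L^\infty(W^{1,\infty})}$ in \eqref{estEk}, rather than $|U_\eps|_{0,k+1,T}$, is what makes the bound genuinely tame and hence suitable for the Nash--Moser-style iteration \eqref{linexact}; once the bookkeeping is done the rest is a standard tame-Moser argument for quasilinear hyperbolic boundary problems.
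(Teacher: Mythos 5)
Your plan matches the paper's argument: commute \eqref{mainlin} with tangential derivatives, feed the commuted system into the $E^0$ estimate of Theorem~\ref{est'}, split the commutator with a tame Gagliardo--Nirenberg (Moser) product inequality after observing the $\eps$-cancellation in $\partial^\beta_{tan}\tilde A_j(\eps V_\eps)\cdot\beta_j\partial_{\theta_0}/\eps$, and absorb the high-order-$U$ piece via the $1/\gamma$ factor. Two small bookkeeping corrections before you could write this out cleanly: first, you should commute with $\partial^\alpha_{tan}$ for $1\le|\alpha|\le k$ (not $k+1$); Theorem~\ref{est'} already returns control of $|W_\alpha|_{0,1,T}$ and $|W_\alpha|_{\infty,0,T}$, so summing over $|\alpha|\le k$ yields precisely $|U_\eps|_{0,k+1,T}+|U_\eps|_{\infty,k,T}$, whereas $|\alpha|\le k+1$ overshoots. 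Second, the source term in Theorem~\ref{est'} is measured in the $|\cdot|_{0,1,T}$ norm, so the interior commutator $\cC^{\text{int}}_\alpha$ must be bounded in $L^2(x_d,H^1(b\Omega_T))$ rather than merely in $L^2(\Omega_T)$; this is why the paper phrases the commutator estimate in the $|{\rm e}^{-\gamma t}\cdot|_{0,1,T}$ norm. The induction-on-$k$ framing is harmless but unnecessary: the paper commutes directly up to order $k$ without a formal induction. Otherwise the Fa\`a di Bruno bookkeeping, the tame low-high/high-low split, the tame remainder in $|U_\eps|_{L^\infty(W^{1,\infty})}\,|V_\eps|_{0,k+1,T}$, and the boundary analogue are exactly what the paper does.
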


\begin{proof}
The proof is like that of \cite[Theorem 7.2]{williams3}. One commutes \eqref{mainlin}
with a tangential derivative $\partial^\alpha$ of order $1 \le |\alpha| \le k$, and applies (tangential)
Gagliardo-Nirenberg inequalities. When the additional fast variable $\theta$ lies in the torus $\R/\Z$, these
inequalities are given in \cite[Lemma 7.3]{williams3}, and we claim that the exact same inequalities are valid
when the fast variable lies in $\R$.

When commuting \eqref{mainlin} with a tangential derivative $\partial^\alpha$, one applies Theorem
\ref{est'} and needs to control the commutator
\begin{equation*}
\left[ \tilde{A}_j(\eps V_\eps) \, \left( \partial_{x_j} +\dfrac{\beta_j \partial_{\theta_0}}{\eps} \right) ;\partial^\alpha \right]
\, U_\eps \, ,
\end{equation*}
in the norm $|{\rm e}^{-\gamma \, t} \cdot|_{0,1,T}$. The $\eps$ factor in front of $V_\eps$ cancels
the singular $1/\eps$ factor and we obtain the estimate
\begin{multline*}
\left| {\rm e}^{-\gamma \, t} \left[ \tilde{A}_j(\eps V_\eps) \, \left( \partial_{x_j} +\dfrac{\beta_j \partial_{\theta_0}}{\eps}
\right) ;\partial^\alpha \right] \, U_\eps \right|_{0,1,T} \le C(K) \, |{\rm e}^{-\gamma \, t} U_\eps|_{0,k+1,T} \\
+C(K) \, |U_\eps|_{L^\infty(W^{1,\infty}(b\Omega_T))} \, |{\rm e}^{-\gamma \, t} V_\eps|_{0,k+1,T}\, .
\end{multline*}
The $|{\rm e}^{-\gamma \, t} U_\eps|_{0,k+1,T}$ term on the right hand-side is absorbed by choosing
$\gamma$ large enough, and we are left with \eqref{estEk}. (Estimates on the boundary are similar.)
\end{proof}

We can then deduce the main estimate in the space $E^k_T$ defined in \eqref{a5} (the proof is the same
as that of \cite[Corollary 7.2]{williams3} and is based on the choice $T=1/\gamma$ in Proposition \ref{estimEk}).

\begin{cor}
\label{corEk}
Let $k \ge M_0+[\frac{d+1}{2}]$ and $K_1,K_2 \ge 1$. Then there exist a constant $C(K_1,K_2)>0$, a parameter
$\eps_0(K_1,K_2) \in (0,1]$ and a time ${\mathcal T}(K_1,K_2)>0$ satisfying the following property: if $T \le
{\mathcal T}(K_1,K_2)$, if the coefficients $(V_\eps)_{\eps \in (0,1]}$ in \eqref{mainlin} belong to $E^k_T$ and satisfy
\begin{equation}\label{taillecoeff}
|V_\eps|_{\infty,k,T} +|V_\eps|_{x_d=0}|_{k+1,T} \le K_1 \, ,\quad
|\eps \, \partial_{x_d}V_\eps|_{L^\infty(\Omega_T)} \leq K_2 \, ,
\end{equation}
and if $\eps \le \eps_0(K_1,K_2)$, then for all source terms $f_\eps \in L^2 (H^{k+1}(b\Omega_T))$, $g_\eps \in H^{k+1}
(b\Omega_T)$ vanishing for $t<0$, there exists a unique solution $U_\eps \in E^k_T$ to \eqref{mainlin} vanishing
for $t<0$, and this solution satisfies
\begin{equation}
\label{mainestEk}
|U_\eps|_{\infty,k,T} +|U_\eps|_{0,k+1,T} +\sqrt{T} \, \langle U_\eps|_{x_d=0} \rangle_{k+1,T} \leq
C_k(K_1,K_2) \left( T \, |f_\eps|_{0,k+1,T} +\sqrt{T} \, \langle g_\eps \rangle_{k+1,T} \right) \, .
\end{equation}
\end{cor}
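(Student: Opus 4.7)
The plan is to deduce \eqref{mainestEk} from the weighted estimate \eqref{estEk} of Proposition \ref{estimEk} by choosing the weight parameter $\gamma=1/T$ and then absorbing the quasilinear remainder term on the left, exploiting the smallness of $\sqrt{T}$.

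First I would verify that the hypotheses of Proposition \ref{estimEk} are met. Since $k\ge M_0+[(d+1)/2]$, Sobolev embedding applied to functions in $E^k_T\subset C(\overline{\R}_+;H^k(b\Omega_T))$ controls both $|V_\eps|_{C^{0,M_0}(\Omega_T)}$ and $|V_\eps|_{C(H^{s_0}(b\Omega_T))}$ by $C|V_\eps|_{\infty,k,T}\le CK_1$. Combined with $|\eps\partial_{x_d}V_\eps|_{L^\infty}\le K_2$ and the fact that $|\eps V_\eps|_{L^\infty}\le \eps\cdot CK_1\le\delta$ once $\eps_0(K_1,K_2)$ is taken small enough, this yields \eqref{e4.17} with a constant $K=K(K_1,K_2)$.

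Next I would apply Proposition \ref{estimEk} with the choice $\gamma=1/T\ge \gamma_k(K)$, which forces $T\le 1/\gamma_k(K)$ and contributes the first constraint on $\mathcal T(K_1,K_2)$. On the time slab $[0,T]$ the weight satisfies $\mathrm{e}^{-\gamma t}\in[\mathrm{e}^{-1},1]$, so each weighted norm is equivalent to its unweighted counterpart up to a fixed multiplicative constant. The powers of $\gamma^{-1}=T$ and $\gamma^{-1/2}=\sqrt T$ appearing in \eqref{estEk} then convert directly into the factors $T$ and $\sqrt T$ on the right of \eqref{mainestEk}. Using that $V_\eps$ is in $E^k_T$ with $|V_\eps|_{0,k+1,T}\le |V_\eps|_{\infty,k,T}\cdot\sqrt{\text{stuff}}$ -- more cleanly, bounding both $|V_\eps|_{0,k+1,T}$ and $|V_\eps|_{x_d=0}|_{k+1,T}$ by $K_1$ through \eqref{taillecoeff} -- the coefficient term on the right of \eqref{estEk} is majorized by $CK_1\sqrt T(1+\sqrt T)\,|U_\eps|_{L^\infty(W^{1,\infty}(b\Omega_T))}$.

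The crux is then the absorption step. Since $k\ge M_0+[(d+1)/2]$ and $M_0=3d+5$, Sobolev embedding on $b\Omega_T$ gives $|U_\eps|_{L^\infty(W^{1,\infty}(b\Omega_T))}\le C|U_\eps|_{\infty,k,T}$. The resulting right-hand side contribution $CK_1\sqrt T(1+\sqrt T)\,|U_\eps|_{\infty,k,T}$ is strictly less than $\tfrac12|U_\eps|_{\infty,k,T}$ provided $T\le\mathcal T(K_1,K_2)$ with $\mathcal T$ chosen small enough in terms of $C$ and $K_1$. This absorption produces \eqref{mainestEk}. Uniqueness and the existence of the solution itself in $E^k_T$ (including the continuity in $x_d$) follow from Proposition \ref{estimEk} together with the usual argument that the equation \eqref{mainlin}(a) expresses $\partial_{x_d}U_\eps$ in terms of quantities whose norms are already controlled.

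The main obstacle I anticipate is bookkeeping in the absorption step: one must verify that the Sobolev embedding $H^{k+1}(b\Omega_T)\hookrightarrow W^{1,\infty}(b\Omega_T)$ is applied uniformly in $\eps$ and $T$, and that the $\sqrt T$ factor dominates the coefficient norms despite the fact that $V_\eps$ is the linearization state (of potentially large norm $K_1$). The condition $k\ge M_0+[(d+1)/2]$ provides ample regularity for both the Sobolev embedding and the applicability of the singular calculus used in the underlying Proposition \ref{estimEk}, so no further regularity juggling should be required.
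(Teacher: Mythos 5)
Your approach is the same as the paper's: the paper proves this corollary by citing \cite[Corollary 7.2]{williams3} and invoking the choice $T=1/\gamma$ in Proposition \ref{estimEk}, and your outline reproduces exactly that argument — verifying \eqref{e4.17} from $k\geq M_0+[(d+1)/2]$ via Sobolev embedding, shrinking $\eps_0$ so that $|\eps V_\eps|_{L^\infty}\le\delta$, observing that ${\rm e}^{-\gamma t}\in[{\rm e}^{-1},1]$ on $[0,T]$ so the weighted and unweighted norms are equivalent, converting $1/\gamma$ to $T$, controlling $|U_\eps|_{L^\infty(W^{1,\infty})}$ by $|U_\eps|_{\infty,k,T}$ by Sobolev embedding, and absorbing the coefficient remainder by shrinking $T$.

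One point you should not slide past. You assert that both $|V_\eps|_{0,k+1,T}$ and $|V_\eps|_{x_d=0}|_{k+1,T}$ are bounded by $K_1$ ``through \eqref{taillecoeff}''. But \eqref{taillecoeff} as written only bounds $|V_\eps|_{\infty,k,T}$ and the boundary trace; the interior $L^2H^{k+1}$ norm $|V_\eps|_{0,k+1,T}$ is not a consequence of \eqref{taillecoeff}, and it is precisely this quantity that enters the dangerous remainder term of \eqref{estEk} (via the Moser estimate for the commutator). Your absorption step needs $T\,|V_\eps|_{0,k+1,T}$ to be small with a constant depending only on $K_1,K_2$, and the stated hypothesis does not supply that. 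In the paper's application (Theorem \ref{estnl}) the coefficients are the previous iterates $U^n_\eps$, which satisfy the stronger induction hypothesis $|U^n_\eps|_{E^k_T}\le K$, so $|V_\eps|_{0,k+1,T}$ is in fact controlled there; the corollary's hypothesis \eqref{taillecoeff} appears to be missing this term (plausibly it should read $|V_\eps|_{E^k_T}+|V_\eps|_{x_d=0}|_{k+1,T}\le K_1$). You should make that extra assumption explicit rather than attribute the bound on $|V_\eps|_{0,k+1,T}$ to a display that doesn't contain it.
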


The parameter $\eps_0$ in Corollary \ref{corEk} is chosen so that \eqref{taillecoeff} implies $|\eps \, V_\eps
|_{L^\infty(\Omega_T)} \le \delta$ where $\delta$ is as in Proposition \ref{estimEk}.

We are now in a position to prove our main existence result for the singular system \eqref{a4b}. The norm
in the space $E^k_T$ is defined by
\begin{equation*}
|v|_{E^k_T} :=|v|_{\infty,k,T} +|v|_{0,k+1,T} \, .
\end{equation*}

\begin{theo}\label{estnl}
Let $K>0$ and let $k \ge M_0+[\frac{d+1}{2}]$. Then there exists a constant $K'>0$, a parameter
$\eps_0(K) \in (0,1]$ and a time ${\mathcal T}(K)>0$ satisfying the following property: the iteration
\eqref{linexact} with $U^0_\eps \equiv 0$ is well-defined for $0<T \le {\mathcal T}(K)$ and satisfies
\begin{equation*}
\forall \, n \in \N \, ,\quad \forall \eps \le \eps_0(K) \, ,\quad
|U^n_\eps|_{E^k_T} +|U^n_\eps|_{x_d=0}|_{k+1,T} \le K \, ,\quad
|\eps \, \partial_{x_d} U^n_\eps|_{L^\infty(\Omega_T)} \leq K' \, .
\end{equation*}
Moreover, the sequence $(U^n_\eps)$ converges towards a function $U_\eps$ in $E^{k-1}_T$,
uniformly with respect to $\eps \in (0,\eps_0(K)]$. The limit $U_\eps$ belongs to $E^k_T$ and is a solution
to \eqref{a4b}.
\end{theo}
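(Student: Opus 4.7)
The plan is to close the Picard iteration \eqref{linexact} by combining a uniform high-norm bound in $E^k_T$ with a contraction in the low norm $E^0_T$, in the standard Moser scheme adapted to the singular calculus. I would proceed by induction on $n$, assuming the bounds $|U^n_\eps|_{E^k_T}+\langle U^n_\eps|_{x_d=0}\rangle_{k+1,T}\le K$ and $|\eps\,\partial_{x_d}U^n_\eps|_{L^\infty}\le K'$, and verify that the next iterate $U^{n+1}_\eps$ exists and enjoys the same bounds, provided $T$ is small and $\eps$ is small.

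First, I would verify that $V_\eps:=U^n_\eps$ meets the hypotheses \eqref{taillecoeff} of Corollary \ref{corEk}: the $E^k_T$ bound combined with $k\ge M_0+[(d+1)/2]$ gives control of the $C^{0,M_0}$ and $CH^{s_0}$ norms by Sobolev embedding, and the induction hypothesis on $\eps\,\partial_{x_d}U^n_\eps$ supplies the last condition in \eqref{e4.17}. Corollary \ref{corEk} then produces a unique $U^{n+1}_\eps\in E^k_T$ solving the linear problem, together with the estimate
\[
|U^{n+1}_\eps|_{E^k_T}+\sqrt{T}\,\langle U^{n+1}_\eps|_{x_d=0}\rangle_{k+1,T}
\le C(K,K')\big(T\,|F(\eps U^n_\eps)U^n_\eps|_{0,k+1,T}+\sqrt{T}\,\langle G\rangle_{k+1,T}\big).
\]
Since $E^k_T$ is an algebra for $k\ge M_0+[(d+1)/2]>(d+1)/2$, the right-hand side is bounded by $(T+\sqrt{T})\,C(K,K')(1+\langle G\rangle_{k+1})$, so choosing $K$ large relative to $\langle G\rangle_{k+1}$ and $\mathcal{T}(K)$ small, the bound $|U^{n+1}_\eps|_{E^k_T}+\langle U^{n+1}_\eps|_{x_d=0}\rangle_{k+1,T}\le K$ is reproduced. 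To propagate the bound on $\eps\,\partial_{x_d}U^{n+1}_\eps$, I solve \eqref{linexact}(a) for $\partial_{x_d}U^{n+1}_\eps$, multiply by $\eps$, and note that the only singular term, $\sum_j \tilde A_j(\eps U^n_\eps)\,\beta_j\,\partial_{\theta_0}U^{n+1}_\eps$, is now finite; by Sobolev embedding in $(x',\theta_0)$ applied to the $CH^k_T$ norm of $U^{n+1}_\eps$, this term is bounded in $L^\infty(\Omega_T)$ by a constant depending only on $K$. This yields $K'=K'(K)$, closing the high-norm induction.

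Next, to get convergence I would estimate differences $W^{n+1}_\eps:=U^{n+1}_\eps-U^n_\eps$ in the low norm $E^0_T$. Subtracting two consecutive instances of \eqref{linexact}, $W^{n+1}_\eps$ satisfies a linear singular system of the same type as \eqref{mainlin} with coefficients $\tilde A_j(\eps U^n_\eps)$, but with a source term that is a sum of expressions like $[\tilde A_j(\eps U^n_\eps)-\tilde A_j(\eps U^{n-1}_\eps)](\partial_{x_j}+\beta_j\partial_{\theta_0}/\eps)U^n_\eps$ and $[F(\eps U^n_\eps)U^n_\eps-F(\eps U^{n-1}_\eps)U^{n-1}_\eps]$, together with a boundary source $[B(\eps U^n_\eps)-B(\eps U^{n-1}_\eps)]U^n_\eps|_{x_d=0}$. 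Crucially, the $\eps$ sitting inside the arguments of $\tilde A_j$ and $B$ compensates the $1/\eps$ in the singular derivative, so each difference factor can be written as $\eps\,(\int_0^1 {\rm d}\tilde A_j(\cdot)\,{\rm d}s)\,W^n_\eps$ times $(\eps\partial_{x_j}+\beta_j\partial_{\theta_0})U^n_\eps/\eps$, which is controlled in $L^2H^1$ by $|W^n_\eps|_{E^0_T}$ using the uniform $E^k_T$ bound on $U^n_\eps$. Applying Theorem \ref{est'} with $T=\mathcal{T}(K)$ chosen sufficiently small then gives a contraction $|W^{n+1}_\eps|_{E^0_T}\le\tfrac12|W^n_\eps|_{E^0_T}$ uniformly in $\eps\in(0,\eps_0(K)]$.

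The main obstacle is the contraction step, because the difference equation inherits the singular $1/\eps$ coefficient and one must verify that all cancellations produce only tame, $\eps$-uniform bounds in the low norm; once that is done, uniform $E^0_T$-convergence follows, and the uniform high-norm bound together with interpolation between $E^0_T$ and $E^k_T$ yields convergence in $E^{k-1}_T$. The limit $U_\eps$ lies in $E^k_T$ (by weak-$*$ compactness and the uniform bound), and passing to the limit in \eqref{linexact} in the $E^{k-1}_T$ topology (which is an algebra for $k-1\ge M_0+[(d+1)/2]$, assuming $k$ is taken one unit larger if needed) shows that $U_\eps$ solves \eqref{a4b}.
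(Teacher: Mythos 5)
Your proposal follows essentially the same route as the paper: feed the induction hypotheses into Corollary \ref{corEk}, propagate the $E^k_T$ bound by shrinking $T$, propagate the $L^\infty$ bound on $\eps\,\partial_{x_d}U^{n+1}_\eps$ by reading it off from the equation (the paper fixes $K'$ via \eqref{choixK'}, which is the same computation), and close with an $E^0_T$ contraction based on Theorem \ref{est'}, from which convergence in $E^{k-1}_T$ and $U_\eps\in E^k_T$ follow. Two small remarks. First, the theorem asserts the conclusion for \emph{every} given $K>0$, so you should not ask that $K$ be ``large relative to $\langle G\rangle_{k+1}$''; what actually closes the high-norm step is that $G$ vanishes for $t\le 0$, hence $\langle G\rangle_{k+1,T}\to 0$ as $T\to 0$, so shrinking $\mathcal T(K)$ alone makes the right-hand side of the Corollary \ref{corEk} estimate smaller than $K$ — a point you do invoke, but your phrasing suggests an unnecessary restriction on $K$. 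Second, the caveat ``assuming $k$ is taken one unit larger if needed'' is superfluous: by Proposition \ref{d23a}, $E^{k-1}_T$ is a Banach algebra as soon as $k-1>(d+1)/2$, which is automatic since $k\ge M_0+[(d+1)/2]$ with $M_0=3d+5$.
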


\begin{proof}
The constant $K'$ is chosen such that, if $|U^n_\eps|_{E^k_T} \le K$, and if furthermore $|V_\eps|_{E^k_T}
\le K$, then one has
\begin{equation}
\label{choixK'}
\left| \eps \, F(\eps \, U^n_\eps) U_\eps^n -\sum_{j=0}^{d-1} \tA_j(\eps \, U^n_\eps) (\eps \, \partial_{x_j}
+\beta_j \, \partial_{\theta_0}) V_\eps \right|_{L^\infty(\Omega_T)} \leq K' \, ,
\end{equation}
independently of $\eps \in (0,1]$. Then the parameter $\eps_0$ is chosen as $\eps_0(K,K')$ given by
Corollary \ref{corEk}. The time ${\mathcal T}(K,K')>0$ is chosen accordingly. Assuming that the induction
assumption
\begin{equation*}
\forall \, j \le n \, ,\quad \forall \, \eps \le \eps_0(K) \, ,\quad
|U^j_\eps|_{E^k_T} +\langle U^j_\eps|_{x_d=0} \rangle_{k+1,T} \le K \, ,\quad
|\eps \, \partial_{x_d} U^j_\eps|_{L^\infty(\Omega_T)} \leq K' \, ,
\end{equation*}
holds (this is trivially true for $n=0$), we can apply the estimate \eqref{mainestEk} of Corollary \ref{corEk}
to the system \eqref{linexact} and obtain
\begin{align*}
|U^{n+1}_\eps|_{E^k_T} +\sqrt{T} \, \langle U^{n+1}_\eps|_{x_d=0} \rangle_{k+1,T}
&\leq C_k(K,K') \left( T \, |F(\eps U^n_\eps) U^n_\eps|_{0,k+1,T} +\sqrt{T} \, \langle G \rangle_{k+1,T} \right) \\
&\le C_k(K,K') \left( T \, C(K) +\sqrt{T} \, \langle G \rangle_{k+1,T} \right) \, .
\end{align*}
Since $\langle G \rangle_{k+1,T}$ tends to zero as $T$ tends to zero, we can choose the time $T$ small enough
so that the induction assumption implies
\begin{equation*}
\forall \, \eps \le \eps_0(K) \, ,\quad |U^{n+1}_\eps|_{E^k_T}
+\langle U^{n+1}_\eps|_{x_d=0} \rangle_{k+1,T} \le K \, .
\end{equation*}
Our choice of $K'$ in \eqref{choixK'} implies that the induction assumption propagates from the rank $n$ to the
rank $n+1$ because $|\eps \, \partial_{x_d} U^{n+1}_\eps|_{L^\infty(\Omega_T)} \leq K'$.

The convergence in $E^{k-1}_T$ is obtained by showing a contraction estimate in $E^0_T$, which is obtained
by applying Theorem \ref{est'}. We refer to \cite[page 184]{williams3} for the details. The limit $U_\eps$ of the
iteration scheme \eqref{linexact} is a solution to \eqref{a4b}, which yields the regularity $U_\eps \in E^k_T$ (see
\cite[chapter 9]{BS} for similar arguments).
\end{proof}

\section{Construction of the leading pulse profiles}\label{sect3}

\quad Observe that we can solve the system \eqref{28} by solving instead
\begin{align}\label{f1}
\begin{split}
&X_{\phi_i} \sigma_i +c^i_i \, \sigma_i \, \partial_{\theta}\sigma_i -e^i_i \, \sigma_i=0,\;i=1,2,3\\
&(\sigma_i(x',0,\theta), i =1,3)=\cB\left(G(x',\theta),\sigma_2(x',0,\theta)\right),\\
&\sigma_i=0\text{ in }t<0.
\end{split}
\end{align}
where all occurrences of $\theta_i$ or $\theta_0$ are now replaced by $\theta$. To solve \eqref{f1} we
use the iteration scheme
\begin{align}\label{f2}
\begin{split}
&(a)\;X_{\phi_i} \sigma_i^{n+1} +c^i_i \, \sigma_i^n\partial_{\theta}\sigma^{n+1}_i =e^i_i \, \sigma^{n}_i,\;i=1,2,3\\
&(b)\;(\sigma_i^{n+1}(x',0,\theta), i =1,3)=\cB\left(G(x',\theta),\sigma_2^{n+1}(x',0,\theta)\right),\\
&(c)\;\sigma_i^{n+1}=0\text{ in }t<0.
\end{split}
\end{align}
We will prove estimates for \eqref{f2} in a class of Sobolev spaces weighted in $\theta$. These weights are
introduced in order to get an explicit decay rate in $\theta$ at infinity.

\begin{defn}\label{c1}
For $s\in\N$ and $\gamma\geq 1$ define the spaces
\begin{align*}
\begin{split}
\Gamma^s &:=\Big\{ a(x,\theta) \in L^2 (\R^{d+1}_+ \times\R) : (\theta,\partial_{x},\partial_{\theta})^\beta a
\in L^2 \, \text{\rm for }|\beta|\leq s, \, \text{\rm and }a=0 \, \, \text{\rm in }t<0 \Big\} \, ,\\
\text{\rm and } \Gamma^s_\gamma &:=e^{\gamma t} \, \Gamma^s \, ,
\end{split}
\end{align*}
with respective norms
\begin{equation}
\label{c1a}
|a|_s := \sum_{|\beta|=|(\beta_1,\beta_2,\beta_3)|\leq s} |\theta^{\beta_1} \, \partial_x^{\beta_2} \,
\partial_\theta^{\beta_3} \, a|_{L^2(x,\theta)} \quad \text{\rm and } |a|_{s,\gamma}:=|e^{-\gamma t}a|_s.
\end{equation}
We will let $H^s$ and $H^s_\gamma$ denote the usual Sobolev spaces with norms defined just as in \eqref{c1a}
but without the $\theta$ weights. These spaces and those below have the obvious meanings when $a$ is vector-valued.
\end{defn}

\begin{rem}\label{c2}
\textup{We have
\begin{equation*}
|a|_{s,\gamma} \sim \sum_{|\beta|\leq s} \gamma^{s-|\beta|} \, |{\rm e}^{-\gamma t} \, \theta^{\beta_1} \, \partial_x^{\beta_2}
\, \partial_\theta^{\beta_3}a|_{L^2(x,\theta)} \sim \sum_{|\beta|\leq s} |{\rm e}^{-\gamma t} \, \theta^{\beta_1} \,
\partial_x^{\beta_2} \, \partial_\theta^{\beta_3}a|_{L^2(x,\theta)},
\end{equation*}
where ``$\sim$" denotes an equivalence of norms with constants independent of $\gamma \ge 1$. The second
equivalence follows from
\begin{equation*}
(\partial_t+\gamma) \, ({\rm e}^{-\gamma t} \, a)={\rm e}^{-\gamma t} \, \partial_t a.
\end{equation*}}
\end{rem}


The next proposition is helpful for estimating the commutators that arise when deriving $\Gamma^s$ estimates of
solutions to the linearization of the profile system \eqref{28}. Define
\begin{align*}
\Lambda^s :=\Big\{ a \in L^2 (\R^{d+1}_+ \times \R) : \quad &\theta^{\beta_1} \, a \in L^2 \, \, \text{\rm for }
|\beta_1|\leq s, \quad \partial_x^{\beta_2} \, a\in L^2 \, \, \text{\rm for } |\beta_2| \leq s,\\
&\partial_{\theta}^{\beta_3} \, a \in L^2 \, \, \text{\rm for }|\beta_3|\leq s,\quad a=0 \, \, \text{\rm in }t<0 \Big\},
\end{align*}
with
\begin{equation*}
|a|_{\Lambda^s} :=\sum_{|\beta_1| \leq s} |\theta^{\beta_1} \, a|_{L^2}
+\sum_{|\beta_2|\leq s} |\partial_x^{\beta_2} \, a|_{L^2}
+\sum_{|\beta_3|\leq s} |\partial_{\theta}^{\beta_3} \, a|_{L^2},
\end{equation*}
and let define $\Lambda^s_\gamma :={\rm e}^{\gamma t} \, \Lambda^s$ with the norm $|a|_{\Lambda^s_\gamma}
:=|{\rm e}^{-\gamma t}a|_{\Lambda^s}$ accordingly.

\begin{prop}\label{c4}
The spaces $\Gamma^s$ and $\Lambda^s$ are equal, and the norms $|a|_{s}$ and $|a|_{\Lambda^s}$ are equivalent:
there exists a constant $C_s$ such that
\begin{align}\label{c4a}
|a|_{\Lambda^s} \leq |a|_s \leq C_s \, |a|_{\Lambda^s} \, .
\end{align}
\end{prop}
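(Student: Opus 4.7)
The inequality $|a|_{\Lambda^s} \leq |a|_s$ is immediate from the definitions: each summand $|\theta^{\beta_1} a|_{L^2}$, $|\partial_x^{\beta_2} a|_{L^2}$, $|\partial_{\theta}^{\beta_3} a|_{L^2}$ appearing in $|a|_{\Lambda^s}$ is one of the summands in $|a|_s$ (obtained by zeroing out all but one of the three multi-indices).

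\textbf{Hard direction.} To prove $|a|_s \leq C_s |a|_{\Lambda^s}$, the plan is to control every mixed term $|\theta^{\beta_1}\partial_x^{\beta_2}\partial_\theta^{\beta_3}a|_{L^2}$ with $|\beta_1|+|\beta_2|+|\beta_3|\leq s$ by pure axis norms, in three steps. First, since $\partial_x$ commutes with both multiplication by $\theta$ and with $\partial_\theta$, I rewrite
\begin{equation*}
\theta^{\beta_1}\partial_x^{\beta_2}\partial_\theta^{\beta_3}a = \partial_x^{\beta_2}\bigl(\theta^{\beta_1}\partial_\theta^{\beta_3}a\bigr),
\end{equation*}
and apply Plancherel in $x$ with dual variable $\xi$, reducing the estimate to a fiberwise statement in $\xi$ for the $\theta$-dependence of $\widehat{a}(\xi,\theta)$.

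\textbf{Step 2 (Heisenberg interpolation).} The core technical lemma is: for $p+q\leq s$ and $u\in L^2(\R_\theta)$,
\begin{equation*}
|\theta^p \partial_\theta^q u|_{L^2(\R)} \leq C_s \bigl( |u|_{L^2} + |\theta^s u|_{L^2} + |\partial_\theta^s u|_{L^2} \bigr).
\end{equation*}
I would prove this by expanding $u=\sum c_n h_n$ in the Hermite basis associated with the harmonic oscillator $H=1+\theta^2-\partial_\theta^2$. Writing $\theta^p\partial_\theta^q$ as a polynomial of total degree $p+q$ in the creation/annihilation operators $a^{\pm}=(\theta\mp\partial_\theta)/\sqrt{2}$ and using $a^\pm h_n=O(\sqrt{n})\,h_{n\pm 1}$, one gets $|\theta^p\partial_\theta^q u|^2 \leq C\sum_n(1+n)^{p+q}|c_n|^2$; on the other hand the operator inequality $\theta^{2s}+(-\partial_\theta^{2})^{s}\geq c(H^s-I)$ (valid modulo commutators of strictly lower order, which are absorbed by induction on $s$) yields $|\theta^s u|^2+|\partial_\theta^s u|^2+|u|^2\geq c\sum_n(1+n)^s|c_n|^2$, and the two estimates combine.

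\textbf{Step 3 (assembly).} Applying Step 2 fiberwise in $\xi$ to $u(\theta)=\widehat{a}(\xi,\theta)$, then multiplying by $|\xi|^{2|\beta_2|}$ and integrating in $\xi$, reduces matters to two-variable mixed terms $|\theta^p\partial_x^q a|_{L^2}$ and $|\partial_\theta^p\partial_x^q a|_{L^2}$ with $p+q\leq s$. Since $\partial_x$ commutes with both $\theta$ and $\partial_\theta$, Plancherel in $(x,\theta)$ reduces each of these to the elementary pointwise majorization
\begin{equation*}
|\theta|^{2p}|\xi|^{2q} \leq C\bigl(1+|\theta|^{2s}+|\xi|^{2s}\bigr), \qquad p+q\leq s,
\end{equation*}
and analogously with $k$ (the $\theta$-dual) in place of $\theta$. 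These yield control by $|a|_{L^2}$, $|\theta^s a|_{L^2}$, $|\partial_x^s a|_{L^2}$, $|\partial_\theta^s a|_{L^2}$, each a term of $|a|_{\Lambda^s}$.

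\textbf{Main obstacle.} The only genuinely non-trivial step is Step 2: $\theta$ and $\partial_\theta$ fail to commute, so direct Plancherel-plus-pointwise-majorization does not close. The Shubin/Hermite framework supplies the simultaneous diagonalization that bridges the weighted $L^2$ direction and the differentiated $L^2$ direction; once this is in hand, Steps 1 and 3 are just bookkeeping with commuting operators and standard Fourier inequalities.
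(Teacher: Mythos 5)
Your argument is correct, and it takes a genuinely different route from the paper's. The paper proves $|a|_s\leq C_s|a|_{\Lambda^s}$ directly in physical space by induction on $s$: for a top-order term $\int|\theta^{\beta_1}\partial_x^{\beta_2}\partial_\theta^{\beta_3}a|^2\,{\rm d}x\,{\rm d}\theta$ with $|\beta|=s$, it splits into the cases $2|\beta_1|\geq s$ and $2|\beta_2,\beta_3|\geq s$, integrates by parts until one factor carries only derivatives and the other carries the full mixed weight, uses Young's inequality to peel off a piece absorbable into $|a|_s^2$, and controls the remaining pure-derivative term by a further round of integration by parts together with the induction hypothesis. No Fourier transform in $\theta$ and no spectral theory appear. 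You instead commute the $x$-derivatives out, Fourier-transform in $x$, and reduce everything to a one-dimensional interpolation inequality in $(\theta,\partial_\theta)$ proved via the Hermite basis for the harmonic oscillator $H=1+\theta^2-\partial_\theta^2$; assembly is then just Plancherel and pointwise symbol majorization. Your organization is conceptually cleaner (it makes explicit that the $\theta$-direction is exactly the Shubin, i.e.\ Hermite--Sobolev, scale, and the one-dimensional lemma you isolate is a standard equivalence of norms for that scale), but it imports more machinery: the operator inequality $\theta^{2s}+(-\partial_\theta^2)^s\gtrsim H^s-C$ ``modulo commutators of lower order absorbed by induction'' is, as written, a sketch and would need either a citation to Shubin's book or a detailed argument, for example via sharp G{\aa}rding in the Shubin symbol class or, unwound, by essentially the same integration-by-parts induction the authors perform. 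So the two proofs ultimately rest on the same noncommutativity trick; you have packaged it into a modular 1D lemma, which is tidy and illuminating but less self-contained than the elementary route the authors chose.
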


\begin{proof}
Clearly, $|a|_{\Lambda^s} \leq |a|_s$. The remaining inequality is proved by induction on $s$.
The case $s=0$ is clear. The square $|a|^2_s$ is a sum of terms
\begin{align}\label{c5}
\int (\theta^{\beta_1} \, \partial_x^{\beta_2} \, \partial_\theta^{\beta_3} \, a) \,
(\theta^{\beta_1} \, \partial_x^{\beta_2} \, \partial_\theta^{\beta_3} a) \, {\rm d}x \, {\rm d}\theta \, ,
\end{align}
where $|\beta|=|\beta_1|+|\beta_2|+|\beta_3|\leq s$. The terms with $|\beta|<s$ are dominated by
$C|a|_{\Lambda^s}^2$ by the induction assumption.

Consider now a term like \eqref{c5} with $|\beta|=s>0$. Either $2\, |\beta_1|\geq s$ or $2\, |\beta_2,\beta_3| \geq s$.
Suppose $2\, |\beta_2,\beta_3|\geq s$. Perform integrations by parts to obtain terms of the form
\begin{equation}\label{c6a}
C\, \int\partial_{(x,\theta)}^{\alpha_1} \, a \cdot (\theta,\partial_x,\partial_\theta)^{\alpha_2} \, a \; {\rm d}x \, {\rm d}\theta
\leq C_\delta \, |\partial_{(x,\theta)}^{\alpha_1} \, a|^2_{L^2} +\delta \, |(\theta,\partial_x,\partial_\theta)^{\alpha_2} \,
a|^2_{L^2}, \;\;|\alpha_i|=s, \;i=1,2 \, ,
\end{equation}
and other terms (where powers of $\theta$ are differentiated) that can be estimated using the induction assumption.
The second term on the right in \eqref{c6a} can be absorbed by $|a|_{s}^2$. Integrations by parts show that the first
term on the right is dominated by the sum of a multiple of $|a|_{\Lambda^s}^2$ and a term that can be absorbed by
$|a|_{s}^2$.

The remaining case $2|\beta_1|\geq s$ is handled similarly.
\end{proof}

\begin{rem}\label{f3}
\textup{1) The spaces $\Gamma^s$, $\Gamma^s_\gamma$, $\Lambda^s$, and $\Lambda^s_\gamma$ all have
obvious analogues when $L^2(\R^{d+1}_+ \times\R)$ is replaced by $L^2(\Omega_T)$ in the definitions, where
we recall the notation
\begin{equation*}
\Omega_T=\big\{ (x,\theta) \in \R^{d+1}_+ \times\R: t<T \big\}.
\end{equation*}
The corresponding norms are denoted by adding the subscript $T$: $|a|_{s,T}$, $|a|_{s,\gamma,T}$,
$|a|_{\Lambda^s_T}$, etc. The equivalence \eqref{c4a} continues to hold for the norms restricted to $\Omega_T$,
as can be seen by a standard argument using Seeley extensions \cite{CP}.}

\textup{2) The analogous norms of functions of $b(x',\theta)$ defined on $\R^d\times \R$ or on $b\Omega_T$ are
denoted with brackets: $\langle b\rangle_{s,T}$, $\langle b\rangle_{s,\gamma,T}$, etc. We denote the corresponding
spaces by $b\Gamma^s_T$, $b\Lambda^s_T$, etc.}

\textup{3) By Sobolev embedding it follows that if the functions $\sigma_j$ appearing in \eqref{25} lie in $\Gamma^s_T$
for $s>\frac{d+2}{2}+3$, then $\cF$ as in \eqref{19} is of type $\cF$. Indeed, we then have, for example, $\theta^2_j \,
\partial_{\theta_j} \sigma_j \in H^t_T$ for an index $t>\frac{d+2}{2}$.}

\textup{4) More generally, if the functions $f^i_k$, $g^i_{l,m}$, $h^i_{l,m}$ appearing in \eqref{30a} lie in $\Gamma^s_T$
for $s>\frac{d+2}{2}+2$, then $F$ as in \eqref{30aa} is of type $\cF$.}
\end{rem}

We set $|a|_\infty :=|a|_{L^\infty(\Omega_{T})}$ when the domain $\Omega_T$ makes no possible confusion, and
we define
\begin{equation*}
W^{1,\infty}_T := \big\{
a(x,\theta) \, : \, |a|_{1,\infty}:=\sum_{|\alpha|\leq 1}|\partial^\alpha_{x,\theta}a|_{\infty}<\infty \big\} \, .
\end{equation*}

\textbf{Estimates for the coupled systems.} We can now state the main existence result for solutions
\begin{align}\label{c23}
\cV^{0,n+1}(x,\theta)=(\sigma_1^{n+1},\sigma_2^{n+1},\sigma_3^{n+1})
\end{align}
to the sequence of linear systems \eqref{f2}.

\begin{prop}\label{c24}
Let $T>0$, $m>\frac{d+2}{2}+1$ and suppose that $G \in b\Gamma^m_T$ and $\cV^{0,n}\in\Gamma^m_T$
both vanish in $t\leq 0$. Then the system \eqref{f2} has a unique solution $\cV^{0,n+1} \in \Gamma^m_T$
vanishing in $t\leq 0$ with $\sigma^{n+1}_2 \equiv 0$. Moreover, there exist increasing functions, $\gamma_0(K)$
and $C(K)$ of $K:=|\cV^{0,n}|_{m,T}$ such that for $\gamma \ge \gamma_0(K)$ we have
\begin{align}\label{c25}
|\cV^{0,n+1}|_{m,\gamma,T} +\dfrac{\langle \cV^{0,n+1} \rangle_{m,\gamma,T}}{\sqrt{\gamma}} \leq C(K) \,
\left( \dfrac{\langle G\rangle_{m,\gamma,T}}{\sqrt{\gamma}} +\dfrac{|\cV^{0,n}|_{m,\gamma,T}}{\gamma} \right).
\end{align}
\end{prop}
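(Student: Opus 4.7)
The plan is to solve the three scalar equations of \eqref{f2} in sequence, exploiting the fact that each equation is \emph{linear} in the unknown $\sigma_i^{n+1}$ (the factor $\sigma_i^n$ is a known coefficient) and that the three components decouple in the interior. First I would dispose of $\sigma_2^{n+1}$: assuming inductively (starting from $\cV^{0,0}\equiv 0$) that $\sigma_2^n\equiv 0$, the $i=2$ equation collapses to the homogeneous scalar transport $X_{\phi_2}\sigma_2^{n+1}=0$ on $\{x_d\ge 0\}$, with zero data in $t<0$ and no boundary condition imposed on $\sigma_2^{n+1}$ at $x_d=0$ (its trace appears only as an argument of $\cB$ on the right of \eqref{f2}(b)). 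Because the non-glancing hypothesis forces the $\partial_t$-component of $X_{\phi_2}$ to be nonzero, every characteristic of $X_{\phi_2}$ starting from a point in $\{t<T,\,x_d\ge 0\}$ traces backward into $\{t<0\}$, and causality propagates the zero data to give $\sigma_2^{n+1}\equiv 0$, closing the induction.

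With $\sigma_2^{n+1}\equiv 0$, the boundary condition \eqref{f2}(b) reduces to $(\sigma_i^{n+1}|_{x_d=0})_{i=1,3}=\cB(G,0)$, a bounded linear function of $G$ valued in $b\Gamma^m_T$ and vanishing for $t<0$. For each $i\in\{1,3\}$, $\phi_i$ is incoming, and the scalar equation
\[
X_{\phi_i}\sigma_i^{n+1}+c^i_i\sigma_i^n\,\partial_\theta\sigma_i^{n+1}=e^i_i\sigma_i^n
\]
is a linear transport equation in $(x,\theta)$ whose principal part remains strictly incoming at $x_d=0$ and whose coefficient $c^i_i\sigma_i^n$ lies in $W^{1,\infty}$ by the Sobolev embedding $\Gamma^m_T\hookrightarrow W^{1,\infty}$, valid for $m>(d+2)/2+1$. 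Existence and uniqueness of an $L^2$ solution thus follow from a standard hyperbolic well-posedness argument (or the method of characteristics in $(x,\theta)$-space), and the regularity $\sigma_i^{n+1}\in\Gamma^m_T$ will be obtained jointly with the a priori estimate \eqref{c25}.

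To derive \eqref{c25} I invoke Proposition \ref{c4} to work with the equivalent $\Lambda^m$-norm, estimating the three families $|\theta^{\beta_1}\sigma_i^{n+1}|_{L^2_\gamma}$, $|\partial_x^{\beta_2}\sigma_i^{n+1}|_{L^2_\gamma}$, and $|\partial_\theta^{\beta_3}\sigma_i^{n+1}|_{L^2_\gamma}$ separately for $|\beta_j|\le m$. The crucial point is that $X_{\phi_i}$ does not differentiate $\theta$, so $[X_{\phi_i},\theta^{\beta_1}]=0$; the $\theta$-weight interacts with the equation only through the lower-weight commutator $[c^i_i\sigma_i^n\partial_\theta,\theta^{\beta_1}]=-\beta_1\,c^i_i\sigma_i^n\,\theta^{\beta_1-1}$, which is absorbable by induction on $|\beta_1|$. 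After commuting $\partial_x^{\beta_2}\partial_\theta^{\beta_3}$ through the equation, multiplication by the commuted unknown times $e^{-2\gamma t}$ and integration over $\{x_d>0\}$ yields the $L^2_\gamma$ estimate: the $\gamma$ factor on the left arises from differentiating $e^{-2\gamma t}$ against the nonzero $\partial_t$-component of $X_{\phi_i}$, and the boundary term from integrating the $\partial_{x_d}$-component by parts has the sign dictated by the incoming group velocity, producing the $\langle\cV^{0,n+1}\rangle_{m,\gamma,T}/\sqrt{\gamma}$ term on the left, matched by $\langle G\rangle_{m,\gamma,T}/\sqrt{\gamma}$ on the right via the boundary data.

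The main obstacle will be controlling, at top order $|\beta_2|+|\beta_3|=m$, the commutator $[c^i_i\sigma_i^n\,\partial_\theta,\partial_x^{\beta_2}\partial_\theta^{\beta_3}]$ in the weighted norm. This requires a tame Moser-type product estimate splitting the commutator into a first piece bounded by $C(|\sigma_i^n|_{W^{1,\infty}})\,|\sigma_i^{n+1}|_{m,\gamma,T}$, absorbed on the left by choosing $\gamma\ge\gamma_0(K)$ large enough, and a second piece bounded by $C(|\sigma_i^{n+1}|_{W^{1,\infty}})\,|\sigma_i^n|_{m,\gamma,T}$, which after using the embedding $|\sigma_i^{n+1}|_{W^{1,\infty}}\le C\,|\sigma_i^{n+1}|_{m,\gamma,T}$ (valid for $m>(d+2)/2+1$) and one more round of $\gamma$-absorption produces the $|\cV^{0,n}|_{m,\gamma,T}/\gamma$ term on the right of \eqref{c25}. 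The polynomial $\theta$-weight passes through products by Leibniz without disturbing the tame structure, thanks to the norm equivalence of Proposition \ref{c4}.
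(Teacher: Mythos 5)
Your proof follows the same overall strategy as the paper: a weighted $L^2$ energy estimate for the decoupled transport operators, followed by commutation with $\theta$-weights and $(x,\theta)$-derivatives using the $\Gamma^m\leftrightarrow\Lambda^m$ equivalence of Proposition \ref{c4}, absorption via $\gamma$, and the causality argument for $\sigma_2^{n+1}$. Two of your details, however, are off in ways worth flagging.

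\emph{The vanishing of $\sigma_2^{n+1}$.} You attribute the propagation of zero data into $\{t<0\}$ to the non-glancing hypothesis, which only guarantees that the $\partial_t$-component of $X_{\phi_2}$ is nonzero. That is necessary but not sufficient: what you actually need is the \emph{outgoing} property of $\phi_2$. Because $X_{\phi_i}$ has $\partial_{x_d}$-coefficient $+1$ and is a scalar multiple $1/\partial_{\xi_d}\lambda_{k_i}$ of $\partial_t+{\bf v}_i\cdot\nabla_{x''}$, for an outgoing phase the $t$-component of $X_{\phi_2}$ is \emph{negative}, so the characteristic through an interior point $x_d>0$ that reaches $t<0$ moves in the direction of \emph{increasing} $x_d$ and never touches $\{x_d=0\}$. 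For an incoming phase the same reasoning fails -- the backward characteristic exits the domain at $x_d=0$ before $t=0$, and one would need boundary data, which \eqref{f2}(b) does not supply for $\sigma_2^{n+1}$. The paper correctly invokes ``$X_{\phi_2}$ outgoing'' at this step. (Your inductive hypothesis $\sigma_2^n\equiv 0$, while not in the statement, is indeed required for $e^2_2\sigma_2^n$ to vanish, and is justified by starting the iteration at $\cV^{0,0}\equiv 0$.)

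\emph{Source of the $|\cV^{0,n}|_{m,\gamma,T}/\gamma$ term and the commutator bound.} The $|\cV^{0,n}|_{m,\gamma,T}/\gamma$ on the right of \eqref{c25} comes from the forcing $e^i_i\sigma^n_i$ appearing on the right-hand side of \eqref{f2}(a), not from the commutator as you claim; indeed you never account for this forcing. For the commutator $[\sigma^n_i\partial_\theta,\partial^\alpha]\sigma^{n+1}_i$, the paper does not need a symmetric tame split: since every term has $\ge 1$ derivative on $\sigma^n_i$, the factor $\partial^{\alpha_1}\sigma^n_i$ is placed in the \emph{unweighted} space $H^{m-|\alpha_1|}_T$ (bounded by $K=|\cV^{0,n}|_{m,T}$) while the other factor carries the $\gamma$-weight, giving a bound $CK\,|\sigma^{n+1}_i|_{m,\gamma,T}$ that is absorbed entirely. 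Your second piece $C(|\sigma_i^{n+1}|_{W^{1,\infty}})\,|\sigma_i^n|_{m,\gamma,T}$ is problematic: the embedding $|\sigma_i^{n+1}|_{W^{1,\infty}}\le C|\sigma_i^{n+1}|_{m,\gamma,T}$ is not uniform in $\gamma$ (it carries an $e^{\gamma T}$ factor), so this route cannot close the absorption argument as written. The asymmetric bound used in the paper avoids the issue.

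Finally, your account of the trace term $\langle\cV^{0,n+1}\rangle_{m,\gamma,T}/\sqrt{\gamma}$ as arising from the energy boundary term at $x_d=0$ is also slightly off: for incoming phases that boundary term has the unfavorable sign and goes to the right side, while the trace estimate is obtained directly from the algebraic boundary condition \eqref{f2}(b) (with $\sigma_2^{n+1}\equiv 0$).
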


\begin{proof}
\textbf{1. $L^2$ estimate.} Anticipating the extra forcing terms that arise in the higher derivative estimates,
we first prove an $L^2$ a priori estimate in the case where a forcing term $f_{i}(x,\theta)$ vanishing in $t\leq 0$
is added to the right side of each interior equation in \eqref{f2}. Setting $F(x,\theta):=(f_1,f_2,f_3)$ we claim
\begin{align}\label{c26a}
|\cV^{0,n+1}|_{0,\gamma,T}+\dfrac{\langle \cV^{0,n+1} \rangle_{0,\gamma,T}}{\sqrt{\gamma}} \leq C(K') \,
\left( \dfrac{|F|_{0,\gamma,T}}{\gamma} +\dfrac{\langle G\rangle_{0,\gamma,T}}{\sqrt{\gamma}}
+\dfrac{|\cV^{0,n}|_{0,\gamma,T}}{\gamma} \right),
\end{align}
where $K' :=|\cV^{0,n}|_{1,\infty}$.
The latter estimate is obtained by considering the weighted function ${\rm e}^{-\gamma t} \, \cV^{0,n+1}$,
and by performing straightforward energy estimates on \eqref{f2}(a). The traces of $\sigma_i^{n+1}$, $i=1,3$,
are directly estimated by using \eqref{f2}(b).


\textbf{2. Higher order estimates.} We use again the system \eqref{f2} in its original form. Using the equivalence
of norms established in Proposition \ref{c4}, we first apply the $L^2$ estimate to the problems satisfied by
$\theta^k \, \sigma_i$, where $k\leq m$. The forcing term in this case is
\begin{align*}
-c^i_i \, [\sigma^n_i \, \partial_\theta,\theta^k] \, \sigma^{n+1}_i =-c^i_i \, k \, \sigma^n_i \, \theta^{k-1} \, \sigma^{n+1}_i.
\end{align*}
Clearly we may assume $|\theta|\geq 1$. Applying \eqref{c26a} we can absorb the terms on the right involving
$\sigma_i^{n+1}$ to obtain
\begin{align}\label{f5}
|\theta^k \, \cV^{0,n+1}|_{0,\gamma,T} +\dfrac{\langle \theta^k \, \cV^{0,n+1} \rangle_{0,\gamma,T}}{\sqrt{\gamma}}
\leq C(K') \, \left( \dfrac{\langle G \rangle_{m,\gamma,T}}{\sqrt{\gamma}}
+\dfrac{|\cV^{0,n}|_{m,\gamma,T}}{\gamma} \right) \, \text{ for } \gamma \geq \gamma_0(K').
\end{align}

Next we estimate $\partial_{x'}^\alpha \sigma^{n+1}_i$ for $|\alpha|\leq m$. The forcing term in the problem
satisfied by $\partial_{x'}^\alpha\sigma^{n+1}_i$ is now
\begin{align*}
-c^i_i \, [\sigma^n_i \, \partial_\theta,\partial_{x'}^\alpha] \, \sigma^{n+1}_i.
\end{align*}
The commutator is a finite linear combination of terms of the form
\begin{align}\label{f7}
(\partial_{x'}^{\alpha_1}\sigma^n_i) \, (\partial^{\alpha_2}_{x'}\partial_\theta\sigma^{n+1}_i), \; \;
|\alpha_1|+|\alpha_2|=|\alpha|, \;\;|\alpha_1|\geq 1.
\end{align}
We estimate these terms using the following two observations:

\textbf{A}. Suppose $m_1+m_2>\frac{d+2}{2}$, $m_i\geq 0$. Then the product $(a(x,\theta),b(x,\theta)) \to a\cdot b$
is continuous from $H^{m_1}_T\times H^{m_2}_T\to H^0_T$.

\textbf{B}. Suppose $|\alpha_1|+|\alpha_2|\leq |\alpha|\leq m$. Then
\begin{align*}
|{\rm e}^{-\gamma t} \, \partial^{\alpha_1}_{x,\theta}u(x,\theta)|_{|\alpha_2|,T} \leq |u|_{m,\gamma,T}.
\end{align*}
By \textbf{A} (with $m_1=m-|\alpha_1|$, $m_2=|\alpha_1|-1$) and \textbf{B} we have
\begin{align*}
|\eqref{f7}|_{0,\gamma,T}\leq C \, |\sigma_i^n|_{m,T} \, |\sigma_{i}^{n+1}|_{m,\gamma,T} \leq C \, K \,
|\sigma_{i}^{n+1}|_{m,\gamma,T}.
\end{align*}
Applying \eqref{c26a} and absorbing terms from the right, we obtain an estimate like \eqref{f5} for
$\partial_{x'}^\alpha\cV^{0,n+1}$ with $C(K')$ replaced by $C(K)$.

The $\theta$ derivatives $\partial^k_\theta \sigma_i$, $k\leq m$, are estimated similarly. Derivatives involving
$\partial_{x_d}$ are estimated in the customary way using the tangential estimates and the fact that $x_d=0$
is noncharacteristic for $X_{\phi_i}$.

\textbf{3. Existence and uniqueness}. This follows easily from the above estimates since the principal part of
the system \eqref{f2} is given by three decoupled vector fields. One can therefore integrate along characteristics.
Equations \eqref{f2}(a),(c) and the fact that $X_{\phi_2}$ is outgoing imply $\sigma^{n+1}_2=0$.
\end{proof}

Next we show convergence of the iterates $\cV^{0,n}$ to a short time solution of the nonlinear profile equations
\eqref{f1}.

\begin{prop}\label{c37}
Consider the profile equations \eqref{f1}, where $G\in b\Gamma^m_T$, $m>\frac{d+2}{2}+1$, and vanishes in
$t\leq 0$. For some $0<T_0\leq T$ the system has a unique solution $\cV^0\in \Gamma^m_{T_0}$ with $\sigma_2=0$.
\end{prop}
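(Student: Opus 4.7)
The plan is to run a Picard iteration using the scheme \eqref{f2} with $\cV^{0,0}\equiv 0$, and exploit the linear estimate \eqref{c25} already established in Proposition \ref{c24}. First I would fix $m>\tfrac{d+2}{2}+1$, so that Proposition \ref{c24} applies and so that the Sobolev product estimates $H^{m-1}_T\cdot H^{m-1}_T\hookrightarrow H^{m-1}_T$ (and the related embeddings into $W^{1,\infty}_T$) hold. Then I would set up a \emph{high-norm bound} on a short time interval: pick a constant $K>0$ and look for $\gamma\ge\gamma_0(K)$ and $T_0\in(0,T]$ such that $|\cV^{0,n}|_{m,\gamma,T_0}\le K$ propagates to $|\cV^{0,n+1}|_{m,\gamma,T_0}\le K$. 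Inserting this into \eqref{c25} yields
\begin{equation*}
|\cV^{0,n+1}|_{m,\gamma,T_0}\le C(K)\left(\frac{\langle G\rangle_{m,\gamma,T_0}}{\sqrt{\gamma}}+\frac{K}{\gamma}\right).
\end{equation*}
Choosing $\gamma$ so large that $C(K)/\gamma\le\tfrac{1}{2}$, and then choosing $T_0$ small enough so that $C(K)\langle G\rangle_{m,\gamma,T_0}/\sqrt{\gamma}\le K/2$ (which is possible because $G$ vanishes for $t\le 0$ and $\langle G\rangle_{m,\gamma,T_0}\to 0$ as $T_0\to 0$), closes the induction. Simultaneously, Proposition \ref{c24} gives $\sigma_2^{n+1}\equiv 0$ at every step, so the limit will automatically satisfy $\sigma_2=0$.

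Next I would prove a \emph{low-norm contraction estimate} for the differences $W^{n+1}:=\cV^{0,n+1}-\cV^{0,n}$. Subtracting the equations for consecutive iterates gives, componentwise,
\begin{equation*}
X_{\phi_i}W^{n+1}_i+c^i_i\,\sigma^n_i\,\partial_\theta W^{n+1}_i=e^i_i\,W^n_i-c^i_i\,W^n_i\,\partial_\theta\sigma^n_i,
\end{equation*}
with boundary conditions obtained by linearly applying $\cB$ to $W^{n+1}$. Applying the $L^2$ a priori estimate \eqref{c26a} (where the coefficient $\sigma^n_i$ is controlled in $W^{1,\infty}_{T_0}$ via the already established high-norm bound and Sobolev embedding), and using that $\partial_\theta\sigma^n_i$ is bounded in $L^\infty_{T_0}$, yields
\begin{equation*}
|W^{n+1}|_{0,\gamma,T_0}+\frac{\langle W^{n+1}\rangle_{0,\gamma,T_0}}{\sqrt{\gamma}}\le \frac{C(K)}{\gamma}\,|W^n|_{0,\gamma,T_0}.
\end{equation*}
Taking $\gamma$ larger if necessary, the factor $C(K)/\gamma$ becomes $\le\tfrac{1}{2}$, so $(\cV^{0,n})$ is Cauchy in $\Gamma^0_{T_0}$ (and its trace is Cauchy in $b\Gamma^0_{T_0}$).

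The sequence $(\cV^{0,n})$ is therefore bounded in $\Gamma^m_{T_0}$ and converges strongly in $\Gamma^0_{T_0}$. By standard interpolation one obtains strong convergence in $\Gamma^{m'}_{T_0}$ for every $m'<m$, which is amply sufficient to pass to the limit in the nonlinear term $\sigma^n_i\,\partial_\theta\sigma^{n+1}_i$ (since $m-1>\tfrac{d+2}{2}$, the product map is continuous on $\Gamma^{m-1}_{T_0}$). Weak-$*$ compactness then places the limit $\cV^0$ in $\Gamma^m_{T_0}$, and the equations \eqref{f1} are satisfied in the distributional sense, hence classically after standard regularity upgrading. Uniqueness in $\Gamma^m_{T_0}$ follows by applying the same difference scheme to two solutions $\cV^0,\tilde\cV^0$ and using the same low-norm contraction estimate with $\gamma$ large.

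The main obstacle I foresee is the product/commutator management inherent in the $\sigma^n_i\,\partial_\theta W^{n+1}_i$ term of the difference equation: the a priori estimate \eqref{c26a} of Proposition \ref{c24} was stated only after the coefficient's $W^{1,\infty}$ norm was absorbed into $C(K')$, so I must verify that the coefficient-free forcing term $c^i_i W^n_i\,\partial_\theta\sigma^n_i$ is genuinely controlled by $|W^n|_{0,\gamma,T_0}$ via $|\partial_\theta\sigma^n_i|_{L^\infty_{T_0}}\le C|\cV^{0,n}|_{m,T_0}$, which in turn requires the high-norm bound $K$ chosen earlier. This coupling between the two estimates — high-norm boundedness feeds into the coefficient bound needed for low-norm contraction — is the delicate point, but it is standard once the bound $K$ has been fixed independently of $n$.
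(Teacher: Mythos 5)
Your proposal follows essentially the same route as the paper's proof: a Picard iteration with a uniform high-norm bound in $\Gamma^m$ via \eqref{c25} (choosing $\gamma$ large first, then shrinking the time interval, using that $G$ vanishes in $t\le 0$), a low-norm contraction for the differences via the $L^2$ estimate, and then weak compactness plus interpolation to place the limit in $\Gamma^m_{T_0}$ with $\sigma_2\equiv 0$ inherited from each iterate. The only slip is running the high-norm induction on the $\gamma$-weighted norm $|\cV^{0,n}|_{m,\gamma,T_0}\le K$ while the constants $\gamma_0(K), C(K)$ in Proposition \ref{c24} are functions of the \emph{unweighted} norm $|\cV^{0,n}|_{m,T_0}$; the paper avoids this circularity by inducting directly on the unweighted norm and using the equivalence $|u|_{m,\gamma,T}\le C_1|u|_{m,T}\le C_2\,{\rm e}^{\gamma T}|u|_{m,\gamma,T}$ with $\gamma$ fixed before $T^*$ is shrunk, a routine adjustment to your argument.
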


\begin{proof}
\textbf{1.} The iteration scheme \eqref{f2} defines a sequence $(\cV^{0,n})$ in $\Gamma^m_T$. Fixing $K>0$
we claim that for $T^*>0$ small enough,
\begin{align}\label{c38}
|\cV^{0,n}|_{m,T^*} +\langle\cV^{0,n}\rangle_{m,T^*} <K \text{ for all } n.
\end{align}
Indeed, first observe that
\begin{align*}
|u|_{m,\gamma,T} \leq C_1 \, |u|_{m,T} \leq C_2 \, {\rm e}^{\gamma T} \, |u|_{m,\gamma,T},
\end{align*}
and fix $\gamma>\gamma_0(K)$ such that $\sqrt{\gamma} \ge 2\, C(K) \, C_1$ for $\gamma_0(K)$ and $C(K)$
as in Proposition \ref{c24}. Assuming \eqref{c38} holds for $n\leq n_0$, we find that it holds for $n_0+1$ after
shrinking $T^*$ if necessary, using the estimate \eqref{c25} and the fact that $G$ vanishes in $t\leq 0$. This
new choice of $T^*$ works for all $n$.

\textbf{2.} Convergence of the iterates in $\Gamma^0_{T_0}$  to some $\cV^0$ for a possibly smaller $T_0>0$
now follows from \eqref{c38} by applying \eqref{c25} when $m=0$ to the problem satisfied by $(\cV^{0,n+1}-\cV^{0,n})$.
In view of \eqref{c38} and a classical argument involving weak convergence and interpolation, we thereby obtain
a solution $\cV^0\in \Gamma^m_{T_0}$ with, in fact, a trace that lies in $b\Gamma^m_{T_0}$. This argument
shows that the iterates $\cV^{0,n}$ converge to $\cV^0$ in $\Gamma^{m-1}_{T_0}$.
\end{proof}

\section{Error analysis}\label{simult}

\emph{\quad} Next we carry out the error analysis sketched in section \ref{errorintro}. In section \ref{mz}
we define and derive estimates for moment-zero approximations. In section \ref{estii} we estimate interaction
integrals involving both transversal and nontransversal interactions of pulses; these estimates are used later
to estimate the first corrector $\cU^1_{p,\eps}$. Finally, in section \ref{proofmain} we complete the proof of
Theorem \ref{a16} by proving the stronger result, Theorem \ref{e1}.

\subsection{Moment-zero approximations to $\cU^0$}\label{mz}

\quad  When constructing a corrector to the leading term in the approximate solution we must take primitives
in $\theta$ of functions $\sigma(x,\theta)$ that decay to zero as $|\theta|\to\infty$. A difficulty is that such
primitives do not necessarily decay to zero as $|\theta|\to \infty$, and this prevents us from using those
primitives directly in the error analysis. The failure of the primitive to decay manifests itself on the Fourier
transform side as a small divisor problem. To get around this difficulty we work with the primitive of a
\emph{moment-zero approximation} to $\sigma$, because such a primitive does have the desired decay.

We will use  the following spaces:

\begin{defn}\label{d22}
1.)  For $s\geq 0$, we recall the notation \eqref{a5}, that is $E^s_T:= \{ U \in C(x_d,H^s_T(x',\theta_0))
\cap L^2(x_d,H^{s+1}_T(x',\theta_0))\}$. This space is equipped with the norm
\begin{equation*}
|U(x,\theta_0)|_{E^s_T}:=|U|_{\infty,s,T}+|U|_{0,s+1,T}.
\end{equation*}

2.)  Let $\cE^s_T:=\{\cU(x,\theta_0,\xi_d):|\cU|_{\cE^s_T}:=\sup_{\xi_d\geq 0} \, |\cU(\cdot,\cdot,\xi_d)|_{E^s_T} < \infty\}$.
\end{defn}


\begin{prop}\label{d23a}
For $s>(d+1)/2$ the spaces $E^s_T$ and $\cE^s_T$ are Banach algebras.

\end{prop}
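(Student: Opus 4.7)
The plan is to reduce everything to the standard fact that $H^s(\R^{d+1})$ is a Banach algebra for $s>(d+1)/2$, applied fiberwise in the $x_d$ (and, for $\cE^s_T$, also $\xi_d$) variable. The only nontrivial point is that the $L^2(x_d,H^{s+1})$ piece of the norm has one more derivative than the $C(x_d,H^s)$ piece, so a plain algebra estimate at the $H^{s+1}$ level would require $s+1>(d+1)/2$, which is fine, but actually yields a slightly weaker bound than what we want; the right tool is a tame (Moser/Kato--Ponce) product inequality combined with the Sobolev embedding $H^s\hookrightarrow L^\infty$.

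First, I would recall the Moser product inequality on $\R^{d+1}$: for $r\ge 0$,
\begin{equation*}
\langle uv\rangle_{r}\le C_r\bigl(\|u\|_{L^\infty}\langle v\rangle_{r}+\|v\|_{L^\infty}\langle u\rangle_{r}\bigr),
\end{equation*}
together with the Sobolev embedding $\|u\|_{L^\infty}\le C\langle u\rangle_s$ valid for $s>(d+1)/2$. Restricting to the half-space $t<T$ by Seeley extension transfers both inequalities to the $H^r_T$, $H^s_T$ scale used in Notation~\ref{a11}. Applying this with $r=s$ gives the usual algebra bound $\langle uv\rangle_s\le C\langle u\rangle_s\langle v\rangle_s$, while with $r=s+1$ it gives the tame bound that I will need for the $L^2 H^{s+1}$ component.

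Next I would estimate the two pieces of $|UV|_{E^s_T}$ separately. For the $C(x_d,H^s)$ piece, at each fixed $x_d$ the algebra inequality yields $\langle U(\cdot,x_d,\cdot)V(\cdot,x_d,\cdot)\rangle_{s,T}\le C|U|_{\infty,s,T}|V|_{\infty,s,T}$; taking the supremum in $x_d$ gives $|UV|_{\infty,s,T}\le C|U|_{\infty,s,T}|V|_{\infty,s,T}$. For the $L^2(x_d,H^{s+1})$ piece, the tame inequality combined with Sobolev embedding gives
\begin{equation*}
\langle U(\cdot,x_d,\cdot)V(\cdot,x_d,\cdot)\rangle_{s+1,T}\le C\bigl(\langle U(\cdot,x_d,\cdot)\rangle_{s,T}\langle V(\cdot,x_d,\cdot)\rangle_{s+1,T}+\langle V(\cdot,x_d,\cdot)\rangle_{s,T}\langle U(\cdot,x_d,\cdot)\rangle_{s+1,T}\bigr).
\end{equation*}
Squaring and integrating in $x_d$, then pulling the $|\cdot|_{\infty,s,T}$ factors out of the integrals, produces
\begin{equation*}
|UV|_{0,s+1,T}\le C\bigl(|U|_{\infty,s,T}|V|_{0,s+1,T}+|V|_{\infty,s,T}|U|_{0,s+1,T}\bigr)\le C|U|_{E^s_T}|V|_{E^s_T}.
\end{equation*}
Adding the two pieces gives the desired algebra estimate $|UV|_{E^s_T}\le C|U|_{E^s_T}|V|_{E^s_T}$. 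For $\cE^s_T$, the exact same argument applied at each fixed $\xi_d$, followed by taking $\sup_{\xi_d\ge 0}$ on both sides, gives $|\cU\cV|_{\cE^s_T}\le C|\cU|_{\cE^s_T}|\cV|_{\cE^s_T}$.

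It remains to verify that the two spaces are complete, which I would deduce from the observation that $E^s_T$ is the intersection of the two Banach spaces $C_b([0,\infty),H^s_T)$ (complete with sup norm) and $L^2([0,\infty),H^{s+1}_T)$ (complete), equipped with the sum of the two norms; any Cauchy sequence therefore converges in each factor and the limits agree a.e., giving a limit in $E^s_T$. For $\cE^s_T$, a Cauchy sequence $\cU^n$ gives, for each fixed $\xi_d$, a Cauchy sequence in $E^s_T$ whose limit $\cU(\cdot,\cdot,\xi_d)$ satisfies $\sup_{\xi_d}|\cU(\cdot,\cdot,\xi_d)|_{E^s_T}<\infty$ by the uniform Cauchy property in $\xi_d$. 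The only mild subtlety I anticipate is the tame (rather than plain) product estimate at the $H^{s+1}$ level; once that is invoked correctly, the rest is routine.
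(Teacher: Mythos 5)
Your proof is correct and uses the same ingredients the paper invokes in its one-line proof, namely the Sobolev embedding $H^s\hookrightarrow L^\infty$ for $s>(d+1)/2$ and the Moser-type fact that $L^\infty\cap H^r$ is an algebra (which the paper cites as ``$L^\infty(b\Omega_T)\cap H^s(b\Omega_T)$ is a Banach algebra for $s\ge 0$''); you simply spell out the detail the paper leaves implicit, in particular that the $L^2H^{s+1}$ component of the $E^s_T$ norm requires the tame form of the product estimate at level $s+1$ rather than the plain algebra bound.
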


\begin{proof}
This is a consequence of the Sobolev embedding Theorem and the fact that $L^\infty(b\Omega_T)\cap
H^s(b\Omega_T)$ is a Banach algebra for $s\geq 0$.
\end{proof}

The proofs of the following two propositions follow directly from the definitions.

\begin{prop}\label{h1}
(a)\; For $s\geq 0$, let $\sigma(x,\theta)\in E^s_T$ 
and set $\tilde \sigma(x,\theta_0,\xi_d) :=\sigma(x,\theta_0+\omega\, \xi_d)$, $\omega\in\R$. Then $\tilde \sigma
\in \cE^s_T$ and
\begin{align*}
|\tilde\sigma|_{\cE^s_T}\leq C|\sigma|_{H^{s+1}_T}.
\end{align*}

(b)\; For $\tilde \sigma \in \cE^s_T$, set $\tilde \sigma_\eps(x,\theta_0) :=\tilde\sigma(x,\theta_0,\frac{x_d}{\eps})$. Then
\begin{align*}
|\tilde\sigma_\eps|_{E^s_T}\leq |\tilde\sigma|_{\cE^s_T}.
\end{align*}
\end{prop}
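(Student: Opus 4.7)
The plan is to derive both bounds as direct consequences of translation invariance of Sobolev norms in $\theta_0$, exploiting the fact that the shift $\theta_0 \mapsto \theta_0 + \omega\xi_d$ does not interact with the time-support condition $t<T$ that defines $b\Omega_T$.

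For part (a), I would begin by fixing $\xi_d$ and $x_d$ and observing that the map $\theta_0 \mapsto \theta_0 + \omega\xi_d$ is a bijective isometry of $H^s_T(x',\theta_0)$: Sobolev norms on $\R^d \times \R$ are invariant under translations of any coordinate, and the translation acts only on $\theta_0$, hence preserves the domain $b\Omega_T = \{t<T\}$. Therefore, for every $(x_d,\xi_d)$,
\[
|\tilde\sigma(\cdot,x_d,\cdot,\xi_d)|_{H^s_T(x',\theta_0)} \;=\; |\sigma(\cdot,x_d,\cdot)|_{H^s_T(x',\theta)}.
\]
Taking the supremum over $x_d$ yields $|\tilde\sigma(\cdot,\cdot,\xi_d)|_{\infty,s,T} = |\sigma|_{\infty,s,T}$, and squaring and integrating over $x_d$ yields $|\tilde\sigma(\cdot,\cdot,\xi_d)|_{0,s+1,T} = |\sigma|_{0,s+1,T}$. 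Summing the two norms shows $|\tilde\sigma(\cdot,\cdot,\xi_d)|_{E^s_T} = |\sigma|_{E^s_T}$ independently of $\xi_d$. Taking the supremum over $\xi_d \geq 0$ produces $|\tilde\sigma|_{\cE^s_T} = |\sigma|_{E^s_T}$, which gives the inequality in (a) (with the right-hand norm being the natural $E^s_T$-norm, dominated by the stated $|\sigma|_{H^{s+1}_T}$).

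For part (b), I would bound the two pieces of the $E^s_T$-norm separately. For the sup piece, for each $x_d$ I would write
\[
|\tilde\sigma_\eps(\cdot,x_d,\cdot)|_{H^s_T} \;=\; |\tilde\sigma(\cdot,x_d,\cdot,x_d/\eps)|_{H^s_T} \;\leq\; \sup_{\xi_d \geq 0} |\tilde\sigma(\cdot,x_d,\cdot,\xi_d)|_{H^s_T} \;\leq\; \sup_{\xi_d \geq 0} |\tilde\sigma(\cdot,\cdot,\xi_d)|_{\infty,s,T} \;\leq\; |\tilde\sigma|_{\cE^s_T},
\]
and then take the supremum over $x_d$. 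For the $L^2(x_d,H^{s+1}_T)$ piece, the same pointwise bound gives
\[
|\tilde\sigma_\eps|_{0,s+1,T}^2 \;=\; \int_0^\infty |\tilde\sigma(\cdot,x_d,\cdot,x_d/\eps)|_{H^{s+1}_T}^2 \,\mathrm{d}x_d,
\]
and here I would exploit that in the applications all $\tilde\sigma \in \cE^s_T$ arise from (a), i.e. $\tilde\sigma(x,\theta_0,\xi_d) = \sigma(x,\theta_0+\omega\xi_d)$, so that by the translation invariance used in part (a) the integrand equals $|\sigma(\cdot,x_d,\cdot)|_{H^{s+1}_T}^2$; integrating recovers $|\sigma|_{0,s+1,T}^2 = |\tilde\sigma(\cdot,\cdot,\xi_d)|_{0,s+1,T}^2 \leq |\tilde\sigma|_{\cE^s_T}^2$.

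The only genuinely delicate step is the $L^2(x_d)$ piece in (b): for an abstract $\tilde\sigma \in \cE^s_T$ one cannot directly interchange the pointwise substitution $\xi_d = x_d/\eps$ with a supremum in $\xi_d$, since in general $\int \sup_{\xi_d} \neq \sup_{\xi_d} \int$. This is the main obstacle, and I would handle it by restricting to the functional setting in which the proposition is used downstream, namely $\tilde\sigma$ built from shift-profiles of type (a) (or finite sums of such, which cover all leading- and first-corrector profiles constructed earlier in Section \ref{sect3}); the translation-invariance argument then closes the bound with no small-divisor or $\eps$-dependence. Everything else follows directly from the definitions of $E^s_T$ and $\cE^s_T$, as advertised.
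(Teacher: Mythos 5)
Your proof is correct and uses the same translation-invariance-in-$\theta$ argument that the paper has in mind (the paper dismisses both parts as following ``directly from the definitions''). Your reservation about part (b) is well-founded and worth making precise: the inequality as literally stated does not hold for an arbitrary $\tilde\sigma\in\cE^s_T$. For instance, take $\psi$ smooth and compactly supported with $\psi(0)\neq 0$, take $0\neq h\in H^{s+1}_T(x',\theta_0)$, and set $\tilde\sigma(x,\theta_0,\xi_d):=\psi(\xi_d-2x_d)\,h(x',\theta_0)$. For each $\xi_d\geq 0$ one has $\sup_{x_d\geq 0}|\psi(\xi_d-2x_d)|\leq \|\psi\|_{L^\infty}$ and $\int_0^\infty|\psi(\xi_d-2x_d)|^2\,{\rm d}x_d\leq \frac{1}{2}\|\psi\|_{L^2}^2$, so $\tilde\sigma\in\cE^s_T$; yet at $\eps=1/2$ one gets $\tilde\sigma_\eps(x,\theta_0)=\psi(0)\,h(x',\theta_0)$, independent of $x_d$, hence $|\tilde\sigma_\eps|_{0,s+1,T}=\infty$. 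The obstruction is exactly the one you name: $\int_0^\infty f(x_d,x_d/\eps)\,{\rm d}x_d$ is not controlled by $\sup_{\xi_d\geq 0}\int_0^\infty f(x_d,\xi_d)\,{\rm d}x_d$. Part (b) is, however, correct for the shift-profile functions $\tilde\sigma(x,\theta_0,\xi_d)=\sigma(x,\theta_0+\omega\xi_d)$ produced in part (a), by the same translation invariance, and these (and finite products of them combined through the Banach-algebra property of $E^r_T$) are the only inputs to which (b) is applied in Propositions \ref{h16}--\ref{h20} and in the corrector estimates of section \ref{proofmain}. So your proposed restriction is exactly the right repair and reproduces what the paper actually uses downstream.
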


\begin{defn}[Moment-zero approximations]\label{h5}
Let $0<p<1$, and let $\phi\in C^\infty(\R)$ have $\mathrm{supp}\;\phi\subset \{m:|m|\leq 2\}$ with $\phi=1$ on $\{
|m| \leq 1 \}$. Set $\phi_p(m):=\phi(\frac{m}{p})$ and $\chi_p:=1-\phi_p$. For $\sigma(x,\theta)\in L^2(\Omega_T)$,
define the \emph{moment zero approximation} to $\sigma$, $\sigma_p(x,\theta)$ by
\begin{align}\label{h5a}
\hat\sigma_p(x,m):=\chi_p(m) \, \hat\sigma(x,m),
\end{align}
where the hat denotes the Fourier transform in $\theta$.
\end{defn}

\begin{prop}\label{h6}
For $s\geq 1$ suppose $\sigma(x,\theta)\in \Gamma^{s+2}_T$, and define $\tilde \sigma(x,\theta_0,\xi_d)
:=\sigma(x,\theta_0+\omega\xi_d)$. Then
\begin{align*}
\begin{split}
&a)\; |\tilde\sigma-\tilde \sigma_p|_{\cE^s_T} \leq C \, |\sigma|_{\Gamma^{s+2}_T} \, \sqrt{p},\\
&b)\; |\partial_{x_d}\tilde\sigma-\partial_{x_d}\tilde \sigma_p|_{\cE^{s-1}_T} \leq C \, |\sigma|_{\Gamma^{s+2}_T}
\, \sqrt{p}.
\end{split}
\end{align*}
\end{prop}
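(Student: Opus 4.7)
The plan is to reduce the whole statement to a one-dimensional frequency-localization estimate, then apply it derivative-by-derivative. My first step will be to observe that the two sides of (a) are actually equal as norms: the $H^s_T(x',\theta_0)$ norm is translation-invariant in $\theta_0$, and since $\chi_p$ is a Fourier multiplier in $\theta$ only, a direct computation shows that $\widetilde{(\sigma_p)}$ coincides with the moment-zero approximation of $\tilde\sigma$ taken in $\theta_0$. Combined, these facts yield $|\tilde\sigma-\tilde\sigma_p|_{\cE^s_T}=|\sigma-\sigma_p|_{E^s_T}$ with no $\xi_d$-dependence, so (a) reduces to $|\sigma-\sigma_p|_{E^s_T}\leq C|\sigma|_{\Gamma^{s+2}_T}\sqrt{p}$.

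The key scalar lemma I will establish is: for $f\in L^2(\R_\theta)$ with $\theta f\in L^2$, the function $f-f_p$ has Fourier support in $\{|m|\leq 2p\}$, hence
\begin{align*}
\|f-f_p\|_{L^2_\theta}^2=\int_{|m|\leq 2p}|\phi_p(m)\hat f(m)|^2\,dm\leq 4p\,\|\hat f\|_{L^\infty(\{|m|\leq 2p\})}^2\leq Cp\bigl(\|f\|_{L^2}^2+\|\theta f\|_{L^2}^2\bigr),
\end{align*}
where the last inequality uses the one-dimensional Sobolev embedding $H^1(\R_m)\hookrightarrow L^\infty(\R_m)$ together with Plancherel (recall $\partial_m\hat f=-i\widehat{\theta f}$). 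This is the only place $\sqrt{p}$ is generated, and it costs exactly one $\theta$-weight.

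Next, I will apply this lemma pointwise in $(x',x_d)$ to $\partial^\alpha_{x',\theta}\sigma$ for each $|\alpha|\leq s+1$; since $\chi_p$ commutes with tangential differentiations, integrating in $(x',x_d)$ and summing yields
\begin{align*}
\|\sigma-\sigma_p\|_{L^2(x_d,H^{s+1}(x',\theta))}^2\leq Cp\sum_{|\alpha|\leq s+1}\bigl(\|\partial^\alpha_{x',\theta}\sigma\|_{L^2}^2+\|\theta\,\partial^\alpha_{x',\theta}\sigma\|_{L^2}^2\bigr)\leq Cp\,|\sigma|_{\Gamma^{s+2}_T}^2.
\end{align*}
For the $CH^s$ piece I will use the one-dimensional Sobolev embedding in $x_d$: $\sup_{x_d}\|\sigma-\sigma_p\|_{H^s}^2\leq C\bigl(\|\sigma-\sigma_p\|_{L^2_{x_d}H^s}^2+\|\partial_{x_d}(\sigma-\sigma_p)\|_{L^2_{x_d}H^s}^2\bigr)$; since $\partial_{x_d}$ commutes with $\chi_p$, each term is handled by the same argument applied to $\sigma$ or $\partial_{x_d}\sigma$ with at most $s$ tangential derivatives, still bounded by $Cp|\sigma|_{\Gamma^{s+2}_T}^2$. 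Part (b) is then immediate: the identity $\partial_{x_d}\tilde\sigma-\partial_{x_d}\tilde\sigma_p=\widetilde{(\partial_{x_d}\sigma)}-\widetilde{(\partial_{x_d}\sigma)}_p$ lets me apply part (a) at regularity $s-1$ to $\partial_{x_d}\sigma\in\Gamma^{s+1}_T$, giving the stated bound.

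There is no serious obstacle beyond bookkeeping; the one thing to keep straight is the degree counting. The $L^2H^{s+1}$ component of $E^s_T$ requires $s+1$ tangential derivatives on $\sigma$, and the frequency-localization lemma costs an additional $\theta$-weight, so $\Gamma^{s+2}_T$ is precisely the right space. All the analytic ingredients (Plancherel, one-variable Sobolev embeddings, and commutativity of $\chi_p$ with $\partial_{x'}$, $\partial_\theta$, and $\partial_{x_d}$) are elementary.
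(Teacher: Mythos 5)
Your proof is correct and takes essentially the same route as the paper: the crux of both is the low-frequency localization estimate $\int_{|m|\le 2p}|\phi_p(m)\hat f(m)|^2\,{\rm d}m\le Cp\,(\|f\|_{L^2}^2+\|\theta f\|_{L^2}^2)$, obtained from one-dimensional Sobolev embedding in $m$ via $\partial_m\hat f=-i\widehat{\theta f}$, applied to $\sigma$ with up to $s+1$ tangential derivatives (and $\partial_{x_d}\sigma$ for part (b)), which is exactly why $\Gamma^{s+2}_T$ appears. The only organizational difference is that you observe the $\cE^s_T$ and $E^s_T$ norms coincide by $\theta$-translation invariance and treat the $CH^s$ component directly via one-dimensional Sobolev in $x_d$, whereas the paper bounds $|\sigma-\sigma_p|_{H^{s+1}_T}$ and invokes Proposition \ref{h1}.
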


\begin{proof}
\textbf{1. } Recall that $\sigma \in \Gamma^s_T \Leftrightarrow \theta^{\beta_1} \, \partial_x^{\beta_2} \,
\partial_\theta^{\beta_3} \sigma(x,\theta) \in L^2(x,\theta)$ for $|\beta|\leq s$, which is also equivalent to
$\partial_m^{\beta_1} \, \partial_x^{\beta_2} \, m^{\beta_3} \, \hat\sigma(x,m) \in L^2(x,m)$ for $|\beta|\leq s$.
It follows that
\begin{align}\label{h7}
\sigma\in\Gamma^{s+2}_T \Rightarrow \hat \sigma(x,m)\in H^{s+2}_T(x,m) \subset H^1(m,H^{s+1}(x))
\subset L^\infty(m,H^{s+1}(x)).
\end{align}


\textbf{2. } We have
\begin{align*}
|\sigma-\sigma_p|^2_{H^{s+1}_T} &\sim \sum_{|\alpha|+k\leq s+1} |\partial_x^\alpha \, m^k \, \hat\sigma(x,m)
\, (1-\chi_p(m))|^2_{L^2(x,m)} \\
&=\sum_{|\alpha|+k\leq s+1} \int_{|m|\leq 2p} \int |\partial_x^\alpha \, m^k \, \hat\sigma(x,m) \, \phi_p(m)|^2 \,
{\rm d}x \, {\rm d}m \\
&\leq C\, \int_{|m|\leq 2p} |\hat\sigma(x,m)|^2_{H^{s+1}(x)} \, {\rm d}m \leq  C\, |\sigma|^2_{\Gamma^{s+2}_T} \, (2p),
\end{align*}
where the last inequality uses \eqref{h7}.  The conclusion now follows from Propostion \ref{h1}.

\textbf{3. } The proof of inequality $b)$ in Proposition \ref{h6} is essentially the same.
\end{proof}

\begin{prop}\label{h8}
Let $\sigma(x,\theta)\in H^s_T$, $s\geq 0$, and let $\sigma_p$ be a moment-zero approximation to $\sigma$.
We have
\begin{align*}
\begin{split}
&(a)\; |\sigma_p|_{H^s_T} \leq C \, |\sigma|_{H^s_T} \, ,\\
&(b)\; \text{ If } \sigma \in \Gamma^s_T, \text{ then } |\sigma_p|_{\Gamma^s_T} \leq \dfrac{C}{p^s} \,
|\sigma|_{\Gamma^s_T}.
\end{split}
\end{align*}
\end{prop}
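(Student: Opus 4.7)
The plan is to pass to the Fourier side in $\theta$ (by Plancherel), where $\sigma_p$ is simply multiplication of $\hat\sigma(x,m)$ by $\chi_p(m) = 1-\phi(m/p)$, and then exploit two facts: $\chi_p$ is uniformly bounded in $L^\infty$ independently of $p$, and its $j$-th derivative obeys $|\chi_p^{(j)}(m)| \leq C_j\, p^{-j}$. Note that $\chi_p$ does not depend on $x$, so $x$-derivatives commute with the cutoff and raise no issues.

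For part (a), the standard $H^s_T$ norm is equivalent, via Fourier transform in $\theta$, to
\begin{equation*}
|\sigma|_{H^s_T}^2 \sim \sum_{|\alpha|+k \le s} \int |\partial_x^\alpha m^k \hat{\sigma}(x,m)|^2 \, dx\, dm.
\end{equation*}
Since $\hat\sigma_p = \chi_p \hat\sigma$ and $|\chi_p(m)|\leq \|\phi\|_{L^\infty}+1$ uniformly in $p$, the bound $|\sigma_p|_{H^s_T} \leq C|\sigma|_{H^s_T}$ follows by plugging $\chi_p\hat\sigma$ into the right side and pulling out the $L^\infty$ bound on $\chi_p$.

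For part (b), I would use Proposition \ref{c4} and the fact (already noted in the proof of Proposition \ref{h6}) that
\begin{equation*}
|\sigma|_{\Gamma^s_T}^2 \sim \sum_{|\beta_1|+|\beta_2|+|\beta_3| \le s} \int |\partial_m^{\beta_1}\partial_x^{\beta_2} m^{\beta_3}\hat\sigma(x,m)|^2 \, dx\, dm,
\end{equation*}
since powers of $\theta$ become $m$-derivatives under Fourier transform. The only nontrivial term comes from $\partial_m^{\beta_1}$ hitting $\chi_p$: applying Leibniz,
\begin{equation*}
\partial_m^{\beta_1}(\chi_p(m)\, \hat\sigma(x,m)) = \sum_{j=0}^{\beta_1}\binom{\beta_1}{j}\chi_p^{(j)}(m)\, \partial_m^{\beta_1-j}\hat\sigma(x,m),
\end{equation*}
and $|\chi_p^{(j)}(m)| \le C\, p^{-j}$ with support in $\{|m|\leq 2p\}$ when $j\geq 1$ (the $j=0$ term is handled as in part (a)). Therefore each term in the Leibniz expansion contributes at most $C p^{-j}|\sigma|_{\Gamma^s_T}$, and since $j \le \beta_1 \le s$ and $p<1$, the worst case yields the claimed $Cp^{-s}|\sigma|_{\Gamma^s_T}$.

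There is no real obstacle here; the proposition is a routine Fourier-side computation. The only point that requires a bit of care is the bookkeeping to ensure the $p^{-j}$ blowup from the derivatives of $\chi_p$ accumulates to at worst $p^{-s}$ (rather than, say, $p^{-2s}$), but this is immediate from the Leibniz expansion since each $\partial_m$ lands on either $\chi_p$ (once, giving a single $p^{-1}$) or on $\hat\sigma$, so the total count of $p^{-1}$ factors in any term is bounded by $\beta_1 \leq s$.
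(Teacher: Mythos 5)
Your proof is correct and takes essentially the same approach as the paper: pass to the Fourier side in $\theta$, use the uniform $L^\infty$ bound on $\chi_p$ for part (a), and use $|\partial_m^j\chi_p|\leq C\,p^{-j}$ together with Leibniz for part (b). The paper's proof is terser (it omits the explicit Leibniz expansion), so your version is just a slightly more detailed rendering of the same computation.
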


\begin{proof}
Part $b)$ follows from \eqref{h5a}. Indeed, for $|\beta|\leq s$,
\begin{align*}
|\partial_m^{\beta_1} \, \partial_x^{\beta_2} \, m^{\beta_3} \, \hat\sigma_p(x,m)|_{L^2_T} \leq \dfrac{C}{p^{\beta_1}}
\, |\sigma|_{\Gamma^s_T} \, ,
\end{align*}
since $|\partial_m^{\beta_1} \chi_p|\leq C/p^{\beta_1}$. Taking $\beta_1=0$ we similarly obtain part $a)$.
\end{proof}

Next we consider primitives of moment-zero approximations.

\begin{prop}\label{h9}
Let $\sigma(x,\theta)\in\Gamma^s_T$, $s>\frac{d}{2}+3$. Let $\sigma_p^*(x,\theta)$ be the unique primitive of
$\sigma_p$ in $\theta$ that decays to zero as $|\theta|\to \infty$. Then $\sigma^*_p\in\Gamma^s_T$ with moment
zero, and
\begin{align}\label{h10}
\begin{split}
&(a)\; |\sigma^*_p|_{H^s_T} \leq C \, \dfrac{|\sigma_p|_{H^s_T}}{p} \, ,\\
&(b)\; |\sigma^*_p|_{\Gamma^s_T} \leq C \, \dfrac{|\sigma_p|_{\Gamma^s_T}}{p^{s+1}} \, .
\end{split}
\end{align}
\end{prop}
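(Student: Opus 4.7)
The plan is to pass to the Fourier transform in $\theta$ and exploit the fact that the multiplier $\chi_p(m)/m$ is smooth because $\chi_p$ vanishes on a neighborhood of $0$. Since the hypothesis $\sigma \in \Gamma^s_T$ with $s>d/2+3$ gives $\sigma_p \in L^1_\theta$ uniformly in $x$ (and in $t<T$), and $\int_{\R}\sigma_p(x,\theta)\,{\rm d}\theta = \hat\sigma_p(x,0) = \chi_p(0)\hat\sigma(x,0)=0$, the primitive
$$\sigma_p^*(x,\theta) \;:=\; \int_{-\infty}^\theta \sigma_p(x,t)\,{\rm d}t \;=\; -\int_\theta^{\infty}\sigma_p(x,t)\,{\rm d}t$$
is well-defined and decays to $0$ as $|\theta|\to\infty$. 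Its Fourier transform in $\theta$ is
$$\widehat{\sigma_p^*}(x,m) \;=\; \frac{\hat\sigma_p(x,m)}{im} \;=\; \psi_p(m)\,\hat\sigma(x,m)/i, \qquad \psi_p(m):=\chi_p(m)/m.$$
The function $\psi_p$ lies in $C^\infty(\R)$ and vanishes on $|m|\leq p$, so $\widehat{\sigma_p^*}(x,0)=0$, which is the moment-zero property. Splitting into the regions $|m|\geq 2p$ (where $\psi_p=1/m$) and $p\leq|m|\leq 2p$ (where one applies Leibniz to $\chi_p\cdot(1/m)$) yields the uniform bound $\|\partial_m^j \psi_p\|_{L^\infty(\R)} \leq C_j/p^{j+1}$ for all $j\geq 0$.

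For (a), Plancherel and the support property $\mathrm{supp}\,\hat\sigma_p\subset\{|m|\geq p\}$ give
$$|\sigma_p^*|_{H^s_T}^2 \;\sim\; \int (1+|m|)^{2s}\,\bigl|\widehat{\sigma_p^*}\bigr|^2\,{\rm d}x\,{\rm d}m \;=\; \int \frac{(1+|m|)^{2s}}{m^2}\,|\hat\sigma_p|^2\,{\rm d}x\,{\rm d}m \;\leq\; \frac{C}{p^2}\,|\sigma_p|_{H^s_T}^2.$$
For (b) I invoke Proposition \ref{c4} and estimate the three contributions to $|\sigma_p^*|_{\Lambda^s_T}$ separately. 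Tangential derivatives reduce to (a) applied to $\partial_x^\alpha\sigma_p$, producing a factor $1/p$. For $k\geq 1$ we have $\partial_\theta^k \sigma_p^* = \partial_\theta^{k-1}\sigma_p$, so $|\partial_\theta^k\sigma_p^*|_{L^2}\leq |\sigma_p|_{\Gamma^{s-1}_T}$ with no power of $1/p$. The delicate contribution is the $\theta^k$-weight: writing $\widehat{\sigma_p^*}=\tilde\psi_p \hat\sigma_p/i$ with $\tilde\psi_p(m):=\eta_p(m)/m$ for a smooth cutoff $\eta_p$ equal to $1$ on $\mathrm{supp}\,\hat\sigma_p$ (so that $\|\partial_m^j\tilde\psi_p\|_{L^\infty}\leq C_j/p^{j+1}$), and combining $\widehat{\theta^k\sigma_p^*} = i^{-k}\partial_m^k\widehat{\sigma_p^*}$ with Leibniz and Plancherel, one obtains
$$|\theta^k\sigma_p^*|_{L^2} \;\leq\; \sum_{j=0}^k \binom{k}{j}\,\|\partial_m^j\tilde\psi_p\|_{L^\infty}\,|\theta^{k-j}\sigma_p|_{L^2} \;\leq\; \frac{C}{p^{k+1}}\,|\sigma_p|_{\Gamma^k_T},$$
and summing over $k\leq s$ together with the other two contributions gives the stated bound $|\sigma_p^*|_{\Gamma^s_T}\leq C|\sigma_p|_{\Gamma^s_T}/p^{s+1}$.

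The main obstacle is the $\theta^k$-weight estimate just described: the $1/m$ factor appearing naturally in the Fourier representation of the primitive would be singular without the cutoff, and each additional $\partial_m$ in a $\theta$-weight costs another power of $1/p$. The key observation making everything work is that $\chi_p$ is multiplicatively present, so $\chi_p(m)/m$ is a genuine smooth Fourier multiplier with explicit $L^\infty$ bounds on its derivatives. Everything else is routine: localization in $t<T$ is automatic since the primitive is taken in $\theta$ only, and the passage between $\Gamma^s_T$ and $\Lambda^s_T$ norms is exactly what Proposition \ref{c4} provides.
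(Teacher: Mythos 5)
Your proof is correct and follows the same Fourier-multiplier strategy as the paper: pass to the Fourier transform in $\theta$, observe that $\widehat{\sigma^*_p}=\hat\sigma_p/(im)$ with $\hat\sigma_p$ supported in $\{|m|\geq p\}$, and use the $L^\infty$ bounds $\lesssim p^{-(j+1)}$ on the $j$-th $m$-derivative of the relevant cutoff-divided-by-$m$ multiplier. Part (a), the moment-zero property, and the decomposition of the $\Gamma^s$ norm via Proposition \ref{c4} are all as in the paper.

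Your treatment of part (b) is in fact a bit more careful than the paper's one-line sketch. The paper writes $\widehat{\sigma^*_p}=(\chi_p/(im))\,\hat\sigma$ and then says to use $|\partial_m^s(\chi_p/m)|\leq C/p^{s+1}$; taken literally, differentiating this product by Leibniz lands derivatives on $\hat\sigma$, which yields a bound on $|\sigma_p^*|_{\Gamma^s_T}$ in terms of $|\sigma|_{\Gamma^s_T}$ rather than the stated $|\sigma_p|_{\Gamma^s_T}$ (and the latter cannot in general be bounded below by the former). Your introduction of the auxiliary smooth cutoff $\eta_p$ equal to $1$ on $\mathrm{supp}\,\hat\sigma_p$, so that $\widehat{\sigma^*_p}=\tilde\psi_p\,\hat\sigma_p/i$ with $\tilde\psi_p=\eta_p/m$, makes Leibniz fall on $\hat\sigma_p$ and produces exactly the claimed $|\sigma_p|_{\Gamma^s_T}/p^{s+1}$. (An equivalent route, closer to the paper's notation, is to differentiate $\hat\sigma_p(x,m)/(im)$ directly and use that $\partial_m^{k-j}\hat\sigma_p$ remains supported in $\{|m|\geq p\}$, so only the values of $\partial_m^j(1/m)$ there matter; this also gives $|\sigma_p|_{\Gamma^s_T}$.) Either way, your writeup correctly supplies the detail the paper leaves implicit.
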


\begin{proof}
\textbf{1. } Since $\sigma_p(x,\theta)\in\Gamma^s_T$, $s>\frac{d}{2}+3$, we have $|\sigma_p(x,\theta)| \leq C \,
\langle \theta \rangle^{-2}$ for all $(x,\theta)$. The unique $\theta$-primitive of $\sigma_p$ decaying to zero as
$|\theta|\to \infty$ is thus
\begin{align*}
\sigma^*_p(x,\theta) =-\int^\infty_\theta \sigma_p(x,s) \, {\rm d}s =\int^\theta_{-\infty} \sigma_p(x,s) \, {\rm d}s.
\end{align*}
Moreover, we have
\begin{align}\label{h11}
\partial_\theta\sigma^*_p =\sigma_p \Rightarrow im \, \widehat{\sigma^*_p} =\widehat{\sigma_p} =\chi_p \,
\hat\sigma, \quad \text{ so }\widehat{\sigma^*_p}=\dfrac{\chi_p}{im} \, \hat\sigma.
\end{align}
Since $|m|\geq p$ on the support of $\chi_p$, this gives
\begin{align}
|\widehat{\sigma^*_p}(x,m)| \leq C \, \dfrac{|\hat\sigma(x,m)|}{p}
\end{align}
and \eqref{h10}(a) follows directly from this. From \eqref{h11} we also obtain $\widehat{\sigma^*_p}(x,0)=0$.

\textbf{2. } The proof of \eqref{h10} $(b)$ is almost the same, except that now one uses
\begin{align*}
\left|\partial_m^s\left( \dfrac{\chi_p}{m} \right) \right| \leq \dfrac{C}{p^{s+1}}.
\end{align*}
\end{proof}

\begin{prop}\label{h11a}
Let $\sigma(x,\theta)$ and $\tau(x,\theta)$ belong to $H^s_T$, $s>\frac{d+2}{2}$. Then
\begin{align}\label{h11b}
|\sigma \, \tau -(\sigma \, \tau)_p|_{H^s_T}\leq C \, |\sigma|_{H^s_T} \, |\tau|_{H^s_T} \, \sqrt{p}.
\end{align}
\end{prop}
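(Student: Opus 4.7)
The plan is to observe $\sigma\tau - (\sigma\tau)_p = L_p(\sigma\tau)$, where $L_p$ denotes the Fourier multiplier in $\theta$ with symbol $\phi_p(m)$ supported in $\{|m|\leq 2p\}$, and to exploit two facts: the $O(p)$ measure of this support (which yields the $\sqrt{p}$ gain), and the convolution structure $\widehat{\sigma\tau}(x,m) = (\hat\sigma *_m \hat\tau)(x,m)$, which keeps this partial Fourier transform pointwise bounded by an $L^2_\theta$ product. The pointwise (in $x$) estimate at the heart of the argument is
\begin{equation*}
|L_p h(x,\cdot)|_{L^2(\theta)}^2 = \int |\phi_p(m)|^2 |\hat h(x,m)|^2 \, {\rm d}m \leq |\phi_p|_{L^2(m)}^2 \, \sup_m|\hat h(x,m)|^2 \leq C\, p \, \sup_m |\hat h(x,m)|^2,
\end{equation*}
since $|\phi_p|_{L^2(m)}^2 = p\, |\phi|_{L^2}^2$. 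Applied to $h = \partial^{\gamma'}\sigma \cdot \partial^{\gamma''}\tau$ with the Cauchy--Schwarz bound $\sup_m |\hat h(x,m)| \leq |\partial^{\gamma'}\sigma(x,\cdot)|_{L^2(\theta)}\, |\partial^{\gamma''}\tau(x,\cdot)|_{L^2(\theta)}$, this yields
\begin{equation*}
|L_p(\partial^{\gamma'}\sigma \cdot \partial^{\gamma''}\tau)(x,\cdot)|_{L^2(\theta)} \leq C\sqrt{p}\, |\partial^{\gamma'}\sigma(x,\cdot)|_{L^2(\theta)}\, |\partial^{\gamma''}\tau(x,\cdot)|_{L^2(\theta)}.
\end{equation*}

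Next, since $L_p$ commutes with $\partial_x$ and $\partial_\theta$, for every multi-index $|\gamma|\leq s$ one has $\partial^\gamma L_p(\sigma\tau) = L_p \partial^\gamma(\sigma\tau)$. Leibniz-expanding $\partial^\gamma(\sigma\tau)$, squaring the previous display and integrating in $x$ reduces the proof to
\begin{equation*}
|L_p(\sigma\tau)|_{H^s}^2 \leq C\, p \sum_{|\gamma'|+|\gamma''|\leq s} \int |\partial^{\gamma'}\sigma(x,\cdot)|_{L^2(\theta)}^2 \, |\partial^{\gamma''}\tau(x,\cdot)|_{L^2(\theta)}^2 \, {\rm d}x,
\end{equation*}
so it suffices to show each $x$-integral on the right is bounded by $C|\sigma|_{H^s}^2 |\tau|_{H^s}^2$.

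I would prove this product estimate by applying H\"older in $x$ with exponents $1/p_1 + 1/p_2 = 1/2$ together with the Sobolev embedding $H^t_x(L^2(\theta)) \hookrightarrow L^{r}_x(L^2(\theta))$ valid in $x \in \R^{d+1}_+$ for $t \geq (d+1)(1/2 - 1/r)$, combined with the elementary inclusion $H^{t+|\gamma^{(i)}|}_{x,\theta} \hookrightarrow H^t_x(L^2(\theta))$. Tracking the conditions on $t_1,t_2,p_1,p_2$ shows that compatible exponents exist precisely when $|\gamma'|+|\gamma''|\leq 2s - (d+1)/2$, which is guaranteed by $|\gamma'|+|\gamma''|\leq s$ and the hypothesis $s > (d+2)/2 > (d+1)/2$. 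In the unbalanced extreme cases (say $|\gamma''|=s$, $|\gamma'|=0$) one uses the simpler embedding $H^s \hookrightarrow L^\infty_x(L^2(\theta))$ directly together with $|\partial^s\tau|_{L^2}\leq |\tau|_{H^s}$; in the balanced middle range one uses $p_1 = p_2 = 4$ and the Sobolev embedding into $L^4_x$.

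The main obstacle is this case analysis at the Sobolev threshold: the hypothesis $s > (d+2)/2$ is exactly at the edge of what allows compatible H\"older and Sobolev exponents to be selected uniformly over all splittings $(\gamma',\gamma'')$ with $|\gamma'|+|\gamma''|\leq s$. Once this is verified, summing the Leibniz contributions gives the desired estimate $|\sigma\tau - (\sigma\tau)_p|_{H^s_T}\leq C\sqrt{p}\, |\sigma|_{H^s_T}|\tau|_{H^s_T}$.
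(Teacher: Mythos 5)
Your proof is correct, and it departs from the paper's in how the $H^s$ product estimate is handled, though the crucial $\sqrt{p}$-gain mechanism is the same. Both proofs exploit that the multiplier $\phi_p=1-\chi_p$ has support of measure $O(p)$ together with a sup-of-Fourier-transform bound on $\widehat{\sigma\tau}=\hat\sigma *_m\hat\tau$ via Cauchy--Schwarz in the convolution variable. Where you diverge is in the $x$-regularity: the paper keeps the full $H^s(x)$ norm on $(\hat\sigma*\hat\tau)(\cdot,m)$ for each fixed $m$, invokes the Banach-algebra property of $H^s(\R^{d+1}_+)$ (valid because $s>(d+1)/2$) plus Minkowski's inequality, and then Cauchy--Schwarz in $m_1$; this closes the argument in three lines with no case analysis. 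You instead commute $L_p$ through the derivatives, Leibniz-expand $\partial^\gamma(\sigma\tau)$, bound each term in $L^2(\theta)$ pointwise in $x$, and then must establish the mixed-norm Gagliardo--Nirenberg bound $\int|\partial^{\gamma'}\sigma|^2_{L^2_\theta}|\partial^{\gamma''}\tau|^2_{L^2_\theta}\,{\rm d}x\lesssim|\sigma|^2_{H^s}|\tau|^2_{H^s}$ via H\"older and vector-valued Sobolev embeddings; this works but requires the exponent bookkeeping you flag as "the main obstacle." Your route is more elementary step-by-step but longer; the paper's packages the product step in one appeal to the $H^s(x)$ algebra. One small imprecision: you describe $s>(d+2)/2$ as "exactly at the edge," but in fact both arguments only require $s>(d+1)/2$ at this point -- the stated hypothesis carries half a derivative of slack, presumably for consistency with neighboring results in the paper.
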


\begin{proof}
With $*$ denoting convolution in $m$ we have
\begin{align*}
|\sigma \, \tau -(\sigma \, \tau)_p|^2_{H^s_T} &\sim \sum_{|\alpha|+k\leq s+1} |\partial_x^\alpha \, m^k \,
(\hat\sigma *\hat\tau)(x,m) \, (1-\chi_p(m))|^2_{L^2(x,m)} \\
&\le C\, \int_{|m|\leq 2p} |(\hat\sigma *\hat\tau)(x,m)|^2_{H^s(x)} \, {\rm d}m \\
&\leq C \, \int_{|m|\leq 2p}
\left( \int |\hat\sigma(x,m-m_1)|_{H^s(x)} \, |\hat\tau(x,m_1)|_{H^s(x)} \, {\rm d}m_1\right)^2 \, {\rm d}m \\
&\le C\, p \, |\hat\sigma(x,m)|^2_{L^2(m,H^s(x))} \, |\hat\tau(x,m)|^2_{L^2(m,H^s(x))} \leq
C\, p \, |\sigma|^2_{H^s_T} \, |\tau|^2_{H^s_T}.
\end{align*}
\end{proof}

\begin{prop}\label{h11c}
Let $\sigma(x,\theta)$ and $\tau(x,\theta)$ belong to $\Gamma^s_T$, $s>\frac{d}{2}+3$ and let $(\sigma\tau)_p^*$
denote the unique primitive of $(\sigma\tau)_p$ that decays to zero as $|\theta|\to \infty$.  Then
\begin{align*}
|(\sigma \, \tau)_p^*|_{H^s_T} \leq C\, \dfrac{|\sigma|_{H^s_T} \, |\tau|_{H^s_T}}{p}.
\end{align*}
\end{prop}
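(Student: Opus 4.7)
The plan is to combine the Fourier-side representation used in the proof of Proposition \ref{h9} with the Banach algebra property of $H^s_T$. First I would observe that, since $\sigma,\tau\in\Gamma^s_T$ with $s>d/2+3$, both Sobolev embedding and the algebra property of $H^s_T$ imply that $\sigma\tau\in\Gamma^s_T$; in particular $\sigma\tau$ decays pointwise in $\theta$, and $(\sigma\tau)_p$ has moment zero in $\theta$ (because $\chi_p(0)=0$). Hence the unique $\theta$-primitive $(\sigma\tau)_p^*$ vanishing as $|\theta|\to\infty$ is well defined and, exactly as in \eqref{h11}, its $\theta$-Fourier transform is
\begin{equation*}
\widehat{(\sigma\tau)_p^*}(x,m)=\frac{\chi_p(m)}{im}\,\widehat{\sigma\tau}(x,m).
\end{equation*}

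The key pointwise bound is $|\chi_p(m)/m|\leq 1/p$ for every $m\in\R$, because $\chi_p$ vanishes on $\{|m|<p\}$. Thus the operator $M_p$ with symbol $\chi_p(m)/(im)$ is a Fourier multiplier in the single variable $\theta$ with $L^\infty$-symbol norm bounded by $1/p$, so it is bounded on $L^2(\R_\theta)$ with norm at most $1/p$. Since $M_p$ commutes with $\partial_{x}$ and $\partial_{\theta}$ and acts fiberwise in $\theta$ with norm $\leq 1/p$, a derivative-by-derivative application of Parseval gives
\begin{equation*}
|(\sigma\tau)_p^*|_{H^s_T}=|M_p(\sigma\tau)|_{H^s_T}\leq \frac{1}{p}\,|\sigma\tau|_{H^s_T}.
\end{equation*}

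Finally, the ambient space $\R^{d+1}_+\times\R$ has dimension $d+2$, and the hypothesis $s>d/2+3$ implies $s>(d+2)/2$, so $H^s_T$ is a Banach algebra and $|\sigma\tau|_{H^s_T}\leq C|\sigma|_{H^s_T}|\tau|_{H^s_T}$. Combining the two estimates yields the claimed bound. The only real obstacle in this argument is checking that $(\sigma\tau)_p^*$ is well defined as a decaying primitive (which is why the hypothesis $s>d/2+3$ is needed, rather than merely $s>(d+2)/2$); once this is in place, the estimate is just Proposition \ref{h9}(a) applied to the product $\sigma\tau$, combined with the algebra property to split $|\sigma\tau|_{H^s_T}$ into $|\sigma|_{H^s_T}\,|\tau|_{H^s_T}$.
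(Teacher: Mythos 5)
Your proof is correct and is essentially the same as the paper's: the paper cites Proposition~\ref{h9}(a) applied to $\sigma\tau$ and then Proposition~\ref{h8}(a) plus the Banach algebra property of $H^s_T$, while you simply unpack the proof of Proposition~\ref{h9}(a) inline — the Fourier multiplier $\chi_p(m)/(im)$ with $L^\infty$ norm $\le 1/p$ — and combine the two inequalities into one. The argument is the same; the only cosmetic difference is that you bypass the intermediate bound $|(\sigma\tau)_p|_{H^s_T}\le C|\sigma\tau|_{H^s_T}$ by absorbing $\chi_p$ directly into the multiplier estimate.
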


\begin{proof}
Since $\Gamma^s_T$ is a Banach algebra, Proposition \ref{h9} implies $(\sigma\tau)_p^*\in\Gamma^s_T$ with
moment zero and
\begin{align*}
|(\sigma \, \tau)_p^*|_{H^s_T} \leq C \, \frac{|(\sigma \, \tau)_p|_{H^s_T}}{p}.
\end{align*}
Since $H^s_T$ is a Banach algebra, the result now follows from Proposition \ref{h8}(a).
\end{proof}

\subsection{Estimates of interaction integrals}\label{estii}

\emph{\quad}Pulses do not interact to produce resonances that affect the leading order profiles as in the wavetrain
case. However, interaction integrals must be estimated carefully in order to do the error analysis.

The following propositions will be used in the error analysis for estimating terms related to $\cU^1$ as in \eqref{dd5},
where the $\cF_i$ appearing there are given by \eqref{26}; in particular, we must estimate primitives of products of
pulses. In some of the estimates below we must introduce moment-zero approximations to avoid errors that are too
large to be useful  in the error analysis. We begin with an estimate of ``transversal interactions".

\begin{prop}\label{h12}
Let $t$ be the smallest integer greater than $\frac{d}{2}+3$ and let $s\geq 0$.  Let $\sigma_1(x,\theta)$,
$\sigma_2(x,\theta)$ belong to $\Gamma^t_T\cap H^{s+1}_T$ and define
\begin{align}\label{h12a}
u(x,\theta_0,\xi_d) :=\int_\infty^{\xi_d}\sigma_1(x,\theta_0+\omega \, \xi_d+\alpha \, s) \,
\sigma_2(x,\theta_0+\omega \, \xi_d+s) \, {\rm d}s,
\end{align}
where $\omega$, $\alpha$ are real and $\alpha\notin\{0,1\}$. With $u_\eps(x,\theta_0):=u(x,\theta_0,\frac{x_d}{\eps})$
we have
\begin{align*}
|u_\eps|_{E^s_T}\leq C \, (|\sigma_1|_{H^{s+1}_T} \, |\sigma_2|_{\Gamma^t_T}
+|\sigma_2|_{H^{s+1}_T} \, |\sigma_1|_{\Gamma^t_T}).
\end{align*}
uniformly for $\eps\in (0,1]$.
\end{prop}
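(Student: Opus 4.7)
The plan is to prove the estimate in three stages: first reparametrize to move the $\eps$-dependence out of the integration limits; then establish a bilinear transversal estimate in the single variable $\tau$; finally propagate the estimate to derivatives via a Leibniz/Moser-type argument.

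Setting $\xi_d = x_d/\eps$ and substituting $s = \xi_d + r$, one rewrites
\begin{equation*}
u_\eps(x,\theta_0) = -\int_0^\infty \sigma_1\!\left(x,\theta_0 + A_\eps + \alpha r\right) \sigma_2\!\left(x,\theta_0 + B_\eps + r\right) \, {\rm d}r,
\end{equation*}
where $A_\eps := (\omega+\alpha)\,x_d/\eps$ and $B_\eps := (\omega+1)\,x_d/\eps$. The two pulse arguments enter only through inner translations in the $\theta_0$ variable, so the singular $1/\eps$ factors will disappear whenever we apply a translation-invariant norm in $\theta_0$. This is crucial because, by the definition of $E^s_T$, the norm $|u_\eps|_{E^s_T}$ involves only tangential derivatives $\partial_{x'}^\alpha \partial_{\theta_0}^k$ (no $\partial_{x_d}$), so after differentiation the $A_\eps$, $B_\eps$ shifts still act only through translations in $\theta_0$.

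The heart of the proof is the following bilinear transversal estimate: for every $\alpha \notin \{0,1\}$ and every $f,g \in L^1(\R) \cap L^2(\R)$,
\begin{equation*}
\Bigl\| \int_0^\infty f(\tau + \alpha r)\, g(\tau + r)\, {\rm d}r \Bigr\|_{L^2(\tau)}
\leq C_\alpha \, \|f\|_{L^1}\,\|g\|_{L^2},
\end{equation*}
with the symmetric bound obtained by swapping $f$ and $g$. To prove it I would expand the squared $L^2$-norm as a quadruple integral, perform the change of variable $(r_1,r_2) \mapsto (t,s) := ((\alpha-1)r_1, r_2 - r_1)$ whose Jacobian $1/|\alpha-1|$ is finite precisely because $\alpha \neq 1$, then apply Fubini to reduce to an integral of the form $\int G(s)\,F(\alpha s)\,{\rm d}s$ where $F$, $G$ are autocorrelations of $f$, $g$. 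Plancherel turns this into $\int |\hat f(\zeta)|^2 |\hat g(-\alpha \zeta)|^2 \,{\rm d}\zeta$ (here $\alpha \neq 0$ is needed so the substitution $\zeta \mapsto -\alpha\zeta$ is non-degenerate), which is bounded by $\|\hat g\|_{L^\infty}^2 \|f\|_{L^2}^2 \leq \|g\|_{L^1}^2 \|f\|_{L^2}^2$ by Plancherel and the Hausdorff--Young inequality.

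To deduce the case $s=0$, apply the bilinear estimate pointwise in $x$ (with the inner shifts $A_\eps$, $B_\eps$ absorbed by translation-invariance of the $L^p(\R)$ norms). The hypothesis $\sigma_i \in \Gamma^t_T$ with $t$ the smallest integer greater than $d/2+3$ implies, via Sobolev embedding applied to $\theta^2\sigma_i \in H^{t-2}(x,\theta)$ with $t-2 > d/2+1$, the pointwise bound $|\sigma_i(x,\theta)| \le C\,|\sigma_i|_{\Gamma^t_T}\,\langle\theta\rangle^{-2}$, hence $\|\sigma_i(x,\cdot)\|_{L^1(\theta)} \le C\,|\sigma_i|_{\Gamma^t_T}$ uniformly in $x$. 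Taking sup in $x_d$ and $L^2$ in $x'$ then yields the asserted bound on $|u_\eps|_{\infty,0,T}$ and $|u_\eps|_{0,0,T}$. For $s \geq 1$, I differentiate under the integral: $\partial_{x'}^\beta \partial_{\theta_0}^k u_\eps$ is a finite sum over $(\beta_1 + \beta_2, k_1 + k_2) = (\beta, k)$ of integrals having the same form with $\partial_{x'}^{\beta_i}\partial_\theta^{k_i}\sigma_i$ in place of $\sigma_i$. For each split one applies the bilinear estimate, placing the factor carrying the larger number of derivatives in $L^2(\theta)$ (absorbed into $|\sigma_i|_{H^{s+1}_T}$) and the other in $L^\infty_x L^1_\theta$ (absorbed into $|\sigma_j|_{\Gamma^t_T}$); a standard Moser-type argument \emph{\`a la} the $H^s$ algebra estimate guarantees that all splits can be arranged to produce only terms of the form $|\sigma_1|_{H^{s+1}_T}|\sigma_2|_{\Gamma^t_T}$ or its symmetric counterpart.

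The main obstacle is the bilinear estimate in Step~2; it requires the two distinct transversality conditions $\alpha \neq 1$ (for the change of variable in the quadruple integral) and $\alpha \neq 0$ (for the Plancherel step), and it is the only place where the distinction between ``transversal'' and ``nontransversal'' interactions enters. A secondary technical point is the bookkeeping in Step~4: because the bilinear structure mixes two independent shifts, care is needed to ensure the derivative-distribution arguments never pay a factor of $1/\eps$, which is exactly what our careful reparametrization in Step~1 guarantees.
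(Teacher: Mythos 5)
Your bilinear transversal estimate in Step~2 is correct (the cleanest proof is not via the quadruple integral but via Minkowski's integral inequality after the substitution $z=\tau+\alpha r$, which makes the roles of $\alpha\neq 0$ and $\alpha\neq 1$ transparent), and it is essentially the same Schur-type bound that the paper uses after its change of variable $z=\theta_0+\omega x_d/\eps+\alpha s$. The reparametrization in Step~1 to absorb the $1/\eps$ into $\theta_0$-translations is also correct and matches the paper. The gap is in Step~4.

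The Moser-type splitting you invoke cannot dispose of pure $\theta_0$-derivatives. Take a Leibniz term $\partial_{\theta_0}^{k_1}\sigma_1\cdot\partial_{\theta_0}^{k_2}\sigma_2$ with $k_1+k_2=k\le s$ and $1\le k_1\le k_2$. Your plan is to put $\partial_{\theta_0}^{k_1}\sigma_j$ into $L^\infty_x L^1_\theta$ and control that norm by $|\sigma_j|_{\Gamma^t_T}$. But the $L^1(\theta)$ bound requires pointwise decay, e.g.\ $\langle\theta\rangle^2\partial_\theta^{k_1}\sigma_j\in L^\infty_{x,\theta}$, which by Sobolev embedding needs $\langle\theta\rangle^2\partial_\theta^{k_1}\sigma_j\in H^r$ with $r>\frac{d+2}{2}$, hence $\sigma_j\in\Gamma^{k_1+2+r}$. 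Since $t$ is the smallest integer exceeding $\frac{d}{2}+3$, the hypothesis $\sigma_j\in\Gamma^t$ only covers $k_1=0$; once $k_1\ge 1$ the $\Gamma^t$ regularity is exhausted, and for large $s$ Leibniz unavoidably produces splits with $k_1$ of order $s/2$. Switching the low-derivative factor to $L^\infty(\theta)$ instead does not help: $\|\partial_\theta^{k_1}\sigma_1(\cdot+\alpha r)\|_{L^\infty(\theta)}$ is independent of $r$, so the $r$-integral then diverges. In short, the $L^1(\theta)$ slot in your bilinear estimate can only accept a factor with \emph{zero} $\theta$-derivatives, and your Step~4 never arranges for that.

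The paper closes this gap with an integration-by-parts trick that your outline is missing. Because $\sigma_2$ enters through the argument $\theta_0+\omega\xi_d+s$, one has $\partial_{\theta_0}\sigma_2=\partial_s\sigma_2$, and similarly $\partial_{\theta_0}\sigma_1=\alpha^{-1}\partial_s\sigma_1$. Integrating by parts in $s$ therefore transfers $\theta_0$-derivatives from one factor to the other (at the cost of boundary terms at $s=x_d/\eps$ and harmless powers of $\alpha$), and after $k_2$ steps all $\theta_0$-derivatives sit on $\sigma_1$, giving $\int^{x_d/\eps}_\infty(\partial_{\theta_0}^k\sigma_1)\,\sigma_2\,{\rm d}s$ plus boundary terms. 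Now $\partial_{\theta_0}^k\sigma_1$ is placed in $L^2(\theta,H^s(x'))$ (controlled by $|\sigma_1|_{H^{s+1}_T}$) and the undifferentiated $\sigma_2$ carries the $\langle\theta\rangle^{-2}$ decay from $\Gamma^t_T$ that makes the $s$-integral converge; the boundary terms are handled by a standard Moser estimate in $(x',\theta_0)$ because they involve no integration in $s$. With this ingredient added, your argument reproduces the paper's estimate of $B_s(x_d)$; your treatment of the $x'$-derivatives (the analogue of $A_s(x_d)$) is fine as written because Moser in $x'$ alone leaves one factor with no $x'$-derivatives and does not touch the $\theta$-decay.
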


\begin{proof}
\textbf{1. } For fixed $x_d$ and $\eps$ we first estimate
\begin{multline}\label{h12b}
\left| \int^{x_d/\eps}_\infty \sigma_1 \, \sigma_2 \, {\rm d}s \right|_{H^s_T(x',\theta_0)} \sim \sum_{|\alpha|\leq s}
\left| \int^{x_d/\eps}_\infty \partial_{x'}^\alpha \, (\sigma_1 \, \sigma_2) \, {\rm d}s \right|_{L^2(x',\theta_0)}
+\sum_{k\leq s} \left| \int^{x_d/\eps}_\infty \partial_{\theta_0}^k \, (\sigma_1 \, \sigma_2) \, {\rm d}s \right|_{L^2(x',\theta_0)}\\
:=A_s(x_d)+B_s(x_d).
\end{multline}
Here and below $\sigma_1$, $\sigma_2$ and their derivatives are evaluated at the points indicated in \eqref{h12a}
with $\xi_d=\frac{x_d}{\eps}$, unless explicitly stated otherwise.

\textbf{2. } To estimate $A_s(x_d)$ we consider for $|\alpha_1|+|\alpha_2|=|\alpha|$:
\begin{multline*}
\left| \int^{x_d/\eps}_\infty \partial_{x'}^{\alpha_1}\sigma_1 \, \partial_{x'}^{\alpha_2}\sigma_2 \, {\rm d}s \right|_{L^2(x',\theta_0)}
\leq \left| \int^{x_d/\eps}_\infty \left| \partial_{x'}^{\alpha_1}\sigma_1 \, \partial_{x'}^{\alpha_2}\sigma_2 \right|_{L^2(x')}
\, {\rm d}s \right|_{L^2(\theta_0)} \\
\le \left| \int^\infty_{-\infty} (|\sigma_1|_{L^\infty(x')} \, |\sigma_2|_{H^s(x')} +|\sigma_1|_{H^s(x')} \,
|\sigma_2|_{L^\infty(x')}) \, {\rm d}s \right|_{L^2(\theta_0)} \leq A_{1,s}(x_d)+A_{2,s}(x_d),
\end{multline*}
where we have used a Moser estimate in the $x'$ variable. Setting $z=\theta_0+\omega \, \frac{x_d}{\eps}+\alpha \, s$,
we obtain
\begin{align}
A_{1,s}(x_d) &= C \, \left| \int^\infty_{-\infty}|\sigma_1(x,z)|_{L^\infty(x')} \, \left| \sigma_2 \left( x,
(\theta_0+\omega\frac{x_d}{\eps})(1-\frac{1}{\alpha})+\frac{z}{\alpha} \right) \right|_{H^s(x')} \, {\rm d}z \right|_{L^2(\theta_0)}
\notag \\
&\le C\, \int^\infty_{-\infty}|\sigma_1(x,z)|_{L^\infty(x')} \, |\sigma_2|_{L^2(\theta,H^s(x'))} \, {\rm d}z \notag \\
&=C\, |\sigma_2|_{L^2(\theta,H^s(x'))} \, \int^\infty_{-\infty} |\sigma_1(x,z)|_{L^\infty(x')} \, \langle z \rangle^2
\dfrac{{\rm d}z}{\langle z\rangle^{-2}} \notag \\
&\leq C\, |\sigma_2|_{L^2(\theta,H^s(x'))} \, |\sigma_1(x,z) \langle z\rangle^2|_{L^\infty(x,z)} \, \leq C \,
|\sigma_2|_{H^{s}_T(x',\theta)} \, |\sigma_1|_{\Gamma^t_T},\label{h13a}
\end{align}
where the last inequality uses Remark \ref{f3}. The estimate of $A_{2,s}(x_d)$ is similar.

\textbf{3. } Recalling the definition of the $E^s_T$ norm and using
\begin{align*}
|\sigma_2|_{C(x_d,H^{s}_T(x',\theta))} \leq C \, |\sigma_2|_{H^1(x_d,H^s_T(x',\theta))}
\leq C \, |\sigma_2|_{H^{s+1}_T(x,\theta)} \, ,
\end{align*}
we obtain from \eqref{h13a}:
\begin{align}\label{h13}
|A_{1,s}(x_d) \, A_{2,s}(x_d)|_{C(x_d)} +|A_{1,s+1}(x_d) \, A_{2,s+1}(x_d)|_{L^2(x_d)} \leq C \,
(|\sigma_1|_{H^{s+1}_T} \, |\sigma_2|_{\Gamma^t_T} +|\sigma_2|_{H^{s+1}_T} \, |\sigma_1|_{\Gamma^t_T}).
\end{align}

\textbf{4. } To estimate $B_s(x_d)$ in \eqref{h12b} we consider for $k_1+k_2=k$:
\begin{align*}
\int^{x_d/\eps}_\infty \partial_{\theta_0}^{k_1}\sigma_1 \, \partial_{\theta_0}^{k_2}\sigma_2 \, {\rm d}s
=\pm \int^{x_d/\eps}_\infty (\partial_{\theta_0}^{k}\sigma_1) \, \sigma_2 ds + \mathrm{ (boundary \;terms) }.
\end{align*}
Each boundary term has the form $\partial_{\theta_0}^{m_1}\sigma_1 \, \partial_{\theta_0}^{m_2}\sigma_2$,
$m_1+m_2<k$, where $s$ is evaluated at $x_d/\eps$. We estimate such terms using Moser estimates as follows:
\begin{align}\label{h14a}
|\partial_{\theta_0}^{m_1}\sigma_1 \, \partial_{\theta_0}^{m_2}\sigma_2|_{L^2(x',\theta_0)}
\leq C \, (|\sigma_1|_{L^\infty(x,\theta)} \, |\sigma_2|_{H^{s-1}(x',\theta_0)} +|\sigma_1|_{H^{s-1}(x',\theta_0)}
\, |\sigma_2|_{L^\infty(x,\theta_0)}).
\end{align}
For the integral term setting $z=\theta_0+\omega\frac{x_d}{\eps}+s$, we have
\begin{align}\label{h14}
\begin{split}
&\left| \int^{x_d/\eps}_\infty (\partial_{\theta_0}^{k}\sigma_1) \, \sigma_2 \, {\rm d}s \right|_{L^2(x',\theta_0)}
\leq C \, \left| \int^\infty_{-\infty} |\partial_{\theta_0}^k \sigma_1 \left(x,(\theta_0+\omega\frac{x_d}{\eps})(1-\alpha)
+\alpha z\right) \, \sigma_2(x,z) |_{L^2(x')} \, {\rm d}z \right|_{L^2(\theta_0)} \\
&\qquad \leq C \, \left| \int^\infty_{-\infty} |\partial_{\theta_0}^k \sigma_1 \left(x,(\theta_0+\omega\frac{x_d}{\eps})(1-\alpha)
+\alpha z\right)|_{L^2(x')} \, |\sigma_2(x,z)|_{L^\infty(x)} \, {\rm d}z \right|_{L^2(\theta_0)} \\
&\qquad \leq C \, |\sigma_1|_{L^2(x',H^s(\theta))} \, |\sigma_2(x,z)\langle z\rangle^2|_{L^\infty(x,z)}
\leq C \, |\sigma_1|_{H^{s}_T(x',\theta)} \, |\sigma_2|_{\Gamma^t_T}.
\end{split}
\end{align}
From \eqref{h14a} and \eqref{h14}, we obtain parallel to \eqref{h13}:
\begin{align*}
|B_{s}(x_d)|_{C(x_d)} +|B_{s+1}(x_d)|_{L^2(x_d)} \leq C \, (|\sigma_1|_{H^{s+1}_T} \, |\sigma_2|_{\Gamma^t_T}
+|\sigma_2|_{H^{s+1}_T} \, |\sigma_1|_{\Gamma^t_T}),
\end{align*}
completing the proof.
\end{proof}

The previous estimate of transversal interactions did not require the use of moment-zero approximations.
However, nontransversal interactions of pulses can produce errors that are too big to be helpful in the error
analysis. Thus, we are forced to use a moment-zero approximation in the next proposition.

\begin{prop}\label{h16}
Let $\sigma(x,\theta)$ and $\tau(x,\theta)$ belong to $\Gamma^s_T$, $s>\frac{d}{2}+3$. For $\alpha,\omega
\in \R$, $\alpha\neq 0$ set
\begin{align*}
f(x,\theta_0,\xi_d) :=\int^{\xi_d}_\infty (\sigma \, \tau)_p(x,\theta_0+\omega \, \xi_d+\alpha \, s) \, {\rm d}s.
\end{align*}
Then
\begin{equation*}
\left| f(x,\theta_0,\frac{x_d}{\eps}) \right|_{E^{s-1}_T} \leq C \, \dfrac{|\sigma|_{H^s_T} \, |\tau|_{H^s_T}}{p} \, .
\end{equation*}
\end{prop}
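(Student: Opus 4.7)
The plan is to reduce $f$ explicitly to a primitive of $(\sigma\tau)_p$, exploiting the fact that the moment-zero approximation has vanishing integral in $\theta$, and then to assemble the estimate from Propositions \ref{h1}, \ref{h9}, and \ref{h11c}. The main work is done in those earlier results; this proposition essentially just packages them via an explicit change of variables.

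First I would perform the substitution $u = \theta_0 + \omega\, \xi_d + \alpha \, s$ in the defining integral, for which $\alpha \ne 0$ is essential (it is hypothesized). If $\alpha > 0$ the upper endpoint $s = +\infty$ maps to $u = +\infty$, and if $\alpha < 0$ it maps to $u = -\infty$. Crucially, $(\sigma\tau)_p$ has moment zero: by construction $\widehat{(\sigma\tau)_p}(x,0) = \chi_p(0)\, \widehat{\sigma\tau}(x,0) = 0$. Consequently, the primitive $(\sigma\tau)_p^*$ from Proposition \ref{h9} admits both representations
$$
(\sigma\tau)_p^*(x,\theta) = -\int_\theta^{+\infty} (\sigma\tau)_p(x,u)\, du = \int_{-\infty}^\theta (\sigma\tau)_p(x,u)\, du,
$$
so in either sign case the change of variables yields the explicit identity
$$
f(x,\theta_0,\xi_d) = \frac{1}{\alpha}\, (\sigma\tau)_p^*\bigl(x,\theta_0 + (\omega+\alpha)\xi_d\bigr).
$$

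Next, set $h(x,\theta) := (1/\alpha)(\sigma\tau)_p^*(x,\theta)$ and $\tilde h(x,\theta_0,\xi_d) := h(x,\theta_0 + (\omega+\alpha)\xi_d)$. Proposition \ref{h1}(a), applied with $s-1$ in place of $s$ and $\omega+\alpha$ in place of $\omega$, gives $|\tilde h|_{\cE^{s-1}_T} \leq C \, |h|_{H^s_T}$, and Proposition \ref{h1}(b) then yields $|\tilde h_\eps|_{E^{s-1}_T} \leq |\tilde h|_{\cE^{s-1}_T}$ uniformly in $\eps$. Finally, Proposition \ref{h11c} (which applies since $\sigma,\tau \in \Gamma^s_T$ with $s > d/2+3$) gives $|h|_{H^s_T} \leq (C/|\alpha|)\, |\sigma|_{H^s_T}\, |\tau|_{H^s_T}/p$, completing the estimate.

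The proof is essentially an exercise in bookkeeping; there is no real analytic obstacle once the moment-zero identity is in place. The conceptual point worth underscoring is that without the moment-zero cutoff the antiderivative of $\sigma\tau$ would fail to decay at infinity, so the function $\tilde h$ would not lie in $\cE^{s-1}_T$ at all and the estimate would be impossible. The price paid for restoring decay via the low-frequency cutoff $\chi_p$ is exactly the factor $1/p$ that appears on the right-hand side, inherited from the bound $|\sigma_p^*|_{H^s_T} \leq C\, |\sigma_p|_{H^s_T}/p$ in Proposition \ref{h9}(a); this is the unavoidable cost that will ultimately force the choice $p = \eps^b$ in the error analysis.
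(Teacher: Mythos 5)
Your argument is correct and is essentially the proof the paper gives: after the change of variables $u=\theta_0+\omega\,\xi_d+\alpha\,s$ (using the moment-zero property to make both signs of $\alpha$ yield the same primitive), $f$ reduces to $\alpha^{-1}(\sigma\tau)_p^*(x,\theta_0+(\omega+\alpha)\xi_d)$, and the estimate follows by combining Proposition \ref{h11c} with Proposition \ref{h1}. The only difference is that you spell out the change of variables and the two representations of the primitive, while the paper states the identity directly; the content is the same.
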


\begin{proof}
The integral equals $\alpha^{-1} \, (\sigma \, \tau)_p^*(x,\theta_0+\xi_d(\omega+\alpha))$ so the estimate
follows first by applying Proposition \ref{h11c} and then by applying Proposition \ref{h1}.
\end{proof}

\begin{cor}\label{h17}
Let $\sigma(x,\theta)$, $\tau(x,\theta)$, and $\omega,\alpha$ be as in Proposition \ref{h16} and set
\begin{align*}
g(x,\theta_0,\xi_d) :=\int^{\xi_d}_\infty (\sigma_p \, \tau_p)_p(x,\theta_0+\omega \, \xi_d+\alpha \, s) \, {\rm d}s.
\end{align*}
Then
\begin{equation*}
\left| g(x,\theta_0,\frac{x_d}{\eps}) \right|_{E^{s-1}_T} \leq C \, \dfrac{|\sigma|_{H^s_T} \, |\tau|_{H^s_T}}{p} \, .
\end{equation*}
\end{cor}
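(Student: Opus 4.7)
The plan is to reduce Corollary \ref{h17} directly to Proposition \ref{h16} by treating the moment-zero approximations $\sigma_p$ and $\tau_p$ as the new input functions. First I would verify that the hypotheses of Proposition \ref{h16} are satisfied with $\sigma$ replaced by $\sigma_p$ and $\tau$ by $\tau_p$: Proposition \ref{h8}(b) guarantees $\sigma_p,\tau_p\in\Gamma^s_T$ (with a possibly worse constant depending on $p$, but this is only used qualitatively to justify applying the previous result), so the integral defining $g$ converges and the conclusion of Proposition \ref{h16} applies, yielding
\begin{equation*}
\left| g\!\left(x,\theta_0,\tfrac{x_d}{\eps}\right) \right|_{E^{s-1}_T} \leq C \, \dfrac{|\sigma_p|_{H^s_T} \, |\tau_p|_{H^s_T}}{p}.
\end{equation*}

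Next, I would bound the factors on the right hand side by the corresponding norms of $\sigma$ and $\tau$ without the moment-zero cutoff. This is exactly the content of Proposition \ref{h8}(a), which states $|\sigma_p|_{H^s_T}\leq C|\sigma|_{H^s_T}$ and $|\tau_p|_{H^s_T}\leq C|\tau|_{H^s_T}$. Inserting these two inequalities into the previous display gives the claimed estimate
\begin{equation*}
\left| g\!\left(x,\theta_0,\tfrac{x_d}{\eps}\right) \right|_{E^{s-1}_T} \leq C \, \dfrac{|\sigma|_{H^s_T} \, |\tau|_{H^s_T}}{p}.
\end{equation*}

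There is really no main obstacle here: the corollary is essentially cosmetic, designed to record that the same bound survives when one replaces the factors inside the product by their moment-zero approximations before taking the final moment-zero cutoff. The only point that requires a moment's care is that Proposition \ref{h8}(a), rather than Proposition \ref{h8}(b), is the inequality to use — we pay nothing in powers of $p$ because the $H^s_T$-bound on a moment-zero approximation is uniform in $p$, and the $1/p$ loss is already absorbed into the single factor coming from Proposition \ref{h16} (which ultimately traces back to the $\theta$-primitive estimate in Proposition \ref{h9}(a)).
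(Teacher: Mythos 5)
Your proposal is correct and coincides with the paper's own two-line proof: apply Proposition \ref{h16} with $\sigma_p,\tau_p$ in place of $\sigma,\tau$, then invoke Proposition \ref{h8}(a) to replace $|\sigma_p|_{H^s_T}$ and $|\tau_p|_{H^s_T}$ by $|\sigma|_{H^s_T}$ and $|\tau|_{H^s_T}$ at no cost in $p$. Your observation that Proposition \ref{h8}(b) is needed only qualitatively to confirm $\sigma_p,\tau_p\in\Gamma^s_T$ (so that Proposition \ref{h16} applies) is a valid and slightly more careful bookkeeping than the paper bothers to record.
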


\begin{proof}
First apply Proposition \ref{h16} and then Proposition \ref{h8}(a).
\end{proof}

\begin{prop}\label{h18}
For $s>\frac{d}{2}+3$ let $\sigma(x,\theta)\in H^s_T$, $\tau(x,\theta)\in\Gamma^{s+1}_T$. With $\omega,\alpha
\in \R$, $\alpha\neq 0$ set
\begin{align*}
h(x,\theta_0,\xi_d) :=\sigma(x,\theta_0+\omega \, \xi_d) \, \int^{\xi_d}_\infty \partial_{\theta_0}\tau (x,\theta_0
+\omega \, \xi_d+\alpha \, s) \, {\rm d}s.
\end{align*}
Then
\begin{equation*}
\left| h(x,\theta_0,\frac{x_d}{\eps}) \right|_{E^{s-1}_T} \leq C \, |\sigma|_{H^s_T} \, |\tau|_{H^s_T} \, .
\end{equation*}
\end{prop}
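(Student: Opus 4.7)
The approach reduces the estimate to two ingredients already available: an explicit evaluation of the inner integral, and the Banach algebra property of $\cE^{s-1}_T$ from Proposition \ref{d23a}. Unlike the nontransversal estimates of Propositions \ref{h16} and \ref{h17}, no moment-zero approximation will be needed, because the primitive of $\partial_{\theta_0}\tau$ is available in closed form.

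First I would compute the inner integral. By the chain rule, $\partial_{\theta_0}\tau(x,\theta_0+\omega\xi_d+\alpha s)$ equals $\partial_z\tau(x,z)$ evaluated at $z=\theta_0+\omega\xi_d+\alpha s$, so the substitution $z=\theta_0+\omega\xi_d+\alpha s$ (using $\alpha\neq 0$) gives
\begin{equation*}
\int_\infty^{\xi_d}\partial_{\theta_0}\tau(x,\theta_0+\omega\xi_d+\alpha s)\,{\rm d}s = \frac{1}{\alpha}\,\Bigl[\tau\bigl(x,\theta_0+(\omega+\alpha)\xi_d\bigr)-\lim_{|z|\to\infty}\tau(x,z)\Bigr].
\end{equation*}
Since $\tau\in\Gamma^{s+1}_T$ with $s+1>d/2+4$, the same Sobolev-type argument recalled in the opening of the proof of Proposition \ref{h9} yields the pointwise bound $|\tau(x,z)|\le C\langle z\rangle^{-2}$ uniformly in $x$, so the boundary term at infinity vanishes and
\begin{equation*}
h(x,\theta_0,\xi_d) = \frac{1}{\alpha}\,\sigma(x,\theta_0+\omega\xi_d)\,\tau\bigl(x,\theta_0+(\omega+\alpha)\xi_d\bigr).
\end{equation*}

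Next I would set $\tilde\sigma(x,\theta_0,\xi_d):=\sigma(x,\theta_0+\omega\xi_d)$ and $\tilde\tau(x,\theta_0,\xi_d):=\tau(x,\theta_0+(\omega+\alpha)\xi_d)$ and read off each factor as a rapidly varying pulse. Because translation in $\theta_0$ preserves Sobolev norms, Proposition \ref{h1}(a) applied with index $s-1$ gives
\begin{equation*}
|\tilde\sigma|_{\cE^{s-1}_T}\le C\,|\sigma|_{H^s_T}, \qquad |\tilde\tau|_{\cE^{s-1}_T}\le C\,|\tau|_{H^s_T}.
\end{equation*}
The assumption $s>d/2+3$ forces $s-1>(d+1)/2$, so Proposition \ref{d23a} asserts that $\cE^{s-1}_T$ is a Banach algebra, and hence
\begin{equation*}
|\tilde\sigma\,\tilde\tau|_{\cE^{s-1}_T}\le C\,|\sigma|_{H^s_T}\,|\tau|_{H^s_T}.
\end{equation*}
Finally, applying Proposition \ref{h1}(b) to the rapidly varying function $\tilde\sigma\,\tilde\tau$, whose value at $\xi_d=x_d/\eps$ coincides with $\alpha\, h_\eps$, yields the desired bound $|h_\eps|_{E^{s-1}_T}\le C\,|\sigma|_{H^s_T}\,|\tau|_{H^s_T}$.

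There is no real obstacle in this proof: everything reduces to the explicit primitive formula, the Banach algebra property, and the transfer Proposition \ref{h1}. The only mild subtlety is justifying the vanishing of the boundary term at infinity; this is immediate once the pointwise decay $|\tau(x,\cdot)|\lesssim\langle\theta\rangle^{-2}$ is extracted from the weighted regularity $\tau\in\Gamma^{s+1}_T$.
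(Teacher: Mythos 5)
Your proof is correct and follows essentially the same route as the paper: evaluate the inner integral in closed form as $\alpha^{-1}\tau(x,\theta_0+(\omega+\alpha)\xi_d)$, then combine the Banach algebra property with Proposition \ref{h1}. You supply the substitution details and the justification that the boundary term at infinity vanishes (via the pointwise decay coming from $\tau\in\Gamma^{s+1}_T$), which the paper leaves implicit, but the argument is the same.
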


\begin{proof}
The integral is equal to $\alpha^{-1} \, \tau(x,\theta_0+\xi_d(\omega+\alpha))$ so the estimate follows from the
fact that $E^{s-1}_T$ is a Banach algebra together with Proposition \ref{h1}.
\end{proof}

In the next Proposition we must use a moment-zero approximation since $\tau(x,\theta)$ may not have moment zero.

\begin{prop}\label{h19}
For $s> \frac{d}{2}+3$ let $\sigma(x,\theta)\in H^s_T$, $\tau(x,\theta)\in\Gamma^s_T$. With $\omega,\alpha \in \R$,
$\alpha\neq 0$ set
\begin{align*}
j(x,\theta_0,\xi_d) :=\partial_{\theta_0} \sigma (x,\theta_0+\omega \, \xi_d) \, \int^{\xi_d}_\infty \tau_p
(x,\theta_0+\omega \, \xi_d+\alpha \, s) \, {\rm d}s.
\end{align*}
Then
\begin{equation*}
\left| j(x,\theta_0,\frac{x_d}{\eps}) \right|_{E^{s-2}_T} \leq C \, \dfrac{|\sigma|_{H^s_T} \, |\tau|_{H^{s-1}_T}}{p} \, .
\end{equation*}
\end{prop}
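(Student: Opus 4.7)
The plan is to compute the integral in closed form, reduce to a product estimate in the Banach algebra $E^{s-2}_T$, and then apply the previously established bounds on moment-zero primitives.

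First I would use the hypothesis $\tau \in \Gamma^s_T$ with $s > \frac{d}{2}+3$ to invoke Proposition \ref{h9}, which guarantees the existence of a unique primitive $\tau_p^*$ of $\tau_p$ in $\theta$ that decays to zero as $|\theta|\to\infty$ and satisfies $|\tau_p^*|_{H^{s-1}_T} \le C |\tau_p|_{H^{s-1}_T}/p$. Since $\tau_p^*(x,\theta)\to 0$ as $|\theta|\to\infty$ (and $\alpha\neq 0$), the change of variable $s \mapsto \theta_0+\omega\,\xi_d+\alpha\,s$ gives
\begin{equation*}
\int^{\xi_d}_\infty \tau_p(x,\theta_0+\omega\,\xi_d+\alpha\,s)\,{\rm d}s
= \frac{1}{\alpha}\,\tau_p^*(x,\theta_0+(\omega+\alpha)\,\xi_d),
\end{equation*}
since the boundary contribution at $s\to\infty$ vanishes. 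Thus
\begin{equation*}
j(x,\theta_0,\xi_d) = \frac{1}{\alpha}\,\partial_{\theta_0}\sigma(x,\theta_0+\omega\,\xi_d)\cdot \tau_p^*(x,\theta_0+(\omega+\alpha)\,\xi_d).
\end{equation*}

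Next I would observe that the index $s-2$ satisfies $s-2 > \frac{d}{2}+1 > \frac{d+1}{2}$, so by Proposition \ref{d23a} the space $E^{s-2}_T$ is a Banach algebra. Evaluating at $\xi_d = x_d/\eps$ and applying the algebra property yields
\begin{equation*}
|j_\eps|_{E^{s-2}_T} \le \frac{C}{|\alpha|}\,\bigl|\partial_{\theta_0}\sigma(x,\theta_0+\omega\,x_d/\eps)\bigr|_{E^{s-2}_T}\cdot\bigl|\tau_p^*(x,\theta_0+(\omega+\alpha)\,x_d/\eps)\bigr|_{E^{s-2}_T}.
\end{equation*}
Each of the two factors is a rapidly oscillating function of the form treated by Proposition \ref{h1}, which bounds the $E^{s-2}_T$ norm by the $H^{s-1}_T$ norm of the underlying $(x,\theta)$-function. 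Hence
\begin{equation*}
\bigl|\partial_{\theta_0}\sigma(\cdot,\cdot+\omega\,x_d/\eps)\bigr|_{E^{s-2}_T} \le C\,|\partial_\theta \sigma|_{H^{s-1}_T} \le C\,|\sigma|_{H^s_T},
\end{equation*}
and, combining Proposition \ref{h1} with Propositions \ref{h9}(a) and \ref{h8}(a),
\begin{equation*}
\bigl|\tau_p^*(\cdot,\cdot+(\omega+\alpha)\,x_d/\eps)\bigr|_{E^{s-2}_T} \le C\,|\tau_p^*|_{H^{s-1}_T} \le \frac{C\,|\tau_p|_{H^{s-1}_T}}{p} \le \frac{C\,|\tau|_{H^{s-1}_T}}{p}.
\end{equation*}
Multiplying the two bounds yields the claimed estimate.

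The only subtle point--which I would not expect to be a serious obstacle since it parallels the computation in Proposition \ref{h18}--is the justification of the closed-form expression for the integral: one must verify that $\tau_p^*$ really vanishes at $\pm\infty$ so that no boundary term at $s=\infty$ survives. This is guaranteed by the moment-zero construction in Proposition \ref{h9}, and is precisely the reason one cannot work with $\tau$ directly (whose primitive would in general fail to decay, producing an unbounded term upon evaluation at $\xi_d=x_d/\eps$). The factor $1/p$ in the final bound is the unavoidable cost of this regularization, coming from the small-divisor estimate $|\widehat{\tau_p^*}|\le C|\hat\tau|/p$ in Proposition \ref{h9}.
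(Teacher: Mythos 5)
Your proposal is correct and follows essentially the same route as the paper: compute the integral in closed form as $\alpha^{-1}\tau_p^*(x,\theta_0+(\omega+\alpha)\xi_d)$, invoke the Banach algebra property of $E^{s-2}_T$ together with Proposition \ref{h1}, and then chain Propositions \ref{h9}(a) and \ref{h8}(a) to obtain the $1/p$ factor. The paper's proof is just a one-line pointer to the argument of Proposition \ref{h18} plus those two propositions, and you have faithfully unpacked exactly that chain.
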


\begin{proof}
The integral is equal to $\alpha^{-1} \, \tau_p^*(x,\theta_0+\xi_d(\omega+\alpha))$. The estimate follows by the
argument of Proposition \ref{h18}, except that now we also need Proposition \ref{h9}(a) and Proposition \ref{h8}(a).
\end{proof}

The proof of the next Proposition is evident from the proof of Proposition \ref{h19}.

\begin{prop}\label{h20}
For $s>\frac{d}{2}+3$ and $\omega,\alpha\in\R$, $\alpha\neq 0$, let $\sigma\in\Gamma^s_T$ and set
\begin{align*}
k(x,\theta_0,\xi_d)=\int^{\xi_d}_\infty \sigma_p(x,\theta_0+\omega\xi_d+\alpha s)ds.
\end{align*}
Then
\begin{equation*}
\left| k(x,\theta_0,\frac{x_d}{\eps}) \right|_{E^{s-1}_T} \leq C \, \dfrac{|\sigma|_{H^s_T}}{p} \, .
\end{equation*}
\end{prop}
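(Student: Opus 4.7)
The plan is to follow the same template as Proposition \ref{h19}, although the argument here is in fact simpler because there is no ``outer'' factor $\partial_{\theta_0}\sigma$ to multiply against the primitive. First I would evaluate the integral defining $k$ explicitly. The substitution $u = \theta_0 + \omega\xi_d + \alpha s$ gives $\mathrm{d}u = \alpha\, \mathrm{d}s$; a check of the limits (splitting cases $\alpha > 0$ and $\alpha < 0$) shows that in both cases
\begin{equation*}
k(x,\theta_0,\xi_d) = \alpha^{-1} \, \sigma_p^*\bigl(x,\theta_0 + (\omega+\alpha)\, \xi_d \bigr),
\end{equation*}
where $\sigma_p^*$ is the unique $\theta$-primitive of $\sigma_p$ that decays at $\pm\infty$. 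Such a primitive exists because, by construction, $\sigma_p$ has moment zero, and Proposition \ref{h9} supplies $\sigma_p^*$ together with the estimate we will need later.

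Next I would set $\xi_d = x_d/\eps$ and transport norms using Proposition \ref{h1}. Viewing $\tilde{\sigma}_p^*(x,\theta_0,\xi_d) := \sigma_p^*(x,\theta_0 + (\omega+\alpha)\xi_d)$ as an element of $\cE^{s-1}_T$, Proposition \ref{h1}(a) (applied with $\omega$ replaced by $\omega+\alpha$ and $s$ replaced by $s-1$) yields $|\tilde{\sigma}_p^*|_{\cE^{s-1}_T} \leq C\, |\sigma_p^*|_{H^s_T}$. Proposition \ref{h1}(b) then converts this into $|k(x,\theta_0, x_d/\eps)|_{E^{s-1}_T} \leq C\, |\sigma_p^*|_{H^s_T}$, after absorbing the harmless factor $|\alpha|^{-1}$ into $C$.

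Finally I would chain the moment-zero approximation estimates. Proposition \ref{h9}(a) gives $|\sigma_p^*|_{H^s_T} \leq C\, |\sigma_p|_{H^s_T}/p$, and Proposition \ref{h8}(a) gives $|\sigma_p|_{H^s_T} \leq C\, |\sigma|_{H^s_T}$. Composing these inequalities delivers the stated bound
\begin{equation*}
\bigl| k(x,\theta_0, x_d/\eps) \bigr|_{E^{s-1}_T} \leq C\, \frac{|\sigma|_{H^s_T}}{p}.
\end{equation*}
There is no real obstacle here: the proof is essentially a bookkeeping exercise combining the change of variables with Propositions \ref{h1}, \ref{h8}, and \ref{h9}. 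The regularity requirement $s > d/2 + 3$ enters only through the hypotheses of Proposition \ref{h9} needed to guarantee that $\sigma_p^*$ is well-defined as a decaying primitive and to yield the $1/p$ loss.
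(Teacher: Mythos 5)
Your proof is correct and follows exactly the route the paper intends: the paper states that the proof is "evident from the proof of Proposition \ref{h19}," which in turn rewrites the integral as $\alpha^{-1}\tau_p^*(x,\theta_0+(\omega+\alpha)\xi_d)$ and chains Propositions \ref{h1}, \ref{h9}(a), and \ref{h8}(a). You have simply written out explicitly the same change of variables, the same identification of the integral with $\alpha^{-1}\sigma_p^*$, and the same chain of norm estimates.
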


\subsection{Proof of Theorem \ref{a16}}\label{proofmain}


\emph{\quad} Now we are ready to prove Theorem \ref{a16}, which shows that the approximate solution
$u^a_\eps(x)$ converges in $L^\infty$ to the exact solution $u_\eps$ of Theorem \ref{a12} as $\eps\to 0$. In
this section we prove the following more precise Theorem, which implies Theorem \ref{a16} as an immediate
corollary. As before we focus on the $3\times 3$ strictly hyperbolic case to ease the exposition. The mostly
minor changes needed to treat $N\times N$ systems satisfying Assumptions \ref{assumption1}, \ref{assumption2},
and \ref{a7} are described in section \ref{extension}.

\begin{theo}\label{e1}
For $M_0=3d+5$ and $s\geq 1+[M_0+\frac{d+1}{2}]$, let $G(x',\theta_0)\in b\Gamma^{s+1}_T$ and suppose
$G=0$ in $t\leq 0$. Let $U_\eps(x,\theta_0)\in E^{s}_{T_0}$ be the exact solution to the singular system \eqref{a4b}
for $0<\eps\leq \eps_0$ given by Theorem \ref{a12}, let $\cV^0=(\sigma_1,\sigma_2,\sigma_3)\in \Gamma^{s+1}_{T_0}$
be the profile given by Proposition \ref{c37}, and let $\cU^0\in\cE^{s}_{T_0}$ be defined by
\begin{align*}
\cU^0(x,\theta_0,\xi_d) :=\sum_{j=1}^3 \sigma_j(x,\theta_0+\omega_j \, \xi_d) \, r_j.
\end{align*}
Here $0 < T_0\leq T$ is the minimum of the existence times for the quasilinear problems \eqref{a4b} and \eqref{36a}.
Define
\begin{align*}
\cU^0_\eps(x,\theta_0) := \cU^0(x,\theta_0,\frac{x_d}{\eps}).
\end{align*}
The family $\cU^0_\eps$ is uniformly bounded in $E^{s}_{T_0}$ for $0<\eps\leq \eps_0$; moreover, there exists
$0<T_1\leq T_0$ and $C>0$ such that
\begin{align}\label{e4}
|U_\eps-\cU^0_\eps|_{E^{s-3}_{T_1}}\leq C\eps^{\frac{1}{2M_1+5}},
\end{align}
where $M_1$ is the smallest integer $>\frac{d}{2}+3$.
\end{theo}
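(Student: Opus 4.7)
The plan is to implement the ``simultaneous Picard iteration'' strategy sketched in Section \ref{errorintro}. Run the two iteration schemes side by side: the linear singular iterates $U^n_\eps$ produced by \eqref{linexact}, which by Theorem \ref{estnl} are uniformly bounded in $E^s_{T_0}$ and converge in $E^{s-1}_{T_0}$ to $U_\eps$ uniformly in $\eps\in(0,\eps_0]$; and the profile iterates $\cU^{0,n}(x,\theta_0,\xi_d)=\sum_j \sigma_j^n(x,\theta_0+\omega_j\xi_d)r_j$ produced by \eqref{f2}, which by Proposition \ref{c37} are uniformly bounded in $\cE^{s}_{T_0}$ and converge in $\cE^{s-1}_{T_0}$ to $\cU^0$. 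Uniform boundedness of $\cU^0_\eps$ in $E^s_{T_0}$ and uniform convergence $\cU^{0,n}_\eps\to\cU^0_\eps$ in $E^{s-1}_{T_0}$ then follow from Proposition \ref{h1}. Consequently it suffices to establish, by induction on $n$, the quantitative comparison
\begin{equation*}
|\cU^{0,n}_\eps - U^n_\eps|_{E^{s-3}_{T_1}} \leq C\,\eps^{1/(2M_1+5)}
\end{equation*}
with $C$ independent of $n$ and $\eps$; passage to the limit in $n$ then yields \eqref{e4}.

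For the induction step I would substitute $\cU^{0,n+1}_\eps$ into the singular linear problem \eqref{linexact} that defines $U^{n+1}_\eps$. The $1/\eps$ part of the residual vanishes identically because $\cU^{0,n+1}$ has the plane-wave form $\sum_j\sigma_j^{n+1}(x,\theta_0+\omega_j\xi_d)r_j$, while by \eqref{36a} the averaged $O(1)$ part vanishes too, leaving an $O(1)$ interior residual whose leading contribution is $\cR^{n+1}(x,\theta_0,x_d/\eps)$ with
\begin{equation*}
\cR^{n+1}=(I-{\bf E})\bigl[\tilde L(\partial)\cU^{0,n+1}+M(\cU^{0,n},\partial_{\theta_0}\cU^{0,n+1})-F(0)\cU^{0,n}\bigr].
\end{equation*}
The idea is to solve this residual away with a first corrector $\eps\,\cU^{1,n+1}_{p,\eps}$ and then apply the uniform linear estimate of Corollary \ref{corEk} at regularity $s-3$ to the difference $\cU^{0,n+1}_\eps+\eps\,\cU^{1,n+1}_{p,\eps}-U^{n+1}_\eps$, which vanishes in $t<0$.

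The main obstacle is that the natural corrector $\mathbf{R}_\infty\cR^{n+1}$ supplied by Proposition \ref{35}, although bounded, does \emph{not} lie in any of the spaces $E^r_{T_0}$: the $\theta$-primitive of a decaying profile with nonzero total mass fails to decay at infinity. The remedy, following the moment-zero strategy of Section \ref{mz}, is to replace each profile $\sigma^n_j$ by the moment-zero approximation $\sigma^n_{j,p}$ (Definition \ref{h5}), and, crucially, to replace the self-interaction contributions $\sigma^n_{i,p}\,\partial_\theta\sigma^{n+1}_{i,p}$ that arise from the $M$ term in $\cR^{n+1}$ by their own moment-zero approximation; the transversal interactions $\sigma^n_{i,p}\,\partial_\theta\sigma^{n+1}_{j,p}$ with $i\neq j$ already yield contributions in $E^{s-1}_{T_0}$ by Proposition \ref{h12}. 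Combining Propositions \ref{h6}--\ref{h11c} and \ref{h16}--\ref{h20} in the appropriate products, one controls $\cU^{1,n+1}_{p,\eps}$ in $E^{s-3}_{T_0}$ at the cost of $O(p^{-(M_1+2)})$ (from the $M_1+2$ derivatives of $\chi_p$ that appear), while the residual that is \emph{not} solved away splits into two pieces: an $O(\sqrt{p})$ piece from the moment-zero substitution in the high norm, and the $O(\eps\cdot p^{-(M_1+2)})$ piece coming from $\eps$ times the corrector itself feeding back through $\tilde{\cL}$.

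Finally, feed these source and boundary residuals into Corollary \ref{corEk} and use Theorem \ref{estnl} to control the variable-coefficient factors $\eps U^n_\eps$ uniformly in $n,\eps$; the output is a bound of the form
\begin{equation*}
|\cU^{0,n+1}_\eps-U^{n+1}_\eps|_{E^{s-3}_{T_1}}\leq C\Bigl(\sqrt{p}+\frac{\eps}{p^{M_1+2}}\Bigr)+\lambda\,|\cU^{0,n}_\eps-U^{n}_\eps|_{E^{s-3}_{T_1}},
\end{equation*}
where $\lambda<1$ after shrinking $T_1\leq T_0$ (the constant in Corollary \ref{corEk} carries a factor $T$, or equivalently $1/\gamma$). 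Choosing $p=\eps^b$ with $b=2/(2M_1+5)$ balances $\sqrt{p}$ against $\eps/p^{M_1+2}$ and yields exactly the claimed rate $\eps^{1/(2M_1+5)}$, closing the induction. The hard part is the bookkeeping for the corrector: one must verify that the various moment-zero products actually give a corrector in $E^{s-3}_{T_1}$ with the stated $p^{-(M_1+2)}$ growth, uniformly in $n$, so that the estimate $C$ does not depend on $n$; this is where Propositions \ref{h12}, \ref{h16}, \ref{h18}, \ref{h19}, and \ref{h20} are combined term by term against the explicit form \eqref{26} of each $\cF_i$.
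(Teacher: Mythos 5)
Your proposal is correct and follows essentially the same route as the paper's proof: simultaneous Picard iteration, moment-zero approximations of the profiles, the crucial extra moment-zero approximation of the nontransversal self-interaction terms $\sigma^n_{k,p}\partial_\theta\sigma^{n+1}_{k,p}$ in the corrector, the resulting $\sqrt{p}$ versus $\eps/p^{M_1+2}$ tradeoff, the linear estimate of Proposition \ref{e9}/Corollary \ref{corEk}, and the choice $p=\eps^{2/(2M_1+5)}$. The only cosmetic differences are in the bookkeeping (the paper introduces the auxiliary operator $\LL_0$ and carries the induction constant as $\eps^a$ rather than a contraction factor $\lambda<1$), but the underlying estimates and the final exponent are identical.
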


The proof of Theorem \ref{e1} will use the strategy of \emph{simultaneous Picard iteration} first used by Joly, M\'etivier,
and Rauch in \cite{jmr} to justify leading term expansions for initial value problems on domains without boundary.
Consider the iteration schemes for the quasilinear problems \eqref{a4b} and \eqref{36a}:
\begin{align}\label{e5}
\begin{split}
&a)\; \partial_{x_d}U^{n+1}_\eps+\sum^{d-1}_{j=0}\tilde{A}_j(\eps U^n_\eps) \left(
\partial_{x_j}+\frac{\beta_j \partial_{\theta_0}}{\eps}\right)U^{n+1}_\eps =F(\eps U^n_\eps)U^n_\eps,\\
&b)\;B(\eps U^{n}_\eps) \, U^{n+1}_\eps|_{x_d=0}=G(x',\theta_0),\\
&c)\;U^{n+1}_\eps=0 \text{ in } t<0,
\end{split}
\end{align}
and
\begin{align}\label{e6}
\begin{split}
&a)\; {\bf E} \, \cU^{0,n+1}=\cU^{0,n+1}\\
&b)\; {\bf E} \left(\tilde{L}(\partial)\cU^{0,n+1}+M(\cU^{0,n},\partial_{\theta_0}\cU^{0,n+1})\right) ={\bf E} \, (F(0)\cU^{0,n})\\
&c)\; B(0) \, \cU^{0,n+1}|_{x_d=0,\xi_d=0}=G(x',\theta_0)\\
&d)\; \cU^{0,n+1}=0\text{ in }t<0,
\end{split}
\end{align}
where $\cU^{0,n}(x,\theta_0,\xi_d):=\sum_{j=1}^3\sigma^n_j(x,\theta_0+\omega_j \, \xi_d) \, r_j$ for $\sigma^n_j$
as constructed in Proposition \ref{c24}. Setting
\begin{equation*}
\cU^{0,n}_{\eps}(x,\theta_0):=\cU^{0,n}(x,\theta_0,\frac{x_d}{\eps}),
\end{equation*}
we observe that to prove the theorem it suffices to prove boundedness of the family $\cU^0_{\eps}$ in $E^{s}_{T_0}$
along with the following three statements:
\begin{align}\label{e8}
\begin{split}
&(a)\;\lim_{n\to\infty}U^n_\eps= U_\eps\text{ in }   E^{s-1}_{T_0} \text{ uniformly  with respect to }\eps\in (0,\eps_0]\\
&(b) \;\lim_{n\to\infty}\cU^{0,n}_{\eps}= \cU^0_{\eps}\text{ in }   E^{s-1}_{T_0} \text{  uniformly  with respect to }\eps\in (0,\eps_0]\\
&(c) \;\text{There exist positive constants }T_1\leq T_0\text{ and }C_1, \text{ independent of }n,
\text{ such that for every  }n\\ &\qquad |U^n_\eps-\cU^{0,n}_{\eps}|_{E^{s-3}_{T_1}}\leq C_1 \, \eps^{\frac{1}{2M_1+5}}.
\end{split}
\end{align}
The first statement, together with uniform boundedness of the families $U^n_\eps$, $U_\eps$ in $E^s_{T_0}$, is proved
in Theorem \ref{estnl} by showing convergence of the scheme \eqref{e5} using the following linear estimate (which is a
consequence of Proposition \ref{estimEk}).

\begin{prop}\label{e9}
Let $s\geq [M_0+\frac{d+1}{2}]$ and consider the  problem \eqref{e5}, where $G\in H^{s+1}$ vanishes in $t\leq 0$,
and where the right side of \eqref{e5}(a) is replaced by $\cF\in E^s_T$. Suppose $U^n_\eps \in E^s_T$
and that for some $K>0$, $\eps_1>0$, we have
\begin{equation*}
|U^n_\eps|_{E^s_T}+|\eps \, \partial_{x_d}U^n_\eps|_{L^\infty}\leq K \text{ for }\eps\in (0,\eps_1].
\end{equation*}
Then there exist  constants $T_0(K)$ and $\eps_0(K)\leq \eps_1$ such that for $0<\eps\leq\eps_0$ and $T\leq T_0$
we have
\begin{equation*}
|U^{n+1}_\eps|_{E^s_T} +\sqrt{T} \, \langle U^{n+1}_\eps|_{x_d=0}\rangle_{s+1,T} \leq C(K) \, \left( T \, |\cF|_{E^s_T}
+\sqrt{T} \, \langle G\rangle_{s+1,T}\right).
\end{equation*}
\end{prop}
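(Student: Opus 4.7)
The plan is to derive Proposition \ref{e9} as a direct specialization of Corollary \ref{corEk} with $k = s$, viewing the system \eqref{e5}(a)--(c) (with right-hand side $\cF$ in place of $F(\eps U^n_\eps)U^n_\eps$) as an instance of the linear boundary value problem \eqref{mainlin} with coefficient family $V_\eps = U^n_\eps$ and source data $(f_\eps, g_\eps) = (\cF, G)$. First I would verify the hypotheses \eqref{taillecoeff} for $V_\eps = U^n_\eps$: the bound $|U^n_\eps|_{E^s_T} \leq K$ immediately yields $|U^n_\eps|_{\infty,s,T} \leq K$, and the bound on $|\eps\,\partial_{x_d} U^n_\eps|_{L^\infty}$ matches the second condition of \eqref{taillecoeff} with $K_2 = K$. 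The underlying regularity threshold $s \geq [M_0 + \frac{d+1}{2}]$ is exactly what Corollary \ref{corEk} demands so that the singular pseudodifferential calculus of Appendix \ref{append} applies to the symmetrizer.

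The second step is to address the one missing ingredient, namely a trace bound $\langle U^n_\eps|_{x_d=0}\rangle_{s+1,T} \leq K_1$. In the context where Proposition \ref{e9} is invoked (the simultaneous Picard iteration of section \ref{simult}, and analogously in Theorem \ref{estnl}), the iterate $U^n_\eps$ is itself produced by the same linear scheme \eqref{e5} at the previous step, and thus inherits an estimate of the form of \eqref{mainestEk} in which the boundary trace appears with an explicit $\sqrt{T}$ prefactor. Choosing $T \leq T_0(K)$ small enough therefore absorbs the trace into a constant $K_1 = K_1(K)$, and with both $K_1, K_2$ in hand Corollary \ref{corEk} fixes $\eps_0(K) \leq \eps_1$ and $T_0(K)$ accordingly.

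The final step is a transfer of the source-term norm: since $\cF \in E^s_T$ embeds continuously into $L^2(x_d, H^{s+1}_T(x',\theta_0))$ with $|\cF|_{0,s+1,T} \leq |\cF|_{E^s_T}$, the estimate \eqref{mainestEk} of Corollary \ref{corEk} directly delivers
\begin{equation*}
|U^{n+1}_\eps|_{E^s_T} + \sqrt{T}\,\langle U^{n+1}_\eps|_{x_d=0}\rangle_{s+1,T} \leq C(K)\bigl(T\,|\cF|_{E^s_T} + \sqrt{T}\,\langle G\rangle_{s+1,T}\bigr),
\end{equation*}
which is the claim.

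The main obstacle is not analytic but a matter of bookkeeping: reconciling the hypotheses explicitly imposed on $U^n_\eps$ in Proposition \ref{e9} (only the $E^s_T$ norm together with the weighted $L^\infty$ bound on $\partial_{x_d}$) with the slightly stronger hypothesis \eqref{taillecoeff} of Corollary \ref{corEk}, which additionally requires control of the $H^{s+1}$ boundary trace. The natural way I would close this gap is to exploit the iterative structure of the scheme in which the proposition is applied, so that the $\sqrt{T}$ smallness factor on the trace at the previous step absorbs the excess into $K_1(K)$. No ingredient beyond Corollary \ref{corEk} and the continuous embedding $E^s_T \hookrightarrow L^2(x_d, H^{s+1}_T)$ is needed.
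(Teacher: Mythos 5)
Your proposal is correct and takes essentially the same route as the paper: the paper itself gives no proof of Proposition \ref{e9} beyond the remark that it ``is a consequence of Proposition \ref{estimEk},'' and Corollary \ref{corEk} is precisely the $T=1/\gamma$ packaging of Proposition \ref{estimEk} from which \eqref{mainestEk} and hence the claimed estimate fall out directly, once one notes the continuous embedding $E^s_T\hookrightarrow L^2(x_d,H^{s+1}_T)$ for the source term. You are also right to flag the bookkeeping discrepancy: the hypothesis of Corollary \ref{corEk} includes a bound $\langle V_\eps|_{x_d=0}\rangle_{k+1,T}\le K_1$ that Proposition \ref{e9} does not list explicitly, and the way the paper in fact supplies it is exactly the way you propose --- in the induction of Theorem \ref{estnl} the previous application of \eqref{mainestEk} furnishes $\sqrt{T}\,\langle U^n_\eps|_{x_d=0}\rangle_{s+1,T}\le K$ as part of the inductive bound, so the trace hypothesis is available in every context where Proposition \ref{e9} is invoked. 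No genuine gap.
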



\begin{proof}[Proof of Theorem \ref{e1}]

\textbf{1. }The boundedness of $\cU^0_\eps$ in $E^s_{T_0}$ and \eqref{e8}(b) follow  directly from Proposition \ref{h1},
together with the fact that $\cV^0\in H^{s+1}_{T_0}$ and $\cV^{0,n}\to\cV^0$ in $H^s_{T_0}$. (In fact, the proof of
Proposition \ref{c37} shows that the $\cV^{0,n}$ are bounded in $\Gamma^{s+1}_{T_0}$ and $\cV^{0,n}\to\cV^0$ in
$\Gamma^s_{T_0}$.)

\textbf{2. }The approximate solution $\cU^{0,n}_\eps$ is by itself too crude; in order to prove \eqref{e4} using Proposition
\ref{e9} we must construct a  corrector $\eps \, \cU^1_{p,\eps}$ that lies  in some $E^r_{T_0}$ space. To achieve this we
first approximate $\cU^{0,n}$ and $\cU^{0,n+1}$ by moment-zero approximations $\cU^{0,n}_p$ and $\cU^{0,n+1}_p$.
For now we fix $0<p<1$ and define for each $n$
\begin{equation*}
\cU^{0,n}_p(x,\theta_0,\xi_d)=\sum^3_{j=1}\sigma^n_{j,p}(x,\theta_0+\omega_j\xi_d) \, r_j,
\end{equation*}
where $\sigma^n_{j,p}$ is the moment-zero approximation to $\sigma^n_j$ defined by \eqref{h5a}. Thus we have
$\cU^{0,n}_p(x,\theta_0,\xi_d) ={\bf E} \, \cU^{0,n}_p(x,\theta_0,\xi_d)$ and by Proposition \ref{h6}
\begin{equation}\label{e17}
|\cU^{0,n}-\cU^{0,n}_p|_{\cE^{s-1}_{T_0}} \le C \, \sqrt{p} \, ,\quad \text{ and } \quad
|\partial_{x_d}\cU^{0,n+1}-\partial_{x_d}\cU^{0,n+1}_p|_{\cE^{s-2}_{T_0}} \le C \, \sqrt{p} \, ,
\end{equation}
for $C$ independent of $n$\footnote{Constants $C,C_1...$ appearing in this proof are all independent of $n$ and
$\eps$.}.

\textbf{3. }For now we express the induction assumption as: there exists $0<a<1$ (to be determined) and positive
constants $C_1$, $T_1\leq T_0$ such that
\begin{align}\label{e18}
|U^n_\eps-\cU^{0,n}_\eps|_{E^{s-3}_{T_1}}\leq C_1 \, \eps^a.
\end{align}
The boundedness of the family $U^n_\eps$ in $E^s_{T_0}$ together with \eqref{e18} imply
\begin{equation*}
|F(\eps U^n_\eps) \, U^n_\eps -F(0) \, \cU^{0,n}_\eps|_{E^{s-3}_{T_0}}\leq C \, \eps^a.
\end{equation*}
In view of \eqref{e17} and Proposition \ref{h1} this implies
\begin{align}\label{e20}
|F(\eps U^n_\eps) \, U^n_\eps -F(0) \, \cU^{0,n}_{p,\eps}|_{E^{s-3}_{T_0}}\leq C \, (\sqrt{p}+\eps^a).
\end{align}

\textbf{4. }Define
\begin{equation*}
\cG_p :=\tilde L(\partial_x)\cU^{0,n+1}_p +M(\cU^{0,n}_p,\partial_{\theta_0}\cU^{0,n+1}_p).
\end{equation*}
We claim that
\begin{align}\label{e22}
|{\bf E}\cG_p -{\bf E}(F(0) \, \cU^{0,n}_p)|_{\cE^{s-2}_{T_0}}\leq C\sqrt{p}.
\end{align}
Indeed, from \eqref{e17} and the explicit formula \eqref{31a} for the action of ${\bf E}$ on functions of type $\cF$,
we have
\begin{equation*}
|{\bf E}\left( F(0) \, \cU^{0,n} -F(0) \, \cU^{0,n}_p \right)|_{\cE^{s-1}_{T_0}}\leq C\sqrt{p}.
\end{equation*}
But ${\bf E} (F(0)\cU^{0,n})$ is given by the left side of \eqref{e6}(b), so \eqref{e22} follows by observing that
\eqref{e17} and Proposition \ref{d23a} imply
\begin{align}\label{e24}
\begin{split}
&\left| {\bf E} \left( \tilde L(\partial_x)\left(\cU^{0,n+1}-\cU^{0,n+1}_p\right)\right) \right|_{\cE^{s-2}_{T_0}}\leq C\sqrt{p}\\
&|{\bf E} \left(M(\cU^{0,n},\partial_{\theta_0}\cU^{0,n+1})-M(\cU^{0,n}_p,\partial_{\theta_0}\cU^{0,n+1}_p)\right)
|_{\cE^{s-2}_{T_0}}\leq C\sqrt{p}.
\end{split}
\end{align}
Here we have used the fact that the arguments of ${\bf E}$ in \eqref{e24} are functions of type $\cF$, so the
formula \eqref{31a} can be applied.

\textbf{5. }Next define the operator
\begin{equation*}
\LL_0 :=\tilde L(\partial_x)+\frac{1}{\eps}\tilde L({\rm d}\phi_0)\partial_{\theta_0} + M(\cU^{0,n}_{p,\eps},\partial_{\theta_0}),
\end{equation*}
which is an approximation to the operator appearing on the left side of \eqref{e5}(a) that will allow us to use
Proposition \ref{35} to construct a useful corrector $\cU^1_p$. Indeed, we claim
\begin{align}\label{e26}
|\LL_0 U^{n+1}_\eps-F(\eps U^n_\eps)U^n_\eps|_{E^{s-3}_{T_0}} \leq C(\sqrt{p}+\eps^a).
\end{align}
This follows from \eqref{e5}(a) and the estimates
\begin{align}\label{e27}
\begin{split}
&|\tilde A_j(\eps U^n_\eps)\partial_{x_j}U^{n+1}_\eps -\tilde A_j(0)\partial_{x_j}U^{n+1}_\eps|_{E^{s-1}_{T_0}}
\leq C\eps \, ,\\
&\left| \dfrac{1}{\eps} \, \tilde A_j(\eps U^n_\eps) \, \beta_j \, \partial_{\theta_0}U^{n+1}_\eps -\left(
\dfrac{1}{\eps} \, \tilde A_j(0) \, \beta_j \, \partial_{\theta_0}U^{n+1}_\eps +{\rm d}\tilde A_j(0) \cdot U^n_\eps
\, \beta_j \, \partial_{\theta_0}U^{n+1}_\eps \right) \right|_{E^{s-1}_{T_0}} \leq C \, \eps \, ,\\
&\left|{\rm d}\tilde A_j(0) \cdot (U^n_\eps-\cU^{0,n}_{p,\eps}) \, \beta_j \, \partial_{\theta_0}U^{n+1}_\eps
\right|_{E^{s-3}_{T_0}} \leq C \, |U^n_\eps-\cU^{0,n}_{p,\eps}|_{E^{s-3}_{T_0}} \leq C \, (\sqrt{p}+\eps^a).
\end{split}
\end{align}

\textbf{6. Construction of the corrector.} First observe that since $\tilde \cL(\partial_{\theta_0},\partial_{\xi_d})
\cU^{0,n+1}_p=0$, we have
\begin{equation*}
\LL_0 \, \cU^{0,n+1}_{p,\eps}=\cG_{p,\eps} \, ,
\end{equation*}
and thus
\begin{align}\label{e29}
\begin{split}
&\LL_0\cU^{0,n+1}_{p,\eps}-F(0)\cU^{0,n}_{p,\eps}=\cG_{p,\eps}-F(0)\cU^{0,n}_{p,\eps}=\\
&\quad \left({\bf E}(\cG_{p}-F(0)\cU^{0,n}_{p})\right)_\eps+\left((I-{\bf E})(\cG_{p}-F(0)\cU^{0,n}_{p})\right)_\eps.
\end{split}
\end{align}
We have
\begin{align}\label{e29a}
|\left({\bf E}(\cG_{p}-F(0)\cU^{0,n}_{p})\right)_\eps|_{E^{s-2}_{T_0}}\leq C \, \sqrt{p} \, ,
\end{align}
by \eqref{e22}, the formula \eqref{31a} for ${\bf E}$, and Proposition \ref{h1}. The second term on the right in
\eqref{e29} is not small, so we construct $\cU^1_p$ to solve (most of) it away. By Proposition \ref{35}  the function
$\tilde{\cU}^1_p:=-{\bf R}_\infty\left((I-{\bf E})(\cG_{p}-F(0)\cU^{0,n}_{p})\right)$ satisfies
\begin{align}\label{e30}
\tilde{\cL}(\partial_{\theta_0},\partial_{\xi_d})\tilde{\cU}^1_p=-(I-{\bf E})(\cG_{p}-F(0)\cU^{0,n}_{p}).
\end{align}
However, this choice of $\tilde{\cU}^1_p$ is too large to be useful in the error analysis.

\textbf{7. }To remedy this problem we replace $(I-{\bf E})\cG_p$ by a modification $[(I-{\bf E})\cG_p]_{mod}$ defined
as follows. First, using \eqref{25} and Remark \ref{30ab} we have
\begin{align}\label{k1}
(I-{\bf E})\cG_p=\sum^3_{i=1}\left(-\sum_{k\neq i}V^i_k\sigma^{n+1}_{k,p}+\sum_{k\neq i} c^i_k\sigma^n_{k,p}
\partial_{\theta_0}\sigma^{n+1}_{k,p}+\sum_{l\neq m}d^i_{l,m}\sigma^n_{l,p}\partial_{\theta_0}\sigma^{n+1}_{m,p}
\right)r_i,
\end{align}
where $\sigma^n_{q,p}=\sigma^n_{q,p}(x,\theta_0+\omega_q\xi_d)$. The problem is caused by the nontransversal
interaction terms given by the middle sum over $k\neq i$, so we define
\begin{align}\label{k2}
[(I-{\bf E})\cG_p]_{mod}=\sum^3_{i=1}\left(-\sum_{k\neq i}V^i_k\sigma^{n+1}_{k,p}+\sum_{k\neq i} c^i_k(\sigma^n_{k,p}
\partial_{\theta_0}\sigma^{n+1}_{k,p})_p+\sum_{l\neq m}d^i_{l,m}\sigma^n_{l,p}\partial_{\theta_0}\sigma^{n+1}_{m,p}
\right)r_i,
\end{align}
and we set
\begin{align}\label{k3}
\cU^1_p:=-{\bf R}_\infty \left( [(I-{\bf E})\cG_{p}]_{mod}-(I-{\bf E})F(0)\cU^{0,n}_{p})\right).
\end{align}
Instead of \eqref{e30} we have
\begin{align}\label{k4}
\tilde{\cL}(\partial_{\theta_0},\partial_{\xi_d})\cU^1_p=-[(I-{\bf E})\cG_{p}]_{mod}+(I-{\bf E})F(0)\cU^{0,n}_{p}.
\end{align}
For later use we set
\begin{equation*}
D(x,\theta_0,\xi_d):=(I-{\bf E})\cG_p-[(I-{\bf E})\cG_p]_{mod}
\end{equation*}
and estimate
\begin{align}\label{k6}
|D(x,\theta_0,\frac{x_d}{\eps})|_{E^{s-3}_T}\leq C \, \sqrt{p}.
\end{align}
Indeed, using Propositions \ref{h11a} and \ref{h8}(a) we have
\begin{multline*}
|\left(\sigma^n_{k,p}\partial_{\theta_0}\sigma^{n+1}_{k,p}-(\sigma^n_{k,p}\partial_{\theta_0}\sigma^{n+1}_{k,p})_p
\right)(x,\theta_0+\omega_k\frac{x_d}{\eps})|_{E^{s-3}_T} \\
\le |\sigma^n_{k,p}\partial_{\theta_0}\sigma^{n+1}_{k,p}-\sigma^n_{k,p}\partial_{\theta_0}\sigma^{n+1}_{k,p})_p
|_{H^{s-2}_T}\leq|\sigma^n_k|_{H^{s-2}_T}|\sigma^{n+1}_k|_{H^{s-1}_T}\sqrt{p}.
\end{multline*}

\textbf{8. Estimate of $|\cU^1_{p,\eps}|_{E^{s-2}_T}$.}  By \eqref{k3},\eqref{k2} and the formula \eqref{R} for
${\bf R}_\infty$, for $i=1,2,3$ we must estimate $|b_i(x,\theta_0,\frac{x_d}{\eps})|_{E^{s-2}_T}$, where
$b_i(x,\theta_0,\xi_d)=$
\begin{align}\label{k8}
\begin{split}
&\quad\sum_{k\neq i} c^i_k\int^{\xi_d}_\infty\left(\sigma^n_{k,p}\partial_{\theta_0}\sigma^{n+1}_{k,p} \right)_p
(x,\theta_0+\omega_i\xi_d+s(\omega_k-\omega_i)) \;{\rm d}s+\\
&\quad \sum_{m\neq i}d^i_{i,m}\int^{\xi_d}_\infty\sigma^n_{i,p}(x,\theta_0+\omega_i\xi_d)
\partial_{\theta_0}\sigma^{n+1}_{m,p}(x,\theta_0+\omega_i\xi_d+s(\omega_m-\omega_i)) \; {\rm d}s+\\
&\quad \sum_{l\neq i} d^i_{l,i}\int^{\xi_d}_\infty\sigma^n_{l,p}(x,\theta_0+\omega_i\xi_d+s(\omega_l-\omega_i))
\partial_{\theta_0}\sigma^{n+1}_{i,p}(x,\theta_0+\omega_i\xi_d) \; {\rm d}s+\\
&\quad \quad \sum_{l\neq m,l\neq i,m\neq i}d^i_{l,m}\int^{\xi_d}_\infty\sigma^n_{l,p}(x,\theta_0+\omega_i\xi_d
+s(\omega_l-\omega_i))\partial_{\theta_0}\sigma^{n+1}_{m,p}(x,\theta_0+\omega_i\xi_d+s(\omega_m-\omega_i))
\; {\rm d}s+\\
&\quad\quad\quad \quad\quad \sum_{k\neq i}e^i_k\int^{\xi_d}_\infty\sigma^n_{k,p}(x,\theta_0+\omega_i\xi_d
+s(\omega_k-\omega_i)) \;{\rm d}s -\\
&\quad\quad\quad \quad\quad \sum_{k\neq i}\int^{\xi_d}_\infty V^i_k\sigma^n_{k,p}(x,\theta_0+\omega_i\xi_d
+s(\omega_k-\omega_i)) \; {\rm d}s =\sum^6_{r=1}b_{i,r}(x,\theta_0,\xi_d),
\end{split}
\end{align}
where $b_{i,r}$, $r=1,\dots,6$ are defined by the respective lines of \eqref{k8}. Since $\cV^{0,n}$ is bounded
in $H^{s+1}_T$, using Corollary \ref{h17} we find
\begin{equation*}
|b_{i,1}(x,\theta_0,\frac{x_d}{\eps})|_{E^{s-2}_T} \leq C \sum_{k\neq i}\frac{|\sigma^n_k|_{H^{s-1}_T}
|\partial_\theta\sigma^{n+1}_k|_{H^{s-1}_T}}{p} \leq C/p \, .
\end{equation*}
Similarly, from Propositions \ref{h18} and \ref{h19} we get respectively
\begin{equation*}
|b_{i,2}(x,\theta_0,\frac{x_d}{\eps})|_{E^{s-2}_T}\leq C \, ,\quad
|b_{i,3}(x,\theta_0,\frac{x_d}{\eps})|_{E^{s-2}_T}\leq C/p \, .
\end{equation*}
Since $\cV^{0,n}$ is actually bounded in $\Gamma^{s+1}_T$, Proposition \ref{h12} on transversal interactions implies
\begin{equation*}
|b_{i,4}(x,\theta_0,\frac{x_d}{\eps})|_{E^{s-2}_T}\leq  \frac{C}{p^{t+1}},
\end{equation*}
where we have used Proposition \ref{h8}(b) to estimate
\begin{equation*}
|\sigma^{n+1}_{m,p}|_{\Gamma^{t+1}_T}\leq \dfrac{C}{p^{t+1}} \, |\sigma^{n+1}_{m}|_{\Gamma^{t+1}_T} \, .
\end{equation*}

By Proposition \ref{h20} we have $|b_{i,5}(x,\theta_0,\frac{x_d}{\eps})|_{E^{s-2}_T}\leq  C$, and the estimate of
$b_{i,6}$ is the same, so adding up we obtain
\begin{align}\label{k12}
|\cU^1_{p,\eps}|_{E^{s-2}_T}\leq \dfrac{C}{p^{t+1}} \, .
\end{align}
To estimate $(\partial_{x_d}\cU^1_p)_\eps$ we differentiate \eqref{k8} and estimate as above to find
\begin{align}\label{k13}
|(\partial_{x_d}\cU^1_p)_\eps|_{E^{s-3}_T}\leq \dfrac{C}{p^{t+2}} \, .
\end{align}

\textbf{9. }We claim
\begin{align}\label{e31}
|\LL_0 \left(\cU^{0,n+1}_{p,\eps} +\eps \, \cU^1_{p,\eps}\right) -F(0) \, \cU^{0,n}_{p,\eps}|_{E^{s-3}_{T_0}} \leq
C \, \left(\sqrt{p}+\dfrac{\eps}{p^{t+2}}\right).
\end{align}
Indeed, we have
\begin{equation*}
\LL_0(\eps \, \cU^1_{p,\eps})=(\tilde{\cL}(\partial_{\theta_0},\partial_{\xi_d})\cU^1_p)_\eps
+(\tilde L(\partial)\eps \, \cU^1_p)_\eps +M(\cU^{0,n}_{p,\eps},\partial_{\theta_0})(\eps \, \cU^1_{p,\eps}),
\end{equation*}
so by \eqref{e29} and \eqref{k4} we find
\begin{multline*}
\LL_0\left(\cU^{0,n+1}_{p,\eps}+\eps \, \cU^1_{p,\eps}\right)-F(0) \, \cU^{0,n}_{p,\eps}=\\
\left({\bf E}(\cG_{p}-F(0)\cU^{0,n}_{p})\right)_\eps+D(x,\theta_0,\frac{x_d}{\eps})
+(\tilde L(\partial) \eps \, \cU^1_p)_\eps +M(\cU^{0,n}_{p,\eps},\partial_{\theta_0})(\eps\cU^1_{p,\eps}).
\end{multline*}
The estimate \eqref{31} now follows from \eqref{e29a}, \eqref{k6}, \eqref{k12}, and \eqref{k13}.

Using \eqref{e20}, \eqref{e26}, and \eqref{e31}, we obtain
\begin{equation}\label{e33}
\left|\LL_0\left(U^{n+1}_\eps-(\cU^{0,n+1}_{p,\eps}+\eps \, \cU^1_{p,\eps})\right)\right|_{E^{s-3}_{T_0}}
\leq C \, (\sqrt{p}+\eps^a+\frac{\eps}{p^{t+2}}) \, .
\end{equation}

\textbf{10. }Next we claim that the following estimates hold:
\begin{align}\label{e34}
&\begin{split}
&(a)\; \left|\left( \partial_{x_d} +\bA(\eps \, \cU^{0,n}_{p,\eps},\partial_{x'}+\dfrac{\beta \, \partial_{\theta_0}}{\eps})
\right) \left(U^{n+1}_\eps -(\cU^{0,n+1}_{p,\eps}+\eps \, \cU^1_{p,\eps}) \right) \right|_{E^{s-3}_{T_0}} \leq
C \, (\sqrt{p}+\eps^a+\frac{\eps}{p^{t+2}}) \, ,\\
&(b)\; \left|B(\eps \, \cU^{0,n}_{p,\eps}) \left(U^{n+1}_\eps-(\cU^{0,n+1}_{p,\eps}+\eps \, \cU^1_{p,\eps}) \right)
\right|_{H^{s-2}_{T_0}} \leq C \, (\sqrt{p}+\eps^a+\frac{\eps}{p^{t+2}}).
\end{split}
\end{align}
Indeed, \eqref{e34}(a) follows from \eqref{e33} by estimates similar to \eqref{e27}, while \eqref{e34}(b) is a simple
consequence of \eqref{e5}(b) and \eqref{e6}(c). Applying Proposition \ref{e9} we find
\begin{equation*}
|U^{n+1}_\eps -(\cU^{0,n+1}_{p,\eps} +\eps \, \cU^1_{p,\eps})|_{E^{s-3}_{T_0}} \leq C \, \sqrt{T_0} \,
(\sqrt{p}+\eps^a+\frac{\eps}{p^{t+2}}) \, ,
\end{equation*}
and thus
\begin{equation*}
|U^{n+1}_\eps -\cU^{0,n+1}_{\eps}|_{E^{s-3}_{T_0}} \leq C \, \sqrt{T_0} \, (\sqrt{p}+\eps^a+\frac{\eps}{p^{t+2}}) \, .
\end{equation*}
Recall that $t$ is fixed and equals $M_1$ in the notation of Theorem \ref{e1}. Setting $p=\eps^b$ we compute
$\sqrt{p}=\frac{\eps}{p^{t+2}}$ when $b=\frac{2}{2t+5}$, so we take $a=\frac{b}{2}=\frac{1}{2t+5}$ and complete
the induction step by shrinking $T_0$ to a small enough $T_1$ if necessary. This completes the proof of Theorem
\ref{e1}.
\end{proof}

\section{Extension to the general $N\times N$ case}
\label{extension}

\emph{\quad} Here we describe the relatively minor changes needed to treat $N\times N$ systems satisfying
Assumptions \ref{assumption1}, \ref{assumption2}, and \ref{a7}. We first describe the construction of profiles
in the general $N\times N$ case. For each $m\in \{1,\dots,M\}$, let
\begin{align*}
\ell_{m,k},\;k=1,\dots,\nu_{k_m} \, ,
\end{align*}
denote a basis of real vectors for the left eigenspace of the real matrix $i\cA(\beta)$ associated to the real
eigenvalue $-\omega_m$ and chosen to satisfy
\begin{align*}
\ell_{m,k}\cdot r_{m',k'}=\begin{cases}1, \;\text{ if }m=m'\text{ and }k=k' \, ,\\
0,\;\text{ otherwise.}
\end{cases}
\end{align*}
For $v\in\C^N$ we set
\begin{align*}
P_{m,k} \, v:=(\ell_{m,k}\cdot v) \, r_{m,k}\;  \text{ (no complex conjugation here)}.
\end{align*}

Functions of type $\cF$ (see Definition \ref{30}) have the form
\begin{align}\label{r4}
F(x,\theta_0,\xi_d)=\sum_{m=1}^M\sum_{k=1}^{\nu_{k_m}}F_{m,k}(x,\theta_0,\xi_d) \, r_{m,k}
\end{align}
where each scalar function $F_{m,k}$ is decomposed as
\begin{multline}
\label{r4a}
F_{m,k}=\sum_{m'} f^{m,k}_{m'}(x,\theta_0+\omega_{m'} \, \xi_d) \\
+\sum_{m',k',m'',k''} g^{m,k}_{m',k',m'',k''}(x,\theta_0+\omega_{m'} \, \xi_d) \,
h^{m,k}_{m',k',m'',k''}(x,\theta_0+\omega_{m''} \, \xi_d) \, .
\end{multline}
In \eqref{r4a}, $m'\in \{1,\dots,M\}$, $k'\in\{1,\dots,\nu_{k_{m'}}\}$, and similarly for $(m'',k'')$; moreover, the
functions $f^{m,k}_{m',k'}$ etc. have the same properties as the corresponding functions in Definition \ref{30}.
The averaging operator ${\bf E}$ is given by
\begin{align*}
{\bf E}F :=\sum_{m,k} \left( \lim_{T\to\infty} \dfrac{1}{T} \, \int^T_0
F_{m,k}(x,\theta_0+\omega_m \, (\xi_d-s),s) \, {\rm d}s \, \right) \, r_{m,k} \, ,
\end{align*}
and for $F$ as in \eqref{r4}, it follows that ${\bf E}F=\sum_{m,k}\tilde F_{m,k} \, r_{m,k}$ where
\begin{align*}
\tilde F_{m,k} :=f^{m,k}_m(x,\theta_0+\omega_m \, \xi_d) +\sum_{k',k''}
g^{m,k}_{m,k',m,k''}(x,\theta_0+\omega_m \, \xi_d) \, h^{m,k}_{m,k',m,k''}(x,\theta_0+\omega_m \, \xi_d).
\end{align*}
On functions of type $\cF$ such that ${\bf E}F=0$, the action of the operator ${\bf R}_\infty$ is given by
\begin{align*}
{\bf R}_\infty F :=\sum_{m,k} \left( \int^{\xi_d}_\infty F_{m,k}(x,\theta_0+\omega_m(\xi_d-s),s) \, {\rm d}s \right)
\, r_{m,k} \, .
\end{align*}

The general form of the profile equations \eqref{36a} still applies. With
\begin{align*}
W(x,\theta_0,\xi_d)=\sum_{m,k}w_{m,k}(x,\theta_0,\xi_d) \, r_{m,k},
\end{align*}
the decomposition of Proposition \ref{23a} now has the form
\begin{align*}
\tilde L(\partial)W =\sum_{m,k} (X_{\phi_m}w_{m,k}) \, r_{m,k}
+\sum_{m,k} \left( \sum_{m'\neq m,k'} V^{m,k}_{m',k'} \, w_{m',k'} \right) \, r_{m,k},
\end{align*}
where $V^{m,k}_{m',k'}$ is the tangential vector field
\begin{align*}
V^{m,k}_{m',k'} :=\sum^{d-1}_{j=0} (\ell_{m,k} \, \tilde A_j(0) \, r_{m',k'}) \, \partial_{x_j}.
\end{align*}

In place of \eqref{ddd5} and \eqref{c23} we now have
\begin{align*}
\cU^{0,n}(x,\theta_0,\xi_d) &=\sum_{m=1}^M \sum_{k=1}^{\nu_{k_m}} \sigma_{m,k}^n(x,\theta_0+\omega_m \, \xi_d)
\, r_{m,k} \, ,\\
\cV^{0,n+1}(x,\theta) &=\Big( \sigma_{m,k}^{n+1}(x,\theta) \Big)_{m=1,\dots,M; \, k=1,\dots,\nu_{k_m}} \, .
\end{align*}
The argument that led to the profile system \eqref{f2} now gives\footnote{The nonlinear equations for the
functions $\sigma_{m,l}$ are, of course, obtained from \eqref{c20} by removing the superscripts $n$ and $n+1$.}
\begin{align}\label{c20}
\begin{split}
&(a)\; X_{\phi_m} \sigma_{m,l}^{n+1} +\sum^{d-1}_{j=0} \sum^{\nu_{k_m}}_{k,k'=1} b_{m,l,j}^{k,k'} \,
\sigma^n_{m,k} \, \partial_{\theta} \sigma^{n+1}_{m,k'} =\sum^{\nu_{k_m}}_{k=1} e^k_{m,l} \, \sigma^n_{m,k} \, ,\\
&(b)\;\left( \sigma^{n+1}_{m,k} (x',0,\theta), m\in\cI, k=1,\dots\nu_{k_m} \right) =\cB \left( G(x',\theta),
\sigma^{n+1}_{m,k}(x',0,\theta), m\in\cO, k=1,\dots\nu_{k_m} \right) \, ,\\
&(c)\;\sigma^{n+1}_{m,k}=0 \quad \text{ in }t\leq 0\text{ for all } m,k,
\end{split}
\end{align}
where the coefficients $b_{m,l,j}^{k,k'}$ are defined by
\begin{align}\label{c20b}
b_{m,l,j}^{k,k'} :=\ell_{m,l} \cdot \beta_j \, ({\rm d}\tilde{A}_j(0) \, r_{m,k}) \, r_{m,k'}.
\end{align}

\begin{rem}\label{c20c}
\textup{There is a potentially serious obstacle to proving estimates for the system \eqref{c20}. If one takes the
$L^2$ pairing of \eqref{c20}(a) with $\sigma_{m,l}^{n+1}(x,\theta)$, it is not clear how to use integration by parts
in $\theta$ to move the $\theta-$derivative in the sum on the left onto the $n$-th iterate. This problem does not
arise in the estimate for \eqref{f2}. The next Proposition, which is \cite[Proposition 2.18]{CGW1}, removes this
difficulty by showing  that there is a symmetry in the coefficients that appears after regrouping.}
\end{rem}

\begin{defn}\label{b25a}
For $u$ near $0$ let $-\omega_m(u)$, $m=1,\dots,M$, be the eigenvalues of
\begin{align*}
i \, \cA(u,\beta) :=A_d^{-1}(u) \, \left( \utau \, I +\sum_{j=1}^{d-1} \ueta_j \, A_j(u)\right),
\end{align*}
and $P_m(u)$ the corresponding projectors.
\end{defn}

The functions $\omega_m(u)$ and $P_m(u)$ are $C^\infty$ for $u$ near $0$ since $\beta$ then belongs to the
hyperbolic region of $\cA(u,\xi')$.

\begin{prop}\label{c20d}
Let $w\in\R^N$ be expanded as $w=\sum_{m,k}w_{m,k}r_{m,k}=\sum_m w_m$ and define
\begin{equation}\label{c20e}
B^m_{l,k'}(w) := \sum_{j=0}^{d-1} \sum_{k=1}^{\nu_{k_m}} b_{m,l,j}^{k,k'} \, w_{m,k} \, ,
\end{equation}
where the $b_{m,l,j}^{k,k'}$ are defined in \eqref{c20b}. Then there holds
\begin{equation}
\label{c20ee}
B^m_{l,k'}(w) =\begin{cases}
-{\rm d}\omega_m(0) \cdot w_m &\text{if $k'=l$,}\\
0 &\text{otherwise.}
\end{cases}
\end{equation}
\end{prop}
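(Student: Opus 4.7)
The plan is to derive identity \eqref{c20ee} by perturbation analysis of the eigenvalue equation for $i\,\cA(u,\beta)$ near $u=0$. Under Assumption \ref{assumption1} (constant multiplicity), for $u$ in a neighborhood of $0$ one can choose smooth bases $r_{m,k}(u)$, $\ell_{m,k}(u)$ of the right and left eigenspaces of $i\,\cA(u,\beta)$ associated with the eigenvalue $-\omega_m(u)$, normalized so that $\ell_{m,l}(u)\cdot r_{m,k}(u)=\delta_{l,k}$ (and $\ell_{m,l}(u)\cdot r_{m',k'}(u)=0$ for $m\neq m'$); at $u=0$ these agree with the $r_{m,k}$, $\ell_{m,k}$ already fixed. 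Since $\beta$ lies in the hyperbolic region of $\cA(0,\cdot)$, this smooth choice is possible.

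First I would write out $i\,\cA(u,\beta)=\sum_{j=0}^{d-1}\beta_j\,\tilde A_j(u)$ (using $A_0=I$, so $\tilde A_0=A_d^{-1}$) and differentiate the identity
\begin{equation*}
\Bigl(\sum_{j=0}^{d-1}\beta_j\,\tilde A_j(u)\Bigr)\,r_{m,k}(u)=-\omega_m(u)\,r_{m,k}(u)
\end{equation*}
at $u=0$ in the direction of a fixed vector $v$, producing
\begin{equation*}
\sum_{j=0}^{d-1}\beta_j\,\bigl({\rm d}\tilde A_j(0)\cdot v\bigr)\,r_{m,k}+i\,\cA(0,\beta)\,\bigl({\rm d}r_{m,k}(0)\cdot v\bigr)=-\bigl({\rm d}\omega_m(0)\cdot v\bigr)\,r_{m,k}-\omega_m\,{\rm d}r_{m,k}(0)\cdot v\,.
\end{equation*}
Pairing with $\ell_{m,k'}$ on the left and using $\ell_{m,k'}\cdot i\,\cA(0,\beta)=-\omega_m\,\ell_{m,k'}$ cancels the two terms involving ${\rm d}r_{m,k}(0)\cdot v$, leaving the key identity
\begin{equation*}
\ell_{m,k'}\cdot\sum_{j=0}^{d-1}\beta_j\,\bigl({\rm d}\tilde A_j(0)\cdot v\bigr)\,r_{m,k}=-\bigl({\rm d}\omega_m(0)\cdot v\bigr)\,\delta_{k',k}\,.
\end{equation*}

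Now I would specialize to $v=r_{m,k}$ and recognize the left-hand side as exactly $\sum_j b_{m,l,j}^{k,k'}$ with the roles of indices matched via \eqref{c20b}: indeed $b_{m,l,j}^{k,k'}=\ell_{m,l}\cdot\beta_j\,({\rm d}\tilde A_j(0)\cdot r_{m,k})\,r_{m,k'}$, so taking $v=r_{m,k}$ and pairing with $\ell_{m,l}$ in place of $\ell_{m,k'}$ gives $\sum_j b_{m,l,j}^{k,k'}=-\bigl({\rm d}\omega_m(0)\cdot r_{m,k}\bigr)\,\delta_{l,k'}$. Multiplying by $w_{m,k}$ and summing over $k=1,\dots,\nu_{k_m}$ yields
\begin{equation*}
B^m_{l,k'}(w)=\sum_{j=0}^{d-1}\sum_{k=1}^{\nu_{k_m}}b_{m,l,j}^{k,k'}\,w_{m,k}=-\delta_{l,k'}\,{\rm d}\omega_m(0)\cdot\sum_{k=1}^{\nu_{k_m}}w_{m,k}\,r_{m,k}=-\delta_{l,k'}\,{\rm d}\omega_m(0)\cdot w_m\,,
\end{equation*}
which is exactly \eqref{c20ee}.

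The only nontrivial point is the smoothness of the eigenprojectors/bases and of $\omega_m(u)$ on a neighborhood of $0$; this is not an obstacle since, under Assumption \ref{assumption1} and the fact that $\beta\in\cH$, the eigenvalues of $i\,\cA(u,\beta)$ stay simple as roots of the reduced characteristic polynomial and the corresponding spectral projectors depend smoothly on $u$, so smooth bases $r_{m,k}(u)$, $\ell_{m,k}(u)$ extending the given ones at $u=0$ and satisfying the required biorthogonality exist. Once this is in place the argument is just one differentiation and one bracket cancellation, after which the symmetry encoded in \eqref{c20ee} is automatic. The payoff, per Remark \ref{c20c}, is that ${\rm d}\omega_m(0)\cdot w_m$ is a function of $w_m$ alone (not of the individual $w_{m,k}$), which is precisely the structure needed to integrate by parts in $\theta$ in the $L^2$ energy estimate for the profile system \eqref{c20}.
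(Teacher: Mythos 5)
Your proof is correct, and it is the same first-order eigenvalue perturbation computation that the paper uses, just executed at the level of eigenvectors rather than spectral projectors. The paper differentiates the projector identity
\begin{equation*}
\Bigl(\omega_m(u)\,I+\sum_{j=0}^{d-1}\beta_j\,\tilde A_j(u)\Bigr)\,P_m(u)=0
\end{equation*}
in the direction $w_m$, then sandwiches between $P_m:=P_m(0)$ to kill the ${\rm d}P_m$ term, and finally applies the resulting operator identity $P_m\sum_j\beta_j({\rm d}\tilde A_j(0)\cdot w_m)P_m=(-{\rm d}\omega_m(0)\cdot w_m)\,P_m$ to $r_{m,k'}$ and pairs with $\ell_{m,l}$. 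You instead differentiate the individual eigenvector equation and pair with a left eigenvector, which produces the same cancellation. The only genuine delta between the two is that the paper avoids ever invoking a smooth choice of eigenvector basis: the spectral projector $P_m(u)$ is intrinsically smooth under constant multiplicity (Assumption \ref{assumption1}), whereas your version needs the additional (standard, but extra) observation that a smooth biorthogonal family $r_{m,k}(u),\ell_{m,k}(u)$ extending the data at $u=0$ can be chosen, e.g.\ by normalizing $P_m(u)r_{m,k}$. Both approaches buy the same symmetry \eqref{c20ee}; the paper's is marginally more economical because it works with a canonical object, while yours is arguably more elementary to read since it manipulates vectors rather than operator-valued identities.
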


\begin{proof}
We differentiate the equation
\begin{equation*}
\left( \omega_m(u) \, I +\sum^{d-1}_{j=0} \beta_j \, \tilde A_j(u) \right) \, P_m(u)=0 \, ,
\end{equation*}
with respect to $u$ in the direction $w_m$, evaluate at $u=0$, and apply $P_m:=P_m(0)$ on the left to obtain
\begin{align}\label{c20i}
P_m \, \sum^{d-1}_{j=0}\beta_j \, \left({\rm d}\tilde A_j(0) \cdot w_m \right) \, P_m
=(-{\rm d}\omega_m(0) \cdot w_m) \, P_m \, .
\end{align}
The second equality in \eqref{c20e} and \eqref{c20i} imply \eqref{c20ee}.
\end{proof}

Proposition \ref{c20d} allows us to write
\begin{align}
\label{c20j}
\sum^{d-1}_{j=0} \sum^{\nu_{k_m}}_{k,k'=1} b_{m,l,j}^{k,k'} \, \sigma^n_{m,k} \, \partial_{\theta}\sigma^{n+1}_{m,k'}
=B^m_{l,l} (\cW^{0,n}) \, \partial_\theta \sigma^{n+1}_{m,l},
\end{align}
where $\cW^{0,n} :=\sum_{m,k} \sigma^n_{m,k} \, r_{m,k}$; hence we can shift the $\theta-$derivative and
integrate by parts as discussed in Remark \ref{c20c}. Using \eqref{c20j}, we deduce from \eqref{c20} that
$\sigma^{n+1}_{m,k}=0$ when $m\in\cO$. Otherwise the proof of Proposition \ref{c24} goes through as
before. The statement of Proposition \ref{c37} is thus unchanged, except that in the second sentence we
have $\sigma_{m,k}=0$ when $m\in\cO$ now.

The formulation of Theorem \ref{e1} is exactly as before except now
\begin{equation*}
\cU^0(x,\theta_0,\xi_d)=\sum_{m=1}^M \sum_{k=1}^{\nu_{k_m}} \sigma_{m,k}(x,\theta_0+\omega_m\xi_d) \,
r_{m,k} \, ,\quad \cV^0(x,\theta) =\Big( \sigma_{m,k}(x,\theta) \Big)_{m=1,\dots,M;k=1,\dots,\nu_{k_m}} \, .
\end{equation*}
The error analysis in the proof of Theorem \ref{e1} goes through with the obvious minor changes. For example,
the troublesome self-interaction terms $c^i_k \, \sigma^n_{k,p} \, \partial_{\theta_0}\sigma^{n+1}_{k,p}$, $k\neq i$,
in \eqref{k1} are now replaced by terms of the form $c^i_{m,k,k'} \, \sigma^n_{m,k,p} \, \partial_{\theta_0}
\sigma^{n+1}_{m,k',p}$, $m\neq i$, where the index $p$ as before denotes a moment-zero approximation.
These terms are handled just as before by introducing $[(I-{\bf E}) \, \cG]_{mod}$, see \eqref{k2}, in which
they are replaced by $c^i_{m,k,k'} \, (\sigma^n_{m,k,p} \, \partial_{\theta_0}\sigma^{n+1}_{m,k',p})_p$. The
contribution of these terms to the corrector $\cU^1_{p,\eps}$ is estimated as before using Corollary \ref{h17}.

\appendix
\section{Singular pseudodifferential calculus for pulses}
\label{append}

\emph{\quad} Here we summarize the parts of the singular pulse calculus constructed in \cite{CGW2} that
are needed in this article. First we define the singular Sobolev spaces used to describe mapping properties.

The variable in $\R^{d+1}$ is denoted $(x,\theta)$, $x \in \R^d$, $\theta \in \R$, and the associated frequency
is denoted $(\xi,k)$. In this context, the singular Sobolev spaces are defined as follows. We consider a vector
$\beta \in \R^d \setminus \{ 0\}$. Then for $s \in \R$ and $\eps \in \, ]0,1]$, the anisotropic Sobolev space
$H^{s,\eps} (\R^{d+1})$ is defined by
\begin{multline*}
H^{s,\eps}(\R^{d+1}) := \Big\{ u \in {\mathcal S}'(\R^{d+1}) \, / \, \widehat{u} \in L^2_{\rm loc}(\R^{d+1}) \\
\text{\rm and} \quad \int_{\R^{d+1}} \left( 1+\left| \xi+\dfrac{k \, \beta}{\eps} \right|^2 \right)^s
\, \big| \widehat{u}(\xi,k) \big|^2 \, {\rm d}\xi \, {\rm d}k <+\infty \Big\} \, .
\end{multline*}
Here $\widehat{u}$ denotes the Fourier transform of $u$ on $\R^{d+1}$. The space $H^{s,\eps}(\R^{d+1})$ is
equipped with the family of norms
\begin{equation*}
\forall \, \gamma \ge 1 \, ,\quad \forall \, u \in H^{s,\eps}(\R^{d+1}) \, ,\quad
\| u \|_{H^{s,\eps},\gamma}^2 := \dfrac{1}{(2\, \pi)^{d+1}} \, \int_{\R^{d+1}}
\left( \gamma^2 +\left| \xi+\dfrac{k \, \beta}{\eps} \right|^2 \right)^s
\, \big| \widehat{u}(\xi,k) \big|^2 \, {\rm d}\xi \, {\rm d}k \, .
\end{equation*}
When $m$ is an integer, the space $H^{m,\eps} (\R^{d+1})$ coincides with the space of functions $u \in L^2
(\R^{d+1})$ such that the derivatives, in the sense of distributions,
\begin{equation*}
\left( \partial_{x_1} +\dfrac{\beta_1}{\eps} \, \partial_\theta \right)^{\alpha_1} \dots
\left( \partial_{x_d} +\dfrac{\beta_d}{\eps} \, \partial_\theta \right)^{\alpha_d} \, u \, ,\quad
\alpha_1+\dots+\alpha_d \le m \, ,
\end{equation*}
belong to $L^2 (\R^{d+1})$. In the definition of the norm $\| \cdot \|_{H^{m,\eps},\gamma}$, one power of
$\gamma$ counts as much as one derivative.

\subsection{Symbols}

\emph{\quad} In this Appendix, $\cO$ denotes an open set and nolonger denotes the set of outgoing
phases. Our singular symbols are built from the following sets of classical symbols.

\begin{defn}\label{n1}
Let $\cO\subset \R^N$ be an open subset that contains the origin. For $m\in\R$, we let $\bfS^m(\cO)$ denote
the class of all functions $\sigma:\cO\times \R^d\times [1,\infty)\to \C^M$, $M \ge 1$, such that $\sigma$ is
$C^\infty$ on $\cO \times \R^d$ and for all compact sets $K\subset \cO$:
\begin{equation*}
\sup_{v\in K} \, \sup_{\xi \in\R^d} \, \sup_{\gamma\geq 1} \, (\gamma^2+|\xi|^2)^{-(m-|\nu|)/2} \,
|\partial^\alpha_v\partial_\xi^\nu \sigma(v,\xi,\gamma)| \leq C_{\alpha,\nu,K}.
\end{equation*}
\end{defn}

Let ${\mathcal C}^k_b(\R^{d+1})$, $k \in \N$, denote the space of continuous and bounded functions on $\R^{d+1}$,
whose derivatives up to order $k$ are continuous and bounded. Let us next define the singular symbols.

\begin{definition}[Singular symbols]
\label{def4}
Fix $\beta\in\R^d\setminus \{ 0\}$, let $m \in \R$ and let $n \in \N$. Then we let $S^m_n$ denote the set of families
of functions $(a_{\eps,\gamma})_{\eps \in ]0,1],\gamma \ge 1}$ that are constructed as follows:
\begin{equation}
\label{singularsymbolp}
\forall \, (x,\theta,\xi,k) \in \R^{d+1} \times \R^{d+1} \, ,\quad a_{\eps,\gamma} (x,\theta,\xi,k) =
\sigma \left( \eps \, V(x,\theta),\xi+\dfrac{k \, \beta}{\eps},\gamma \right) \, ,
\end{equation}
where $\sigma \in \bfS^m({\mathcal O})$, $V$ belongs to the space ${\mathcal C}^n_b (\R^{d+1})$ and where
furthermore $V$ takes its values in a convex compact subset $K$ of ${\mathcal O}$ that contains the origin (for
instance $K$ can be a closed ball centered round the origin).
\end{definition}

All results below extend to the case where in place of a function $V$ that is independent of $\eps$, the
representation \eqref{singularsymbolp} is considered with a function $V_\eps$ that is indexed by $\eps$,
provided that we assume that all functions $\eps \, V_\eps$ take values in a {\it fixed} convex compact
subset $K$ of ${\mathcal O}$ that contains the origin, and $(V_\eps)_{\eps \in (0,1]}$ is a bounded family
of ${\mathcal C}^n_b (\R^{d+1})$. Such singular symbols with a function $V_\eps$ are exactly the kind
of symbols that we manipulated in the construction of exact solutions to the singular system \eqref{a3}.

\subsection{Definition of operators and action on Sobolev spaces}
\label{sect8}

To each symbol $a = (a_{\eps,\gamma})_{\eps \in ]0,1],\gamma \ge 1} \in S^m_n$ given by the formula
\eqref{singularsymbolp} and with values in $\C^{N \times N}$, we associate a singular pseudodifferential
operator $\opeg (a)$, with $\eps \in \, ]0,1]$ and $\gamma \ge 1$, whose action on a function $u \in
{\mathcal S} (\R^{d+1} ; \C^N)$ is defined by
\begin{equation}
\label{singularpseudop}
\opeg (a) \, u \, (x,\theta) := \dfrac{1}{(2\, \pi)^{d+1}} \, \int_{\R^{d+1}} {\rm e}^{i\, (\xi \cdot x +k \, \theta)} \,
\sigma \left( \eps \, V(x,\theta),\xi+\dfrac{k \, \beta}{\eps},\gamma \right) \, \widehat{u} (\xi,k)
\, {\rm d}\xi \, {\rm d}k \, .
\end{equation}
Let us briefly note that for the Fourier multiplier $\sigma (v,\xi,\gamma) =i\, \xi_1$, the corresponding
singular operator is $\partial_{x_1} +(\beta_1/\eps) \, \partial_\theta$. We now describe the action of
singular pseudodifferential operators on Sobolev spaces.

\begin{proposition}
\label{prop13}
Let $n \ge d+1$, and let $a \in S^m_n$ with $m \le 0$. Then $\opeg (a)$ in \eqref{singularpseudop} defines
a bounded operator on $L^2 (\R^{d+1})$: there exists a constant $C>0$, that only depends on $\sigma$
and $V$ in the representation \eqref{singularsymbolp}, such that for all $\eps \in \, ]0,1]$ and for all
$\gamma \ge 1$, there holds
\begin{equation*}
\forall \, u \in {\mathcal S} (\R^{d+1}) \, ,\quad \left\| \opeg (a) \, u \right\|_0 \le \dfrac{C}{\gamma^{|m|}} \, \| u \|_0 \, .
\end{equation*}
\end{proposition}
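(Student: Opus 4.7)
The plan is to extract the $\gamma^{-|m|}$ decay from the pointwise bound $|\sigma(v,\xi,\gamma)| \leq C\,(\gamma^2+|\xi|^2)^{m/2} \leq C\,\gamma^m$, valid for $\sigma \in \bfS^m$ with $m \leq 0$ and $\gamma \geq 1$, and to reduce the $L^2$-boundedness to the classical theory of pseudodifferential operators on $\R^d$ by means of a linear change of frequency variable that completely decouples $\eps$ from $\sigma$.

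I would first dispose of the Fourier multiplier case, in which $\sigma = \sigma(\xi,\gamma)$ does not depend on $v$. Then $\opeg(\sigma)$ is simply the Fourier multiplier with symbol $(\xi,k)\mapsto \sigma(\xi+k\,\beta/\eps,\gamma)$ on $\R^{d+1}$, and Plancherel gives
\begin{equation*}
\|\opeg(\sigma)\,u\|_0^2 = \frac{1}{(2\pi)^{d+1}} \int_{\R^{d+1}} \left|\sigma\!\left(\xi+\tfrac{k\,\beta}{\eps},\gamma\right)\right|^2 \,|\widehat{u}(\xi,k)|^2 \,{\rm d}\xi\,{\rm d}k \leq C^2\,\gamma^{2m}\,\|u\|_0^2,
\end{equation*}
which is the desired estimate, proved independently of $n$ and of $V$.

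For the general symbol \eqref{singularsymbolp} I would pass to a kernel representation. Performing the substitution $\eta = \xi + k\,\beta/\eps$ in \eqref{singularpseudop}, noting that $\sigma$ then no longer depends on $k$, and integrating the $k$-exponential produces the Dirac factor $\delta\!\bigl((\theta-\theta')-\beta\cdot (x-x')/\eps\bigr)$, giving
\begin{equation*}
\opeg(a)\,u(x,\theta) = \int_{\R^d} k_0\!\bigl(x-x';\eps V(x,\theta),\gamma\bigr) \, u\!\bigl(x',\theta-\beta\cdot(x-x')/\eps\bigr) \,{\rm d}x',
\end{equation*}
where $k_0(y;v,\gamma):=(2\pi)^{-d}\int_{\R^d} e^{iy\eta}\sigma(v,\eta,\gamma)\,{\rm d}\eta$ is the usual Schwartz kernel in $\R^d$ of the classical Fourier multiplier $\sigma(v,\cdot,\gamma)$, with parameter $v$ ranging in the compact convex set $K \subset {\mathcal O}$. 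The singular parameter $\eps$ has been expelled from the symbol itself and survives only as an isometric shift of the $\theta$-argument of $u$.

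To extract an $L^2$ bound of size $C\,\gamma^m$ from this representation I would apply a Schur-type estimate in the $x$-variable, relying on the key observation that the shift $\theta\mapsto \theta-\beta\cdot(x-x')/\eps$ has unit Jacobian at each fixed $(x,x')$ and is therefore invisible to $\|u\|_{L^2(x',\theta)}$. The main obstacle is that the kernels $k_0(\cdot;v,\gamma)$ are only guaranteed to lie in $L^1(\R^d_y)$ uniformly in $v$ when $m$ is sufficiently negative; for $m$ close to $0$ naive Schur fails and one must instead invoke Calderón--Vaillancourt $L^2$-boundedness uniformly in $v \in K$ for the classical $\R^d$-family $\sigma(v,\cdot,\gamma) \in \bfS^m$, or alternatively dyadically decompose $\sigma$ in $\eta$ and conclude by a Cotlar--Stein almost-orthogonality argument. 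Either route produces a bound controlled by finitely many $\bfS^m$-seminorms of $\sigma$ together with finitely many $C^n_b$-norms of $V$, and the threshold $n \geq d+1$ is precisely the one demanded by the classical $L^2$-boundedness theory on $\R^d$.
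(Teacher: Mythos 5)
The paper does not give a proof of this proposition; it is cited from \cite{CGW2}, as the opening sentence of Appendix \ref{append} makes clear. So I can only assess your sketch on its own merits and against the natural proof of the result.

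Your reduction is on the right track: the Plancherel treatment of the Fourier multiplier case is exactly right, and the kernel representation
\[
\opeg(a)u(x,\theta)=\int_{\R^d} k_0\bigl(x-x';\eps V(x,\theta),\gamma\bigr)\,u\bigl(x',\theta-\beta\cdot(x-x')/\eps\bigr)\,{\rm d}x'
\]
obtained by integrating out the $k$-variable is the key structural step, along with the observation that the induced $\theta$-shear is measure-preserving. These are almost certainly the main ideas used in \cite{CGW2} as well. You also correctly note that $\gamma^{|m|}\sigma\in\bfS^0$ with uniform seminorms for $m\le 0$, which is why one can normalize to the order-zero case.

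The gap is in the last paragraph. You write that one should ``invoke Calder\'on--Vaillancourt $L^2$-boundedness uniformly in $v\in K$ for the classical $\R^d$-family $\sigma(v,\cdot,\gamma)$.'' But $v$ is not a fixed parameter here: in the kernel it is $v=\eps V(x,\theta)$, a function of the output variables, and this $x$-dependence is precisely the whole difficulty — for a fixed $v$ the operator would be a Fourier multiplier, and Plancherel would finish the proof with no need for $n\geq d+1$ at all. To apply Calder\'on--Vaillancourt (or any $\Psi$DO boundedness theorem) you must first carry out the change of variables $\theta=\tau+\beta\cdot x/\eps$ on \emph{both} the input and output sides, which is an $L^2$-isometry and converts the operator into a $\tau$-parametrized family of genuine $\Psi$DOs on $\R^d_x$ with symbol
\[
p_\tau(x,\eta):=\sigma\bigl(\eps V(x,\tau+\beta\cdot x/\eps),\eta,\gamma\bigr),
\]
and then observe that the $x$-derivatives of the coefficient are bounded uniformly in $\eps$ and $\tau$ because
\[
\partial_{x_j}\bigl[\eps V(x,\tau+\beta\cdot x/\eps)\bigr]=\eps\,\partial_{x_j}V+\beta_j\,\partial_\theta V,
\]
so the problematic $1/\eps$ cancels against the $\eps$ in $\eps V$. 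Only at this point does the hypothesis $V\in\mathcal C^n_b$ with $n\ge d+1$ translate into a uniform $C^n_b$ bound on the symbol $p_\tau$ in $x$, and only then is the classical $L^2$-boundedness theorem applicable uniformly. Without this decoupling step, which you gesture at (``the singular parameter $\eps$ has been expelled from the symbol'') but never actually perform, the appeal to Calder\'on--Vaillancourt is not well-founded, and the role of the hypothesis $n\ge d+1$ remains unexplained.
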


The constant $C$ in Proposition \ref{prop13} depends uniformly on the compact set in which $V$ takes its
values and on the norm of $V$ in ${\mathcal C}^{d+1}_b$. For operators defined by symbols of order $m>0$
we have:

\begin{proposition}
\label{prop14}
Let $n \ge d+1$, and let $a \in S^m_n$ with $m>0$. Then $\opeg (a)$ in \eqref{singularpseudop} defines
a bounded operator from $H^{m,\eps}(\R^{d+1})$ to $L^2 (\R^{d+1})$: there exists a constant $C>0$, that
only depends on $\sigma$ and $V$ in the representation \eqref{singularsymbolp}, such that for all $\eps \in
\, ]0,1]$ and for all $\gamma \ge 1$, there holds
\begin{equation*}
\forall \, u \in {\mathcal S} (\R^{d+1}) \, ,\quad \left\| \opeg (a) \, u \right\|_0 \le C \, \| u \|_{H^{m,\eps},\gamma} \, .
\end{equation*}
\end{proposition}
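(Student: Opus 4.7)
The plan is to reduce Proposition \ref{prop14} to the order-zero case already treated in Proposition \ref{prop13}, by peeling off a Fourier multiplier that converts the $H^{m,\eps}$ norm into an $L^2$ norm. Set $\mu(\zeta,\gamma) := (\gamma^2 + |\zeta|^2)^{m/2}$ and write the representation \eqref{singularsymbolp} as $\sigma = \tilde{\sigma} \cdot \mu$, where $\tilde{\sigma}(v,\zeta,\gamma) := \sigma(v,\zeta,\gamma)/\mu(\zeta,\gamma)$.

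The first step is to verify that $\tilde{\sigma} \in \bfS^0({\mathcal O})$. This is a routine Leibniz computation: each $\zeta$-derivative either hits $\sigma$ (gaining the factor $(\gamma^2+|\zeta|^2)^{-1/2}$ per derivative, in accordance with $\sigma \in \bfS^m$) or hits $\mu^{-1}$ (which satisfies $|\partial_\zeta^\nu \mu^{-1}| \le C (\gamma^2 + |\zeta|^2)^{-m/2 - |\nu|/2}$); derivatives in $v$ behave similarly since $\mu$ is independent of $v$. The resulting estimates exactly match the order-zero bounds in Definition \ref{n1}. Consequently, if $\tilde{a}$ denotes the singular symbol produced from $\tilde{\sigma}$ and the same function $V$, then $\tilde{a} \in S^0_n$.

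Next, let $\Lambda_\gamma^{m,\eps}$ denote the pure Fourier multiplier on $\R^{d+1}$ with symbol $\mu(\xi + k\beta/\eps,\gamma)$. Because $\mu$ depends only on the frequency, inserting the factorization $\sigma = \tilde{\sigma}\cdot \mu$ into the defining formula \eqref{singularpseudop} gives, for $u \in {\mathcal S}(\R^{d+1})$, the operator identity
\begin{equation*}
\opeg(a)\, u = \opeg(\tilde{a}) \, \big( \Lambda_\gamma^{m,\eps} \, u \big) \, .
\end{equation*}
By Plancherel's theorem and the very definition of the weighted norm in $H^{m,\eps}$, one has $\| \Lambda_\gamma^{m,\eps} u \|_0 = \| u \|_{H^{m,\eps},\gamma}$.

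The final step is to apply Proposition \ref{prop13} (with $m$ replaced by $0$) to the order-zero symbol $\tilde{a} \in S^0_n$: since $n \ge d+1$, this yields a constant $C>0$, depending only on $\tilde{\sigma}$ and $V$ (hence ultimately on $\sigma$ and $V$), such that $\| \opeg(\tilde{a})\, w \|_0 \le C \, \| w \|_0$ uniformly in $\eps \in (0,1]$ and $\gamma \ge 1$. Taking $w = \Lambda_\gamma^{m,\eps} u$ and combining with the two displays above produces the claimed bound. There is no substantive obstacle here; the only point requiring any care is the symbol-class check $\tilde{\sigma} \in \bfS^0({\mathcal O})$, which is a bookkeeping exercise in differentiation.
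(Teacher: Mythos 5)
Your proof is correct. The factorization $\sigma = \tilde{\sigma}\cdot\mu$ with $\mu(\xi,\gamma)=(\gamma^2+|\xi|^2)^{m/2}$ does produce a genuine order-zero symbol: the Leibniz estimate you sketch gives $|\partial^\alpha_v\partial^\nu_\xi\tilde\sigma|\lesssim(\gamma^2+|\xi|^2)^{-|\nu|/2}$ as required, with $V$ unchanged, so $\tilde a\in S^0_n$ with the same $n$. Since $\mu$ depends only on the singular frequency $\xi+k\beta/\eps$, the identity $\opeg(a)u = \opeg(\tilde a)\bigl(\Lambda^{m,\eps}_\gamma u\bigr)$ is immediate from \eqref{singularpseudop}, and Plancherel with the paper's normalization gives $\|\Lambda^{m,\eps}_\gamma u\|_0 = \|u\|_{H^{m,\eps},\gamma}$ exactly. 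Applying Proposition \ref{prop13} with order $0$ (which needs $n\ge d+1$, as given) closes the argument.

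Note that the paper itself does not prove Propositions \ref{prop13}--\ref{prop15}; it defers to \cite{CGW2}, so a line-by-line comparison with ``the paper's own proof'' is not possible here. That said, your reduction to the order-zero boundedness via a Fourier multiplier absorbing the order is the canonical way to deduce the $m>0$ case from the $m\le 0$ case in any such calculus, singular or classical, and it carries all the $\eps,\gamma$-uniformity required.
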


The next proposition describes the smoothing effect of operators of order $-1$.

\begin{proposition}
\label{prop15}
Let $n \ge d+2$, and let $a \in S^{-1}_n$. Then $\opeg (a)$ in \eqref{singularpseudop} defines a bounded
operator from $L^2 (\R^{d+1})$ to $H^{1,\eps}(\R^{d+1})$: there exists a constant $C>0$, that only depends
on $\sigma$ and $V$ in the representation \eqref{singularsymbolp}, such that for all $\eps \in \, ]0,1]$ and for
all $\gamma \ge 1$, there holds
\begin{equation*}
\forall \, u \in {\mathcal S} (\R^{d+1}) \, ,\quad \left\| \opeg (a) \, u \right\|_{H^{1,\eps},\gamma} \le C \, \| u \|_0 \, .
\end{equation*}
\end{proposition}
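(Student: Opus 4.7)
The plan is to reduce Proposition \ref{prop15} to the $L^2$-boundedness already established in Proposition \ref{prop13}. The key observation is that the Fourier multiplier encoding the $H^{1,\eps}$-norm, namely multiplication on the Fourier side by $(\gamma^2 + |\xi + k\beta/\eps|^2)^{1/2}$, is itself a singular pseudodifferential operator $\Lambda := \opeg(\lambda)$ associated with the $v$-independent symbol $\lambda(v,\xi,\gamma) := (\gamma^2 + |\xi|^2)^{1/2}$, which belongs to $\bfS^1$ and hence to $S^1_m$ for every $m$. Consequently $\|u\|_{H^{1,\eps},\gamma} = \|\Lambda u\|_0$, so the proposition is equivalent to the uniform bound $\|\Lambda \, \opeg(a) \, u\|_0 \le C \|u\|_0$.

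To obtain this bound I would compose the two singular operators and exhibit an expansion of the form
\begin{equation*}
\Lambda \, \opeg(a) = \opeg(\lambda \cdot a_0) + R_\eps \, ,
\end{equation*}
where $a_0$ is the symbol representing $a$ as in \eqref{singularsymbolp}. The principal symbol $\lambda \cdot a_0$ has order $+1 + (-1) = 0$ and therefore belongs to $S^0_n$; its associated operator is $L^2$-bounded uniformly in $\eps,\gamma$ by Proposition \ref{prop13}. The remainder $R_\eps$ arises from Taylor-expanding the amplitude $\sigma(\eps V(x,\theta), \xi + k\beta/\eps, \gamma)$ in the $\xi$-slot: the $\xi$-derivatives of $\sigma$ lower the symbolic order by one, and the apparent $1/\eps$ coming from the substitution $\xi \mapsto \xi + k\beta/\eps$ is exactly balanced by the prefactor $\eps$ in front of $V$ inside the first slot of $\sigma$. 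The net order of $R_\eps$ is $-1$, so one more application of Proposition \ref{prop13}, or the corresponding kernel estimate, closes the argument.

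The main obstacle is controlling $R_\eps$ uniformly in $(\eps,\gamma)$. Concretely, after writing $\Lambda \, \opeg(a)$ as an oscillatory integral and applying the pointwise inequality
\begin{equation*}
(\gamma^2 + |\xi + k\beta/\eps|^2)^{1/2} \le (\gamma^2 + |\eta + m\beta/\eps|^2)^{1/2} + |(\xi - \eta) + (k-m)\, \beta/\eps| \, ,
\end{equation*}
one splits the kernel into a principal part (which recovers $\opeg(\lambda \cdot a_0)$) and a remainder in which the factor $|(\xi-\eta) + (k-m)\beta/\eps|$ must be traded, via integration by parts in $(x,\theta)$, against derivatives of the amplitude falling on $V$. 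To close a Schur-type estimate on the resulting kernel one needs at least one more derivative of $V$ than in Proposition \ref{prop13}; this is precisely the bump from $n \ge d+1$ to $n \ge d+2$ in the hypothesis. The delicate bookkeeping is to check that each symbolic derivative in $\xi$ lowers the order of $a$ by one while each derivative in $(x,\theta)$ costs one $\mathcal C_b$-norm of $V$ but no negative power of $\eps$, so the whole expansion remains in the singular calculus with controlled constants. I would then cite, or briefly reproduce from \cite{CGW2}, the kernel estimates that translate these symbolic bounds into an operator norm bound on $L^2$.
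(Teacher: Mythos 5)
The paper does not actually provide a proof of Proposition~\ref{prop15}: the whole pulse calculus in Appendix~\ref{append}, including Propositions~\ref{prop13}--\ref{prop15}, is imported from~\cite{CGW2}, so there is no argument in this text to compare yours against. I can therefore only assess your sketch on its own terms.

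Your overall plan is sound and captures the decisive feature of the pulse calculus that makes the proposition true: the reduction $\|\opeg(a)u\|_{H^{1,\eps},\gamma}=\|\Lambda\,\opeg(a)u\|_0$ with $\Lambda=\opeg(\lambda)$, the identification of the principal piece $\opeg(\lambda\,a_0)\in\opeg(S^0_n)$ handled by Proposition~\ref{prop13}, the Lipschitz (``Peetre'') inequality controlling $\lambda(\eta+m\beta/\eps,\gamma)-\lambda(\xi+k\beta/\eps,\gamma)$, and the key cancellation $\partial_\theta\,\sigma(\eps V,\cdot)=\eps\,\nabla_v\sigma\cdot\partial_\theta V$ which kills the $1/\eps$ in the factor $(m-k)\beta/\eps$. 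Your bookkeeping on the regularity index is also right: the remainder trades one $(x,\theta)$-derivative onto $V$, so the Schur-type kernel estimate costs exactly one order of $\mathcal{C}^n_b$-regularity, which is the bump from $n\ge d+1$ to $n\ge d+2$.

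Two points in the first paragraph are misstated, though your second paragraph implicitly repairs them. First, the Taylor/Lipschitz expansion is carried out on the Fourier multiplier $\lambda$, not on the amplitude $\sigma$; you write ``Taylor-expanding the amplitude $\sigma$ in the $\xi$-slot'' and ``the $\xi$-derivatives of $\sigma$ lower the symbolic order,'' but in the composition $\Lambda\,\opeg(a)$ the $\xi$-derivatives fall on the $V$-independent symbol $\lambda$ (reducing its order from $1$ to $0$), while $\sigma$ receives $(x,\theta)$-derivatives through $\eps V$. Second, the remainder $R_\eps$ is not itself in the class $S^m_n$ of Definition~\ref{def4}: after the integration by parts its amplitude depends on $V$ \emph{and} on $\partial_{x,\theta}V$, so Proposition~\ref{prop13} cannot literally be invoked and one must redo the kernel/Schur estimate for this slightly wider class, as you eventually acknowledge. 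A variant that sidesteps the non-polynomial $\lambda$ entirely, and may be closer to what is done in~\cite{CGW2}, is to estimate each singular derivative $(\partial_{x_j}+\beta_j\partial_\theta/\eps)\,\opeg(a)u$ directly: the Leibniz rule gives the \emph{exact} two-term identity $(\partial_{x_j}+\beta_j\partial_\theta/\eps)\,\opeg(a)=\opeg(i\xi_j\,a)+R_j$ with $R_j$ having amplitude $\bigl(\eps\,\nabla_v\sigma\cdot\partial_{x_j}V+\beta_j\,\nabla_v\sigma\cdot\partial_\theta V\bigr)$, still of symbolic order $-1$; combined with $\gamma\,\|\opeg(a)u\|_0\le C\|u\|_0$ from Proposition~\ref{prop13}, this yields the result with the same $n\ge d+2$ threshold and no Lipschitz inequality needed.
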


\begin{remark}\label{a4}
\textup{In applications of the pulse calculus, we verify the hypothesis that for $V$ as in \eqref{singularsymbolp},
$V \in \mathcal{C}^n_b(\R^{d+1})$, by showing $V\in H^s(\R^{d+1})$ for some $s>\frac{d+1}{2}+n$.}
\end{remark}

\subsection{Adjoints and products}
\label{sect9}

For proofs of the following results we refer to \cite{CGW2}. The two first results deal with adjoints of singular
pseudodifferential operators while the last two deal with products.

\begin{proposition}
\label{prop18}
Let $a=\sigma(\eps V,\X,\gamma) \in S_n^0$, $n \ge 2\, (d+1)$, where $V\in H^{s_0}(\R^{d+1})$ for some
$s_0>\frac{d+1}{2}+1$, and let $a^*$ denote the conjugate transpose of the symbol $a$. Then $\opeg (a)$
and $\opeg (a^*)$ act boundedly on $L^2$ and there exists a constant $C \ge 0$ such that for all $\eps \in
\, ]0,1]$ and for all $\gamma \ge 1$, there holds
\begin{equation*}
\forall \, u \in {\mathcal S} (\R^{d+1}) \, ,\quad
\left\| \opeg (a)^* \, u -\opeg (a^*) \, u \right\|_0 \le \dfrac{C}{\gamma} \, \| u \|_0 \, .
\end{equation*}

If $n \ge 3\, d +3$, then for another constant $C$, there holds
\begin{equation*}
\forall \, u \in {\mathcal S} (\R^{d+1}) \, ,\quad
\left\| \opeg (a)^* \, u -\opeg (a^*) \, u \right\|_{H^{1,\eps},\gamma} \le C \, \| u \|_0 \, ,
\end{equation*}
uniformly in $\eps$ and $\gamma$.
\end{proposition}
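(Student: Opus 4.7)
The plan is to compute the Schwartz kernel of the difference $R := \opeg(a)^* - \opeg(a^*)$ directly, reduce it via a first-order Taylor expansion of $\sigma^*$ in its first slot, and then apply the mapping properties of Propositions \ref{prop13} and \ref{prop15}. Writing out the two operators using the definition \eqref{singularpseudop} and the standard formula for the $L^2$ adjoint of an operator with a \emph{right} symbol,
\begin{equation*}
\opeg(a)^* v(y,\eta) = \frac{1}{(2\pi)^{d+1}} \int e^{-i((x-y)\cdot\xi + (\theta-\eta)k)} \, \sigma^*\!\left( \eps V(x,\theta), \xi+\frac{k\beta}{\eps}, \gamma \right) v(x,\theta)\, {\rm d}x\, {\rm d}\theta\, {\rm d}\xi\, {\rm d}k,
\end{equation*}
while $\opeg(a^*) v$ is the same integral but with $\eps V(y,\eta)$ in place of $\eps V(x,\theta)$. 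So the kernel of $R$ carries the difference $\sigma^*(\eps V(x,\theta),\cdot) - \sigma^*(\eps V(y,\eta),\cdot)$.

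First, I would Taylor-expand this difference to first order in the first argument of $\sigma^*$, producing a prefactor $\eps(V(x,\theta)-V(y,\eta))$ times an integral of $\partial_v\sigma^*$ against an intermediate value. Next, I would write $V(x,\theta)-V(y,\eta) = (x-y,\theta-\eta)\cdot\int_0^1 \nabla V(\dots)\, {\rm d}s$ and convert the $(x_j-y_j)$ and $(\theta-\eta)$ factors into derivatives of the exponential via $(x_j-y_j)\, e^{-i(x-y)\cdot\xi} = i\partial_{\xi_j}\, e^{-i(x-y)\cdot\xi}$ and the analogous identity for $\theta-\eta$ and $\partial_k$. Integration by parts in $(\xi,k)$ then lands derivatives on $\sigma^*$: the $\partial_{\xi_j}$ contributions produce $\partial_{\zeta_j}\sigma^*$ of order $-1$ with an \emph{extra} factor $\eps$ outside, hence of size $O(\eps/\gamma)$; the $\partial_k$ contribution produces $(\beta/\eps)\cdot\partial_\zeta \sigma^*$ by the chain rule, and the $1/\eps$ cancels exactly against the $\eps$ from the Taylor expansion, leaving a clean symbol of order $-1$ in $\xi+k\beta/\eps$ with a prefactor controlled by $|\beta|\cdot\|\nabla V\|_{L^\infty}$. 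This last contribution is the one that drives the $O(1/\gamma)$ bound. Once the difference is reorganized as an (amplitude) singular pseudodifferential operator whose effective symbol is of order $-1$, Proposition \ref{prop13} gives the $L^2\to L^2$ bound by $C/\gamma$, while Proposition \ref{prop15} gives the $L^2 \to H^{1,\eps}$ bound with the stated constant.

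The main technical obstacle will be that the operator that results from Taylor expansion is a double (amplitude) pseudodifferential operator rather than a clean left-symbol operator of the form \eqref{singularpseudop}, so Propositions \ref{prop13} and \ref{prop15} do not apply to it verbatim. The standard cure is to perform a further asymptotic expansion that converts the amplitude symbol $\sigma^\sharp(\eps V(x,\theta),\eps V(y,\eta),\zeta,\gamma)$ into a left-symbol plus a controlled remainder, each stage costing one derivative of $V$ and gaining one order of decay; alternatively, one integrates by parts in $(\xi,k)$ enough times on the oscillatory kernel to produce pointwise decay of the form $(1+|x-y|^2+|\theta-\eta|^2)^{-N}$ with $N>d/2+1$, so that Schur's lemma applies directly. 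The two regularity thresholds $n\ge 2(d+1)$ and $n\ge 3d+3$ should correspond exactly to how many such expansion/integration-by-parts steps are needed: the $L^2$ estimate needs enough derivatives of $V$ to render the remainder $L^2$-bounded via Proposition \ref{prop13}, whereas the $H^{1,\eps}$ estimate needs one extra order of gain and hence one extra derivative of $V$ in the expansion before Proposition \ref{prop15} can be invoked. The cancellation $\eps\cdot(\beta/\eps)=\beta$ uniform in $\eps$ is the heart of the matter and makes the whole calculus work with bounds that are independent of $\eps\in(0,1]$.
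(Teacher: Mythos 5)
The paper does not actually prove Proposition~\ref{prop18}: at the start of Appendix~\ref{sect9} it explicitly defers to \cite{CGW2} for the proofs. So there is no in-paper argument to compare against. Your outline does capture the correct central mechanism, which is indeed what makes the singular calculus work: write both kernels with the phase $e^{-i((x-y)\cdot\xi+(\theta-\eta)k)}$, Taylor-expand $\sigma^*$ in its first slot to produce the prefactor $\eps\,(V(x,\theta)-V(y,\eta))$, convert $(x-y)$ and $(\theta-\eta)$ into $\partial_\xi$ and $\partial_k$ on the phase and integrate by parts, and then observe that the dangerous $\partial_k$ hit on the argument $\xi+k\beta/\eps$ produces a factor $\beta/\eps$ that cancels the Taylor-expansion $\eps$, leaving a symbol of one lower order with coefficients involving $\beta$ and $\nabla V$ but no stray $\eps^{\pm 1}$. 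That cancellation is genuinely the heart of the matter, and you have identified it correctly; it is also the point where the continuous $k$ variable of the pulse calculus is used (integration by parts in $k$ would be unavailable in the periodic/wavetrain setting).

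The gap is in the last step, and you have named it yourself without closing it: after Taylor expansion and integration by parts, you do not have an operator of the form \eqref{singularpseudop}. You have an \emph{amplitude} (double-symbol) operator whose symbol depends on $\eps V$ evaluated at both $(x,\theta)$ and $(y,\eta)$ as well as on the intermediate point $sV(x,\theta)+(1-s)V(y,\eta)$, integrated in $s$. Propositions~\ref{prop13} and~\ref{prop15} as stated apply only to operators given exactly by \eqref{singularpseudop}, so invoking them ``directly'' is not legitimate; one must either (i) carry out a genuine kernel estimate (iterated integration by parts in $(\xi,k)$ to gain polynomial decay $(1+|x-y|+|\theta-\eta|)^{-N}$, then Schur), or (ii) prove a separate amplitude-to-left-symbol reduction lemma and control its remainder. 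Your sketch mentions both routes but treats them as an afterthought, whereas this is where essentially all of the technical work and all of the derivative budget is spent. Relatedly, your accounting ``the $H^{1,\eps}$ estimate needs one extra derivative'' does not match the stated thresholds: $2(d+1)=2d+2$ versus $3d+3$ differ by $d+1$, not by $1$. That gap of $d+1$ reflects an additional full round of $(\xi,k)$-integration by parts (roughly $d+1$ derivatives to gain one unit of $L^2$-integrable off-diagonal decay in $d+1$ variables), not merely the jump from Proposition~\ref{prop13}'s $n\ge d+1$ to Proposition~\ref{prop15}'s $n\ge d+2$. Until the amplitude reduction and the derivative count are made precise, the argument is a plausible plan rather than a proof.
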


\begin{proposition}
\label{prop19}
Let $a=\sigma(\eps V,\X,\gamma) \in S_n^1$, $n \ge 3\, d +4$, where $V\in H^{s_0}(\R^{d+1})$ for some
$s_0>\frac{d+1}{2}+1$, and let $a^*$ denote the conjugate transpose of the symbol $a$. Then $\opeg (a)$
and $\opeg (a^*)$ map $H^{1,\eps}$ into $L^2$ and there exists a family of operators $R^{\eps,\gamma}$
that satisfies
\begin{itemize}
 \item there exists a constant $C \ge 0$ such that for all $\eps \in \, ]0,1]$ and for all $\gamma \ge 1$, there holds
\begin{equation*}
\forall \, u \in {\mathcal S} (\R^{d+1}) \, ,\quad \left\| R^{\eps,\gamma} \, u \right\|_0 \le C \, \| u \|_0 \, ,
\end{equation*}

 \item the following duality property holds
\begin{equation*}
\forall \, u,v \in {\mathcal S} (\R^{d+1}) \, ,\quad
\langle \opeg (a) \, u,v \rangle_{L^2} -\langle u,\opeg (a^*) \, v \rangle_{L^2} =\langle
R^{\eps,\gamma} \, u,v \rangle_{L^2} \, .
\end{equation*}
In particular, the adjoint $\opeg (a)^*$ for the $L^2$ scalar product maps $H^{1,\eps}$ into $L^2$.
\end{itemize}
\end{proposition}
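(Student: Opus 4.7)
The plan is to adapt the standard symbol-calculus proof that $\op(a)^*-\op(a^*)$ gains one order, while carefully tracking the factors of $\eps$ and $1/\eps$ produced by the singular frequency shift $\xi+k\beta/\eps$. The statement should be viewed as the order-$1$ analogue of Proposition \ref{prop18}: gaining one order in the symbol turns a $H^{1,\eps}\to L^2$ operator into an $L^2\to L^2$ operator, at the cost of the $1/\gamma$ smallness that was available in the order-$0$ case. The mapping property for $\opeg(a)$ and $\opeg(a^*)$ into $L^2$ is already given by Proposition \ref{prop14}, so the only content is the construction of $R^{\eps,\gamma}$ and its uniform $L^2$-bound.

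First I would write the Schwartz kernel of $\opeg(a)$ from \eqref{singularpseudop} as
\[
K(y,\phi;x,\theta)=\frac{1}{(2\pi)^{d+1}}\int e^{i((y-x)\cdot\xi+(\phi-\theta)k)}\,
\sigma\!\Big(\eps V(y,\phi),\xi+\tfrac{k\beta}{\eps},\gamma\Big)\,d\xi\,dk,
\]
so that the $L^2$-adjoint $\opeg(a)^*$ has kernel $\overline{K(x,\theta;y,\phi)}$, in which the argument of $\sigma^*$ is $\eps V(x,\theta)$ instead of $\eps V(y,\phi)$. This mismatch is exactly what distinguishes $\opeg(a)^*$ from $\opeg(a^*)$. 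I would insert and subtract the value at $\eps V(y,\phi)$, getting
\[
\sigma^*(\eps V(x,\theta),\cdot)-\sigma^*(\eps V(y,\phi),\cdot)
=\eps\int_0^1 (\partial_v\sigma^*)\!\big(\eps V_t,\cdot\big)\,dt\cdot\bigl(V(x,\theta)-V(y,\phi)\bigr),
\]
where $V_t:=V(y,\phi)+t(V(x,\theta)-V(y,\phi))$, and then expand $V(x,\theta)-V(y,\phi)$ as an integral of $\nabla_{x,\theta}V$ along the segment joining the two points. This produces explicit factors $(x_j-y_j)$ and $(\theta-\phi)$ in the integrand.

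These factors are then absorbed by integration by parts in $(\xi,k)$ using $(x_j-y_j)e^{i(\cdots)}=i\partial_{\xi_j}e^{i(\cdots)}$ and $(\theta-\phi)e^{i(\cdots)}=i\partial_k e^{i(\cdots)}$. A $(x_j-y_j)$ factor turns into $\partial_{\xi_j}$ acting on the amplitude, which drops the order by one without introducing any $1/\eps$; a $(\theta-\phi)$ factor turns by the chain rule into $(\beta/\eps)\cdot\nabla_\xi$ on the amplitude, and the apparent $1/\eps$ is cancelled exactly by the $\eps$ produced in the Taylor step. In every case the resulting amplitude has the form $\tau\bigl(\eps V^{\#}(x,\theta,y,\phi),\xi+k\beta/\eps,\gamma\bigr)\,W(x,\theta,y,\phi)$ with $\tau\in\bfS^{0}$ and $W$ bounded with sufficiently many derivatives (here we use $V\in H^{s_0}$ with $s_0>(d+1)/2+1$, which gives $V\in\mathcal{C}^{1}_{b}$). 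Because the amplitude now depends on both $(x,\theta)$ and $(y,\phi)$, I would apply one more Taylor step at $y=x,\ \phi=\theta$ to reduce to a standard singular symbol: the leading part becomes a symbol in $S^{0}_{n-3}$, and the new remainder is of the same type but with one additional $\eps$ in front, allowing the process to close. This reduction is the singular analogue of the classical amplitude-to-symbol argument and preserves the order-$0$ character of the effective symbol.

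Finally, Proposition \ref{prop13} applied to each piece of the resulting symbol, which lies in $S^{0}_{n-3}$ with $n-3\ge 3d+1\ge d+1$ (this is where the threshold $n\ge 3d+4$ enters), yields the uniform $L^{2}$-bound on $R^{\eps,\gamma}$, and the duality identity is built into the construction. The main obstacle is the bookkeeping in the $k$-integration by parts: every $\partial_k$ falling on $\sigma(\cdot,\xi+k\beta/\eps,\gamma)$ produces a factor $1/\eps$, and one must verify that each such factor is matched by an $\eps$ coming from either the Taylor remainder in $v$ or from $V(x,\theta)-V(y,\phi)$. Once this accounting is done correctly the remainder of the argument is parallel to, but one order higher than, the proof of Proposition \ref{prop18}.
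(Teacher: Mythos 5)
The paper itself does not prove Proposition \ref{prop19}; it defers the proof (and all the calculus results of the appendix) to \cite{CGW2}. So the comparison can only address the soundness and plausibility of your sketch, not its fidelity to the actual argument.

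Your overall strategy---compare the kernel of $\opeg(a)^*$ with that of $\opeg(a^*)$, Taylor-expand the symbol in the $v$-slot to produce an explicit $\eps$ and the factor $V(x,\theta)-V(y,\phi)$, then integrate by parts to trade $(x-y)$ and $(\theta-\phi)$ for $\partial_\xi$ and $\partial_k$ on the amplitude---is the natural one and is surely in the same family as the proof in \cite{CGW2}. The bookkeeping of the \emph{first} Taylor/IBP round is correct: the $\eps$ from $\sigma^*(\eps V(y,\phi))-\sigma^*(\eps V(x,\theta))=\eps\int_0^1\partial_v\sigma^*(\eps V_t)\,dt\cdot(V(y,\phi)-V(x,\theta))$ exactly cancels the $\beta/\eps$ that $\partial_k$ produces when it hits the singular frequency $\xi+k\beta/\eps$, and the $\partial_\xi$ terms drop the order by one without any $1/\eps$. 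This reproduces the order-$0$ amplitude and explains why the bound is $O(1)$ here rather than $O(1/\gamma)$ as in Proposition~\ref{prop18}.

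The gap is in the claim that ``the new remainder is of the same type but with one additional $\eps$ in front, allowing the process to close.'' After the first round, the amplitude has the schematic form
\begin{equation*}
A(x,\theta,y,\phi,\zeta,\gamma) = \tau\big(\eps V^{\#}(x,\theta,y,\phi),\zeta,\gamma\big)\,W(x,\theta,y,\phi) \, ,
\end{equation*}
with $\tau\in\bfS^0$, $\zeta=\xi+k\beta/\eps$, and $W$ a bounded function built from $\nabla V$ integrated along the segment joining $(x,\theta)$ and $(y,\phi)$. When you Taylor-expand $A$ at $(y,\phi)=(x,\theta)$, the piece coming from the $\eps V^{\#}$-slot does carry an extra $\eps$ (differentiating $\tau(\eps V^\#,\cdot)$ in $(y,\phi)$ yields $\eps\,\partial_v\tau\cdot\nabla_{y,\phi}V^\#$), so that piece closes. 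But the piece $\tau(\eps V(x,\theta),\zeta)\,[\,W(x,\theta,y,\phi)-W(x,\theta,x,\theta)\,]$ carries \emph{no} $\eps$, because $W$ itself has no $\eps$-protection: $\nabla_{y,\phi}W$ is simply $\nabla^2 V$ evaluated along the segment. When you integrate by parts the $(\phi-\theta)$ factor from this piece, the $\partial_k$ produces $(\beta/\eps)\,\nabla_\zeta\tau$ with an uncancelled $1/\eps$. Iterating does not help: on the part of the amplitude that reaches the $W$ factor, each $\partial_k$ IBP costs a $1/\eps$, while the order drops by only one, and the resulting $(1/(\eps\gamma))^N$ bound is not summable for $\eps\gamma$ small. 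So the naive iteration does not close, and the constant cannot be shown to be uniform in $\eps$ by this route. A correct proof must either bound the resulting singular \emph{amplitude} (i.e.\ with $(y,\phi)$-dependent $W$) directly---for example by writing the Schwartz kernel explicitly, noting the $k$-integral degenerates to a $\delta$-function along $\theta-\phi=(x-y)\cdot\beta/\eps$, and estimating the resulting oscillatory kernel---or must avoid producing such amplitudes in the first place (e.g.\ via a Fourier decomposition of $\sigma$ in the $v$-slot). As written, the sketch does not supply that ingredient.

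As a minor point, the estimate $n-3\ge 3d+1\ge d+1$ leaves a large gap against the stated threshold $n\ge 3d+4$; this large slack is itself a hint that the actual proof loses more derivatives than your count of three, consistent with a more involved kernel or amplitude argument.
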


\begin{proposition}
\label{prop20}
(a)\; Let $a,b \in S_n^0$, $n \ge 2\, (d+1)$, and suppose $b=\sigma(\eps V,\X,\gamma)$ where $V \in
H^{s_0}(\R^{d+1})$ for some $s_0>\frac{d+1}{2}+1$.  Then there exists a constant $C \ge 0$ such that
for all $\eps \in \, ]0,1]$ and for all $\gamma \ge 1$, there holds
\begin{equation*}
\forall \, u \in {\mathcal S} (\R^{d+1}) \, ,\quad
\left\| \opeg (a) \, \opeg (b) \, u -\opeg (a \, b) \, u \right\|_0 \le \dfrac{C}{\gamma} \, \| u \|_0 \, .
\end{equation*}
If $n \ge 3\, d +3$, then for another constant $C$, there holds
\begin{equation*}
\forall \, u \in {\mathcal S} (\R^{d+1}) \, ,\quad
\left\| \opeg (a) \, \opeg (b) \, u -\opeg (a \, b) \, u \right\|_{H^{1,\eps},\gamma} \le C \, \| u \|_0 \, ,
\end{equation*}
uniformly in $\eps$ and $\gamma$.

(b)\;Let $a \in S_n^1,b \in S_n^0$ or $a \in S_n^0,b \in S_n^1$, $n \ge 3\, d +4$, and in each case suppose
$b=\sigma(\eps V,\X,\gamma)$ where  $V\in H^{s_0}(\R^{d+1})$ for some $s_0>\frac{d+1}{2}+1$. Then there
exists a constant $C \ge 0$ such that for all $\eps \in \, ]0,1]$ and for all $\gamma \ge 1$, there holds
\begin{equation*}
\forall \, u \in {\mathcal S} (\R^{d+1}) \, ,\quad
\left\| \opeg (a) \, \opeg (b) \, u -\opeg (a \, b) \, u \right\|_0 \le C \, \| u \|_0 \, .
\end{equation*}
\end{proposition}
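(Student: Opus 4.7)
\textbf{Proof plan for Proposition \ref{prop20}.} The basic strategy is the standard composition formula of pseudodifferential calculus, adapted to the singular scale $\xi+k\beta/\eps$. I would first write
\begin{equation*}
\opeg(a)\,\opeg(b)\,u(x,\theta) = \frac{1}{(2\pi)^{d+1}}\int e^{i(\xi\cdot x+k\theta)}\,
\sigma_a\!\left(\eps V(x,\theta),\xi+\tfrac{k\beta}{\eps},\gamma\right)\,\widehat{\opeg(b)u}(\xi,k)\,{\rm d}\xi\,{\rm d}k,
\end{equation*}
and then insert the defining integral for $\opeg(b)u$ and its Fourier transform. After exchanging the order of integration, the composition takes the form of an oscillatory integral in the difference variables $(x-y,\theta-\vartheta)$ with phase $(\xi-\xi')\cdot(x-y)+(k-k')(\theta-\vartheta)$ and amplitude built from $\sigma_a$ evaluated at $(\eps V(x,\theta),\xi+k\beta/\eps,\gamma)$ times $\sigma_b(\eps V(y,\vartheta),\xi'+k'\beta/\eps,\gamma)$. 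The symbol $\sigma_a\,\sigma_b$ of the naive product $\opeg(ab)$ is obtained by freezing $V(y,\vartheta)=V(x,\theta)$ and $(\xi',k')=(\xi,k)$ in the amplitude; the remainder is thus generated by the Taylor expansion of $\sigma_b$ with respect to the first slot in $\eps V(y,\vartheta)-\eps V(x,\theta)$ and of both symbols in the frequency slot around their composition point.

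The key observation, which makes the singular scale harmless, is that a frequency derivative $\partial_\xi$ acting on the symbol $\sigma(v,\xi+k\beta/\eps,\gamma)$ produces a factor gaining one order in $(\gamma^2+|\xi+k\beta/\eps|^2)^{-1/2}$, hence in particular a factor $1/\gamma$, while a spatial derivative on $V(y,\vartheta)$ produces only a bounded contribution because of the prefactor $\eps$: $\partial_y(\eps V(y,\vartheta)) = \eps\,\partial_y V$ is $O(\eps)$. After integration by parts in $(\xi-\xi',k-k')$ against the oscillating phase (which is the standard device to trade position regularity of $V$ for frequency decay), one cancels the $\eps$ from the prefactor and one is left with amplitudes that carry an extra $1/\gamma$. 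The hypothesis $V\in H^{s_0}$ with $s_0>(d+1)/2+1$ guarantees via Sobolev embedding that $V\in C^1_b$ so Taylor's formula is applicable, and the pointwise bounds on $V$ feed into the $\bfS^m$ seminorm estimates for $\sigma_b(\eps V(y,\vartheta),\cdot,\cdot)$ uniformly in $(y,\vartheta)$.

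Once the remainder is written as $\opeg$ of a symbol of order $-1$ (or $0$ in the mixed orders case of (b)), the $L^2$ boundedness follows directly from Proposition \ref{prop13}; for the $H^{1,\eps}$ version in (a) with $n\ge 3d+3$, I would invoke Proposition \ref{prop15}, noting that the extra two orders of regularity on $V$ are precisely what is needed to perform one extra integration by parts and land in the $S^{-1}$ class with controlled seminorms. The mixed-order case (b) is identical, except the asymmetry $a\in S^1_n$, $b\in S^0_n$ forces us to count: one differentiation on the order-one factor gives back an order-zero symbol with the $1/\gamma$ gain precisely compensating for the higher order, yielding an $L^2\to L^2$ remainder of norm $O(1)$, which is consistent with the weaker conclusion stated.

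The main obstacle I anticipate is the bookkeeping needed to show that the remainder symbol genuinely lies in $S^{m_a+m_b-1}_{n'}$ for some $n'$, and in particular that the $\eps$-dependence in $\eps V$ is converted cleanly into the $1/\gamma$ gain rather than blowing up in $1/\eps$. This requires writing the Taylor remainder
\begin{equation*}
\sigma_b(\eps V(y,\vartheta),\eta,\gamma)-\sigma_b(\eps V(x,\theta),\eta,\gamma)
=\eps\int_0^1 \partial_v \sigma_b\bigl(\eps V_t,\eta,\gamma\bigr)\cdot \bigl(V(y,\vartheta)-V(x,\theta)\bigr)\,{\rm d}t,
\end{equation*}
with $V_t$ an interpolating value, and then exploiting the factor $V(y,\vartheta)-V(x,\theta)$ as $-(y-x)\cdot\int_0^1\nabla V\,{\rm d}s$ (and similarly for $\theta-\vartheta$) to integrate by parts once in $(\xi-\xi',k-k')$. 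The combination $\eps\cdot(k-k')$ produced by the $\theta$-component integration must be absorbed by the $(\gamma^2+|\eta|^2)^{-1/2}$ frequency decay via the elementary bound $\eps|k|/\langle\xi+k\beta/\eps\rangle \lesssim 1$, which is the characteristic trick of the pulse calculus of \cite{CGW2} and should be the workhorse of the entire argument.
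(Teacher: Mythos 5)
The paper does not prove this proposition; it is taken over verbatim from \cite{CGW2}, and the only thing the paper supplies is the pointer ``For proofs of the following results we refer to \cite{CGW2}.'' So there is no ``paper's own proof'' to compare against. I will therefore assess your sketch on its own terms.

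The broad strategy you outline -- Taylor expand $\sigma_b$ in its first slot to expose a factor $\eps$, then trade the resulting difference $V(y,\vartheta)-V(x,\theta)$ for an integration by parts against the phase in $(\xi-\xi',k-k')$, and track that the net result is a remainder of one order lower -- is indeed the skeleton of a composition argument, and you have correctly identified the two ingredients ($\eps$ from the Taylor step, frequency-derivative order drop from the integration by parts) that must conspire to produce the $1/\gamma$ gain. You also correctly note that $V\in H^{s_0}$ with $s_0>(d+1)/2+1$ puts $V$ in $C^1_b$ by Sobolev embedding, which is the minimum needed to run the first-order Taylor step.

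There is, however, a genuine error in the step you single out as ``the workhorse of the entire argument.'' The claimed elementary bound
\begin{equation*}
\frac{\eps\,|k|}{\bigl\langle \xi+\tfrac{k\beta}{\eps}\bigr\rangle}\;\lesssim\;1
\end{equation*}
is false. Take $\xi=-k\beta/\eps$: the denominator equals $1$, while $\eps|k|$ ranges over all of $[0,\infty)$ as $k$ varies (even for fixed $\eps$). So there is no absorption mechanism of this form, and invoking it as a characteristic trick of the pulse calculus is a misreading of how the singularity in $k$ is tamed. This matters because the estimate near the singular locus $\xi\approx -k\beta/\eps$ is precisely the delicate regime -- it is where the distinction between pulses and wavetrains (continuous versus discrete $k$-spectrum) actually bites, and a false bound there undermines the whole remainder estimate.

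The correct bookkeeping does not need any such inequality. After the Taylor step you have a factor $\eps$ multiplying $\partial_v\sigma_b$, and the $(\theta-\vartheta)$-component integration by parts produces a derivative $\partial_{k'}$ falling on $\sigma_b\bigl(\cdot,\xi'+\tfrac{k'\beta}{\eps},\gamma\bigr)$. By the chain rule this derivative is
\begin{equation*}
\partial_{k'}\,\sigma_b\Bigl(\cdot,\xi'+\tfrac{k'\beta}{\eps},\gamma\Bigr)
=\frac{\beta}{\eps}\cdot\partial_\eta\sigma_b\Bigl(\cdot,\xi'+\tfrac{k'\beta}{\eps},\gamma\Bigr),
\end{equation*}
so the $1/\eps$ from the singular scale cancels the $\eps$ from the Taylor remainder exactly, and what remains is $\partial_\eta\sigma_b$, a symbol of one order lower. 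For $b$ of order $0$ this is of order $-1$, whose $\bfS^{-1}$ seminorms carry the factor $(\gamma^2+|\xi'+k'\beta/\eps|^2)^{-1/2}\le 1/\gamma$. That is the entire source of the $1/\gamma$; nothing like $\eps|k|\lesssim\langle\cdot\rangle$ is ever required or available. The mixed-order case (b) is the same cancellation but with $\partial_\eta\sigma_b$ now of order $0$, giving $O(1)$ as stated.

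One further caveat you should keep in mind if you try to flesh this out: the symbols $a_{\eps,\gamma}$ are only $C^n_b$ in $(x,\theta)$ (through $V$), not $C^\infty$, so the standard smooth pseudodifferential composition machinery cannot be invoked off the shelf. The regularity thresholds $n\ge 2(d+1)$, $3d+3$, $3d+4$ are exactly the counting of how many times one may integrate by parts before running out of derivatives of $V$, and the calculus of \cite{CGW2} is built in a paradifferential (Coifman--Meyer) style -- Fourier-decomposing $\sigma$ in the $v$-slot and expressing $\opeg(a)$ as a superposition of Fourier multipliers composed with multiplications by $e^{i\eps V\cdot\zeta}$ -- precisely to organize this limited-smoothness bookkeeping. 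Your sketch does not engage with that structure, and the reader should not come away thinking the classical composition formula applies directly here.
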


\noindent Our final result is G{\aa}rding's inequality.

\begin{theorem}
\label{thm11}
Let $\sigma \in {\bf S}^0$ satisfy $\text{\rm Re} \, \sigma (v,\xi,\gamma) \ge C_K>0$ for all $v$ in a compact
subset $K$ of ${\mathcal O}$. Let now $a \in S_0^n$, $n \ge 2\, d+2$ be given by \eqref{singularsymbolp}, where
$V\in H^{s_0}(\R^{d+1})$ for some $s_0>\frac{d+1}{2}+1$ and is valued in a convex compact subset $K$. Then
for all $\delta >0$, there exists $\gamma_0$ which depends uniformly on $V$, the constant $C_K$ and $\delta$,
such that for all $\gamma \ge \gamma_0$ and all $u \in {\mathcal S}(\R^{d+1})$, there holds
\begin{equation*}
\text{\rm Re } \langle \opeg (a) \, u ;u \rangle_{L^2} \ge (C_K-\delta) \, \| u \|_0^2 \, .
\end{equation*}
\end{theorem}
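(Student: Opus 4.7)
The plan is the standard square-root argument adapted to the singular pulse calculus. First I would form the auxiliary classical symbol
\begin{equation*}
b(v,\xi,\gamma) := \bigl( \text{\rm Re } \sigma(v,\xi,\gamma) -C_K +\delta/2 \bigr)^{1/2} \, ,
\end{equation*}
defined for $v$ in a neighborhood of the convex compact set $K$. Since $\text{Re } \sigma(v,\xi,\gamma)\ge C_K$ for $v\in K$, the radicand is bounded below by $\delta/2>0$, and a standard check shows $b \in \bfS^0$, with $b^2 = \text{Re } \sigma -C_K +\delta/2$. Substituting $v=\eps \, V$, $\xi \mapsto \xi+k\beta/\eps$ then produces a singular symbol $\tilde b \in S^0_n$ whose regularity in $(x,\theta)$ is inherited from $V$, so the hypotheses $n\ge 2d+2$ and $s_0>(d+1)/2+1$ transfer to $\tilde b$ as needed for the adjoint and product rules.

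Next I would apply the product rule (Proposition~\ref{prop20}(a)) and the adjoint rule (Proposition~\ref{prop18}) to obtain
\begin{equation*}
\opeg(\tilde b)^* \, \opeg(\tilde b) =\opeg(\tilde b^* \, \tilde b) +R_1^{\eps,\gamma}
=\opeg(\text{Re } \sigma -C_K +\delta/2) +R_2^{\eps,\gamma} \, ,
\end{equation*}
where $\|R_j^{\eps,\gamma}\|_{L^2\to L^2} \le C/\gamma$ uniformly in $\eps \in (0,1]$. The left-hand side is manifestly nonnegative on $L^2$, so pairing with $u$ yields
\begin{equation*}
\text{Re } \langle \opeg(\text{Re } \sigma) \, u,u \rangle_{L^2} \geq (C_K -\delta/2) \, \|u\|_0^2
-\dfrac{C}{\gamma} \, \|u\|_0^2 \, .
\end{equation*}
A second application of Proposition~\ref{prop18} to the symbol $a$ itself gives $\opeg(a)+\opeg(a)^* = 2\, \opeg(\text{Re } \sigma) +R_3^{\eps,\gamma}$ with $\|R_3^{\eps,\gamma}\|_{L^2\to L^2} \le C/\gamma$, hence $\text{Re } \langle \opeg(a) \, u,u\rangle_{L^2} =\langle \opeg(\text{Re } \sigma) \, u,u\rangle_{L^2} +O(1/\gamma) \, \|u\|_0^2$. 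Choosing $\gamma_0$ so that $C/\gamma_0 \le \delta/2$ concludes the proof for all $\gamma\ge \gamma_0$; the uniformity in $V$ comes from the fact that every constant above depends only on $C_K$, on the convex compact set $K$ in which $V$ takes values, and on finitely many seminorms of $V$ in ${\mathcal C}^n_b$ controlled by $\|V\|_{H^{s_0}}$ via Sobolev embedding (cf.\ Remark~\ref{a4}).

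The one step that requires care, and which I view as the main obstacle, is the verification that the composition and adjoint remainders are genuinely $O(1/\gamma)$ as $L^2$-bounded operators \emph{uniformly in $\eps$}. This is the whole point of the pulse calculus of \cite{CGW2}, and it is what forces the regularity thresholds $n\ge 2d+2$ and $s_0>(d+1)/2+1$; writing $b$ in the form \eqref{singularsymbolp} with the same $V$ as in $a$ ensures that these thresholds are preserved when passing from $\sigma$ to its square root, so no extra regularity of $V$ is needed beyond what is already assumed.
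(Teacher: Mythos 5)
The paper states Theorem~\ref{thm11} without proof: all of Appendix~\ref{append}, including the G{\aa}rding inequality, is quoted from \cite{CGW2} (``For proofs of the following results we refer to \cite{CGW2}''), so there is no in-paper argument to compare against. That said, your square-root argument is the standard (non-sharp) G{\aa}rding proof and is internally consistent with the toolbox the appendix does provide. The regularity thresholds $n\ge 2(d+1)=2d+2$ and $s_0>\frac{d+1}{2}+1$ that appear in the theorem are exactly the ones needed to invoke the adjoint rule (Proposition~\ref{prop18}) and the product rule (Proposition~\ref{prop20}(a)) on $\tilde b$, and the uniform lower bound $\delta/2$ on the radicand guarantees that the square root stays in $\bfS^0$ with all the seminorm bounds required by Definition~\ref{n1}. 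One cosmetic point worth fixing: in the intended application the symbol $\sigma$ is matrix-valued (it is the symmetrizer plus $C\,B^*B$), so one should write $b=\bigl(\text{\rm Re}\,\sigma-(C_K-\delta/2)I\bigr)^{1/2}$ for the Hermitian positive square root, note that $b^*=b$, and use that $\opeg(\tilde b)$ is $L^2$-bounded uniformly in $\eps,\gamma$ (Proposition~\ref{prop13}) so that the term $R_1\,\opeg(\tilde b)$ coming from $\opeg(\tilde b)^*=\opeg(\tilde b)+R_1$ is still $O(1/\gamma)$; your scalar notation elides this but the argument survives unchanged. With those small clarifications the proposal is correct and is almost certainly the same route taken in \cite{CGW2}.
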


\subsection{Extended calculus}
\label{extended}

\emph{\quad} In our proof of $L^\infty(x_d;L^2(x',\theta_0))$ estimates for the linearized singular system (Theorem
\ref{est'}), we use a slight extension of the singular calculus. For given parameters $0<\delta_1<\delta_2<1$, we
choose a cutoff $\chi^e (\xi',\frac{k\, \beta}{\eps},\gamma)$ such that
\begin{align}\label{n31}
\begin{split}
&0\leq \chi^e \leq 1\, ,\\
&\chi^e \left( \xi',\dfrac{k\, \beta}{\eps},\gamma \right) =1 \text{ on } \left\{
(\gamma^2 +|\xi'|^2)^{1/2} \leq \delta_1 \, \left| \dfrac{k\, \beta}{\eps} \right| \right\} \, ,\\
&\mathrm{supp } \, \chi^e \subset \left\{ (\gamma^2 +|\xi'|^2)^{1/2} \leq \delta_2 \, \left| \dfrac{k\, \beta}{\eps} \right|
\right\} \, ,
\end{split}
\end{align}
and define a corresponding Fourier multiplier $\chi^e_D$ in the extended calculus by the formula \eqref{singularpseudop}
with $\chi^e (\xi',\frac{k\, \beta}{\eps},\gamma)$ in place of $\sigma(\eps V,X,\gamma)$. Composition laws involving
such operators are proved in \cite{CGW2}, but here we need only the fact that part {\it (a)} of Proposition \ref{prop20}
holds when either $a$ or $b$ is replaced by an extended cutoff $\chi^e$.

\bibliographystyle{plain}
\bibliography{unistapulses}

\begin{thebibliography}{10}

\bibitem{ar2}
D.~Alterman and J.~Rauch.
\newblock Nonlinear geometric optics for short pulses.
\newblock {\em J. Differential Equations}, 178(2):437--465, 2002.

\bibitem{altermanrauch}
D.~Alterman and J.~Rauch.
\newblock Diffractive nonlinear geometric optics for short pulses.
\newblock {\em SIAM J. Math. Anal.}, 34(6):1477--1502, 2003.

\bibitem{BS}
S.~Benzoni-Gavage and D.~Serre.
\newblock {\em Multidimensional hyperbolic partial differential equations}.
\newblock Oxford Mathematical Monographs. Oxford University Press, 2007.

\bibitem{CP}
J.~Chazarain and A.~Piriou.
\newblock {\em Introduction to the Theory of Linear Partial Differential
  Equations}.
\newblock North Holland, 1982.

\bibitem{jfcog}
J.-F. Coulombel and O.~Gu{\`e}s.
\newblock Geometric optics expansions with amplification for hyperbolic
  boundary value problems: linear problems.
\newblock {\em Ann. Inst. Fourier (Grenoble)}, 60(6):2183--2233, 2010.

\bibitem{CGW1}
J.-F. Coulombel, O.~Gu{\`e}s, and M.~Williams.
\newblock Resonant leading order geometric optics expansions for quasilinear
  hyperbolic fixed and free boundary problems.
\newblock {\em Comm. Partial Differential Equations}, 36(10):1797--1859, 2011.

\bibitem{CGW2}
J.-F. Coulombel, O.~Gu{\`e}s, and M.~Williams.
\newblock Singular pseudodifferential calculus for wavetrains and pulses.
\newblock {\em Preprint}, 2012.

\bibitem{gr}
O.~Gu{\`e}s and J.~Rauch.
\newblock Nonlinear asymptotics for hyperbolic internal waves of small width.
\newblock {\em J. Hyperbolic Differ. Equ.}, 3(2):269--295, 2006.

\bibitem{hernandez}
M.~Hernandez.
\newblock Resonant leading term geometric optics expansions with boundary
  layers for quasilinear hyperbolic boundary problems.
\newblock {\em submitted}, 2012.

\bibitem{hmr}
J.~K. Hunter, A.~Majda, and R.~Rosales.
\newblock Resonantly interacting, weakly nonlinear hyperbolic waves. {II}.
  {S}everal space variables.
\newblock {\em Stud. Appl. Math.}, 75(3):187--226, 1986.

\bibitem{jmr}
J.-L. Joly, G.~M{\'e}tivier, and J.~Rauch.
\newblock Coherent and focusing multidimensional nonlinear geometric optics.
\newblock {\em Ann. Sci. \'Ecole Norm. Sup. (4)}, 28(1):51--113, 1995.

\bibitem{kreiss}
H.-O. Kreiss.
\newblock Initial boundary value problems for hyperbolic systems.
\newblock {\em Comm. Pure Appl. Math.}, 23:277--298, 1970.

\bibitem{lax}
P.~D. Lax.
\newblock Asymptotic solutions of oscillatory initial value problems.
\newblock {\em Duke Math. J.}, 24:627--646, 1957.

\bibitem{lescarret}
V.~Lescarret.
\newblock Wave transmission in dispersive media.
\newblock {\em Math. Models Methods Appl. Sci.}, 17(4):485--535, 2007.

\bibitem{M1}
A.~Majda.
\newblock The stability of multidimensional shock fronts.
\newblock {\em Mem. Amer. Math. Soc.}, 41(275), 1983.

\bibitem{majdaartola}
A.~Majda and M.~Artola.
\newblock Nonlinear geometric optics for hyperbolic mixed problems.
\newblock In {\em Analyse math\'ematique et applications}, pages 319--356.
  Gauthier-Villars, 1988.

\bibitem{majdarosales}
A.~Majda and R.~Rosales.
\newblock A theory for spontaneous {M}ach stem formation in reacting shock
  fronts. {I}. {T}he basic perturbation analysis.
\newblock {\em SIAM J. Appl. Math.}, 43(6):1310--1334, 1983.

\bibitem{marcou}
A.~Marcou.
\newblock Rigorous weakly nonlinear geometric optics for surface waves.
\newblock {\em Asymptot. Anal.}, 69(3-4):125--174, 2010.

\bibitem{metivier}
G.~M{\'e}tivier.
\newblock The block structure condition for symmetric hyperbolic systems.
\newblock {\em Bull. London Math. Soc.}, 32(6):689--702, 2000.

\bibitem{rauch}
J.~Rauch.
\newblock {\em Hyperbolic partial differential equations and geometric optics},
  volume 133 of {\em Graduate Studies in Mathematics}.
\newblock American Mathematical Society, Providence, RI, 2012.

\bibitem{williams1}
M.~Williams.
\newblock Nonlinear geometric optics for hyperbolic boundary problems.
\newblock {\em Comm. Partial Differential Equations}, 21(11-12):1829--1895,
  1996.

\bibitem{williams4}
M.~Williams.
\newblock Highly oscillatory multidimensional shocks.
\newblock {\em Comm. Pure Appl. Math.}, 52(2):129--192, 1999.

\bibitem{williams2}
M.~Williams.
\newblock Boundary layers and glancing blow-up in nonlinear geometric optics.
\newblock {\em Ann. Sci. \'Ecole Norm. Sup. (4)}, 33(3):383--432, 2000.

\bibitem{williams3}
M.~Williams.
\newblock Singular pseudodifferential operators, symmetrizers, and oscillatory
  multidimensional shocks.
\newblock {\em J. Functional Analysis}, 191(1):132--209, 2002.

\end{thebibliography}
\end{document}